\author{M. McKee}
\address{338 Hammond Lane, Providence, UT 84332, USA}
\email{mark.mckee.zoso@gmail.com}
\author{A. Pasquale}
\address{Université de Lorraine, CNRS, IECL, F-57000 Metz, France}
\email{angela.pasquale@univ-lorraine.fr}
\author{T. Przebinda}
\address{Department of Mathematics, University of Oklahoma, Norman, OK 73019, USA}
\email{przebinda@gmail.com}
\title[Symmetry breaking operators for dual pairs with one member compact]{Symmetry breaking operators\\ for dual pairs with one member compact}
\def\ch{\mathop{\hbox{\rm ch}}\nolimits}
\def\g{\mathfrak g}
\def\z{\mathfrak z}
\def\h{\mathfrak h}
\def\sp{\mathfrak {sp}}
\def\o{\mathfrak o}
\def\u{\mathfrak u}
\def\k{\mathfrak k}
\def\R{\mathbb{R}}
\def\C{\mathbb{C}}
\def\Ze{\mathbb{Z}}
\def\Ha{\mathbb{H}}
\def\z{\mathfrak z}
\def\c{\mathfrak c}
\def\so{\mathfrak s_{\overline 0}}
\def\ss1{\mathfrak s_{\overline 1}}
\def\hs1{\mathfrak h_{\overline 1}}
\def\supp{\mathrm{supp}}
\def\Op{\mathrm{Op}}
\def\Ker{\mathop{\mathrm{Ker}}}
\def\G{\mathrm{G}}
\def\Lg{\mathrm{L}}
\def\N{\mathrm{N}}
\def\Zg{\mathrm{Z}}
\def\SOg{\mathrm{S}\mathrm{O}}
\def\K{\mathrm{K}}
\def\H{\mathrm{H}}
\def\M{\mathrm{M}}
\def\Zg{\mathrm{Z}}
\def\Sg{\mathrm{S}}
\def\Spin{\mathrm{Spin}}
\def\L{\mathrm{L}}
\def\Bbb{\mathbb}
\def\N{\mathrm{N}}
\def\H{\mathrm{H}}
\def\GL{\mathrm{GL}}
\def\SL{\mathrm{SL}}
\def\SO{\mathrm{SO}}
\def\Sp{\mathrm{Sp}}
\def\Og{\mathrm{O}}
\def\Ug{\mathrm{U}}
\def\Vg{\mathrm{V}}
\def\Se{\mathcal{S}}
\def\Mg{\mathrm{M}}
\def\fo{\mathcal{F}}
\newcommand{\anticomm}[2]{\null^{#1}#2}
\newcommand{\danticomm}[2]{\null^{\anticomm{#1}{#2}}#2}
\def \t{\tilde}
\def \wt{\widetilde}
\newcommand{\reg}[1]{ {#1}^{reg}}
\def\W{\mathrm{W}}
\def\Wv{\mathrm{W}}
\def\W+{\mathrm{W}_{\BB C}}
\def\Vv{\mathrm{V}}
\def\Uv{\mathrm{U}}
\def\V{\mathsf{V}}
\def\X{\mathrm{X}}
\def\Y{\mathrm{Y}}
\def\Xv{\mathrm{X}}
\def\Yv{\mathrm{Y}}
\def\Sy{\mathsf{S}}
\def\Dc{\mathbb {D}}
\def\Zb{\mathbb {Z}}
\def\Hc{\mathcal {H}}
\def\End{\mathop{\hbox{\rm End}}\nolimits}
\def\diag{\mathop{\hbox{\rm diag}}\nolimits}
\def\ad{\mathop{\hbox{\rm ad}}\nolimits}
\def\Ad{\mathop{\hbox{\rm Ad}}\nolimits}
\def\Hom{\mathop{\rm Hom}\nolimits}
\def\Re{\mathop{\rm Re}\nolimits}
\def\Im{\mathop{\rm Im}\nolimits}
\def\triv{\mathop{\rm triv}}
\def\tr{\mathop{\rm tr}\nolimits}
\def\sgn{\mathop{\hbox{\rm sgn}}\nolimits}
\def\vol{\mathop{\hbox{\rm vol}}\nolimits}
\def\lim{\mathop{\hbox{\rm lim}}\nolimits}
\newcommand\inner[2]{\langle #1,#2\rangle}
\def\Ind{\mathop{\hbox{\rm Ind}}\nolimits}
\def\supp{\mathop{\hbox{\rm supp}}\nolimits}
\def\Oo{\mathcal{O}}
\def\Ss{\mathcal{S}}
\newcommand{\Obigmat}[3]{
{\footnotesize
\left(
\begin{array}{c|c}
#1 & 0 \\ 
\hline
0 & 
\begin{matrix}
#2 & 0\\
0 & #3
\end{matrix}\!\!
\end{array}
\right)}}
\newcommand{\Omat}[1]{
\left(
\begin{array}{c|c}
h_\bullet & 0   \\ 
\hline
0 &   #1
\end{array}
\right)}
\newcommand{\Omatinv}[1]{
\left(
\begin{array}{c|c}
h_\bullet^{-1} & 0   \\ 
\hline
0 &   #1
\end{array}
\right)}
\newcommand{\Omatplus}[2]{
\left(
\begin{array}{c|c}
#1 & 0   \\ 
\hline
0 &   #2
\end{array}
\right)}
\newcommand{\Obigdiag}[3]{
\left(
\begin{array}{c|c}
\!\begin{matrix}
#1 & &0   \\
     & \ddots & \\
0 & &   #2
\end{matrix} 
\!\!
& 0 \\
\hline
0 &   #3
\end{array}
\right)}
\def\fontindex{\arabic}
\def\fonttitre{\textsf}
\newcounter{thh}
\newtheorem{thm}[thh]{\fonttitre{Theorem}}
\newtheorem{pro}[thh]{\fonttitre{Proposition}}
\newtheorem*{pro*}{\fonttitre{Proposition}}
\newtheorem{cor}[thh]{\fonttitre{Corollary}}
\newtheorem*{coro*}{\fonttitre{Corollary}}
\newtheorem{lem}[thh]{\fonttitre{Lemma}}
\theoremstyle{definition} \newtheorem{rem}{Remark}
\newtheorem*{defi*}{\fonttitre{Définition}}
\newtheorem*{nota*}{\fonttitre{Notation}}
\newenvironment{prf}{\begin{proof}}{\end{proof}}
\def\muet{ \ifthenelse{\equal{a}{b}}}
\def\nn{\nonumber}
\definecolor{darkgreen}{rgb}{0.09, 0.45, 0.27}
\definecolor{brown}{RGB}{139, 0, 0}
\newcommand{\tC}{\scaleto{\text{C}}{4pt}}
\def\biblio{\sloppy
\bibliographystyle{alpha}
\bibliography{article}}
\begin{document}

\thanks{The second author is grateful to the University of Oklahoma for hospitality and financial support. She is partially supported by the ANR Project OpART (ANR-23-CE40-0016).
The third author acknowledges hospitality and financial support from the 
Universit\'e de Lorraine and partial support from the NSA under Grant H98230-13-1-0205 and the 
NSF under Grant DMS-2225892. Part of this research took place within the online Research Community on Representation Theory and Noncommutative Geometry sponsored by the American Institute of Mathematics. Some work was done while the last two authors were at the Institute of Mathematical Sciences of the National University of Singapore. They gratefully thank the Institute for hospitality and financial support.}

\date{}
\subjclass[2010]{Primary: 22E45; secondary: 22E46, 22E30} 
\keywords{Reductive dual pairs, Howe duality, symmetry breaking operators, Weyl calculus, Lie superalgebras}

\maketitle
\begin{abstract}
We consider a dual pair $(\G,\G')$, in the sense of Howe, with $\G$ compact acting on $\L^2(\R^n)$, for an appropriate $n$, via the Weil representation $\omega$. Let $\wt\G$ be the preimage of $\G$ in the metaplectic group. 
Given a genuine irreducible unitary representation $\Pi$ of $\wt\G$, let $\Pi'$ be the corresponding irreducible unitary representation of $\wt\G'$ in Howe's correspondence. 
The orthogonal projection onto the $\Pi$-isotypic component $\L^2(\R^n)_\Pi$ is, up to a constant multiple, the unique symmetry breaking operator in $\Hom_{\wt\G\wt{\G'}}(\Hc_\omega^\infty, \Hc_\Pi^\infty\otimes \Hc_{\Pi'}^\infty)$. We study this operator by computing its Weyl symbol. 
Our results allow us to recover the known list of highest weights of irreducible representations of $\wt\G$ occurring in Howe's correspondence when the rank of $\G$ is strictly bigger than the rank of $\G'$. They also allow us to compute the wavefront set of $\Pi'$ by elementary means.  
\end{abstract}

\tableofcontents

%\linenumbers
%%

\section*{\bf Introduction}

Let $\Wv$ be a finite dimensional  vector space over $\Bbb R$ equipped with a non-degenerate symplectic form $\langle\cdot ,\cdot \rangle$ and let $\Sp(\Wv)$ denote the corresponding symplectic group. Write $\wt\Sp(\Wv)$ for the metaplectic group. Let us fix the character $\chi$ of $\R$ given by $\chi(r)=e^{2\pi i r}$, $r\in\R$. Then the Weil representation of $\wt\Sp(\Wv)$ associated to $\chi$ is denoted by $(\omega, \Hc_\omega)$.

For $\G, \G'\subseteq \Sp(\Wv)$ forming a reductive dual pair in the sense of Howe, let 
$\wt\G$, $\wt\G'$ denote their preimages  in $\wt\Sp(\Wv)$.
Howe's correspondence (or local $\theta$-correspondence) for $\wt\G$, $\wt\G'$ is a
bijection $\Pi\leftrightarrow \Pi'$ between the irreducible admissible representations of 
$\wt\G$ and $\wt{\G'}$ which occur as smooth quotients of $\omega$, \cite{HoweTrans}. 
It can be formulated as follows.
Assume that $\Hom_{\wt\G}(\Hc_\omega^\infty, \Hc_\Pi^\infty)\ne 0$.  Then $\Hom_{\wt\G}(\Hc_\omega^\infty, \Hc_\Pi^\infty)$ is a $\wt{\G'}$-module under the action via $\omega$.
Howe proved that it has a unique irreducible quotient, which is an irreducible admissible representation $(\Pi', \Hc_{\Pi'})$ of $\wt{\G'}$. Conversely, $\Hom_{\wt{\G'}}(\Hc_\omega^\infty, \Hc_{\Pi'}^\infty)$ is a $\wt\G$-module which has a unique irreducible admissible quotient, infinitesimally equivalent to  $(\Pi, \Hc_\Pi)$. 
Furthermore,  $\Pi\otimes\Pi'$ occurs as a quotient of $\omega^\infty$ in a unique way, i.e. 
\begin{equation}
\label{dim-symm-breaking}
\dim\Hom_{\wt\G\wt{\G'}}(\Hc_\omega^\infty, \Hc_\Pi^\infty\otimes \Hc_{\Pi'}^\infty)=1\,.
\end{equation}
In \cite{TKobayashiProgram}, the elements of 
\[
\Hom_{\wt\G}(\Hc_\omega^\infty, \Hc_\Pi^\infty)\,,\quad  \Hom_{\wt{\G'}}(\Hc_\omega^\infty, \Hc_{\Pi'}^\infty)\quad\text{and} \quad \Hom_{\wt\G\wt{\G'}}(\Hc_\omega^\infty, \Hc_\Pi^\infty\otimes \Hc_{\Pi'}^\infty)
\]
are called symmetry breaking operators. Their construction is part of Stage C of Koba\-yashi's program for branching problems in the representation theory of real reductive groups.

Since the last space is one dimensional, it deserves a closer look. The explicit contruction of the (essentially unique) symmetry breaking operator in 
$\Hom_{\wt\G\wt{\G'}}(\Hc_\omega^\infty, \Hc_\Pi^\infty\otimes \Hc_{\Pi'}^\infty)$
provides an alternative and direct approach to Howe's correspondence. 
To do this is the aim of the present paper. 
 
Our basic assumption is that $(\G, \G')$ is an irreducible dual pair with $\G$ compact.  
As shown by  Howe \cite{howetheta}, up to an isomorphism, $(\G, \G')$ is one of the pairs
\begin{equation} \label{classificationGG'}
(\Og_d, \Sp_{2m}(\R))\,,  \qquad  (\Ug_{d}, \Ug_{p,q})\,, \qquad        
(\Sp_d, \Og^*_{2m})\,.
\end{equation}
Then the representations $\Pi$ and  $\Pi'$ together with their contragredients are arbitrary irreducible unitary highest weight representations. They have been defined by Harish-Chandra in \cite{HC1955c}, were classified in \cite{EnrightHoweWallach} and have been studied in terms of Zuckerman functors in \cite{Wallachholomorphic}, \cite{Adamsdiscrete} and \cite{Adamshighestweight}. The 1-1 correspondence of representations in terms of their highest weights was first determined by Kashiwara and Vergne in \cite{KashiwaraVergne}. 

The crucial fact for constructing the symmetry breaking operator in $\Hom_{\wt\G\wt{\G'}}(\Hc_\omega^\infty, \Hc_\Pi^\infty\otimes \Hc_{\Pi'}^\infty)$ is that, up to a non-zero constant multiple, there is a unique $\G\G'$-invariant tempered distribution $f_{\Pi\otimes\Pi'}$ on 
$\Wv$ such that 
\begin{equation}
\label{intertwining}
\Hom_{\wt\G\wt{\G'}}(\Hc_\omega^\infty, \Hc_\Pi^\infty\otimes \Hc_{\Pi'}^\infty)=\C(\Op \circ \mathcal{K})(f_{\Pi\otimes\Pi'})\,,
\end{equation}
where $\Op$ and $\mathcal{K}$ are classical transformations which we shall review in section \ref{section:preliminaries}.
In \cite{PrzebindaUnitary}, $f_{\Pi\otimes\Pi'}$ is called the intertwining distribution associated to 
$\Pi\otimes\Pi'$.
In fact, if we work in a Schr\"odinger model of $\omega$, then $f_{\Pi\otimes\Pi'}$ happens to be the Weyl symbol, \cite{Hormander}, of the operator $(\Op \circ \mathcal{K})(f_{\Pi\otimes\Pi'})$.

The previous paragraph does not require $\G$ to be compact.
Suppose that the group $\G$ is compact. Let $\Theta_\Pi$ and $d_\Pi$ respectively denote the character and the degree of $\Pi$. Then the projection onto the $\Pi$-isotypic component of 
$\omega$ is equal to $d_\Pi/2$ times
\begin{equation}\label{projectiononomegapi}
\int_{\wt\G}\omega(\t g) \check\Theta_\Pi(\t g)\,d\t g=\omega(\check\Theta_\Pi)\,,
\end{equation}
where $\check\Theta_\Pi(\t g)=\Theta_\Pi(\t g^{-1})$ and we normalize the Haar measure $d\t g$ of $\t\G$ to have the total mass $2$. 
(This explains the constant multiple $\frac{1}{2}$ needed for the projection. 
In this way, the mass of $\G$ is equal to $1$.)
By Howe's correspondence with $\G$ compact, { the projection onto} the $\Pi$-isotypic component of $\omega$ is a {symmetry} breaking operator for $\Pi\otimes\Pi'$. 
The intertwining distribution for $\Pi\otimes\Pi'$ 
is therefore determined by the equation
\begin{equation}\label{projectiononomegapi1}
(\Op \circ \mathcal{K})(f_{\Pi\otimes\Pi'})=
\frac{1}{2}
\omega(\check\Theta_\Pi)\,.
\end{equation}
There are more cases when $f_{\Pi\otimes\Pi'}$ may be computed via the formula \eqref{projectiononomegapi1}, see \cite{PrzebindaUnitary}. However, if the group $\G$ 
is compact then the 
distribution character $\Theta_{\Pi'}$ may also be recovered from $f_{\Pi\otimes\Pi'}$ via an explicit formula, see \cite{PrzebindaUnipotent}. Thus, in this case, we have a diagram
\begin{equation}\label{first diagram}
\Theta_\Pi\longrightarrow f_{\Pi\otimes\Pi'} \longrightarrow \Theta_{\Pi'}\,.
\end{equation}
In general, the asymptotic properties of $f_{\Pi\otimes\Pi'}$ { relate} the associated varieties of the primitive ideals of $\Pi$ and $\Pi'$ and, under some more assumptions, the wave front sets of these representations, see \cite{PrzebindaUnitary},  \cite{PrzebindaUnipotent} and \cite{McKeePasqualePrzebindaWC_WF}.

The usual, often very successful, approach to Howe's correspondence avoids any work with distributions on the symplectic space. Instead, one finds Langlands parameters (see \cite{Moeglinarchi}, \cite{AdamsBarbaschcomplex}, \cite{AnnegretunitaryI}, \cite{AnnegretunitaryII}, 
\cite{Annegretorthosymplectic}, \cite{Jian-ShuLi-Cheng-boZhu}), character formulas (see \cite{Adamslift}, \cite{RenardLift}, \cite{DaszPrzebindaInv}, 
\cite{PrzebindaStableUnitary}, \cite{Merino2019characters}, \cite{LokePrzebinda_stable}), or candidates for character formulas (as in \cite{BerPrzeCHC_inv_eig}, \cite{PrzebindaCauchy}, \cite{LokePrzebinda_chc_padic_def}), or one establishes preservation of unitarity (as in \cite{Jian-ShuLiSingular}, \cite{HeHongyu}, \cite{PrzebindaUnitary}, \cite{AdamsBarbaschPaulTrapaVogan},  \cite{HarrisLiSun}, \cite{MaSunZhu_2017}). 
However, in the background (explicit or not), there is the orbit correspondence induced by the unnormalized moment maps
\[
\g^*\longleftarrow \Wv \longrightarrow \g'{}^*\,,
\]
where $\g$ and $\g'$ denote the Lie algebras of $\G$ and $\G'$, respectively, and 
$\g^*$ and $\g'^*$ are their duals. This correspondence of orbits has been studied in \cite{DaszKrasPrzebindaComplex}, \cite{DaszKrasPrzebindaK-S2} and \cite{PanReal}. 
Furthermore, in their recent work, \cite{LockMaassocvar}, Loke and Ma computed the associated variety of the representations for the dual pairs in the stable range in terms of the orbit correspondence. The $p$-adic case was studied in detail in \cite{Moeglinarchiwave}.

Working with the $\G\G'$-invariant distributions on $\Wv$ is a more direct approach than relying on the orbit correspondence and provides different insights and results. As a complementary contribution to all work mentioned above, we compute the intertwining distributions $f_{\Pi\otimes\Pi'}$ explicitly, see section \ref{main results}. As an application, we obtain the wave front set of $\Pi'$ by elementary means. 
The computation will be sketched in section \ref{section:wavefront}, and the detailed proof appeared in \cite{McKeePasqualePrzebindaWC_WF}. 
Another application of the methods presented in this paper leads to the explicit formula for the character of the corresponding irreducible unitary representation $\Pi'$ of $\wt\G'$. This can be found in \cite{AllanMerino-Thesis, Merino2019characters}.

The explicit formulas for the intertwining distribution provide important information on the nature of the symmetry breaking operators. Namely, they show that none of the 
symmetry breaking operators of the form $(\Op \circ \mathcal{K})(f_{\Pi\otimes\Pi'})$ is a differential operator. For the present situation, this answers in the negative the question on the existence of differential  symmetry breaking operators, addressed in different contexts by several authors (see for instance \cite{Kobayashi-Pevzner-DSBO-I, Kobayashi-Pevzner-DSBO-II, Kobayashi_Speh_AMS} and the references given there). This property is the content of Corollary \ref{cor:SBO-no-diff}.

Finally, observe that our computations leading to the intertwining distributions apply to any genuine irreducible representation $\Pi$ of the compact member $\wt{\G}$ of a dual pair. They provide an explicit formula for the Weyl symbol of the projection of $\omega|_{\wt{\G}}$
onto the $\Pi$-isotypic component. According to Howe's duality theorem, this projection is non-zero if and only if there is a unitary highest weight representation $\Pi'$ of $\wt{\G}'$ such that $\Pi\otimes \Pi'$ occurs in $\omega|_{\wt{\G}\wt{\G}'}$, i.e. $\Pi$ occurs in Howe's correspondence. 
When the rank of $\G$ is strictly bigger than that of $\G'$, we recover the known necessary and sufficient conditions on the highest weights of $\Pi$ so that it occurs in Howe's correspondence. 
See Corollary \ref{HW-l>l'}. 

The paper is organized as follows. In section \ref{section:preliminaries}, we introduce some notation and review the construction of the intertwining distributions. Section \ref{The center of the metaplectic group} computes the intertwining distribution for the dual pair 
$(\Zg, \Sp(\Wv))$, where $\Zg=\Og_1$ is the center of the symplectic group $\Sp(\Wv)$, and introduces some properties needed in the sequel. Section \ref{section:dualpairs-supergroups} recalls how to realize the dual pairs with one member compact as Lie supergroups, 
and section \ref{section:orbital integrals} collects some definitions and properties of the Weyl--Harish-Chandra integration formulas on $\Wv$ that we will need to compute the intertwining distributions. 
Section \ref{main results} states the main results of this paper. 
The dual pairs $(\Og_2,\Sp_{2l'}(\R))$ are particular because the group $\SOg_2$ is abelian. The intertwining distributions corresponing to these pairs are computed in section \ref{section:O2}. The smallest example of $(\Og_2,\Sp_{2}(\R)=\SL_2(\R))$ is presented with more details. An additional example is given in section \ref{examples}, where we illustrate the main two theorems when $(\G,\G')=(\Ug_l,\Ug_{p,p})$ and $\Pi$ is the trivial representation of $\Ug_l$. 
The proofs of the main results are in sections \ref{Intertwining distributions as an integral over g}, \ref{Intertwining distributions as an integral over h} and \ref{An intertwining distribution in terms of orbital integrals on the symplectic space}. We treat the special cases concerning the non-identity connected components of the orthogonal groups in sections \ref{The special case even}, \ref{The special case, odd 1}, \ref{section: proof of thm det} and \ref{The special case, odd 2}. Here we need the Weyl's integral and character formulas found by Wendt in \cite{Wendt}. 
Section \ref{section: proof corollary on UU l<=p+q} contains the proof of a necessary condition of a representation of $\wt{\Ug}_l$ to occur in Howe's correspondence for $(\Ug_l,\Ug_{p,q})$
when 
$
p=\min(p,q) <l \leq l'=p+q$. In section \ref{section: proof corollary HW-H-l <=l'}, we consider the dual pair $(\Sp_l,\Og^*_{2l'})$. 
Using intertwining distribution, we recover the known fact that  certain representations of $\Sp_l$ 
occur in Howe's correspondence. 
Finally, in section \ref{section:wavefront}, we outline how the results of this paper lead, for each representation $\Pi$ of $\wt{\G}$ occurring in Howe's duality, to the computation of the wave front set of the representation $\Pi'$ dual to $\Pi$. 
The details are in \cite{McKeePasqualePrzebindaWC_WF}.
The nine appendices collect and prove some auxiliary results. 
\medskip

\textit{Acknowledgement:} 
We are indebted to the anonymous referee whose extremely careful reading and valuable comments made us aware of errors and omissions in the original manuscript. 
The questions raised by the referee have lead us to make significant additions, 
which have greatly improved our paper. 
%%%%%%%%%%%%%
\section{\bf Notation and preliminaries}
\label{section:preliminaries}
Let us first recall the construction of the metaplectic group $\wt{\Sp}(\Wv)$ and the Weil representation $\omega$. We are using the approach of \cite[Section 4]{AubertPrzebinda_omega}, to which we refer the reader for more details.

Let $\sp$ denote the Lie algebra of $\Sp(\Wv)$, both contained in ${\rm End}(\Wv)$. 
Fix a positive definite compatible complex structure $J$ on $\Wv$, that is an element $J\in \sp$ such that $J^2=-1$ (minus the identity on $\Wv$) and the symmetric bilinear form $\langle J\cdot ,\cdot \rangle$ is positive definite on $\Wv$. 
For an element $g\in \Sp(\Wv)$, let $J_g=J^{-1}(g-1)$. The adjoint of $J_g$ with respect to the form $\langle J \cdot,\cdot\rangle$ is $J_g^*=Jg^{-1}(1-g)$. In particular, $J_g$ and $J_g^*$ have the same kernel. Hence the image of $J_g$ is
\[
J_g\Wv=(\Ker J_g^*)^\perp=(\Ker J_g)^\perp\,,
\]
where $\perp$ denotes the orthogonal 
complement 
with respect to $\langle J \cdot,\cdot\rangle$.
Therefore, the restriction of $J_g$ to $J_g\Wv$ defines an invertible element. Thus 
for every $g\neq 1$,
it makes sense to talk about $\det(J_g)_{J_g\Wv}^{-1}$, the reciprocal of the determinant of the restriction of $J_g$ to $J_g\Wv$. 
With this notation, we have
\begin{equation}\label{metaplectic group}
\wt{\Sp}(\Wv)=\{\t g=(g;\xi)\in \Sp(\Wv)\times \C,\ \ \xi^2=i^{\dim (g-1)\Wv}\det(J_g)_{J_g\Wv}^{-1}\}\,,
\end{equation}
with the convention that $\det(J_g)_{J_g\Wv}^{-1}=1$ if $g=1$.
There exists a $2$-cocycle $C:\Sp(\Wv)\times \Sp(\Wv) \to \C$, 
explicitly described in 
 \cite[Proposition 4.13]{AubertPrzebinda_omega}, such that $\wt{\Sp}(\Wv)$ is a group with respect to the multiplication 
\begin{equation}\label{multiplicationSp}
(g_1;\xi_1)(g_2;\xi_2)=(g_1g_2;\xi_1\xi_2 C(g_1,g_2))
\end{equation} 
and the homomorphism 
\begin{equation}\label{coveringhomomorphism}
\wt{\Sp}(\Wv)\ni (g;\xi)\to g\in \Sp(\Wv)
\end{equation} 
does not split.

Let $\mu_\Wv$ (or simply $dw$) be the Lebesgue measure on $\Wv$ normalized by the condition that the volume of the unit cube with respect to the form $\langle J\cdot ,\cdot \rangle$ is $1$. (Since all positive complex structures are conjugate by elements of $\Sp$, this normalization does not depend on the particular choice of $J$.) Let $\Wv=\Xv\oplus \Yv$ be a complete polarization. 
We suppose that $\Xv$, $\Yv$ and $J$ are chosen so that $J(\Xv)=\Yv$.
Similar normalizations are fixed for the Lebesgue measures on every vector subspace of $\Wv$, for instance on $\Xv$ and on $\Yv$. Furthermore, for every finite dimensional real vector space $\Vv$, we write 
$\Ss(\Vv)$ for the Schwartz space on $\Vv$ and 
$\Ss'(\Vv)$
for the space of tempered distributions on $\Vv$. 
We use the notation $\G'$ for the second member of a dual pair because it is the centralizer of $\G$ in $\Sp(\Wv)$. We also use the notation $\cdot{\,}'$ for all the objects associated with $\G'$, such as $\g'$, $\Pi'$, ... Unfortunately, this collides with the usual notation for the dual of a linear topological space in functional analysis, also used in this paper, such as $\mathcal{D}'(\R^n)$, $\mathcal{S}'(\R^n)$, ... We hope the reader will guess from the context the correct meaning of the notation.

Each element $K\in 
\Ss'(\Xv\times \Xv)$ defines an operator
$\Op(K)\in \Hom(\Ss(\Xv),\Ss'(\Xv))$ by
\begin{equation}\label{Op}
\Op(K)v(x)=\int_\Xv K(x,x')v(x')\,dx'.
\end{equation}
The map 
\begin{equation}
\label{Op}
\Op: \Ss'(\Xv\times \Xv) \to \Hom(\Ss(\Xv),\Ss'(\Xv))
\end{equation}
is an isomorphism of linear topological spaces. This is known as the Schwartz Kernel Theorem, 
\cite[Theorem 51.7]{Treves}. 
The Weyl transform is the linear isomorphism $\mathcal K:
\Ss'(\Wv)\to \Ss'(\Xv\times \Xv)$ defined for $f \in \Ss(\Wv)$ by 
\begin{equation}\label{K}
\mathcal K(f)(x,x')=\int_\Yv f(x-x'+y)\chi\big(\frac{1}{2}\langle y, x+x'\rangle\big)\,dy\,,
\end{equation}
(Recall that $\chi$ is the character of $\R$ we fixed at the beginning of the introduction.) 

For $g\in \Sp(\Wv)$, let
\begin{equation} \label{eq:chicg}
\chi_{c(g)}(u)=\chi\big(\tfrac{1}{4}\langle (g+1)(g-1)^{-1}u, u\rangle\big) \qquad (u=(g-1)w,\ w\in\Wv)\,.
\end{equation}
Notice that, if $g-1$ is invertible on $\Wv$, then 
$$
\chi_{c(g)}(u)=\chi\big(\tfrac{1}{4}\langle c(g)u, u\rangle\big)\,,
$$ 
where $c(g)=(g+1)(g-1)^{-1}$ is the usual Cayley transform.

Following \cite[Definition 4.23 and (114)]{AubertPrzebinda_omega}, we define
\begin{equation}\label{Tt}
T:\wt\Sp(\Wv)\ni \t g =(g;\xi)\longrightarrow  \xi\, \chi_{c(g)}\mu_{(g-1)\Wv}\in 
\Ss'(\Wv)\,,
\end{equation}
where $\mu_{(g-1)\Wv}$ is the Lebesgue measure on the subspace $(g-1)\Wv$ normalized as above, i.e. the volume of the unit cube with respect to the form $\langle J\cdot ,\cdot \rangle$ is $1$. 
Set
\begin{equation}
\label{omega}
\omega=\Op\circ \mathcal K \circ T\,.
\end{equation}
As proved in \cite[Theorem 4.27]{AubertPrzebinda_omega}, $\omega$ is a unitary representation of $\wt\Sp$ on $\L^2(\Xv)$. In fact, $(\omega, \L^2(\Xv))$ is the 
Schr\"odinger model of Weil representation of $\wt\Sp$ attached to the character $\chi$ and the 
polarization $\Wv=\Xv\oplus\Yv$. In this realization, $\mathcal{H}_\omega=\L^2(\Xv)$ and 
$\mathcal{H}_\omega^\infty=\mathcal{S}(\Xv)$.

The distribution character of the Weil representation turns out to be the function
\begin{equation}\label{THETA}
\Theta: \wt{\Sp}(\Wv)\ni (g;\xi)\to\xi\in\C^\times\,,
\end{equation}
\cite[Proposition 4.27]{AubertPrzebinda_omega}. Hence for $\t g\in \wt{\Sp}(\Wv)$ in the preimage of $ g\in \Sp(\Wv)$ under the double covering map \eqref{coveringhomomorphism}, we have
\begin{equation}\label{THETAT}
T(\t g)=\Theta(\t g) \chi_{c(g)}\mu_{(g-1)\Wv}\qquad (\t g\in \wt{\Sp}(\Wv))\,.
\end{equation}

Suppose now that $\G, \G'\subseteq \Sp(\Wv)$ is a dual pair. Every irreducible admissible representation $\Pi\otimes\Pi'$ of $\wt\G\times\wt\G'$ occurring in Howe's correspondence 
may be realized, up to infinitesimal equivalence, as a subspace of $\mathcal H_\omega^\infty{}'=
\Ss'(\Xv)$. Hence
$$
\Hom_{\wt\G\wt{\G'}}(\Hc_\omega^\infty, \Hc_\Pi^\infty\otimes \Hc_{\Pi'}^\infty)
\subseteq \Hom(\mathcal{S}(\Xv),\Ss'(\Xv))\,.
$$
The existence of the interwining distribution $f_{\Pi\otimes\Pi'}\in \Ss'(\Wv)$ defined (up to a multiplicative constant) by \eqref{intertwining} is thus a
consequence of \eqref{dim-symm-breaking}, \eqref{Op} and \eqref{K}.

Finally, because of \eqref{omega}, equation \eqref{projectiononomegapi} and \eqref{projectiononomegapi1} lead to the equality
\begin{equation}
\label{to-compute}
f_{\Pi\otimes\Pi'}=
\frac{1}{2}T(\check\Theta_\Pi)
=\int_\G \check\Theta_\Pi(\t g)T(\t g) \, dg\,.
\end{equation}
The problem of finding an explicit expression for $f_{\Pi\otimes\Pi'}$ is hence transformed into the task of computing the right-hand side of \eqref{to-compute}. 

%%%%%%%%%%%%%%
\section{\bf The center of the metaplectic group}
\label{The center of the metaplectic group}
Let $\Zg=\{1, -1\}$ be the center of the symplectic group $\Sp(\Wv)$. 
Then $(\Zg, \Sp(\Wv))$ is a dual pair in $\Sp(\Wv)$ with compact member $\Zg$.
Let $(\wt\Zg, \wt{\Sp}(\Wv))$ be the corresponding dual pair in the metaplectic group 
$\wt{\Sp}(\Wv)$.
Then  $\wt\Zg$  coincides with the center of $\wt{\Sp}(\Wv)$ and is equal to
\begin{equation}\label{center of metaplectic group}
\wt\Zg=\{(1;1), (1; -1), (-1; \zeta), (-1; -\zeta)\}\,,
\end{equation}
where $\zeta=\left(\frac{i}{2}\right)^{\frac{1}{2}\dim \Wv}$.

In this section we illustrate how to compute the intertwining distributions for the pair $(\Zg, \Sp(\Wv))$. At the same time, we introduce some facts that will be needed in the rest of the paper. 

The formula for the cocycle in \eqref{multiplicationSp}
is particularly simple over $\Zg$:
\[
C(1,\pm1)=C(-1,1)=1\ \ \text{and}\ \ C(-1,-1)=2^{\dim\Wv}\,.
\]
Also, $C(g,1)=C(1,g)=1$ for all $g\in \Sp(\Wv)$ by \cite[Proposition 4.13]{AubertPrzebinda_omega}.
Notice that 
\begin{equation}
\label{product in Ztilde}
(-1; \pm\zeta)^2=(1;\zeta^2 C(-1,-1))=(1; (-1)^{\frac{1}{2}\dim \Wv})
\,.
\end{equation}
Hence the covering \eqref{coveringhomomorphism} restricted to $\wt\Zg$,
\begin{equation}\label{center of metaplectic group covering}
\wt\Zg\ni \t z\to z\in \Zg
\end{equation}
splits if and only if $\frac{1}{2}\dim \Wv$ is even.

By \eqref{Tt} and \eqref{metaplectic group}, we have
\begin{equation*}
\begin{aligned}
&T(1;1)=\delta\,,\quad &&T(1;-1)=-\delta\,,\\ 
&T(-1;\zeta)=\zeta\,\mu_\Wv\,,\quad &&T(-1;-\zeta)=-\zeta\,\mu_\Wv\,.
\end{aligned}
\end{equation*}
Moreover, \cite[Proposition 4.28]{AubertPrzebinda_omega} shows that for $v\in \L^2(\Xv)$ and $x\in \Xv$,
\begin{equation*}
\begin{aligned}
&\omega(1;1)v(x)=v(x)\,,\quad  &&\omega(1;-1)v(x)=-v(x)\,,\\
&\omega(-1;\zeta)v(x)=\frac{\zeta}{|\zeta|}v(-x)\,,\quad &&\omega(-1;-\zeta)v(x)=-\frac{\zeta}{|\zeta|}v(-x)\,.
\end{aligned}
\end{equation*}
Since $T(\t z)=\Theta(\t z) \chi_{c(z)}\mu_{(z-1)\Wv}$ for $\t z\in \t Z$, it follows that
\begin{equation}\label{ThetaT-center}
\omega(\t z)v(x)=\frac{\Theta(\t z)}{|\Theta(\t z)|} v(zx)\qquad (\t z\in \wt\Zg)\,.
\end{equation}
The fraction
\begin{equation}\label{chi+}
\chi_+(\t z)=\frac{\Theta(\t z)}{|\Theta(\t z)|} \qquad (\t z\in \wt\Zg)
\end{equation}
defines an irreducible character $\chi_+$ of the group $\wt\Zg$. Let $\varepsilon$ be the unique non-trivial irreducible character of the two element group $\Zg$. Then
\begin{equation}\label{chi-}
\chi_-(\t z)=\varepsilon(z)\frac{\Theta(\t z)}{|\Theta(\t z)|} \qquad (\t z\in \wt\Zg)
\end{equation}
is also an irreducible character of $\wt\Zg$. 

Let $\L^2(\Xv)_+\subseteq \L^2(\Xv)$ denote the subspace of the even functions and let $\L^2(\Xv)_-\subseteq \L^2(\Xv)$ denote the subspace of the odd functions. Then, as is well known, \cite[(6.9)]{KashiwaraVergne}, the restriction $\omega_{\pm}$ of $\omega$ to $\L^2(\Xv)_\pm$ is irreducible. As we have seen above, the center $\wt\Zg$ acts on $\L^2(\Xv)_\pm$ via the character $\chi_\pm$.
Thus $\chi_\pm$ is the central character of $\omega_\pm$.

Hence, in the case of the dual pair $(\Zg, \Sp(\Wv))$, Howe's correspondence looks as follows
\begin{equation}\label{HowesCorrespondenceO1Sp}
(\chi_+, \C)\leftrightarrow (\omega_+,\L^2(\Xv)_+)\ \ \ \text{and}\ \ \ (\chi_-, \C)\leftrightarrow (\omega_-,\L^2(\Xv)_-)\,.
\end{equation}
The projections 
\[
\L^2(\Xv)\to \L^2(\Xv)_+ \quad \text{and} \quad \L^2(\Xv)\to \L^2(\Xv)_-
\]
are respectively given by 
\[
\frac{1}{2}\omega(\check\chi_+)=\frac{1}{4}\sum_{\t z\in \wt\Zg} \check\chi_+(\t z) \omega(\t z) 
\quad \text{and} \quad  
\frac{1}{2}\omega(\check\chi_-)= \frac{1}{4}\sum_{\t z\in \wt\Zg} \check\chi_-(\t z) \omega(\t z)\,.
\]
The corresponding intertwining distributions are
\begin{equation}\label{intertwiningdistributionsforO1Sp}
\begin{aligned}
f_{\chi_+\otimes\omega_+}&=
\frac{1}{4}\sum_{\t z\in \wt\Zg} \check\chi_+(\t z) T(\t z)=
\frac{1}{2}
\big(\delta+2^{-\frac{1}{2}\dim\Wv}\mu_\Wv\big)\,,\\ 
f_{\chi_-\otimes\omega_-}&=
\frac{1}{4}\sum_{\t z\in \wt\Zg} \check\chi_-(\t z) T(\t z)=
\frac{1}{2}\big(\delta-2^{-\frac{1}{2}\dim\Wv}\mu_\Wv\big)\,,
\end{aligned}
\end{equation}
where we normalize the total mass of $\Zg$ to be $1$, as we did for a general dual pair $(\G, \G')$ with $\G$ compact. 

The right-hand side of \eqref{intertwiningdistributionsforO1Sp} is a sum of two homogenous distributions of different { homogenity} degrees. So, asymptotically, they can be isolated. This allows us to recover $\mu_\Wv$, { and hence $\tau_{\sp(\Wv)}(\Wv)$}, the wave front of $\omega_\pm$, out of the intertwining distribution.

%%%%%%%%%
\section{\bf Dual pairs as Lie supergroups}
\label{section:dualpairs-supergroups}

To present the main results of this paper, we need the realization of dual pairs with one member compact as Lie supergroups. The content of this section is taken from \cite{PrzebindaLocal} and \cite{McKeePasqualePrzebindaSuper}. We recall the relevant material for making our exposition self-contained.

For a dual pair $(\G, \G')$ as in \eqref{classificationGG'}, there is a division algebra $\Bbb D=\R$, $\C$, $\Ha$ with an involution  over $\R$, a finite dimensional 
right $\Bbb D$-vector space $\Vv$ with a positive definite hermitian form $(\cdot,\cdot)$ and a finite dimensional 
right $\Bbb D$-vector space $\Vv'$ with a non-degenerate skew-hermitian form $(\cdot,\cdot)'$ such that $\G$ coincides with the isometry group of $(\cdot,\cdot)$ and $\G'$ coincides with the isometry group of $(\cdot,\cdot)'$.
We assume that $\G$ centralizes the complex structure $J$ and that $J$ normalizes $\G'$. Then the conjugation by $J$ is a Cartan involution on $\G'$, which we denote by $\theta$. 

Let $\V_{\overline 0}=\Vv$, $d=\dim_\Dc\V_{\overline 0}$, $\V_{\overline 1}=\Vv'$ and $d'=\dim_\Dc\V_{\overline 1}$.  We assume that both $\V_{\overline 0}$ and $\V_{\overline 1}$ are 
right vector spaces over $\Dc$. Set $\V=\V_{\overline 0}\oplus \V_{\overline 1}$ and define an element $\Sy \in \End(\V)$ by
\[
\Sy (v_0+v_1)=v_0-v_1 \qquad (v_0\in \V_{\overline 0}, v_1\in \V_{\overline 1})\,.
\]
Let
\begin{eqnarray*}
&&\End(\V)_{\overline 0}=\{x\in \End(\V);\ \Sy x=x\Sy \}\,,\\
&&\End(\V)_{\overline 1}=\{x\in \End(\V);\ \Sy x=-x\Sy \}\,,\\
&&\GL(\V)_{\overline 0}=\End(\V)_{\overline 0}\cap \GL(\V)\,.
\end{eqnarray*}
Denote by $(\cdot ,\cdot )''$ the direct sum of the two forms $(\cdot ,\cdot )$ and $(\cdot ,\cdot )'$. Let
\begin{eqnarray}\label{super liealgebra}
&&\so=\{x\in \End(\V)_{\overline 0};\ (xu,v)''=-(u,xv)'',\ u,v\in\V\}\,,\\
&&\ss1=\{x\in \End(\V)_{\overline 1};\ (xu,v)''=(u,\Sy xv)'',\ u,v\in\V\}\,,\nn\\
&&\mathfrak s=\so\oplus \ss1\,,\nn\\
&&\Sg=\{s\in \GL(\V)_{\overline 0};\  (su,sv)''=(u,v)'',\ u,v\in\V\}\,,\nn\\
&&\langle x, y\rangle=\tr_{\Dc/\R}(\Sy xy)\,.
\label{symplectic-structure}
\end{eqnarray}
(Here $\tr_{\Dc/\R}(x)$ denotes the trace of $x$ considered as a real endomorphism of $\V$.)
Then $(\Sg, \mathfrak s)$ is a real Lie supergroup, i.e. a real Lie group $\Sg$ together with a real Lie superalgebra $\mathfrak s=\so\oplus \ss1$, whose even component $\so$ is the Lie algebra of $\Sg$. (In terms of \cite[\S 3.8]{DeligneMorgan99}, $(\Sg, \mathfrak s)$ is a Harish-Chandra pair.)
We shall write $\mathfrak s(\V)$ instead of $\mathfrak s$ whenever we want to specify the Lie superalgebra $\mathfrak s$ constructed as above from $\V$ and $(\cdot ,\cdot)''$. 

The group $\Sg$ acts on $\mathfrak s$ by conjugation
and 
$\langle\cdot ,\cdot\rangle$ is a non-degenerate $\Sg$-invariant form on the real vector space $\mathfrak s$, whose restriction to $\so$ is symmetric and restriction to $\ss1$ is skew-symmetric. We shall employ the notation 
\begin{eqnarray}
\label{S adjoint action on s}
&&s.x=\Ad(s)x=sxs^{-1}  \qquad (s\in \Sg\,,\ x\in \mathfrak{s})\,,\\
\label{s0 adjoint action on s}
&&x(w)=\ad(x)(w)=xw-wx \qquad (x\in \so\,,\ w\in \ss1)\,.
\end{eqnarray}
In terms of the notation introduced at the beginning of this section,
\[
\g=\so|_{\V_{\overline 0}}\,,\quad \g'=\so|_{\V_{\overline 1}}\,,\quad \G=\Sg|_{\V_{\overline 0}}\,,\quad \G'=\Sg|_{\V_{\overline 1}}\,.
\]
Define $\Wv=\Hom_\Dc(\V_{\overline{1}},\V_{\overline{0}})$. 
Then, by restriction, we have the identification
\begin{equation}
\label{eq:WasHom}
 \Wv=\ss1\,.
\end{equation}
Under this identification, the adjoint action of $\G$ on $\ss1$ becomes the action on $\Wv$ by the left  (postmultiplication). Similarly, the adjoint action of $\G'$ on $\ss1$ becomes the action 
of $\G'$ on $\Wv$ via the right (premultiplication) by the inverse.
Also, we have the unnormalized moment maps
\begin{equation}\label{unnormalized moment maps}
\tau:\Wv\ni w\to w^2|_{\V_{\overline 0}}\in \g\,,\qquad \tau':\Wv\ni w\to w^2|_{\V_{\overline 1}}\in \g'\,.
\end{equation}

An element $x \in \mathfrak s$ is called semisimple (resp., nilpotent) if $x$ is semisimple (resp., nilpotent) as an endomorphism of $\V$. We say that a semisimple element $x \in \ss1$ is regular if it is nonzero and $\dim(S.x) \geq \dim(S.y)$ for all semisimple $y \in \ss1$. Let $x \in \ss1$ be fixed. 
For $x,y\in \ss1$ let $\{x,y\}=xy+yx \in \so$ be their anticommutator.
\label{semisimple-elements-W}

The anticommutant and the double anticommutant
of $x$ in $\ss1$ are  
\begin{eqnarray*}
\anticomm{x}{\ss1}&=&\{y \in \ss1:\{x,y\}=0\}\,,\\
\danticomm{x}{\ss1}&=&\bigcap_{y \in \anticomm{x}{\ss1}} \anticomm{y}{\ss1}\,,
\end{eqnarray*}
respectively. A Cartan subspace $\hs1$ of $\ss1$ is defined as the double anticommutant
of a regular semisimple element $x \in \ss1$.
We denote by $\reg{\hs1}$ the set of regular elements in $\hs1$.

Next we describe the Cartan subspaces $\hs1\subseteq\ss1$. We refer to \cite[\S 6]{PrzebindaLocal} and \cite[\S 4]{McKeePasqualePrzebindaSuper} for the proofs omitted here. 
Let $l$ be the rank of $\g$, $l'$ the rank of $\g'$, and set 
\begin{equation}
\label{l''}
l''=\min(l,l')\,.
\end{equation}
Given a Cartan subspace $\hs1$, there are $\Ze/2\Ze$-graded subspaces $\V^j\subseteq\V$ such that the restriction of the form $(\cdot ,\cdot )''$ to each $\V^j$ is non-degenerate, $\V^j$ is orthogonal to $\V^k$ for $j\ne k$ and
\begin{equation}\label{decomposition of space for a cartan subspace}
\V=\V^0\oplus \V^1\oplus\V^2\oplus\dots \oplus \V^{l''}\,.
\end{equation}
The subspace $\V^0$ coincides with the intersection of the kernels of the elements of $\hs1$ (equivalently, $\V^0=\Ker(x)$ if $\hs1=\danticomm{x}{\ss1}$).
For $1\leq j\leq l''$, the subspaces $\V^j=\V^j_{\overline 0}\oplus \V^j_{\overline 1}$ are described as follows.

Suppose $\Dc=\R$. Then there is a basis $v_0$, $v_0'$ of $\V_{\overline 0}^j$ and a  
basis $v_1$, $v_1'$ of $\V_{\overline 1}^j$ such that
\begin{eqnarray}\label{individual basis}
&&(v_0,v_0)''=(v_0',v_0')''=1\,,\qquad (v_0,v_0')''=0\,,\\
&&(v_1,v_1)''=(v_1',v_1')''=0\,,\qquad (v_1,v_1')''=1\,.\nn
\end{eqnarray}
The following formulas define an element $u_j\in \ss1(\V^j)$,
\begin{eqnarray*}
&&u_j(v_0)=\frac{1}{\sqrt{2}}(v_1-v_1')\,,\qquad u_j(v_1)=\frac{1}{\sqrt{2}}(v_0-v_0')\,,\\
&&u_j(v_0')=\frac{1}{\sqrt{2}}(v_1+v_1')\,,\qquad u_j(v_1')=\frac{1}{\sqrt{2}}(v_0+v_0')\,.
\end{eqnarray*}
Suppose $\Dc=\C$. Then there are vectors $v_0$ and $v_1$ such that $\V_{\overline 0}^j=\C v_0$, $\V_{\overline 1}^j=\C v_1$, $(v_0,v_0)''=1$ and $(v_1,v_1)''=\delta_j i$, with  $\delta_j=\pm 1$ fixed by the form $(\cdot,\cdot)'$. 
The following formulas define an element $u_j\in \ss1(\V^j)$,
\begin{equation}\label{deltaj}
u_j(v_0)=e^{-i\delta_j \frac{\pi}{4}} v_1\,,\qquad \ u_j(v_1)=e^{-i\delta_j \frac{\pi}{4}} v_0\,.
\end{equation}
Suppose $\Dc=\Ha$. Then $\V_{\overline 0}^j=\Ha v_0$, $\V_{\overline 1}^j=\Ha v_1$, where $(v_0,v_0)''=1$ and $(v_1,v_1)''= i$. 
The following formulas define an element $u_j\in \ss1(\V^j)$,
\[
u_j(v_0)=e^{-i \frac{\pi}{4}} v_1\,, \qquad  u_j(v_1)=e^{-i\frac{\pi}{4}} v_0\,.
\]
In any case, by extending each $u_j$ by zero outside $\V^j$, we have
\begin{equation}\label{a cartan subspace}
\hs1=\sum_{j=1}^{l''} \R u_j\,.
\end{equation}

The formula (\ref{a cartan subspace}) describes a maximal family of mutually non-conjugate Cartan subspaces of $\ss1$. By classification, see \cite[\S 6]{PrzebindaLocal}, 
there is only one such subspace unless the dual pair $(\G, \G')$ is isomorphic to $(\Ug_l, \Ug_{p,q})$ with $l''=l< p+q$. In the last case there are $\min(l,p) - \max(l-q,0) +1$ such subspaces, assuming $p\leq q$. For each $m$ such that $\max(l-q,0)\leq m\leq \min(p,l)$ there is a Cartan subspace $\hs1{}_{,m}$ determined by the condition that $m$ is the number of positive $\delta_j$'s in (\ref{deltaj}). We may assume that $\delta_1=\dots=\delta_m=1$ and $\delta_{m+1}=\dots=\delta_l=-1$.
If $(\G, \G')$ is isomorphic to $(\Ug_l, \Ug_{p,q})$ with $l\geq l''=p+q$, then there is a unique Cartan subspace of $\ss1$ up to conjugation. It is determined by the condition that in \eqref{deltaj}
there are $p$ positive and $q$ negative $\delta_j$'s. We may assume that the first $p$
$\delta_j$'s are positive.

The Weyl group $W(\Sg,\hs1)$ is the quotient of the stabilizer of $\hs1$ in $\Sg$  by the subgroup $\Sg^{\hs1}$ fixing each element of $\hs1$. If $\Dc\neq \C$, then 
$W(\Sg,\hs1)$
acts by all sign changes and all permutations of the $u_j$'s. 
If $\Dc=\C$, the Weyl group acts by all sign changes and all permutations of the $u_j$'s which preserve $(\delta_1,\dots,\delta_{l''})$, see \cite[(6.3)]{PrzebindaLocal}.

Set $\delta_j=1$ for all $1\leq j\leq l''$, if $\Dc\ne \C$, and in any case, i.e. $\Dc\ne \C$ or $\Dc= \C$, define 
\begin{equation}\label{complex structures for l leq l' 2}
J_j=\delta_j\tau(u_j)\,,\qquad  J_j'=\delta_j\tau'(u_j) \qquad (1\leq j\leq l'')\,.
\end{equation}
Then $J_j$,  $J_j'$  are complex structures on $\V_{\overline 0}^j$ and $\V_{\overline 1}^j$ respectively. Explicitly,
\begin{eqnarray}\label{explicit J_j}
&&J_j(v_0)=-v_0',\quad J_j(v_0')=v_0\,,\quad  J_j'(v_1)=-v_1'\,,\quad J_j'(v_1')=v_1\,,\quad  \text{if}\ \ \Dc=\R\,,\\
&&J_j(v_0)=- i v_0\,,\quad J_j'(v_1)=- i v_1\,,\quad  \text{if}\ \ \Dc=\C\ \text{or}\ \Dc=\Ha\,.\nn
\end{eqnarray}
(The point of the multiplication by the $\delta_j$ in (\ref{complex structures for l leq l' 2}) is that the complex structures $J_j$, $J_j'$ do not depend on the Cartan subspace $\hs1$.) 
In particular, if $w=\sum_{j=1}^{l''} w_j u_j\in \hs1$, then
\begin{eqnarray}\label{explicit tau and tau' on cartan subspace}
\tau(w)=\sum_{j=1}^{l''} w_j^2\delta_j J_j\ \ \text{and}\ \ \tau'(w)=\sum_{j=1}^{l''} w_j^2\delta_j J_j'\,.
\end{eqnarray}
(Notice that $w_j^2\geq 0$.) Let  $\hs1^2\subseteq \so$ be the subspace spanned by all the squares $w^2$, $w\in\hs1$. (This is a linear space, not a collection of squares. We hope that the notation $\hs1^2$ will not cause any confusion.) Then
\begin{equation}\label{hs1squared}
\hs1^2=\sum_{j=1}^{l''} \R(J_j+J_j')\,.
\end{equation}
We shall use the following identification
\begin{equation}\label{the identification}
\hs1^2|_{\V_{\overline 0}}\ni \sum_{j=1}^{l''} y_jJ_j = \sum_{j=1}^{l''}y_jJ_j'\in \hs1^2|_{\V_{\overline 1}}\,.
\end{equation}

Recall from \eqref{l''} that $l''=\min(l,l')$. 
If $l''=l$, then $\hs1^2|_{\V_{\overline 0}}$ is a Cartan subalgebra of $\g$
which we denote by $\h$. The identification \eqref{the identification} embeds $\h$ 
diagonally in $\g$ and in $\g'$. It is contained in an elliptic Cartan subalgebra of $\g'$, say $\h'$. 
(``Elliptic'' means that all the roots of $\h$ in $\g'_\C$ are purely imaginary.) 
Similarly, if $l''=l'$, then $\hs1^2|_{\V_{\overline 1}}$ is an elliptic Cartan subalgebra of $\g'$ which we denote by $\h'$.
If $l\leq l'$ we denote by $\z'\subseteq \g'$ the centralizer of $\h$. Similarly, if $l'\leq l$ we denote by  $\z\subseteq \g$ the centralizer of $\h'$. In particular, if $l'=l$, then $\z'=\h'=\h=\z$, where the first equality is in $\g$, the second is $\eqref{the identification}$ and the 
last is in $\g'$.

Let ${\so}_\C=\g_\C\oplus \g'_\C$ be the complexification of $\so$. Fix a system of positive roots for the adjoint action of $\hs1^2$ on ${\so}_\C$. Suppose first that $l\leq l'$. By the identification \eqref{the identification}, $\h$ preserves both $\g_\C$ and $\g'_\C$. So our choice of positive roots for $(\hs1^2{}_\C,{\so}_\C)$ fixes a positive root system of $(\h_\C,\g_\C)$ and extends to a compatible positive root system for $(\h'_\C,\g'_\C)$. Let $\pi_{\g/\h}$ be the product of positive roots of $(\h_\C,\g_\C)$ and let $\pi_{\g'/\z'}$ be the product of positive roots of $(\h'_\C,\g'_\C)$ such that the corresponding root spaces do not occur in $\z'_\C$. If 
$l>l'$, then $\pi_{\g'/\h'}$ and $\pi_{\g/\z}$ can be similarly defined. 
See Appendix \ref{section:positive root products} for the explicit expressions of these root products restricted to the elements in \eqref{the identification}.

Suppose $l'<l$. Then $\V_{\overline 1}^0=0$, $\V_{\overline 0}^0\ne 0$ and 
\begin{equation}\label{extendedcsa0}
\V_{\overline 0}=\V_{\overline 0}^0\oplus \V_{\overline 0}^1\oplus\V_{\overline 0}^2\oplus\cdots \oplus \V_{\overline 0}^{l''}
\end{equation}
is a direct sum orthogonal decomposition with respect to the positive definite hermitian form $(\cdot,\cdot)$. We extend $\h\subseteq \g$ to a Cartan subalgebra $\h(\g)\subseteq \g$ as follows. The restriction of $\h(\g)$ to $\V_{\overline 0}^1\oplus\V_{\overline 0}^2\oplus\dots \oplus \V_{\overline 0}^{l''}$ coincides with $\h$. Pick an orthogonal direct sum decomposition 
\begin{equation}\label{extendedcsa1}
\V_{\overline 0}^0=\V_{\overline 0}^0{}^{,0}\oplus \V_{\overline 0}^0{}^{,l''+1}\oplus \V_{\overline 0}^0{}^{,l''+2}\oplus \cdots\oplus \V_{\overline 0}^0{}^{,l}\,,
\end{equation}
where for $j>l''$, 
$\dim_{\mathbb{D}} \V_{\overline 0}^0{}^{,j}=2$ if $\Bbb D=\R$ and 
$\dim_{\mathbb{D}} \V_{\overline 0}^0{}^{,j}
=1$ if $\Bbb D\ne\R$. 
Also $\V_{\overline 0}^0{}^{,0}=0$ unless $\G=\Og_{2l+1}$, in which case 
$\dim_{\mathbb{D}} \V_{\overline 0}^0{}^{,0}=1$. 
In each space $\V_{\overline 0}^0{}^{,j}$, with  $j>l''$, we pick an orthonormal basis and define $J_j$ as in \eqref{explicit J_j}. Then
\begin{equation}\label{h(g)}
\h(\g)=\sum_{j=1}^l \R J_j\,.
\end{equation}
If $l\leq l'$, then we set $\h(\g)=\h$.

Let $J_j^*$, $1\leq j\leq l$, be the basis of the space $\h(\g)^*$ which is dual to 
$J_1,\dots, J_l$, and set
\begin{eqnarray}\label{eq:ej}
e_j=-iJ_j^*\,, \qquad 1\leq j\leq l\,.
\end{eqnarray}
If $\mu \in i\h(\g)^*$, then $\mu=\sum_{j=1}^l \mu_j e_j$ with $\mu_j \in \R$. 
We say that $\mu$ is strictly dominant if $\mu_1>\mu_2> \dots >\mu_l$.
%%%%

\section{\bf Orbital integrals on $\Wv$}
\label{section:orbital integrals}

In this section we recall from \cite{McKeePasqualePrzebindaSuper} and \cite{McKeePasqualePrzebindaWCestimates} some definitions and results concerning the orbital integrals on $\Wv$ that we will need in the following sections.
%Moreover, using the surjectivity properties of Harish-Chandra's orbital integrals, we prove that the image of the so-called Harish-Chandra regular almost-elliptic orbital integral on $\Wv$ is ``large enough'' in the sense of Corollary \ref{cor:sufficiently many Fphi} { below}  when $l\geq l'$. 

Let $\Ss'(\Wv)^\Sg$ denote the space of $\Sg$-invariant tempered distributions on $\Wv$, where the $\Sg$-action is induced by \eqref{S adjoint action on s}. Let $\hs1$ be a Cartan subspace of $\Wv$.
Suppose first that $\G$ is different from $\Og_{2l+1}$ with $l<l'$.
For $w\in \reg{\hs1}$, the orbital integral attached to the orbit $\mathcal{O}(w)=\Sg.w$ is the element $\mu_{\mathcal{O}(w),\hs1}$ of $\Ss'(\Wv)^\Sg$ defined for $\phi\in \Ss(\Wv)$ by 
\begin{equation}
\label{muwforall othergroups}
\mu_{\mathcal{O}(w),\hs1}(\phi)=
\int_{\Sg/\Sg^{\hs1}} \phi(s.w)\, d(s\Sg^{\hs1})\,.
\end{equation}
Suppose now that $\G=\Og_{2l+1}$ with $l<l'$. Then one needs to modify \eqref{muwforall othergroups} because the union of the orbits $\Sg.w$ over all $w\in\reg{\hs1}$ would not be dense in $\Wv$; see \cite[Theorem 20]{McKeePasqualePrzebindaSuper}. Let $w_0\in \ss1(\V^0)$ be a non-zero element, $w\in \reg{\hs1}$ and $\Sg^{\hs1+w_0}$ the centralizer of $w+w_0$ in $\Sg$. Set $\Oo(w)=\Sg.(w+w_0)$ and define
\begin{equation}\label{muwforoodd}
\mu_{\Oo(w),\hs1}(\phi)
=\int_{\Sg/\Sg^{\hs1+w_0}}\phi(s.(w+w_0))\,d(s\Sg^{\hs1+w_0})\,.
\end{equation}
(Since $\ss1(\V^0)\setminus \{0\}$ is a single $\Sg(\V^0)$-orbit, the $\Sg$-orbit of $w+w_0$, and hence the right-hand side of \eqref{muwforoodd}, does not depend on the choice of $w_0\in \ss1(\V^0)$.) 
The orbital integrals \eqref{muwforall othergroups} and \eqref{muwforoodd} are well-defined, tempered distribution on $\Wv$, which depend only on $\tau(w)$, 
or equivalently $\tau'(w)$, via the identification (\ref{the identification}).

For $w\in \hs1$, set
\begin{equation}
\label{products of roots in so}
\pi_{\so/\hs1^2}(w^2)=
\begin{cases}
\pi_{\g/\h}(\tau(w))\pi_{\g'/\z'}(\tau'(w)) &\text{if $l\leq l'$},\\[.1em]
\pi_{\g/\z}(\tau(w))\pi_{\g'/\h'}(\tau'(w)) &\text{if $l\geq l'$}\,.
\end{cases}
\end{equation}
As shown in \cite[Lemma 1.2]{McKeePasqualePrzebindaWCestimates}, there is a constant 
$C(\hs1)$, depending on $\hs1$ and with $|C(\hs1)|=1$, such that
\begin{equation}
\label{Ch1}
\pi_{\so/\hs1^2}(w^2)=C(\hs1) |\pi_{\so/\hs1^2}(w^2)|\,.
\end{equation}
The set $\reg{\hs1}$ of regular elements of $\hs1$ is explicitly given by 
\begin{equation} 
\label{h1reg}
\reg{\hs1}=\{w \in \hs1; \;\pi_{\so/\hs1^2}(w^2)\neq 0\}\,.
\end{equation}
Choose a positive Weyl chamber $\hs1^+\subseteq \reg{\hs1}$, i.e. an open fundamental domain for the action of the Weyl group, $W(\Sg,\hs1)$.
There is a normalization $d\tau(w)$ of the Lebsegue measure on $\h$, respectively a normalization $d\tau'(w)$ of the Lebsegue measure on $\h'$, such that the following equalities hold for all $\phi\in\mathcal{S}(\Wv)$:
\begin{alignat}{2}
\label{weyl int on w 1}
\mu_\Wv(\phi)&=\sum_{\hs1}\int_{\tau(\hs1^+)}|\pi_{\so/\hs1^2}(w^2)|\mu_{\Oo(w),\hs1}(\phi)\,d\tau(w) \qquad &&\text{if $l\leq l'$}\,,\\
\label{weyl int on w 2}
\mu_\Wv(\phi)&=\int_{\tau'(\hs1^+)}|\pi_{\so/\hs1^2}(w^2)|\mu_{\Oo(w),\hs1}(\phi)\,d\tau'(w)
\qquad &&\text{if $l\geq l'$}\,.
\end{alignat}
Formulas \eqref{weyl int on w 1} and \eqref{weyl int on w 2} are the Weyl--Harish-Chandra integration formulas on $\Wv$, \cite[Theorem 21]{McKeePasqualePrzebindaSuper}. 
The sum in \eqref{weyl int on w 1} is over the family of mutually non-conjugate Cartan subspaces $\hs1\subseteq \Wv$. (It therefore reduces to a single term 
for $(\G,\G')$ different from $(\Ug_l, \Ug_{p,q})$ with $l< l'=p+q$.) The formulas agree for $l=l'$ once we identify $\tau(w)$ and $\tau'(w)$ via \eqref{the identification}.

Let $C_{\hs1}=C(\hs1)\cdot i^{\dim \g/\h}$, where $C(\hs1)$ is as in \eqref{Ch1}. 
If $(\G,\G')=(\Ug_l, \Ug_{p,q})$ with $l< l'=p+q$, let
\[
\bigcup_{\hs1}\tau(\reg{\hs1})=\bigcup_{m=\max(l-q,0)}^{\min(p,l)}\tau(\hs1{}_{,m})\,.
\]
In all other cases, $\bigcup_{\hs1}\tau(\reg{\hs1})$ will denote $\tau(\reg{\hs1})$, where $\hs1$ is the fixed Cartan subspace. The Harish-Chandra regular almost-elliptic orbital integral on $\Wv$ is the function 
\[
F:\bigcup_{\hs1}\tau(\reg{\hs1})\to \Ss'(\Wv)^\Sg
\]
defined 
for every $y\in \bigcup_{\hs1}\tau(\reg{\hs1}),\ y=\tau(w)=\tau'(w)$ as follows:
\begin{equation}
\label{eq:originalf-def}
F(y)=\begin{cases}
\sum_{\hs1} C_{\hs1}\pi_{\g'/\z'}(y)\mu_{\Oo(w),\hs1} &\text{if $l\leq l'$}\,,\\[.3em]
C_{\hs1}\pi_{\g'/\h'}(y)\mu_{\Oo(w),\hs1} &\text{if $l>l'$}\,.
\end{cases}
\end{equation}
Following Harish-Chandra's notation, we shall write $F_\phi(y)$ for $F(y)(\phi)$.

Suppose first that $l\leq l'$.
According to \cite[Theorem 3.6]{McKeePasqualePrzebindaWCestimates}, $F$ uniquely extends 
to a function $F:\h\to \Ss'(\Wv)^\Sg$ satisfying 
\begin{equation}\label{extension by the symmetry condition1}
F(sy)=\sgn_{\g/\h}(s) F(y) \qquad (s\in W(\G,\h),\ y\in \h)\,.
\end{equation}
This extension is supported in $\h\cap \tau(\Wv)$. 
The extended map $F$ is smooth on the subset of $y=\sum_{j=1}^l y_j J_j$ where each $y_j\ne 0$ and, for any multi-index $\alpha=(\alpha_1,\dots,\alpha_l)$ with
\begin{equation}
\label{jump-conditions}
\max(\alpha_1,\dots,\alpha_l)\leq \left\{
\begin{array}{lll}
d'-r-1\ &\text{if}\ \Dc=\R\ \text{or}\ \C\,,\\
2(d'-r)\ &\text{if}\ \Dc=\Ha\,,\\
\end{array}\right.
\end{equation}
%%%
the function $\partial(J_1^{\alpha_1}J_2^{\alpha_2}\dots J_l^{\alpha_l})F(y)$ extends to a continuous function on $\h\cap\tau(\Wv)$ vanishing on the boundary of $\h\cap\tau(\Wv)$.

For any values of $l$ and $l'$, there is 
the pullback via the unnormalized
moment map $\tau':\Wv\to \g'$, namely 
$$
{\tau'}^*: \Ss(\g')
\ni \psi
 \to \psi\circ \tau'\in \Ss(\Wv)^\G\,.
$$ 
According to \cite[(25)]{McKeePasqualePrzebindaWCestimates} (a special case of a theorem 
of Astengo, Di Blasio and Ricci \cite[Theorem 6.1]{Ricci_2009}), there is a continuous map $\tau'_*:\Ss(\Wv)^\G \to \Ss(\g')$ such that 
\begin{equation}
\label{tau'star-surjective}
{\tau'}^*\circ \tau'_*(\phi)=\phi \qquad (\phi\in \Ss(\Wv)^\G)\,.
\end{equation}
In particular, the map ${\tau'}^*$ is surjective. 
We will denote by $\phi^\G$ the projection of $\phi\in \Ss(\Wv)$ onto the space of the $\G$-invariants in $\Ss(\Wv)$,
	\begin{equation}
		\label{phiG}
		\phi^\G(w)=\int_\G\phi(g. w)\,dg \qquad (w\in\Wv)\,.
	\end{equation}
	(Recall that we have normalized the Haar measure on $\G$ so that its mass is $1$.)

Suppose now that $l>l'$. Then by 
\cite[(39)]{McKeePasqualePrzebindaWCestimates}, 
\begin{equation}
\label{HCintegral}
F_\phi(y)=C'_{\hs1} 
\pi_{\g'/\h'}(y)
\int_{\G'/\H'} \psi(g'.y)\; d(g'\H') \qquad (\phi\in \mathcal{S}(\Wv), \, y \in \reg{\hs1})\,,
\end{equation}
where $\H'\subseteq \G'$ is the Cartan subgroup corresponding to $\h'$,
\begin{equation}
\label{HCintegral-bis}
\psi=\tau'_*(\phi^\G) \in \mathcal{S}(\g')\,,
\end{equation}
and $C'_{\hs1}$ is a suitable non-zero constant. The right-hand side of \eqref{HCintegral} is 
Harish-Chandra's orbital integral of $\psi$. It provides a $W(\G',\h')$-skew-invariant extension of 
$F_\phi$ to $\h'{}^{In-reg}$, where $\h'{}^{In-reg}\subseteq \h'$ is the subset where no 
non-compact roots vanish. Furthermore, as a function of $\phi$, $F_\phi(y)$ is $\Sg$-invariant; see  \cite[Theorem 3.3]{McKeePasqualePrzebindaWCestimates}.

Notice that, by \cite[(69)--(72)]{McKeePasqualePrzebindaWCestimates}, formulas \eqref{HCintegral}
and \eqref{HCintegral-bis}  also hold when $l=l'$ because $\Zg'=\H'$ in this case. 

\begin{lem}
\label{lem:supportF}
Suppose that $l\leq l'$ and $\G\neq \Og_{2l+1}$. Let $\Ug\subseteq \reg{\h}$ be a nonempty 
$W(\G,\h)$-invariant open subset. Then there is a nonzero function $\phi\in C^\infty_c(\Wv)^\G$ such that $\phi\geq 0$ and $\supp F_\phi \subseteq \Ug$. (Here $\supp$ denotes the support.)
\end{lem}
\begin{proof}
Let $\Vg$ be a nonempty open set in $\reg{\h}$ with closure $\overline{\Vg}\subseteq \Ug$. By \cite[p. 19, especially (9)]{VaradarajanHarmonic}, the set $\G.\Vg$ is open in $\g$. Hence $\tau^{-1}(\G.\Vg)$ is open and $S$-invairant in $\Wv$. Let $\phi\in C_c^\infty(\Wv)^\G$ be a nonzero function such that $\phi\geq 0$ and $\supp \phi \subseteq \tau^{-1}(\G.\Vg)$. We want to prove that $\supp F_\phi \subseteq \Ug$.

Suppose first that $\G\neq \Ug_l$. Hence $F_\phi(y)=C_{\hs1}\pi_{\g'/\z'}(y)\mu_{\Oo(w),\hs1}(\phi)$
for all $y\in \tau(\reg{\hs1})$.
(Here $\pi_{\g'/\z'}(y)=\pi_{\g'/\z'}(\tau'(w))$ where $y=\tau(w)=\tau'(w)$.)
Since the zero set of $\pi_{\g'/\z'}$ is a finite union of root hyperplanes, $\supp F_\phi$ is the closure in $\h$ of the set of the $y=\tau(w)$ with $w\in \reg{\hs1}$ such that $\mu_{\Oo(w),\hs1}(\phi)\neq 0$.
If $\mu_{\Oo(w),\hs1}(\phi)\neq 0$, then $\Oo(w)\cap \supp\phi\neq \emptyset$, where 
$\Oo(w)=\Sg.w$. Hence $(\Sg.w)\cap \tau^{-1}(\G.\Vg)\neq \emptyset$. This means that there are
$g,g_1\in \G$, $g'\in \G'$ and $v\in \Vg$ such that $gg'.w=\tau^{-1}(g_1.v)$. Therefore
$$
g.y=g.\tau(w)=\tau(gg'.w)=g_1.v \quad\text{and hence} \quad g_1^{-1}g. y=v\in \G.y\cap \h\,.
$$
By \cite[Corollary 23]{VaradarajanHarmonic}, $y\in W(\G,\h)v$. Thus $y\in \Vg$ because $\Vg$ is 
$W(\G,\h)$-invariant. This proves that $\supp F_\phi \subseteq \overline{\Vg}\subseteq \Ug$.

The same argument extends to the case of $\G=\Ug_l$ because all Cartan subspaces $\h_{\overline{1},m}$ satisfy $\tau(\h_{\overline{1},m})\subseteq \h$.
\end{proof}

\begin{rem}
\label{rem:Weyl-invariant l>l'}
The Cartan subalgebra $\h'$ is $\theta$-stable, where $\theta$ is the fixed Cartan involution 
of $\g'$. Let $\H' \subseteq \G'$ be the Cartan subgroup which is the centralizer of $\h'$ in $\G'$, and let $\K'$ be the maximal compact subgroup of $\G'$ which is fixed by $\theta$. Then, by \cite[Lemma 10]{HC-56}, the Weyl group $W(\G',\h')$ coincides with $W(\K',\h')$, i.e. the normalizer of $\H'$ in $\K'$ modulo the centralizer of $\H'$ in $\K'$. Explicitly, $\K'$ is $\Ug_{l'}$ if $\Dc=\R$ or $\Ha$, and $\Ug_p \times \Ug_q$ if $\Dc=\C$. Hence $W(\G',\h')$ acts on $\h'$ by permuting the $J'_j$, \eqref{complex structures for l leq l' 2}, if $\Dc=\R$ or $\Ha$, and by separately permuting the first $p$ and the last $q$ elements $J'_j$ if $\Dc=\C$. Since $\delta_j=1$ for all $j=1,\dots,l'$ if $\Dc=\R$ or $\Ha$, and  $\delta_j=1$ for $j=1,\dots,p$ and $\delta_j=-1$ for $j=p+1,\dots,p+q$ if $\Dc=\C$, it follows from \eqref{explicit tau and tau' on cartan subspace} that the domain of integration $\tau'(\reg{\hs1})$ appearing in \eqref{weyl int on w 2} is $W(\G',\h')$-invariant. This property will be relevant in Proposition
\ref{cor:sufficiently many Fphi} below.
\end{rem}

%%%
Recall from page \pageref{semisimple-elements-W} the notions of semisimple and regular elements in $\Wv=\ss1$.
By \cite[Theorem 20]{McKeePasqualePrzebindaSuper}, the set of semisimple elements is dense in 
$\Wv$ for every dual pair with one member compact unless $(\G,\G')=(\Og_{2l+1},\Sp_{2l'}(\R))$ with $2l+1<2l'$. %(In the latter case, one should remplace the semisimple element the almost semisimple elements.) 
As noticed in section \ref{section:dualpairs-supergroups}, $\Wv$ has a unique class of Cartan subalgebras unless $(\G,\G')=(\Ug_l,\Ug_{p,q})$ with $l<l'=p+q$. 
Suppose these two families of dual pairs are excluded. Let $\hs1$ denote the Cartan subalgebra 
in $\Wv$ fixed in \eqref{a cartan subspace}. Then $\reg{\Wv}=\Sg.\reg{\hs1}$ is the set of regular semisimple elements of $\Wv$. It is open and dense in $\Wv$. 

\begin{pro}
\label{cor:sufficiently many Fphi}
Suppose that $l\geq l'$. Let $\Phi$ be a $W(\G',\h')$-invariant function
on $\tau'(\reg{\hs1})$. Then there is a unique $\Sg$-invariant function $\Phi^\sharp$
on $\reg{\Wv}$ such that 
$$
\Phi^\sharp(y)=(\Phi\circ \tau')(y) \qquad (y\in \reg{\hs1})\,.
$$
Moreover, 
\begin{equation}
\label{variation-on-WHC-integration-l>=l'}
\frac{1}{|W(\G',\h')|} \int_{\tau'(\reg{\hs1})} \Phi(y) \pi_{\g/\z}(y) F_\phi(y)\, dy=\int_{\Wv} \Phi^\sharp(w)\phi(w)\, dw
\qquad (\phi \in C_c^\infty(\Wv))
\end{equation}
provided the integrals are absolutely convergent. 
%%
%
%Suppose now that $l\leq l'$and $\Dc=\Ha$. 
%Let $W_0(\G,\h)$ be the subgroup of $W(\G,\h)$ acting on $\h=\sum_{j=1}^l \R J_j$ by permutation of the coordinates and let $\Phi$ be a $W_0(\G,\h)$-invariant function
%on $\tau(\reg{\hs1})$. Then there is a unique $\Sg$-invariant function $\Phi^\sharp$
%on $\reg{\Wv}$ such that 
%$$
%\Phi^\sharp|_{\reg{\hs1}}=(\Phi\circ \tau)|_{\reg{\hs1}}.
%$$
%Moreover,
%%%
%\begin{equation}
%\label{variation-on-WHC-integration-l<=l'}
%\frac{1}{|W_0(\G,\h)|} \int_{\tau(\reg{\hs1})} \Phi(y) \pi_{\g/\h}(y) F_\phi(y)\, dy=\int_{\reg{\Wv}} \Phi^\sharp(w)\phi(w)\, dw
%\qquad (\phi \in C_c^\infty(\reg{\Wv}))\,.
%\end{equation}
%%%
\end{pro}
\begin{proof}
%The existence of $\Phi^\sharp$ is due to the fact that $\Phi\circ \tau'$ and 
%$\Phi\circ \tau$ are $W(\Sg,\hs1)$-invariant functions on $\reg{\hs1}$. 
The existence of $\Phi^\sharp$ is due to the fact that $\Phi\circ \tau'$ is a $W(\Sg,\hs1)$-invariant function on $\reg{\hs1}$. 
The Weyl group $W(\G',\h')$ acts on $\tau'(\reg{\hs1})$ by permuting the coordinates with respect to the basis $\{J_1',\dots, J'_{l'}\}$.
The action is simple and transitive and $\tau'(\hs1^+)$ is a fundamental domain. Since the function $\Phi(y) \pi_{\g/\z}(y) F_\phi(y)$ is $W(\G',\h')$-invariant on 
$\tau'(\reg{\hs1})$,  the formula \eqref{variation-on-WHC-integration-l>=l'} is a restatement of the Weyl--Harish-Chandra integration formulas on $\Wv$ for $l\geq l'$, see \eqref{weyl int on w 1}.

%Suppose now that $l\leq l'$. The set $\tau(\reg{\hs1})$ is $W_0(\G,\h)$-invariant and the function 
%$\Phi(y) \pi_{\g/\h}(y) F_\phi(y)$ is $W_0(\G,\h)$-invariant on $\tau(\reg{\hs1})$.
%Moreover,  $\tau(\hs1^+)$ is a fundamental domain for the action of $W_0(\G,\h)$ on 
%$\tau(\reg{\hs1})$. Hence \eqref{variation-on-WHC-integration-l<=l'} is a restatement of the
%Weyl--Harish-Chandra integration formulas on $\Wv$ for $l\leq l'$, see \eqref{weyl int on w 2}.
\end{proof}

%%%%%%

%%
\section{\bf Main results}
\label{main results}
Suppose an irreducible representation $\Pi$ of $\wt\G$ occurs in Howe's correspondence. This means that there is a subspace $\Hc_\Pi\subseteq \L^2(\Xv)$ on which the restriction of $\omega$ coincides with $\Pi$. Since $\wt\Zg\subseteq \wt\G\cap\wt\G'$, then either $\Hc_\Pi\subseteq \L^2(\Xv)_+$ or $\Hc_\Pi\subseteq \L^2(\Xv)_-$. In the first case the restriction of the central character $\chi_\Pi$ of $\Pi$ to $\wt\Zg$ is equal to $\chi_+$ and in the second case to $\chi_-$. Thus for $\t z\in\wt\Zg$ and $\t g\in\wt\G$,
\begin{eqnarray}\label{centralcharacterofpi}
&&\Theta_\Pi(\t z\t g)=\chi_+(\t z)\Theta_\Pi(\t g)\ \ \ \text{if}\ \ \ \Hc_\Pi\subseteq \L^2(\Xv)_+\,,\\
&&\Theta_\Pi(\t z\t g)=\chi_-(\t z)\Theta_\Pi(\t g)\ \ \ \text{if}\ \ \ \Hc_\Pi\subseteq \L^2(\Xv)_-\,.\nn
\end{eqnarray}
We see from equations \eqref{THETAT}, \eqref{chi+},  \eqref{chi-} and \eqref{centralcharacterofpi} that the function
\[
\wt\G\ni \t g\to T(\t g)\check\Theta_\Pi(\t g)\in\Ss'(\Wv)
\]
is constant on the fibers of the covering map \eqref{coveringhomomorphism}.
The following lemma is a restatement of \eqref{to-compute}. Our main results will be the explicit expressions of the various integrals appearing on the right-hand sides of the equations below.
\begin{lem}\label{main theorem summary}
Let $\G^0\subseteq\G$ denote the connected identity component.
Suppose $(\G,\G')=(\Ug_{d}, \Ug_{p,q})$ or $(\Sp_d, \Og^*_{2m})$. Then
$\G=\G^0=-\G^0$ and
\begin{equation} \label{main theorem summary1}
f_{\Pi\otimes\Pi'}=\int_{\G}\check\Theta_\Pi(\t g) T(\t g)\,dg=\int_{-\G^0}\check\Theta_\Pi(\t g) T(\t g)\,dg\,.
\end{equation}
Formula \eqref{main theorem summary1} holds also  if $(\G,\G')=(\Og_d, \Sp_{2m}(\R))$ with $d$ even and $\Theta_\Pi$ supported in  $\wt{\G^0}$, {because $\G^0=\SO_d=-\SO_d=-\G^0$}. In the remaining cases
\begin{equation} \label{main theorem summary2}
f_{\Pi\otimes\Pi'}=\int_{\G}\check\Theta_\Pi(\t g) T(\t g)\,dg=\int_{-\G^0}\check\Theta_\Pi(\t g) T(\t g)\,dg+\int_{\G\setminus(-\G^0)}\check\Theta_\Pi(\t g) T(\t g)\,dg\,.
\end{equation}
\end{lem}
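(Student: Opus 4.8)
The plan is to derive this lemma purely by unwinding the definitions already established in the excerpt, together with the structure theory of the compact groups $\G$ appearing in \eqref{classificationGG'}. The starting point is \eqref{to-compute}, which says $f_{\Pi\otimes\Pi'}=T(\check\Theta_\Pi)$, i.e.\ $f_{\Pi\otimes\Pi'}=\int_{\wt\G}\check\Theta_\Pi(\t g)\,T(\t g)\,d\t g$ where $d\t g$ has total mass $2$. The observation made just before the lemma — that $\t g\mapsto T(\t g)\check\Theta_\Pi(\t g)$ is constant on the fibers of the covering \eqref{coveringhomomorphism}, which follows from \eqref{THETAT}, \eqref{chi+}, \eqref{chi-} and \eqref{centralcharacterofpi} — lets us push the integral down to $\G$: since each fiber has two elements on which the integrand agrees and the Haar measure on $\wt\G$ has mass $2$ while that on $\G$ has mass $1$, we get $f_{\Pi\otimes\Pi'}=\int_{\G}\check\Theta_\Pi(\t g)\,T(\t g)\,dg$, where we abuse notation writing $\t g$ for either preimage of $g$ (the integrand being well defined). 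This is the first equality in both \eqref{main theorem summary1} and \eqref{main theorem summary2}.

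Next I would split $\G$ into cosets of the identity component $\G^0$. For the pairs $(\Ug_d,\Ug_{p,q})$ and $(\Sp_d,\Og^*_{2m})$, the compact member $\G$ is $\Ug_d$ or $\Sp_d$, which is connected, so $\G=\G^0$; and $-1\in\G$ in both cases (it is $-\id$ on $\V_{\overline 0}$, an isometry of the hermitian form), so $-\G^0=-\G=\G=\G^0$. Hence the domain $\G$ in the integral literally equals $-\G^0$, giving \eqref{main theorem summary1}. For $(\Og_d,\Sp_{2m}(\R))$ with $d$ even we have $\G=\Og_d$, which is disconnected with $\G^0=\SO_d$; when $d$ is even, $-\id\in\SO_d$, so again $-\SO_d=\SO_d=\G^0$, and if moreover $\Theta_\Pi$ is supported in $\wt{\G^0}$ then the integrand vanishes off $\wt{\G^0}$, so the integral over $\G$ reduces to the integral over $\G^0=-\G^0$, again yielding \eqref{main theorem summary1}. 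In the remaining cases ($\G=\Og_d$ with either $d$ odd, where $-\id\notin\SO_d$, or $d$ even but $\Theta_\Pi$ not supported on $\wt{\G^0}$), one simply writes $\G$ as the disjoint union $(-\G^0)\sqcup(\G\setminus(-\G^0))$ — valid because $-\G^0$ is always a single coset of $\G^0$ contained in $\G$ — and splits the integral accordingly to obtain \eqref{main theorem summary2}.

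The only genuinely non-trivial point, and the one I would treat most carefully, is the descent of the integral from $\wt\G$ to $\G$: one must check that the normalization conventions match up, namely that the Haar measure on $\wt\G$ of total mass $2$ and the Haar measure on $\G$ of total mass $1$ are compatible under the two-to-one covering, so that $\int_{\wt\G}F(\t g)\,d\t g=\int_{\G}F(g)\,dg$ for any $F$ constant on fibers. This is where the normalization "the mass of $\G$ is equal to $1$" from the discussion around \eqref{projectiononomegapi} is used, and it is essentially bookkeeping; the substantive input — that the integrand is fiber-constant — has already been recorded. Everything else is the elementary group theory of when $-\id$ lies in the identity component of $\Og_d$, $\Ug_d$, or $\Sp_d$, which is immediate.
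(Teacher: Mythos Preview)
Your proof is correct and matches the paper's approach exactly: the paper simply declares that the lemma ``is a restatement of \eqref{to-compute}'' after noting (just before the statement) the fiber-constancy of $\t g\mapsto T(\t g)\check\Theta_\Pi(\t g)$, and you have spelled out precisely the descent to $\G$ and the elementary case analysis of when $-\G^0=\G^0$ that this entails.
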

The integrals over $-\G^0$ in \eqref{main theorem summary1} and \eqref{main theorem summary2} are given in Theorems \ref{main thm for l<l'} and \ref{main thm for l>=l'} below, proved in section 
\ref{An intertwining distribution in terms of orbital integrals on the symplectic space}.
The integrals over the other connected component in \eqref{main theorem summary2}  are computed in Theorems \ref{main thm for l<l', special}, \ref{main thm for l<l', special odd 1} and \ref{main thm for l>=l', special odd 2}, respectively, and proved in sections \ref{The special case even}, \ref{The special case, odd 1}, and \ref{The special case, odd 2}.
Theorem \ref{main theorem O2l for l>l'}, proved in this section, will furthermore show that the second integral on the right-hand side of \eqref{main theorem summary2} coincides with the first integral when $(\G,\G')=(\Og_{d},\Sp_{2l'}(\R))$, where $d=2l$ or $d=2l+1$, provided $l>l'$.

\begin{rem}\label{projection on invariants}
Notice that, since the character $\Theta_\Pi$ is conjugation invariant,
\[
\int_\G \check\Theta_\Pi(\t g) T(\t g)(\phi)\,dg=\int_\G \check\Theta_\Pi(\t g) T(\t g)(\phi^\G)\,dg\,,
\]
where $\phi^\G$ is defined as in \eqref{phiG}.
\end{rem}

Let
\begin{equation}\label{eq:iota}
\iota=
\begin{cases} 
1 &\text{if $\Dc=\R$ or $\C$}\,,\\
\frac{1}{2} &\text{if $\Dc=\Ha$}\,,
\end{cases}
\end{equation}
and let 
\begin{equation}\label{number r 10}
r=\frac{2\dim \g}{\dim \Vv_\R}\,,
\end{equation}
where the subscript $\R$ indicates that we are viewing $\Vv$ as a vector space over $\R$.
Explicitly,
\begin{equation}\label{number r 1}
r=\left\{
\begin{array}{lll}
2l-1\ &\text{if}\ \G=\Og_{2l}\,,\\
2l\ &\text{if}\ \G=\Og_{2l+1}\,,\\
l\ &\text{if}\ \G=\Ug_{l}\,,\\
l+\frac{1}{2}\ &\text{if}\ \G=\Sp_{l}\,.
\end{array}\right.
\end{equation}

Let 
\begin{equation}
\label{eq:delta,beta}
\delta=\frac{1}{2\iota}(d'-r+\iota) \quad \text{and} \quad \beta=
\frac{2\pi}{\iota}\,.
\end{equation} 
Fix an irreducible representation $\Pi$ of $\wt\G$ that occurs in the restriction of the Weil representation $\omega$ to $\wt\G$. Let 
$\mu\in i\h(\g)^*$ be the Harish-Chandra parameter of $\Pi$ with $\mu_1>\mu_2>\cdots$. 
This means that $\mu=\lambda+\rho$, where $\lambda$ is the highest weight of $\Pi$ and $\rho$ is one half times the sum of the positive roots of $(\g_\C,\h_\C)$. 
If $\G=\Ug_1$ then $\rho=0$ and $\mu=\lambda$ is the weight of $\Pi$. 
 If $\G=\Og_2$ then $\rho=0$. In this case, if $\Pi$ is trivial or $\det$, then $\mu=0$.
Otherwise $\Pi|_{\SOg_2}$ has two weights and we pick any one of them.

Let $P_{a,b}$ and $Q_{a,b}$ be the 
piecewise polynomial functions
defined in (\ref{D0'}) and (\ref{D0''}). Define
\begin{eqnarray}
\label{eq:ajbj}
&&
a_j=-\mu_j-\delta+1\,, \qquad  
b_j=\mu_j-\delta+1\,,\\
\label{pjqj}
&&
p_j(\xi)=P_{a_j,b_j}(\beta \xi)e^{-\beta |\xi|}\,,\qquad 
q_j(\xi)=\beta ^{-1}
Q_{a_j,b_j}(\beta^{-1}\xi) 
\qquad (1\leq j\leq l,\; \xi\in \R)\,, 
\end{eqnarray}
where $\delta$ and $\beta$ are as in (\ref{eq:delta,beta}). 
Notice that $a_j$ and $b_j$ are integers (see Lemma \ref{ximuchexplicit}).
Furthermore, set 
\begin{equation}
\label{kappa0}
\kappa_0=\begin{cases}
1/2 &\text{if $\G=\Og_{2l}$ and $\lambda_l=\mu_l=0$}\,,\\
1 &\text{otherwise}\,.
\end{cases}
\end{equation}

\begin{thm}\label{main thm for l<l'}
Let $l\leq l'$.
Then there is a non-zero constant $C$ which depends only on the dual pair $(\G,\G')$
such that for all $\phi\in\Ss(\Wv)$
\begin{equation}\label{main thm for l<l' a}
\int_{-\G^0}\check\Theta_\Pi(\t g) T(\t g)(\phi)\,dg=C \,
\kappa_0
\check{\chi}_\Pi(\t{c}(0)) 
\int_{\h\cap\tau(\Wv)}
\left(\prod_{j=1}^l  \left(p_j(y_j) +q_j(-\partial_{y_j})\delta_0(y_j)\right)\right)\cdot F_\phi(y)\,dy\,,
\end{equation}
where $\chi_\Pi$
is the central character of $\Pi$ \textup{(}see \eqref{centralcharacterofpi}\textup{)}, $\t{c}$ is a real analytic lift of the Cayley transform \textup{(}see \eqref{eq:tildec}\textup{)}, $\delta_0$ is the Dirac delta at $0$, 
and $F_\phi(y)$ is the Harish-Chandra regular almost-elliptic orbital integral on $\Wv$
of $\phi$ at $y$ \textup{(}see 
\cite[Definition 3.2]{McKeePasqualePrzebindaWCestimates}
and \eqref{eq:originalf-def}\textup{)}.

The term 
\begin{equation}
\label{product-theorem2}
\prod_{j=1}^l  \left(p_j(y_j) +q_j(-\partial_{y_j})\delta_0(y_j)\right)
\end{equation}
is:
\begin{enumerate}
\item a function of $y$ if and only if all the $q_j$'s are zero, and this happens if and only if $l=l'$ 
and $(\G,\G')\neq (\Og_{2l},\Sp_{2l'}(\R))$;
\item a linear combination of products of functions and Dirac delta's at $0$ in some coordinates $y_j$ if and only if all the $q_j$'s are of degree zero. This happens if and only if 
either $(\G,\G')= (\Og_{2l},\Sp_{2l}(\R))$, or $l'=l+1$ and $\Dc=\C$ or 
$\Ha$.
\end{enumerate}
In the remaining cases, \eqref{product-theorem2} is a distribution, but not a measure. 
\end{thm}
\begin{rem}
\label{rem:integration-domain-main thm for l<l'}
The integration domain $\h\cap \tau(\Wv)$ appearing in Theorem \ref{main thm for l<l'}
was explicitly determined in 
\cite[Lemma 3.4]{McKeePasqualePrzebindaWCestimates}. 
It is equal to $\h$ if $\Dc\neq \C$ or if $\Dc=\C$ and $l\leq \min(p,q)$.
By \eqref{2deltaminus2}, \eqref{2deltaminus2-bis} and Appendix \ref{appenE}, we see that 
$a_j\leq 0$ for all $1\leq j\leq l$ when $l\leq l'$. Hence each $P_{a_j,b_j}(\beta y_j)$ vanishes for $y_j<0$. In cases (1) and (2) of Theorem \ref{main thm for l<l'} with $\Dc=\R$ or $\Ha$, we can therefore replace the domain of integration $\h$ with the smaller domain $\tau(\hs1)$.
\end{rem}

In the case $l>l'$, up to conjugation, there is a unique Cartan subspace $\hs1$ in $\Wv$. 
Recall that for $\Dc=\C$ we are supposing that $p\leq q$.

Define $s_0\in W(\G,\h(\g))$ by 
\begin{alignat}{2}
\label{s0-R-H}
s_0(J_j)&=J_j \qquad\text{($1\leq j\leq l)$} &&\quad\text{if $\Dc=\R$ or $\Ha$\,,}\\
\label{s0-C}
s_0(J_j)&=\begin{cases} 
J_{j} &\text{($1\leq j\leq p)$}\\
J_{q+j} &\text{($p+1\leq j\leq l-q)$}\\
J_{j-l+l'} &\text{$(l-q+1\leq j\leq l)$}\, 
\end{cases} &&\qquad\text{if $\Dc=\C$}\,.
\end{alignat}
\begin{thm}\label{main thm for l>=l'}
Let $l>l'$. Consider a genuine irreducible representation $\Pi$ of $\wt\G$. (Its highest weight is among the weights listed in Appendix \ref{appenE}).
Then
\begin{equation}
\label{integral-Thm3}
\int_{-\G^0}\check\Theta_\Pi(\t g) T(\t g)\,dg \neq 0
\end{equation}
if and only if the highest weight $\lambda=\sum_{j=1}^l\lambda_j e_j$ of $\Pi$ is of the form  
\begin{eqnarray*}
\textup{(a)} \; &&
\text{$\lambda_1\geq \lambda_2 \geq \cdots \geq \lambda_{l'}\geq 0$
and $\lambda_j=0$ for $l' +1\leq j\leq l$\,, \quad if $\Dc=\R$ or $\Ha$\,,}\\
\textup{(b)} \; &&
\text{$\lambda_j=\frac{p-q}{2}+\nu_j$\,, \quad where}\\ 
&&\nu_1\geq \cdots \geq\nu_{p}\geq 0\,, \;
\nu_j=0 \; \text{for $p+1\leq j\leq l-q$}\,, \; 0\geq \nu_{l-q+1}\geq \cdots \geq \nu_l\,,
\quad \text{if $\Dc=\C$\,.} 
\end{eqnarray*}
Suppose that (a) and (b) are satisfied.
Then there is a 
non-zero
constant $C$ which depends only on the dual pair $(\G,\G')$ 
such that for all $\phi\in\Ss(\Wv)$
\begin{equation}\label{main thm for l>=l' b}
\int_{-\G^0}\check\Theta_\Pi(\t g) T(\t g)(\phi)\,dg
=C\, \kappa_0
\check{\chi}_\Pi(\t{c}(0)) \int_{\tau'(\reg{\hs1})}\Big(\prod_{j=1}^{l'}  
p_{s_0^{-1}(j)}(y_j)
\Big)\cdot F_\phi(y)\,dy\,,
\end{equation}
where 
$\kappa_0$ is as in \eqref{kappa0}
and, explicitly,
$$
\prod_{j=1}^{l'}  
p_{s_0^{-1}(j)}(y_j)
=\begin{cases}
\prod_{j=1}^{l'}  p_j(y_j) &\text{if $\Dc=\R$ or $\Ha$}\,,\\
\big(\prod_{j=1}^{p}  p_j(y_j)\big)\big(
\prod_{j=p+1}^{l'}  p_{j+l-l'}(y_j)
\big) &\text{if $\Dc=\C$}\,.
\end{cases}
$$
The right-hand side of \eqref{main thm for l>=l' b} can be written as a non-zero constant multiple of
\begin{equation}
\label{integral on W for l>=l'}
\kappa_0
\check{\chi}_\Pi(\t{c}(0)) \int_{\tau'(\reg{\hs1})} \Phi(y)\pi_{\g/\z}(y) F_\phi(y) \, dy
=\int_{\Wv} \Phi^\sharp(w)\phi(w) \,dw\,,
\end{equation}
where 
\begin{multline}
\label{main thm for l>=l' c} 
\Phi(y)=
\frac{
\sum_{s'\in W(\G',\mathfrak{h}')} \sgn_{\g'/\h'}(s') 
\prod_{j=1}^{l'} P_{a_{s_0,j},b_{s_0,j},2\delta_j} (\beta (s'y)_j) }
{\pi_{\g/\z}(y)} e^{-\beta \sum_{j=1}^{l'} |y_j|}\,,\\
\qquad (y_j={J_j'}^*y,  \; y=\tau(w)=\tau'(w), w\in \reg{\hs1})
\end{multline}
is a non-zero $W(\G',\h')$-invariant real-valued continuous function on $\tau'(\reg{\hs1})$, and 
$\Phi^\sharp$ is an $\Sg$-invariant function such that
$\Phi^\sharp(w)=\Phi(\tau'(w))$ for all $w\in \reg{\hs1}$. 
In \eqref{main thm for l>=l' c},  $\mu$ is the Harish-Chandra parameter of $\Pi$,
\begin{equation}
\label{asj-bsj}
a_{s,j}=-(s\mu)_j-\delta+1\,,\qquad  
b_{s,j}=(s\mu)_j-\delta+1 \qquad 
(s\in W(\G,\h),\ 1\leq j\leq l)\,,
\end{equation}
$P_{a,b,\pm 2}$ is the polynomial defined in \eqref{eq:Pab2} or \eqref{eq:Pabminus2}, and the $\delta_j$'s are as in \eqref{deltaj}. (See \eqref{the identification} for the identifications $y=\tau(w)=\tau'(w)$ in \eqref{main thm for l>=l' c} .)
\end{thm}

\begin{rem}
Recall from Remark \ref{rem:Weyl-invariant l>l'} that the domain of integration $\tau'(\reg{\hs1})$ appearing in Theorem \ref{main thm for l>=l'} is $W(\G',\h')$-invariant. 
Formula \eqref{main thm for l>=l' c} will prove, by Proposition \ref{cor:sufficiently many Fphi}, that the intertwining distribution is not zero when the conditions (a) or  (b) are satisfied. 
\end{rem}

\begin{rem}
Conditions (a) and (b) in Theorem \ref{main thm for l>=l'} are precisely those ensuring that $\Pi$ occurs in Howe's correspondence. See Corollary \ref{HW-l>l'} below.  (They are contragredient to those listed in \cite[Appendix]{PrzebindaInfinitesimal}, because the Weil representation used there is contragredient to the one used here.)
\end{rem}

Before considering the integrals over $\G\setminus (-\G^0)$ in \eqref{main theorem summary2},  let us introduce some notation concerning the irreducible representations of the orthogonal groups. \label{notation-Od-irreps}
Since $\Dc\neq \C$, we can choose a polarization $\Wv=\Xv\oplus \Yv$ so that $\G$ preserves $\Xv$ and $\Yv$. We shall suppose in what follows that we have made such a choice. 

Suppose that $\G=\Og_d$. Then, for each highest weight $\lambda$ of an irreducible representation of $\G^0$ there are one or two unitary genuine representations 
of $\wt{\G}$ having highest weight $\lambda$. 
There are two if and only if either $d=2l$ and $\lambda_l=0$, or $d=2l+1$. See e.g. \cite[\S 5.5.5]{GoodmanWallach2009}. 
Let $\Pi_{\lambda,+}$ and $\Pi_{\lambda,-}$ be these representations. 
Set 
\begin{equation}
\label{chi+-on-O}
\chi_+(\wt{g})=\frac{\Theta(\wt{g})}{|\Theta(\wt{g})|} \qquad (g\in \Og_d)\,,
\end{equation}
where $\Theta$ is defined in \eqref{THETA}. Then $\chi_+$ is a character of $\wt{\G}$.
Notice that \eqref{chi+-on-O} is an extension of \eqref{chi+} from $\wt{\Zg}$ to $\wt{\G}$. 
In fact, Proposition 4.28 in \cite{AubertPrzebinda_omega} implies that $(\chi_+(\wt{g}))^2=(\det g)_\X^{-1}$, where $(\det g)_\X$ indicates the determinant of $g$ as endomorphism of $\X$.

\label{O_2l+1, l>l', notation}
Then, in the Schr\"odinger model for the Weil representation $\omega$, for which the space of smooth vectors is $\mathcal{S}(\X)$, 
\begin{equation}
\label{omega-on-G}
\left(\omega\otimes\chi_+^{-1}\right)(\wt g)f(x)=f(g^{-1}x) \qquad (g\in \G, \, f\in \mathcal{S}(\X), \, x\in \X)\,.
\end{equation}
Hence $\omega\otimes\chi_+^{-1}$ descends to a representation $\omega_0$ of $\G$ given by 
\begin{equation}
\label{omega0}
\omega_0(g)f(x)=f(g^{-1}x) \qquad (g\in \G, \, f\in \mathcal{S}(\X), \, x\in \X)\,.
\end{equation}
\begin{thm}
\label{main theorem O2l for l>l'}
Suppose that $l>l'$. Let $\Pi$ be an irreducible representation of $\wt{\Og}_d$ occurring in the restriction of the Weil representation to $\wt{\Og}_d$. 
If $d=2l$, then $\lambda_l=0$. In both cases $d=2l$ or $d=2l+1$, the second irreducible genuine representation of $\wt{\Og}_d$ having the same highest weight as $\Pi$ does not occur in the restriction of the Weil representation to $\wt{\Og}_d$. Moreover,
\begin{equation}
\label{O2l-l>l'}
\int_{\G}\check\Theta_\Pi(\t g) T(\t g)\,dg=2\int_{\G^0}\check\Theta_\Pi(\t g) T(\t g)\,dg=2\int_{-\G^0}\check\Theta_\Pi(\t g) T(\t g)\,dg
\,.
\end{equation}
In particular, 
\begin{equation}
\label{equal-integrals-Od-l>l'}
\int_{\G\setminus(-\G^0)}\check\Theta_\Pi(\t g) T(\t g)\,dg\,=\int_{-\G^0}\check\Theta_\Pi(\t g) T(\t g)\,dg\,.
\end{equation}

The integral on the very right-hand side of \eqref{O2l-l>l'} was computed in Theorem \ref{main thm for l>=l'}. 
\end{thm}
\begin{proof}
Let $\lambda$ be the highest weight of $\Pi$, and let $d=2l$ or $2l+1$. 
Recall the notation introduced before \eqref{chi+-on-O}.

Suppose that both $\Pi_{\lambda,+}$ and $\Pi_{\lambda,-}$ occur. 
Then $\Pi_{\lambda,\pm}\otimes \chi_+^{-1}$ descends to a representation $(\Pi_{\lambda,\pm}\otimes\chi_+^{-1})|_\G$ of $\G$ occurring in $\omega_0$. 
Let $\mathcal{S}(\X)_{\Pi_{\lambda,\pm}}\subseteq\mathcal{S}(\X)$ denote the $\Pi_{\lambda,\pm}$-isotypic component in $\mathcal{S}(\X)$.
By \eqref{omega0}, 
\begin{equation}
\label{omega0-for-Pil}
(\Pi_{\lambda,\pm}\otimes\chi_+^{-1})|_{\G}(g)f(x)=f(g^{-1}x) \qquad (g\in \G, \, 
 f\in \mathcal{S}(\X)_{\Pi_{\lambda,\pm}}, 
\, x\in \X)\,.
\end{equation}
Let $\Pi_{\lambda,0}$ denote an irreducible representation of $\G$ whose restriction to the identity component has highest weight $\lambda$.
As one can see from \cite[\S 5.5.5]{GoodmanWallach2009},
\begin{equation}
\label{Pi-tensor-chi+}
\text{
if $(\Pi_{\lambda,+}\otimes\chi_+^{-1})|_\G=\Pi_{\lambda,0}$, then $(\Pi_{\lambda,-}\otimes\chi_+^{-1})|_\G=\Pi_{\lambda,0}\otimes\det$\,.
}
\end{equation}
Hence $\Pi_{\lambda,0}\otimes\Pi_{\lambda,0}\otimes \det$ occurs in $\omega_0\otimes\omega_0$,
acting on $\mathcal{S}(\X\oplus\X)$. 
Recall that $\Pi_{\lambda,0}=\Pi_{\lambda,0}^c$ is self-contragredient. Since $\Pi_{\lambda,0}^c\otimes\Pi_{\lambda,0}$ contains the trivial representation, we conclude that
$\det$ occurs in $\omega_0\otimes\omega_0$.
Observe that $\omega_0\otimes\omega_0$ acts on $\mathcal{S}(\X\oplus\X)$ by 
$$
\omega_0\otimes\omega_0(g)f(x)=f(g^{-1}x) \qquad (g\in \G, \, f\in \mathcal{S}(\X\oplus\X), 
\, x\in \X)\,.
$$
It is therefore the ``representation $\omega_0$'' corresponding to a dual pair $(\Og_d,\Sp_{4l'}(\R))$.
By Proposition \ref{non-occurence-det}, it follows that $d\leq 2l'$, contrary to our assumption.  

Suppose first that $\Pi_{\lambda,+}$ is not isomorphic to $\Pi_{\lambda,-}$, which by the description of the irreducible representations of orthogonal groups \cite[\S 5.5.5]{GoodmanWallach2009} can occur only when $\lambda_l=0$ if $d=2l$. Then the above argument shows that only one of $\Pi_{\lambda,+}$ and $\Pi_{\lambda,-}$ (i.e. $\Pi$) occurs in the restriction of the Weil representation. 

On the other hand, if $\Pi_{\lambda,+}$ is isomorphic to $\Pi_{\lambda,-}$, then $d=2l$ (because 
$\det(-\mathrm{I}_{2l+1})=-1$) and, again by \cite[\S 5.5.5]{GoodmanWallach2009}, $\lambda_l\ne 0$. In this case, $\Pi_{\lambda,0}=\Pi_{\lambda,0}\otimes \det$ and 
the above argument shows that the representation does not occur in $\omega$.

%If $\Pi_{\lambda,+}$ is not isomorphic to $\Pi_{\lambda,-}$ then the above argument shows that only one of them occurs, and if $d=2l$ then $\lambda_l=0$.
%
%If $\Pi_{\lambda,+}$ is isomorphic to $\Pi_{\lambda,-}$  then $d=2l$ (because 
%$\det(-\mathrm{I}_{2l+1})=-1$) and, by the well known description of the representations of the compact orthogonal groups \cite{GoodmanWallach2009}, $\lambda_l\ne 0$. In this case, the above argument shows that the representation does not occur in $\omega$. 
 
Thus the second representation of $\wt{\Og}_d$ which has the same restriction as $\Pi$ to $\G^0=\SO_d$, does not occur. Hence the 
$\Pi|_{\wt{\SO}_{d}}$-isotypic component of $\omega$ coincides with the $\Pi$-isotypic component of $\omega$. Therefore
\[
\int_{\G}\check\Theta_\Pi(\t g) T(\t g)\,dg=2\int_{\G^0}\check\Theta_\Pi(\t g) T(\t g)\,dg\,.
\]
(The factor 2 is a consequence of the normalization of the measures.)
In particular, $\int_{\G\setminus\G^0}\check\Theta_\Pi(\t g) T(\t g)\,dg=
\int_{\G^0}\check\Theta_\Pi(\t g) T(\t g)\,dg$\,. If $\G=\Og_{2l}$, then $\G^0=-\G^0$ and if $\G=\Og_{2l+1}$, then $\G\setminus\G^0=-\G^0$.  This explains the second equality in \eqref{O2l-l>l'}.
\end{proof}

\begin{rem}
It should be pointed out that the proof of Theorem \ref{main theorem O2l for l>l'} does not use the known classification of the highest weights of the genuine irreducible representations occurring in Howe's correspondence. 
\end{rem}

Consider now the case $(\G, \G')=(\Og_{2l},\Sp_{2l'}(\R))$ and the character $\Theta_\Pi$ not supported in the preimage $\wt{\G^0}$ of the connected identity component 
$\G^0\subseteq \G$. 

Suppose that 
$l\leq l'$ and 
$l\neq 1$. 
Then the graded  vector space \eqref{decomposition of space for a cartan subspace} is equal to
\[
\V=\V^0_{\overline{1}} \oplus\V^1\oplus\V^2\oplus\dots \oplus \V^{l}\,.
\]
Recall from \eqref{individual basis} that in each $\V_{\overline 0}^j$ we selected an orthonormal basis $v_0$, $v_0'$. For convenience, we introduce the index $j$ in the notation and  we write $v_{2j-1}=v_0$ and $v_{2j}=v_0'$, for $1\leq j\leq l$. Then $v_1$, $v_2$,\dots, $v_{2l}$ is an orthonormal basis of 
$\V_{\overline 0}$ and 
\[
J_jv_{2j-1}=-v_{2j}\,,\ \ \ J_jv_{2j}=v_{2j-1} \qquad (1\leq j\leq l)\,.
\]
In terms of the dual basis \eqref{eq:ej} of $\h_\C^*$, the positive roots are
\[
e_j\pm e_k \qquad (1\leq j<k\leq l)\,.
\]
Define an element $s\in\G$ by
\begin{equation}\label{s}
sv_1=v_1,\ sv_2=v_2,\; \dots,\; sv_{2l-1}=v_{2l-1},\ sv_{2l}=-v_{2l}\,.
\end{equation}
Then $\G=\G^0\cup\G^0s$ is the disjoint union of two connected components. 
Set
\[
\V_{\overline 0, s}=\V_{\overline 0}^1\oplus \V_{\overline 0}^2\oplus  \cdots\oplus \V_{\overline 0}^{l-1}\oplus \R v_{2l}\,,
\ \ \text{and}\ \   \V_s=\V_{\overline 0, s}\oplus \V_{\overline 1}\,.
\]
The dual pair corresponding to $(\V_{\overline 0, s},\V_{\overline 1})$ is $(\G_s, \G'_s)=(\Og_{2l-1}, \Sp_{2l'}(\R))$ acting on the symplectic space 
$\Wv_s=\Hom(\V_{\overline 1},\V_{\overline 0, s})$. 
The objects corresponding to $\Wv_s$ will be distinguished by the subscript $s$.

Let $\h_s=\sum_{j=1}^{l-1}\R J_j$. This is the centralizer of $s$ in $\h=\sum_{j=1}^{l} \R J_j$. 
Set
\begin{equation}
\label{rhos_O2l}
\rho^{\tC}_s=(l-1)e_1+(l-2)e_2 + \dots + e_{l-1}\,.
\end{equation}
Let 
\[
\lambda=\sum_{j=1}^{l-1}\lambda_j e_j
\]
be the highest weight of $\Pi$. (Here $\lambda_l=0$ because we assume that $\Theta_\Pi$ is not supported in $\wt{\G^0}$.) 
Define
\[
\mu^{\tC}=\lambda+\rho_s^{\tC}\,.
\]
The number  $r$, \eqref{number r 1}, for the group $\G$ is equal to
\[
r=2l-1
\]
and the number $\delta$, \eqref{eq:delta,beta}, for the dual pair $(\G, \G')$ is equal to
\[
\delta=\frac{1}{2}(2l'-r+1) =l'-l+1\,.
\]
Set
\[
a_j^{\tC}=-\mu_j^{\tC}-\delta+1=-\mu_j^{\tC}-l'+l\,,\quad b_j^{\tC}=\mu_j^{\tC}-\delta+1=\mu_j^{\tC}-l'+l\,, \qquad (1\leq j\leq l-1)\,.
\]
Notice that $a_j^{\tC}=a_j$ and $b_j^{\tC}=b_j$ for $1\leq j\leq l-1$ because 
$\rho_s^{\tC}$ coincides with the restriction of $\rho$ to $\h_s$.
Using these numbers in place of $a_j$ and $b_j$ in \eqref{pjqj}, define the functions $p_j^{\tC}$ and $q_j^{\tC}$. 
\begin{thm}\label{main thm for l<l', special}
Let $(\G, \G')=(\Og_{2l}, \Sp_{2l'}(\R))$ with $1<l\leq l'$. Assume that the character $\Theta_\Pi$ is not supported in $\wt{\G^0}$. 
Then there is a constant $C$ which depends only on the dual pair $(\G,\G')$  such that 
for any $\phi\in\Ss(\Wv)$
\begin{equation}\label{main thm for l<l' a, special}
\int_{\G^0s}\check\Theta_\Pi(\t g) T(\t g)(\phi)\,dg
=C 
D_\Pi 
\check{\chi}_\Pi(\t{c}(0)) 
\int_{\h_s}\prod_{j=1}^{l-1}
\left(p_j^{\textup{\tC}}(y_j)+q_j^{\textup{\tC}}(-\partial_{y_j})
\delta_0(y_j)\right)
 \cdot F_{\phi^\G|_{\Wv_s}}(y)\,dy\,, 
\end{equation}
where $\check{\chi}_\Pi(\t{c}(0))$ 
and $D_\Pi$ are equal to 
$\pm 1$, 
and $D_\Pi$ distinguishes $\Pi$ and $\Pi\otimes \det$.
\end{thm}

Theorem \ref{main thm for l<l', special} excludes the dual pairs $(\G, \G')=(\Og_2, \Sp_{2l'}(\R))$ because its proof relies on an analogue of the Weyl's character formula for $\G\setminus \G^0$ proved by \cite{Wendt} for nonconnected compact semisimple Lie groups. These excluded cases will  be treated in subsection \ref{example:O2Sp2}. 
\medskip

\label{2l+1 smaller than 2l' special-the page}
Now we consider the case $(\G, \G')=(\Og_{2l+1},\Sp_{2l'}(\R))$ with $1\leq l\leq l'$. Recall from \eqref{decomposition of space for a cartan subspace} the graded vector space $\V$.
In the case we consider, $\dim \V_{\overline 0}^{0}=1$, $\dim \V_{\overline 1}^{0}=2(l'-l)$ and for $1\leq j\leq l$, $\dim \V_{\overline 0}^{j}=\dim \V_{\overline 1}^{j}=2$.
Let 
\[
\Wv_s=\Hom(\V_{\overline 1},\V_{\overline 0}^{1}\oplus \cdots\oplus \V_{\overline 0}^{l})
\quad \text{and} \quad 
\Wv_s^\perp=\Hom(\V_{\overline 1},\V_{\overline 0}^{0})\,.
\]
Then
\begin{equation}\label{2l+1 smaller than 2l' special}
\Wv=\Wv_s\oplus \Wv_s^\perp
\end{equation}
is a direct sum orthogonal decomposition. Let $\G_s\subseteq \G$ be the subgroup acting trivially on the space $\V_{\overline 0}^{0}$. 
The Lie algebra $\g_s$ of $\g$ embeds as those elements 
acting as zero on $\V_{\overline 0}^{0}$.
Let $\G_s'=\G'$.
Then the dual pair corresponding to $\Wv_s$ is 
$(\G_s,\G'_s) \simeq 
(\Og_{2l}, \Sp_{2l'}(\R))$ and dual pair corresponding to $\Wv_s^\perp$ is $(\Og_{1}, \Sp_{2l'}(\R))$. 
If $\H$ is a Cartan subgroup of $\G$, then $\H^0=\H_s^0$ is a Cartan subgroup of $\G_s^{0}$,
 and the Lie algebras $\g$ and $\g_s$ share the same Cartan subalgebra $\h=\h_s$.
The following theorem will be proved in section \ref{The special case, odd 2}.

\begin{thm}
\label{main thm for l<l', special odd 1}
Let $(\G, \G')=(\Og_{2l+1}, \Sp_{2l'}(\R))$ with $1\leq l\leq l'$.  
Then there is a nonzero constant $C$ such that for all $\phi\in \mathcal{S}(\Wv)$
\begin{equation}
\label{main thm for l<l' a, special odd 1bis} 
\int_{\G^0}\check\Theta_\Pi(\t g) T(\t g)(\phi)\,dg
= C
(-1)^{|\lambda|}
\int_{\h}
\prod_{j=1}^l  
\left(p_j(y_j) +q_j(-\partial_{y_j})
\delta_0(y_j)\right)
F_{\phi^\G|_{\Wv_s}}(y)\,dy\,,
\end{equation}
where $p_j, q_j$ are defined as in \eqref{pjqj}, $\lambda$ is the highest weight of $\Pi$ and $|\lambda|=\sum_{j=1}^l \lambda_j$ is a nonnegative integer. (See Appendix \ref{appenE}.)

If $l=l'$, then $F_{\phi^\G|_{\Wv_s}}$ is proportional to $F_{\phi}$ (independently of $\phi$).
\end{thm}
\begin{rem}
\label{rem:integration-domain-main thm for l<l'-O2l+1}
As in Theorem \ref{main thm for l<l'}, the term
$$
\prod_{j=1}^l  \left(p_j(y_j) +q_j(-\partial_{y_j})\delta_0(y_j)\right)
$$
is a function of $y$ (i.e. all the $q_j$'s are zero) if and only if $l=l'$. 
In the other cases, it is a distribution, but not a measure. Furthermore, if $l=l'$, 
we can replace the domain of integration $\h$ with the smaller domain $\tau(\hs1)$. 
\end{rem}

\begin{rem}
It is known from the classification of the representations occurring in Howe's correspondence (see e.g. 
\cite[Appendix]{PrzebindaInfinitesimal}) that for the pair $(\G,\G')=(\Og_{2l+1},
\Sp_{2l'}(\R))$ with $l\leq l'$ there are two representations of $\wt{\G}$ with the same highest weight $\lambda$ that occur in the correspondence, namely $\Pi(\t g)$ and $\Pi(\t g)\otimes\det(g)$. They agree on $\G^0$, so the integral 
on the left-hand side of \eqref{main thm for l<l' a, special odd 1bis} cannot distinguish them. In particular, we cannot replace the factor $(-1)^{|\lambda|}$ with $\check{\chi}_\Pi(\t{c}(0))$, which appears in Theorems \ref{main thm for l<l'} and \ref{main thm for l>=l'}.
\end{rem}

\begin{rem}
The pair $(\Og_1, \Sp_{2l'}(\R))$ was studied in detail in section \ref{The center of the metaplectic group}.
\end{rem}

Suppose $(\G,\G')=(\Og_d, \Sp_{2l'}(\R))$, where $d=2l$ or $2l+1$ and $d>2$.
In Theorem \ref{thm:det} below, the integral over $\G\setminus (-\G^0)$ of the distribution-valued map $g \to \check\Theta_\Pi(\t g) T(\t g)$ is reduced to an integral over $-\G^0_s$. 
The resulting equality, which holds independently of the mutual relation between the ranks $l$ and $l'$, will be needed in \cite{McKeePasqualePrzebindaWC_WF}. 
Recall that 
%$\G\setminus -\G^0$ is equal to $\G^0s$ if $\G=\Og_{2l}$ and to $\G^0$ if $\G=\Og_{2l+1}$. 
$$
\G\setminus (-\G^0)= \begin{cases}\G^0s & \text{if $\G=\Og_{2l}$}\,,\\
                                              \G^0 & \text{if $\G=\Og_{2l+1}$}\,.\end{cases}
$$
Moreover, 
$-\G^0_s=\G_s^0$ if $\G=\Og_{2l+1}$.

\begin{thm}
\label{thm:det}
Let $\G=\Og_d$ with $d>2$. If  $d=2l$, suppose that the character $\Theta_\Pi$ is not supported in $\wt{\G^0}$. 
Then for all $\phi\in\Ss(\Wv)$
\begin{equation}
\label{main thm for l<l' a, special odd 1}
\int_{\G\setminus (-\G^0)}\check\Theta_\Pi(\t g) T(\t g)(\phi)\,dg=
\frac{1}{2}
 \int_{-\G_s^0}\check\Theta_\Pi(\t g)\det(1-g) T_s(\t g)(\phi^\G|_{\Wv_s})\,dg\,,
\end{equation}
where $T_s$ is the operator $T$, see \eqref{Tt}, corresponding to the symplectic space $\Wv_s$.
\end{thm}

We prove Theorem \ref{thm:det}  in section \ref{section: proof of thm det}.

\begin{rem}
The term $\det(1-g)$ appearing in \eqref{main thm for l<l' a, special odd 1}
admits a representation theoretical interpretation. Indeed, let
$\sigma$ be the spin representation 
of the spin cover 
of $\G_s^0$. Then 
the tensor product $\sigma\otimes\sigma^c$ is a representation of $\G_s^0$ and, 
by \cite[Ch. XI, III., p. 254]{Littlewood}
\begin{equation}\label{character of spin}
\Theta_{\sigma\otimes\sigma^c}(g)=|\Theta_\sigma(g)|^2=\det(1+g)\qquad (g\in \G_s^0)\,.
\end{equation}
So $\det(1-g)=\Theta_{\sigma\otimes\sigma^c}(-g)$.
\end{rem}

Suppose $l>l'$. Theorem \ref{main theorem O2l for l>l'}  reduces the computation of $\int_{\G} \check\Theta_\Pi(\t g) T(\t g)\,dg$ to that of $\int_{-\G^0} \check\Theta_\Pi(\t g) T(\t g)\,dg$, done in Theorem \ref{main thm for l>=l'}. 
One could still try to compute the integral on $\G\setminus (-\G^0)$ directly, without relying on
Theorem \ref{main theorem O2l for l>l'}. 
As an example, we do it for $\Og_{2l+1}$ in Theorem \ref{main thm for l>=l', special odd 2} below. Nevertheless, the result is less precise than that from Theorem \ref{main theorem O2l for l>l'} since we are only able to prove that the integral over $\G\setminus (-\G^0)$ is a nonzero constant multiple of the one over $-\G^0$. Determining the constant is a serious issue even in the much easier situation of $(\Ug_l,\Ug_{l'})$; see \cite{McKeePasqualePrzebindaWCSymmetryBreakingUU}. 

To consider the case $(\G, \G')=(\Og_{2l+1},\Sp_{2l'}(\R))$ with $l>l'$, recall the graded vector space $\V$, \eqref{decomposition of space for a cartan subspace} and the formula \eqref{extendedcsa0}, 
\[
\V=\V^0\oplus \V^{1}\oplus \cdots\oplus \V^{l'}\,,
\]
where, as in \eqref{extendedcsa1}, 
\begin{eqnarray*}
\V_{\overline 0}^0&=&\V_{\overline 0}^{0,0}\oplus \left(\V_{\overline 0}^{0,0}\right)^\perp\,,\\
\V_{\overline 1}^0&=&0\,,
\end{eqnarray*}
with $\dim \V_{\overline 0}^{0,0}=1$ and $\dim \left(\V_{\overline 0}^{0,0}\right)^\perp=2(l-l')$.
Let 
\[
\Wv_s=\Hom(\V_{\overline 1}^{1}\oplus \cdots\oplus \V_{\overline 1}^{l'}, \left(\V_{\overline 0}^{0,0}\right)^\perp
\oplus \V_{\overline 0}^{1}\oplus \cdots\oplus \V_{\overline 0}^{l'})\,,\ \ \ 
\Wv_s^\perp=\Hom(\V_{\overline 1},\V_{\overline 0}^{0,0})\,.
\]
(Notice that $\V_{\overline 1}^{1}\oplus \cdots\oplus \V_{\overline 1}^{l'}=\V_{\overline 1}$ and 
$\left(\V_{\overline 0}^{0,0}\right)^\perp
\oplus \V_{\overline 0}^{1}\oplus \cdots\oplus \V_{\overline 0}^{l'}$ is the orthogonal complement of the one dimensional space $\V_{\overline 0}^{0,0}$ in $\V_{\overline 0}$.)
Then
\begin{equation}\label{main thm for l>=l', special odd 2 -1}
\Wv=\Wv_s\oplus \Wv_s^\perp
\end{equation}
is a direct sum orthogonal decomposition. Let $\G_s\subseteq \G$ be the subgroup acting trivially on the space $\V_{\overline 0}^{0,0}$ and let $\G_s'=\G'$.
The dual pair corresponding to $\Wv_s$ is 
$(\G_s,\G'_s)
\simeq
(\Og_{2l}, \Sp_{2l'}(\R))$ and dual pair corresponding to $\Wv_s^\perp$ is $(\Og_{1}, \Sp_{2l'}(\R))$.

\begin{thm}\label{main thm for l>=l', special odd 2}
Let $(\G, \G')=(\Og_{2l+1}, \Sp_{2l'}(\R))$ with $l> l'$. 
Then
\begin{equation}
\label{integral-Thm7}
\int_{\G^0}\check\Theta_\Pi(\t g) T(\t g)\,dg \neq 0
\end{equation}
if and only if the highest weight $\lambda=\sum_{j=1}^l\lambda_j e_j$ of $\Pi$ satisfies
condition (a) of Theorem \ref{main thm for l>=l'} for $\Dc=\R$.
%%%
%\begin{equation}
%\label{condition-on-lambda-O2l+1}
%\text{$\lambda_1\geq \lambda_2 \geq \cdots \geq \lambda_{l'}\geq 0$
%and $\lambda_j=0$ for $l' +1\leq j\leq l$}\,.
%\end{equation}
%%%
Suppose that this condition is satisfied. 
Then there is a 
non-zero
constant  $C$ which depends only on the dual pair $(\G,\G')$ such that for all $\phi\in\Ss(\Wv)$
\begin{equation}
\label{main thm for l<l' a, special odd 1-bis}
\int_{\G^0}\check\Theta_\Pi(\t g)T(\t g)(\phi)\,dg
%\label{main thm for l>=l' b, special odd 2}
=C (-1)^{|\lambda|}  \int_{\tau'(\reg{\hs1})}\Big(\prod_{j=1}^{l'}  p_j(y_j)\Big) F_{\phi}(y)\,dy\,.
\end{equation}
\end{thm}

As a byproduct of our calculations of the intertwining distributions, we obtain the list of highest weights of the genuine irreducible representations $\Pi$ of $\wt{\G}$
that occur in Howe's correspondence when $l>l'$. This list was first determined (without any restrictions on the ranks $l$ and $l'$) in \cite{KashiwaraVergne}. 

\begin{cor}
\label{HW-l>l'}
Suppose that $l>l'$. 
A genuine representation $\Pi\in \wt{\G}^\wedge$ occurs in Howe's correspondence if and only if its highest weight satisfies conditions (a) or (b) of Theorem \ref{main thm for l>=l'}. 
\end{cor}
\begin{proof}
Our computations of the intertwining distribution $\int_{\G} \wt{\Theta}_\Pi(\wt{g}) T(\wt{g}) \, dg$ can be applied to any genuine irreducible representation $\Pi\in \wt{\G}^\wedge$ 
(not necessarily occurring in Howe's correspondence).
 This distribution is 
nonzero if and only if $\omega|_{\wt{\G}}$ has a nonzero $\Pi$-isotypic component. This is equivalent to the fact that there is a unitary highest weight representation $\Pi'$ of $\wt{\G'}$ such that $\Pi\otimes\Pi'$ occurs in $\omega|_{\wt{\G}\wt{\G'}}$. 
The nonvanishing of the intertwining distributions leads to conditions (a) or (b) of Theorem \ref{main thm for l>=l'} when $\G=\Ug_l$ or $\Sp_l$. 
In the case of orthogonal groups, we can further use 
Theorem \ref{main theorem O2l for l>l'} and conclude that the nonvanishing of the intertwining distributions is equivalent to the nonvanishing of the integral of $\wt{\Theta}_\Pi(\wt{g}) T(\wt{g})$ over $-\G^0$. The claim then follows again from Theorem \ref{main thm for l>=l'}.
\end{proof}

As we shall see in the proofs in section \ref{An intertwining distribution in terms of orbital integrals on the symplectic space}, the list of highest weights in Theorem \ref{main thm for l>=l'} is 
obtained by comparing the support of the function $\prod_{j=1}^{l'}  p_j(y_j)$ with the domain of
integration, $\tau'(\reg{\hs1})$. Unfortunately, this method is not refined enough to 
provide necessary and sufficient conditions when $l\leq l'$. 

Let us now consider the dual pair $(\Ug_l, \Ug_{p,q})$. Recall that in this case $l'=p+q$ and that we assume that $p\leq q$.
If $l\leq p$ all irreducible genuine representations of $\wt{\Ug_l}$ occur because the pair is in the stable range with $\Ug_l$ the smaller member; see \cite{Jian-ShuLiSingular} or \cite{ProtsakPrzebindaStable}. 
The absence of conditions on the highest weight in Theorem \ref{main thm for l<l'} is consistent with this fact (despite the fact that we cannot see that our intertwining operator is not $0$).

If $p<l\leq p+q$ then the next corollary gives precise necessary conditions on the highest weight of $\Pi$ to occur in the correspondence. The proof is independent of the classification and is based on a refined analysis of the intertwining distribution; see section \ref{section: proof corollary on UU l<=p+q}.

\begin{cor}
\label{HW-l <=l'}
Suppose that $\Dc=\C$ and $p<l\leq p+q$. Let $\Pi\in \wt{\G}^\wedge$ be a genuine irreducible representation of highest weight $\lambda$.
If either $\lambda_{p+1}>\frac{p-q}{2}$ or 
(when $q<l$ holds) $\lambda_{l-q}<\frac{p-q}{2}$,
 then $\Pi$ does not occur in Howe's correspondence.
\end{cor}

For the dual pair $(\Sp_l, \Og^*_{2l'})$, by the known classification of highest weights of representations of $\Sp_l$ occurring in Howe's correspondence, all irreducible genuine representations of $\wt{\Sp_l}$ occur if $l\leq l'$. We can recover this fact out of the formula for the intertwining distribution determined in Theorem \ref{main thm for l<l'} (and hence without using the classification) 
only when $l'\in \{l, l+1\}$. 
This is the content of the following corollary, proved in section \ref{section: proof corollary HW-H-l <=l'}. 

\begin{cor}
\label{HW-H-l <=l'}
Suppose that $\Dc=\Ha$ 
and $l\leq l'$. 
Let $\Pi$ be an irreducible genuine representation of $\G=\Sp_l$ with highest weights $\lambda_1\geq\dots\geq \lambda_l$. If $\lambda_l\geq l'-l-1$ then 
$\Pi$ occurs in Howe's correspondence. In particular, if $l'=l$ or $l'=l+1$, then every genuine irreducible representation $\Pi\in \wt{\G}^\wedge$ occurs in Howe's correspondence.
\end{cor}

We terminate our discussion on the highest weights of the genuine irreducible representations of 
$\wt{\G}$ occurring in Howe's correspondence with the pair $(\Og_2, \Sp_{2l'}(\R))$. For this dual pair, we compute the intertwining distributions in section
\ref{section:O2}. We will recover the (well-known) list of representations of 
$\wt{\Og}_2$ occurring in Howe's correspondence by their explicit formulas. See also Remark 
\ref{rem:classificationO2-Sp2l'-l'>1-triv-det}.

\begin{rem}
\label{reverse-p-q}
In this article we have considered the group $\Ug_{p,q}$ with $p\leq q$. Suppose now that $q\geq p$. This is equivalent to replacing the form $(\cdot,\cdot)'$ into its opposite. Correspondingly, the symplectic form $\langle\cdot,\cdot\rangle$ becomes its opposite. The inner product $-\langle J \cdot,\cdot\rangle$ is now positive definite provided we select $-J$ instead of $J$. In the notation at the beginning of section \ref{section:preliminaries},  the equation defining the 
preimages of $g\in\Sp(\Wv)$ in $\wt{\Sp}(\Wv)$ becomes
$$
\xi^2=i^{\dim(g-1)\Wv} \det(-J_g)^{-1}_{J_g\Wv}=(-i)^{\dim(g-1)\Wv} \det(J_g)^{-1}_{J_g\Wv}\,,
$$
because $(-1)^{\dim(J_g\Wv)}=(-1)^{\dim(g-1)\Wv}$. This means that $\xi$ is transformed into 
$\bar{\xi}$. Since $\Theta((g;\xi))=\xi$, we conclude that $\Theta$ needs to be changed into $
\overline{\Theta}$, i.e. the metaplectic representation $\omega$ is replaced by its contragredient 
$\omega^\vee$. Therefore 
$$\omega|_{\wt{\G}\times\wt{\G'}}=\bigoplus (\Pi\otimes \Pi')
\quad\text{is replaced by} \quad \omega^\vee|_{\wt{\G}\times\wt{\G'}}=\bigoplus (\Pi^\vee\otimes (\Pi')^\vee)\,.
$$
The highest weights of the representations of 
$\Ug_{l}$ occurring in $\omega^\vee$ are obtained from those listed for far in this paper by changing their sign and permuting them so that they are in decreasing order. Those written in 
\eqref{genuine-hw-Ul}, are replaced for $\Ug_{p,q}$, where $q\geq p$, with 
$$
\lambda_j=\frac{q-p}{2}+\nu_j, \quad \nu_j\in \Zb\,, \quad \nu_1 \geq \nu_2 \geq \cdots \geq \nu_l\,.
$$
\end{rem}

We conclude this section with a result on the non-differential operator nature of the symmetry breaking operators in $\Hom_{\wt\G\wt{\G'}}(\Hc_\omega^\infty, \Hc_\Pi^\infty\otimes \Hc_{\Pi'}^\infty)$.

\begin{cor}
\label{cor:SBO-no-diff}
Let $(\G,\G')$ be a real reductive dual pair with one member compact. Then the essentially unique non-zero symmetry breaking operator in 
$$\Hom_{\wt\G\wt{\G'}}(\Hc_\omega^\infty, \Hc_\Pi^\infty\otimes \Hc_{\Pi'}^\infty)$$ 
is not a differential operator. 
\end{cor}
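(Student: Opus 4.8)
\textbf{Proof proposal for Corollary \ref{cor:SBO-no-diff}.}
The plan is to use the characterization \eqref{intertwining}, which says that every symmetry breaking operator in $\Hom_{\wt\G\wt{\G'}}(\Hc_\omega^\infty, \Hc_\Pi^\infty\otimes \Hc_{\Pi'}^\infty)$ is a scalar multiple of $(\Op\circ\mathcal K)(f_{\Pi\otimes\Pi'})$, and then to translate ``differential operator'' into a condition on the Weyl symbol $f_{\Pi\otimes\Pi'}$. Recall that, in the Schr\"odinger model, an operator $\Op(K)\in\Hom(\Ss(\Xv),\Ss^*(\Xv))$ is a differential operator with polynomial coefficients precisely when its Weyl symbol $\mathcal K^{-1}(K)\in\Ss^*(\Wv)$ is a polynomial on $\Wv$ (equivalently, a finite linear combination of derivatives of $\delta_0$ composed with a polynomial times a Gaussian is ruled out unless it is exactly polynomial); more generally an operator is differential if and only if its Weyl symbol, as a tempered distribution on $\Wv=\Xv\oplus\Yv$, is a polynomial in the $\Yv$-variable (the ``momentum'' variable) with distributional coefficients in the $\Xv$-variable. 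So the first step is to record this standard fact from the Weyl calculus (it is in \cite{Hormander}): $(\Op\circ\mathcal K)(f)$ is a differential operator if and only if $f_{\Pi\otimes\Pi'}$ is a polynomial in one of the Lagrangian variables, hence in particular has wavefront set contained in the conormal of that Lagrangian and, crucially, is \emph{not} rapidly decreasing at infinity in that variable while being supported there. A cleaner sufficient criterion: if $f_{\Pi\otimes\Pi'}$ contains a summand that decays exponentially (a genuine Gaussian-type factor, as in $e^{-\beta|y_j|}$ or the Gaussian $\mu_\Wv$-type terms), then $(\Op\circ\mathcal K)(f_{\Pi\otimes\Pi'})$ cannot be differential, because a differential operator has a polynomial symbol and a polynomial cannot have an exponentially decaying nonzero summand after separating homogeneity/decay types.

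Second, I would feed in the explicit formulas from section \ref{main results}. By Lemma \ref{main theorem summary} and Theorems \ref{main thm for l<l'}--\ref{main thm for l>=l', special odd 2}, the intertwining distribution $f_{\Pi\otimes\Pi'}$ is, along each Cartan subspace, a sum of two pieces: a locally integrable piece built from $p_j(y_j)=P_{a_j,b_j}(\beta y_j)e^{-\beta|y_j|}$ (a polynomial times a genuine decaying exponential) and a transversal derivative-of-delta piece built from the $q_j$'s. In every case at least one of these two features is present and nonzero: either the $q_j$'s are nonzero (so $f_{\Pi\otimes\Pi'}$ has a $\delta_0$-derivative component supported on a proper subspace but this alone does not finish it) or the $p_j$'s carry the factor $e^{-\beta|y_j|}$ with $\beta=\pi/\iota>0$. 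The key observation is that the exponential factor $e^{-\beta|y_j|}$ is \emph{always} present in the leading (non-delta) part — it never degenerates to $1$ — because $\beta>0$ unconditionally. I would then argue that the presence of this exponentially decaying, non-polynomial summand in the Weyl symbol is incompatible with $(\Op\circ\mathcal K)(f_{\Pi\otimes\Pi'})$ being a differential operator: a differential operator (even with polynomial coefficients) has a polynomial Weyl symbol, and subtracting a polynomial from $f_{\Pi\otimes\Pi'}$ cannot cancel the $e^{-\beta|y_j|}$ term (compare asymptotics as $y_j\to\infty$ along $\h\cap\tau(\Wv)$, on which the orbital-integral transform $F_\phi$ is surjective onto a suitable Schwartz-type space by \cite{McKeePasqualePrzebindaWCestimates}). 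One must also treat the cases covered by \eqref{main theorem summary2}: there the extra component over $\G\setminus(-\G^0)$, given by Theorems \ref{main thm for l<l', special}, \ref{main thm for l<l', special odd 1}, \ref{main thm for l>=l', special odd 2}, again carries analogous $e^{-\beta|\cdot|}$ factors (or is obtained from a lower-rank case of the same type), and adding it to the first component does not produce cancellation of the exponential, because the two components live over different Weyl-group-translated regions / different components of $\G$ and cannot interfere. Finally, the rank-one building block $(\Og_1,\Sp_{2l'}(\R))$ handled in section \ref{The center of the metaplectic group} is a base case: there $f_{\chi_\pm\otimes\omega_\pm}=\delta\pm 2^{-\frac12\dim\Wv}\mu_\Wv$, and the Lebesgue-measure summand $\mu_\Wv$ has Weyl symbol a nonzero multiple of a Gaussian, which is visibly not a polynomial, so the operator is not differential.

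Third, to make the argument uniform and clean I would phrase the incompatibility abstractly. Suppose for contradiction $(\Op\circ\mathcal K)(f_{\Pi\otimes\Pi'})=D$ is a (continuous) differential operator $\Ss(\Xv)\to\Ss^*(\Xv)$. Its Weyl symbol is then a polynomial $P$ on $\Wv$ (this is exactly the statement that differential operators are the Weyl quantizations of polynomials, \cite[Ch.~18]{Hormander}). Hence $f_{\Pi\otimes\Pi'}=P$ as tempered distributions on $\Wv$. But $f_{\Pi\otimes\Pi'}$, restricted to test functions $\phi$ supported near a regular point of a Cartan subspace, equals the expression in \eqref{main thm for l<l' a} / \eqref{main thm for l>=l' b} / \eqref{main thm for l<l' a, special odd 1} / \eqref{main thm for l>=l' b, special odd 2}, which contains the factor $e^{-\beta\sum_j|y_j|}$ multiplying a nonzero polynomial (the numerator in \eqref{main thm for l>=l' c} does not vanish identically since $\mu$ is strictly dominant, so the alternating sum over $W(\G',\h')$ is a nonzero skew polynomial divided by $\pi_{\g'/\h'}$, yielding a nonzero invariant polynomial). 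A polynomial $P$ cannot restrict to such an exponentially-times-polynomially varying function on a half-line in $\Wv$ unless the coefficient of $e^{-\beta|y_j|}$ vanishes, contradiction; equivalently, matching the behavior as $y_j\to+\infty$ forces $P\equiv 0$ there, hence $f_{\Pi\otimes\Pi'}\equiv 0$, contradicting \eqref{dim-symm-breaking}. \emph{The main obstacle} I anticipate is the bookkeeping needed to be sure that in \emph{every} one of the finitely many cases in \eqref{classificationGG'} — including the mixed-component cases \eqref{main theorem summary2}, the degenerate small-rank cases such as $(\Og_2,\Sp_2(\R))$ (Remark \ref{O2Sp2}), and the possibility that all $q_j$ and all polynomial numerators might conspire to cancel — there really is a surviving, non-polynomial (exponentially decaying or Gaussian or derivative-of-delta) summand in $f_{\Pi\otimes\Pi'}$ that no polynomial can absorb; this amounts to checking that $\beta=\pi/\iota$ is never $0$ (clear) and that the polynomial prefactors $\prod_j P_{a_j,b_j}$ (resp.\ the $W(\G',\h')$-alternating sums) are not identically zero, which follows from the definitions in \eqref{D0'}--\eqref{D0''} together with strict dominance of $\mu$. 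Once that case analysis is in place the contradiction is immediate from the Weyl-calculus characterization of differential operators.
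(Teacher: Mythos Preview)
Your argument has a genuine gap in the third step. You correctly note in the first paragraph that a general differential operator (with arbitrary coefficients) has Weyl symbol that is polynomial only in the $\Yv$-variable, with distributional coefficients in $\Xv$. But in the third step you revert to the claim that the Weyl symbol of a differential operator is a polynomial $P$ on all of $\Wv$; this is only true for differential operators with polynomial coefficients, and nothing in the problem restricts to that class. With the correct characterization (polynomial in $\Yv$, distributional in $\Xv$), your contradiction via the factor $e^{-\beta|y_j|}$ no longer goes through directly: the $y_j$ are coordinates on the Cartan subalgebra $\h\subseteq\g$ appearing \emph{after} the orbital-integral transform $\phi\mapsto F_\phi$, not coordinates along the Lagrangian $\Yv\subset\Wv$. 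You would still have to argue that such exponential behaviour in the $\h$-variables is incompatible with polynomial dependence in the $\Yv$-direction on $\Wv$, and that is not obvious from the formulas as written.

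The paper avoids this difficulty altogether by a much shorter and cleaner route. It uses the Schwartz kernel characterization (H\"ormander, Theorems 5.2.1 and 5.2.3): $\Op\circ\mathcal K(f)$ is a (distribution-valued) differential operator if and only if $\mathcal K(f)$ is supported on the diagonal of $\Xv\times\Xv$, and a short computation with the Weyl transform shows this forces $\supp f\subseteq\Yv$. Now the decisive point, which your proposal does not invoke, is that $\supp f_{\Pi\otimes\Pi'}$ is $\G\G'$-invariant and, since the pair is of type I, $\G\G'$ acts irreducibly on $\Wv$. Hence $\supp f_{\Pi\otimes\Pi'}\subseteq\Yv$ would force $\supp f_{\Pi\otimes\Pi'}=\{0\}$. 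One glance at the explicit formulas in section \ref{main results} (or already at \eqref{intertwiningdistributionsforO1Sp}) shows this is false. So the paper replaces your case-by-case asymptotic analysis by a single invariance/irreducibility argument on the support; all that is needed from the main theorems is the trivial fact that $f_{\Pi\otimes\Pi'}$ is not supported at the origin.
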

\begin{proof}
We are going to show that $(\Op \circ \mathcal{K})(f_{\Pi\otimes\Pi'})$ is not a differential operator.
 
Let $f\in \Ss'(\Wv)$ and recall the definition of $\mathcal K(f)$ in \eqref{K}.
According to \cite[Theorems 5.2.1 (the Schwartz kernel theorem) and 5.2.3]{Hormander}, the continuous linear map $\Op\circ \mathcal K(f)$ is a distribution-valued differential operator if and only if $\mathcal K(f)\in \Ss'(\Xv\times\Xv)$ is supported by
the diagonal $\Delta=\{(x,x);\ x\in\Xv\}$. This implies that $f$ is supported in $\Yv$. Indeed, given $\varphi\in \Ss(\Xv\times \Xv)$, let $\psi\in \Ss(\Xv\times\Xv)$ be defined by $\varphi(x,x')=\psi(x-x',x+x')$ for all $x,x'\in \Xv$. Furthermore, 
let $\psi(\cdot,\widehat{\cdot})\in \Ss(\Xv\times \Yv)$ denote the partial Fourier transform of $\psi$ with respect to its second variable, defined by
$$
\psi(a,\widehat{y})=\int_\Xv \chi\big(\frac{1}{2}\langle y,b\rangle\big) \psi(a,b) \, db \qquad ((a,y)\in \Xv\times \Yv)\,.
$$
Then 
$$
\supp\varphi \cap \Delta=\emptyset 
\quad\text{if and only if}\quad
\supp\psi(\cdot,\widehat{\cdot})\cap (\{0\}\times \Yv)=\emptyset\,.
$$
Since $\mathcal{K}(f)(\varphi)=f(\psi(\cdot,\widehat{\cdot}))$ 
by \eqref{K}, we obtain the claim. 

Notice that this cannot happen in our case. Indeed, the support of $f_{\Pi\otimes\Pi'}$ is $\G\G'$-invariant. 
Since the complex structure $J\in \G'$ permutes $\Xv$ and $\Yv$, the only 
$\G\G'$-orbit in $\Yv$ is the zero orbit. Hence the inclusion $\supp f_{\Pi\otimes\Pi'}\subseteq \Yv$ would imply $\supp\,f_{\Pi\otimes\Pi'}=\{0\}$.  
This would mean that the wavefront set of $\Pi'$ is $0$, i.e. $\Pi'$ is finite dimensional. By classification, see Appendix \ref{appenE} all highest weight representations occurring in Howe's correspondence are infinite dimensional unless $\G'=\Ug_{l'}$, which is compact. 
In this case, the intertwining distribution is a smooth function; see \cite{McKeePasqualePrzebindaWCSymmetryBreakingUU}.
In particular, its support is not $0$. 
Hence the intertwining operator is not a differential operator.  
\end{proof}

%%%%%%%%%%%%%
\section{\bf The pair $(\Og_2,\Sp_{2l'}(\R))$}
\label{section:O2}

We consider here the case $(\G,\G')=(\Og_2,\Sp_{2l'}(\R))$. 
By \eqref{E4} and Proposition \ref{pro:det-covering}, we can identify 
$$
\wt\Og_2=\{(g;\zeta)\in \Og_2\times \C^\times; \zeta^2=
(\det g)^{l'} \}\,.
$$
and the $\det^{1/2}$-covering $\wt\Og_2\ni (g;\zeta) \to g\in\Og_2$ 
splits if and only if $l'$ is even.  
Let $\Pi\in \wt\Og_2$ occur in Howe's correspondence and 
let 
$\chi_+:\wt\Og_2\to \C^\times$ be the character of $\wt\Og_2$ defined by \eqref{chi+-on-O}. 

Since $\Pi$ is genuine, there is $\Pi_0\in \widehat{\Og_2}$ such that 
$\Pi_0(g)=(\Pi\otimes \chi_+^{-1})(\wt g)$.
Accordingly,
$$
\int_{\Og_2} \check{\Theta}_\Pi(\wt{g})\omega(\wt{g})\; dg=
\int_{\Og_2} \check{\Theta}_{\Pi_0}(g)\omega_0(g)\; dg\,,
$$
where $\omega_0$ is as in \eqref{omega0}.

Observe 
that  the image under the metaplectic cover of $\supp(\Theta_\Pi)$ is equal to $\supp(\Theta_{\Pi_0})$.
Since $\wt\SOg_2 \to \SOg_2$ splits by \eqref{E6}, we conclude that $\Theta_\Pi$ is supported in $\wt{\G^0}=\wt{\SOg_2}$ if and only if $\Theta_{\Pi_0}$ is supported in $\SO_2$.
In the sequel, $\triv$ denotes the trivial representation.

\begin{pro}
\label{O2Sp2l'R}
Let $(\G,\G')=(\Og_2,\Sp_{2l'}(\R))$ and let $\Pi$ be a genuine irreducible representation of 
$\wt{\G}$ with character $\Theta_\Pi$ not supported in $\wt{\G^0}$. 
Then either $\Pi=\wt{\triv}=\chi_+$, or 
$\Pi=\wt{\det}$ is the character of $\wt{\G}$ such that 
$(\wt{\det}\otimes \chi_+^{-1})(\t g)=\det(g)$ for all $\t g\in \wt{\G}$.

%Let $l'>1$.
Decompose $\Wv=\M_{2,2l'}(\R)$ as $\Wv=\Wv_1\oplus\Wv_2$, where 
$\Wv_1$ is subspace of the $w\in\Wv$ for which all entries of the second row are $0$ and 
$\Wv_2$ is subspace of the $w\in\Wv$ for which all entries  of the first row are $0$.
Then
\begin{equation}
\label{det-l'>1 outside}
\int_{(\SO_2)s} \chi_+^{-1}(\wt{g})T(\wt{g})(\phi) \, dg=\mu_\mathcal{O}(\phi)\,,
\end{equation}
where $s$ is as in \eqref{s},
$\mathcal{O}$ is the $\Og_2\times \Sp_{2l'}(\R)$-orbit of $n_0=\begin{pmatrix}
0 & 0 & \dots & 0\\ 1 & 0 & \dots & 0
\end{pmatrix}\in \Wv$ and 
$\mu_\mathcal{O}\in \mathcal{S}'(\Wv)$ is the invariant measure on $\mathcal{O}$
defined by 
\begin{equation}
\label{muO}
\mu_\mathcal{O}(\phi)=2^{l'-1} \int_{\Wv_2}\int_{\Og_2} \phi(gw) \; dg \,d\mu_{\Wv_2}(w)
\qquad  (\phi\in \mathcal{S}(\Wv))\,.
\end{equation}
Therefore
\begin{equation}
\label{det-l'>1}
\int_{\Og_2} \check\Theta_{\wt{\det}}(\wt{g}) T(\wt{g})(\phi) \,dg=\int_{\SOg_2} \chi_+^{-1}(\wt{g})T(\wt{g})(\phi) \, dg-
\mu_\mathcal{O}(\phi)  \qquad  (\phi\in \mathcal{S}(\Wv))\,
\end{equation}
and 
\begin{equation}
	\label{det-l'>1, triv}
\int_{\Og_2} \check\Theta_{\wt{\triv}}(\wt{g}) T(\wt{g})(\phi) \,dg=\int_{\SOg_2} \chi_+^{-1}(\wt{g})T(\wt{g})(\phi) \, dg+
	\mu_\mathcal{O}(\phi)  \qquad  (\phi\in \mathcal{S}(\Wv))\,.
\end{equation}
The integral over $\SOg_2$ is computed by Theorem \ref{main thm for l<l'}.

If  $l'=1$,  then $\wt{\det}$ does not occur in Howe correspondence and
hence
$$
\int_{\Og_2} \check\Theta_{\wt{\det}}(\t g)T(\t g)\, dg=0\,.
$$
Moreover,
\begin{equation}
\label{1 for O2}
\int_{\Og_2} \check{\Theta}_{\wt{\triv}}(\wt{g})T(\wt{g})\; dg= 2 \int_{\SOg_2} \chi_+^{-1}(\wt{g})T(\wt{g}) \; dg=
2\mu_{\mathcal{O}}\,.
\end{equation}
%%
%is computed by Theorem \ref{main thm for l<l'}; see also subsection \ref{example:O2Sp2}.  
\end{pro}
%%%
\begin{proof}
For $n\in \Zb$, let $\rho_n$ be the character of $\SOg_2$ defined by 

\[
\rho_n(
\left(
\begin{array}{lll}
\cos \theta & \sin\theta\\
-\sin\theta & \cos\theta
\end{array}
\right))=e^{in\theta}\,.
\]

Up to equivalence, the irreducible representations of $\Og_2$ are of the form $\Pi_{0,n}=\Ind_{\SOg_2}^{\Og_2}(\rho_n)$ with $n>0$, together with the trivial representation $\triv$ and $\det$. (Moreover, 
$\Pi_{0,n}\simeq \Pi_{0,-n}$ and $\Pi_{0,0}=1\oplus \det$.) Hence $\Theta_{\Pi_0}$ does not have support contained in $\wt{\SO_2}$ if and only if ${\Pi_0}|_{\SOg_2}=1$. 
Hence the only possible cases are $\triv$ and $\det$.

%%%%%%%%%%%
Since
\begin{align*}
\check{\Theta}_{\wt{\triv}}(\t g)T(\t g)&=\chi_+^{-1}(\t g)T(\t g)\,,\\
\check{\Theta}_{\wt{\det}}(\t g)T(\t g)&=
\check{\Theta}_{\wt{\det}}(\t g)\chi_+(\t g)\chi_+^{-1}(\t g)T(\t g)=
\det(g)\chi_+^{-1}(\t g)T(\t g)\,,
\end{align*}
we see that
\begin{align*}
\int_{\Og_2} \check{\Theta}_{\wt{\triv}}(\t g)T(\t g)\, dg&
=\int_{\SO_2} \chi_+^{-1}(\t g)T(\t g) \, dg
+\int_{(\SO_2)s} \chi_+^{-1}(\t g)T(\t g) \, dg\,,\\
\int_{\Og_2} \check{\Theta}_{\wt{\det}}(\t g)T(\t g)\, dg&=\int_{\SO_2} \chi_+^{-1}(\t g)T(\t g) \, dg
-\int_{(\SO_2)s} \chi_+^{-1}(\t g)T(\t g) \, dg\,.
\end{align*}

We now compute the integral over $(\SO_2)s$. Let $g_t=\begin{pmatrix}
\cos(t) & \sin(t) \\ -\sin(t) & \cos(t)
\end{pmatrix}\in \SO_2$ and recall from \eqref{s} that  
  $s=\begin{pmatrix}
	1 & 0 \\ 0 &-1
\end{pmatrix}\in \Og_2\setminus \SO_2$. 
Then $g_{t}s=g_{t/2}sg_{-t/2}$. If $f$ is any function on $(\SO_2)s$, then 
\begin{align*}
\int_{(\SO_2)s} f(g)\, dg&=
\int_{\SO_2} f(g_ts) dg_t
=\frac{1}{2\pi}\int_0^{2\pi} f(g_{t}s)\, dt=\frac{1}{2\pi}\int_0^{2\pi} f(g_{t/2}sg_{-t/2})\, dt\\
&=\frac{1}{2\pi}\int_0^{\pi} f(g_{t}sg_{-t})\cdot 2dt=\frac{1}{2\pi}\int_0^{\pi} f(g_{t}sg_{-t})\, dt+\frac{1}{2\pi}\int_\pi^{2\pi} f(g_{t}sg_{-t})\, dt\\
&=\frac{1}{2\pi}\int_0^{2\pi} f(g_{t}sg_{-t})\, dt
=\int_{\SO_2} f(g_{-t}sg_{t})\, dg_t\,.
\end{align*}
Applying this to $\SO_2\ni g\to \chi_+^{-1}(\wt{g})T(\wt{g})\in \mathcal{S}'(\R)$, we get
\begin{equation}
\label{intSO2s-1}
\int_{(\SO_2)s} \chi_+^{-1}(\wt{g})T(\wt{g}) \, dg
=
\int_{\SO_2} \chi_+^{-1}(\wt{g^{-1}sg})T(\wt{g^{-1}sg}) \, dg
\,.
\end{equation}
Decompose $\Wv=\M_{2,2l'}(\R)$ as in the statement of the theorem and let $g\in \Og_2$. 
Then $\Wv=g^{-1}\Wv_1\oplus g^{-1}\Wv_2$ is an orthogonal decomposition such that
$g^{-1}sg$ preserves both $g^{-1}\Wv_1$ and $g^{-1}\Wv_2$. Notice that 
\begin{align*}
g^{-1}sg|_{g^{-1}\Wv_1}&=1_{g^{-1}\Wv_1} \quad \text{because $s|_{\Wv_1}=1$}\,,\\
g^{-1}sg|_{g^{-1}\Wv_2}&=-1_{g^{-1}\Wv_2} \quad \text{because $s|_{\Wv_2}=-1$}\,.
\end{align*}
By Lemma \ref{lemma:WandT}, 
\begin{equation}
\label{decompositionTWforg1}
\chi_+^{-1}(\wt{g^{-1}sg})T_\Wv(\wt{g^{-1}sg})=
\chi_+^{-1}(\wt{1_{g^{-1}\Wv_1}})T_\Wv(\wt{1_{g^{-1}\Wv_1}})\otimes 
\chi_+^{-1}(\wt{-1_{g^{-1}\Wv_2}})T_\Wv(\wt{-1_{g^{-1}\Wv_2}})\,,
\end{equation}
independently of the choices of the preimages of $g^{-1}sg$, $1_{g^{-1}\Wv_1}$ and $-1_{g^{-1}\Wv_2}$ in $\wt{\Sp}(\Wv)$, $\wt{\Sp}(g^{-1}\Wv_1)$ and $\wt{\Sp}(g^{-1}\Wv_2)$, respectively. 
We can therefore fix $\wt{1_{g^{-1}\Wv_1}}$ to be the identity element of $\wt{\Sp}(g^{-1}\Wv_1)$, which 
gives $\chi_+^{-1}(\wt{1_{g^{-1}\Wv_1}})=1$. Hence 
$$
\chi_+^{-1}(\wt{1_{g^{-1}\Wv_1}})T_\Wv(\wt{1_{g^{-1}\Wv_1}})=\delta_{0,g^{-1}\Wv_1}\,,
$$
where $\delta_{0,g^{-1}\Wv_1}$ indicates Dirac's delta at $0$ in the space $g^{-1}\Wv_1$.

By \cite[Definition 4.16 and Remark 4.5]{AubertPrzebinda_omega}, $\Theta_\Wv^2(-1)=(-2i)^{\dim \Wv}$. Hence $|\Theta_{\Wv}(\wt{-1})|=2^{\dim \Wv/2}$ only depends on the dimension of $\Wv$. 
In particular,
$$
|\Theta_{g^{-1}\Wv_2}(\wt{-1})|=|\Theta_{\Wv_2}(\wt{-1})|=2^{\dim \Wv_2/2}\,.
$$
So 
$$
\chi_+^{-1}(\wt{-1_{g^{-1}\Wv_2}})T_\Wv(\wt{-1_{g^{-1}\Wv_2}})= |\Theta_{g^{-1}\Wv_2}(\wt{-1})|
\mu_{g^{-1}\Wv_2}=2^{\dim \Wv_2/2}\mu_{g^{-1}\Wv_2}\,.
$$
Thus \eqref{decompositionTWforg1} becomes
\begin{equation}
\label{decompositionTWforg2}
\chi_+^{-1}(\wt{g^{-1}sg})T_\Wv(\wt{g^{-1}sg})=2^{\dim \Wv_2/2} \delta_{0,g^{-1}\Wv_1} \otimes
\mu_{g^{-1}\Wv_2}\,.
\end{equation}
By \eqref{intSO2s-1}, for all $\phi\in \mathcal{S}(\Wv)$,  
\begin{align*}
\int_{(\SO_2)s} \chi_+^{-1}(\wt{g})T(\wt{g})(\phi) \, dg
&=2^{\dim \Wv_2/2} \int_{\SO_2} (\delta_{0,g^{-1}\Wv_1} \otimes
\mu_{g^{-1}\Wv_2})(\phi) \,dg\\
&= 2^{\dim \Wv_2/2} \int_{\SO_2} \int_{g^{-1}\Wv_2}\phi(w) \; 
d\mu_{g^{-1}\Wv_2}(w) \,dg\\
&=2^{\dim \Wv_2/2} \int_{\Wv_2}\int_{\SO_2} \phi(gw) \; dg \,d\mu_{\Wv_2}(w) \,.
\end{align*}
Notice that, since $sw=-w$ for $w\in \Wv_2$,
\begin{align*}
\int_{\Wv_2}\int_{\SO_2} \phi(gw) \; dg \,d\mu_{\Wv_2}(w)&=
\int_{\Wv_2}\int_{\SO_2} \phi(-gw) \; dg \,d\mu_{\Wv_2}(w)\\
&=\int_{\Wv_2}\int_{\SO_2} \phi(gsw) \; dg \,d\mu_{\Wv_2}(w)\\
&=\int_{\Wv_2}\int_{(\SO_2)s} \phi(gw) \; dg \,d\mu_{\Wv_2}(w)\,.
\end{align*}
Hence,
\begin{align*}
&\int_{(\SO_2)s} \chi_+^{-1}(\wt{g})T(\wt{g})(\phi) \, dg\\
&=2^{\dim \Wv_2/2} 
\Big( \frac{1}{2} \int_{\Wv_2}\int_{\SO_2} \phi(gw) \; dg \,d\mu_{\Wv_2}(w) +  \frac{1}{2} \int_{\Wv_2}\int_{(\SO_2)s} \phi(gw) \; dg \,d\mu_{\Wv_2}(w)  \Big)\\
&=2^{\dim \Wv_2/2 -1}
 \int_{\Wv_2}\int_{\Og_2} \phi(gw) \; dg \,d\mu_{\Wv_2}(w) \,.
\end{align*}
In conclusion, 
$$
\int_{(\SO_2)s} \chi_+^{-1}(\wt{g})T(\wt{g})(\phi) \, dg=  \mu_\mathcal{O}(\phi) \qquad (\phi\in \mathcal{S}(\Wv))\,,
$$
where $\mu_\mathcal{O}$ is as in \eqref{muO}. 

We now show that $\mu_\mathcal{O}$ is a $\Og_2\times\Sp_{2l'}(\R)$-invariant measure on the orbit 
$\mathcal{O}$. 
Notice first that $\Wv_2\setminus \{0\}=\Sp_{2l'}(\R).n_0$. Indeed, $n_0\in \Wv_2$
and $\Sp_{2l'}(\R)$ preserves $\Wv_2$. Conversely, let $w_2=\begin{pmatrix}
0 & 0 \\ u & v
\end{pmatrix} \in \Wv_2\setminus\{0\}$, where $u,v\in \M_{1,l'}(\R)$. Since $J=\begin{pmatrix}
0 & I_{l'} \\ -I_{l'} & 0
\end{pmatrix}\in \Sp_{2l'}(\R)$ and $w_2J=\begin{pmatrix}
0 & 0 \\ -v & u
\end{pmatrix}$, we can suppose that $u\neq 0$. If $a\in \GL_{l'}(\R)$ has $u$ as its first row and $b$ is a symmetric matrix having $v$ as its first row, then 
$\begin{pmatrix}
a & b \\ 0 & (a^t)^{-1}
\end{pmatrix}\in \Sp_{2l'}(\R)$ and $n_0 \begin{pmatrix}
a & b \\ 0 & (a^t)^{-1}
\end{pmatrix} =w_2$.
It follows from this that $\{gw_2; g\in \Og_2, w_2\in \Wv_2\}=\mathcal{O}\cup \{0\}$.
The right-hand side of $\eqref{muO}$ is clearly $\Og_2$-invariant, and we see that it is $\Sp_{2l'}(\R)$-invariant by linear changes of variables in the integral over $\Wv_2$ because the elements of 
$\Sp_{2l'}(\R)$ have determinant 1.

Let  $l'=1$. By Proposition \ref{non-occurence-det}, $\wt{\det}$ does not occur in Howe correspondence. Let $\Pi={\wt{\triv}}$. Since $\wt{\det}$ does not occur, the projection onto the $\Og_2$-isotypic component is equal to the projection onto the-$\SOg_2$ isotypic component. Therefore, \eqref{1 for O2} follows,
because the volume of $\SOg_2$ is $\frac{1}{2}$.

Since \eqref{det-l'>1} vanishes when $l'=1$, we have 
$$
\int_{\Og_2} \check{\Theta}_{\wt{\triv}}(\t g)T(\t g)\, dg= 2\int_{\SO_2} \chi_+^{-1}(\t g)T(\t g) \, dg=2\mu_{\mathcal{O}}\,.
$$
\end{proof}

\begin{rem} \label{rem:classificationO2-Sp2l'-l'>1-triv-det}
Formulas \eqref{det-l'>1} and \eqref{det-l'>1, triv} show that $\wt{\det}$ and $\wt{\triv}$ occur in the Howe correspondence when $l'>1$. 
This is compatible with the classification, as for $l'>1$ the 
the dual pair $(\Og_2,\Sp_{2l'}(\R))$ is in the stable range, so all genuine representations occur.
\end{rem}

\subsection{\bf The special case $(\G,\G')=(\Og_2,\Sp_2(\R)=\SL_2(\R))$}
\label{example:O2Sp2}

In this case, $\H=\SO_2$ and 
$\g=\h=\R J_1$, where $J_1=\R\begin{pmatrix}
0 & 1\\ -1 & 0
\end{pmatrix}$. Moreover, $\tau(\hs1)=\R^+ J_1$ and $\h\cap \tau(\Wv)=\h$. The Harish-Chandra parameter of 
$\Pi\in \wt{\Og}_2^\wedge$ (which coincides with its highest weight since $\rho=0$) 
is of the form $\mu e_1$, where $\mu\geq 0$ is an integer. Hence, in the notation \eqref{eq:ajbj}, $a=-b=-\mu$ and $\beta=2\pi$. 

If $\mu=0$, then $P_{-\mu,\mu}=0$.
If $\mu>0$, then the function $P_{-\mu,\mu}$ is supported in $[0,+\infty)$ and, 
by \eqref{D0'} and Remark \ref{Pab-Laguerre}, 
\begin{equation}
\label{P-O2}
P_{-\mu,\mu,2}(2\pi y_1)=2 (-1)^{\mu-1} L^1_{\mu-1}(4\pi y_1)=2 (-1)^{\mu-1} 
\sum_{h=0}^{\mu-1}
{\mu\choose\mu-1-h} \frac{(-4\pi y_1)^h}{h!}\,,
\end{equation}
where $L^1_{\mu-1}$ is a Laguerre polynomial. Moreover, by \eqref{D0''},  
$Q_{-\mu,\mu}(y)=2\pi (-1)^\mu$ for all $\mu\geq 0$.

Suppose first $\mu>0$. Then $\Pi$ is supported in $\wt{\SO_2}$ and,
by Lemma \ref{main theorem summary} and Theorem \ref{main thm for l<l'},
for every $\phi\in \Ss(\Wv)$,
\begin{align}
f_{\Pi\otimes \Pi'}(\phi)&=\int_{\SO_2} \check{\Theta}_{\Pi}(\wt{g})T(\wt{g})(\phi)\; dg \notag\\
\label{fPIPi'-O2-general}
&=2\pi C (-1)^\mu \int_0^{+\infty} P_{-\mu,\mu,2}(2\pi y_1) e^{-2\pi y_1} F_\phi(y_1 J_1)\, dy_1
+  C  \int_\h \delta_0(y) F_\phi(y) \; dy\,,
\end{align}
where $C$ is the constant appearing in Theorem \ref{main thm for l<l'}.
To make formula \eqref{fPIPi'-O2-general} explicit, we need to calculate the terms involving $F(y)$,
the Harish-Chandra regular almost-elliptic orbital integral on $\Wv$.

By \cite[Definition 3.1, (39) and (27)]{McKeePasqualePrzebindaWCestimates} 
and \eqref{integralonS/Sh1-2} with $\Zg'=\H'$, there are constants $C_{\hs1}$ and $C'_{\hs1}$
such that,
for all $y=y_1J_1=
\tau(w)\in \tau(\hs1)$,
\begin{equation}
\label{orbital-integrals-SL2R}
F_\phi(y)=C_{\hs1} \pi_{\g'/\h'}(y') \int_{\Sg/\Sg^{\hs1}} \phi(s.w) \, d(s\Sg^\hs1)
=C'_{\hs1} \pi_{\g'/\h'}(y') \int_{\G'/\H'} \psi(g'. y') \, d(g'\H')\,, 
\end{equation}
where $y'=y_1J_1'=y_1 \begin{pmatrix}
0 & 1\\ -1 & 0
\end{pmatrix}=\tau'(w)$, and 
$\psi=\tau'_*(\phi^\G)\in \Ss(\g')$. 
The right-hand side of \eqref{orbital-integrals-SL2R} is Harish-Chandra's orbital integral for the
orbit $\G'.y'$.

Notice that, for $\G=\Og_2$ and $l=1\leq l'$, the extension of $F(y)$ from $y\in\h^+=\tau(\hs1)$ to $-\tau(\hs1)$ is even in $y$; see \cite[Theorem 3.6]{McKeePasqualePrzebindaWCestimates}. 
Hence, 
\begin{equation*}
\int_\h \delta_0(y) F_\phi(y) \; dy=
\lim_{y_1\rightarrow 0+} 
%\lim_{\substack{y\in\tau(\hs1)\\y\rightarrow 0}} 
F_\phi(y_1 J_1) \qquad (\phi\in \Ss(\Wv))\,.
\end{equation*}
Write $x\in \g'$ as 
$$
x=x_1 \begin{pmatrix}
1 & 0 \\ 0 & -1
\end{pmatrix} + x_2 \begin{pmatrix}
0 & 1 \\ 1 & 0\end{pmatrix}  + x_3 J'_1=
\begin{pmatrix}
x_1 & x_2 +x_3 \\ x_2-x_3 & -x_1 
\end{pmatrix}=A(x_1,x_2,x_3)\,,
$$
where $(x_1,x_2,x_3) \in \R^3$. Then the map $A:\R^3\to \g'$ is a linear isomorphism. It transfers the 
adjoint action of $\G'$ on $\g'$ to the natural action on $\R^3$ by $\SO(2,1)^0$, the identity component of $\SO(2,1)$, i.e. the group of isometries of $x_1^2+x_2^2-x_3^2=-\det(A(x_1,x_2,x_3))$ preserving the 
positive light cone 
$$
X^{0+}=\{(x_1,x_2,x_3)\in \R^3; x_1^2+x_2^2=x_3^2, x_3>0\}\,.
$$
See \cite[Chapter IV, \S 5.1]{HoweTan}.
Under the map $A$, the orbit $\G'.y'$ with $y'=y_1 J'_1$ and $y_1>0$ is the image of the hyperboloid's upper sheet  
$$
O_{y_1}^-=\{(x_1,x_2,x_3)\in \R^3; x_1^2+x_2^2-x_3^2=-y_1^2, x_3>0\}\,.
$$
Under $A$, the positive light cone $X^{0+}$ corresponds to the $\G'$-orbit of 
$x_0=\begin{pmatrix}
0 & 1\\ 0 & 0
\end{pmatrix}$. Moreover $\G'. x_0\simeq \G'/\M\N$, where $\M=\{\pm 1\}$ and $\N=\exp(\R x_0)=
\Big\{ \begin{pmatrix}
1 & t\\ 0 & 1
\end{pmatrix}; t\in \R \Big\}$.
As the geometry suggests, for suitable normalizations of the $\SO(2,1)^0$-invariant orbital measures, 
$$
\lim_{y_1\to 0^+} \int_{O_{y_1}^-} f \,d\mu_{O_{y_1}^-}= \int_{X^{0+}} f \,d\mu_{X^{0+}} \qquad 
(f\in \Ss(\R^3))\,.
$$
Thus, for a suitable positive constant $C''_{\hs1}$
\begin{equation}
\label{F0-O2}
\int_\h \delta_0(y) F_\phi(y) \; dy=C''_{\hs1} \int_{\G'/\M\N} \psi(g'.x_0) \; d(g'\M\N) 
\qquad (\phi\in \Ss(\Wv), \psi\in \Ss(\g')^\G, \psi \circ \tau'= \phi^\G)\,.
\end{equation}

Suppose now that $\mu=0$.
Then, by Proposition \ref{O2Sp2l'R}, $\Pi=\widetilde{\triv}=\chi_+$ and 
for $\phi\in\Ss(\Wv)$,
$$
f_{\wt{\triv}\otimes \wt{\triv}'}(\phi)=2\int_{\SO_2} \chi_+^{-1}(\wt{g})T(\wt{g})(\phi)\, dg
=2C \int_\h \delta_0(y) F_\phi(y) \; dy\,,
$$
where $\wt{\triv}'$ denotes the representation of $\wt{\Sp}_2(\R)$ in Howe correspondence with
$\wt{\triv}$ and the last equality follows from Theorem \ref{main thm for l<l'}.

\smallskip
%%%%%%%%%%%%%%%
%%%%%%%%%%%%%%%

%%
\section{\bf Another example: $(\G,\G')=(\Ug_l,\Ug_{p,p})$ and $\Pi=\wt{\triv}$}
\label{examples}

Let $(\G,\G')=(\Ug_l,\Ug_{p,p})$. Hence $l'=2p$. Consider the trivial representation $\triv$ of $\Ug_l$. In the Schr\"odinger model, with a polarization $\Wv=\Xv\oplus \Yv$ preserved by $\G$, we have
\begin{equation}
\label{omegachiplus}
\omega(\wt g)v(x)=\chi_+(\wt g)v(g^{-1}x) \qquad (\wt g\in \wt\G,\, v\in \Se(\Xv),\, x\in \Xv) \,,
\end{equation}
where $\chi_+:\wt{\Sp}(\Wv)\to \Ug_1$ is a function whose restriction to $\wt \G$ is a character. See 
\cite[Proposition 4.28]{AubertPrzebinda_omega}. Let $\wt{\triv}$ denote this restriction. Then 
$\wt{\triv}$ is the lift to $\wt{\Ug_l}$ of $\triv$, which occurs in Howe's correspondence. Moreover, 
\eqref{omegachiplus} implies that 
$$
\omega(\check{\Theta}_{\wt \triv})v(x)=\int_\G v(g^{-1}x) \, dg \qquad (v\in \Se(\Xv), \, x\in \Xv)\,.
$$
Let $\wt{\triv}'$ be the representation of $\wt{\Ug}_{p,p}$ which corresponds to $\wt{\triv}$.
If $l=1$, then $\wt{\triv}'$ is a minimal representation of $\Ug_{p,p}$, called the Wallach representation. 

In this section we are computing $f_{\wt{\triv}\otimes\wt{\triv}'}$, which is the Weyl symbol of the operator $\omega(\check{\Theta}_{\wt \triv})$. As in our main theorems, we distinguish the cases 
$l\leq l'$ and $l>l'$. 
Notice first that the parameters appearing in \eqref{eq:delta,beta} are 
$$
\beta=2\pi 
\qquad \text{and} \qquad \delta=p+\frac{1-l}{2}=\frac{1+l'-l}{2}\,.
$$
Moreover, 
$
\rho=\sum_{j=1}^l \big( \frac{l+1}{2} -j\big) e_j
$
for $\G=\Ug_l$. 

\subsubsection{\bf The case $l\leq l'$}
The parameters \eqref{eq:ajbj} corresponding to $\Pi=\wt \triv$ are 
\begin{equation} 
\label{ajbjUlUpp}
a_j=-\frac{l'}{2}+j \qquad \text{and} \qquad b_j=-\frac{l'}{2}+l+1-j\,,
\end{equation}
where $1\leq j\leq l$. Observe that the $a_j$'s and the $b_j$'s describe the same set 
$$\{-l'/2+1,\dots, -l'/2+l-1,-l'/2+l\}$$
and 
$b_{l+1-j}=a_j$ for all $1\leq j\leq l$.
Hence, by \eqref{absymmetryPabQab},
\begin{eqnarray}
\label{symmPab1}
P_{a_{l+1-j},b_{l+1-j}}(\xi)=P_{b_j,a_j}(\xi)=P_{a_j,b_j}(-\xi)\,,\\
\label{symmQab1}
Q_{a_{l+1-j},b_{l+1-j}}(\xi)=Q_{b_j,a_j}(\xi)=Q_{a_j,b_j}(-\xi)\,.
\end{eqnarray}
Since $a_j=b_{l+1-j}\leq 0$ for all $1\leq j\leq\min(l, l'/2)$, by \eqref{eq:Pabminus2},
\begin{equation}
\label{Pab2=0}
P_{a_j,b_j,-2}(\xi)=%%P_{b_{l+1-j},a_{l+1-j},-2}(\xi)
P_{a_{l+1-j},b_{l+1-j},2}(\xi)=0  \qquad (1\leq j\leq \min(l, l'/2))\,.
\end{equation}
Also, $a_j\leq 0$ for all $j$ (and hence $b_j\leq 0$ for all $j$) if and only if $l\leq l'/2$. 
Furthermore, $a_j+b_j=l-l'+1$, which is independent of $j$, is $\geq 1$ if and only if $l=l'$.
As a consequence (see \eqref{D0''}), 
\begin{eqnarray*}
&&\text{$P_{a_j,b_j}=0$ for all $1\leq j\leq l$ \quad if and only if \quad $l\leq \frac{l'}{2}$}\,,\\
&&\text{$Q_{a_j,b_j}\neq 0$ for all $1\leq j\leq l$ \quad if $l<l'$}\,,\\
&&\text{$Q_{a_j,b_j}=0$ for all $1\leq j\leq l$ \quad if $l=l'$}\,.
\end{eqnarray*}

We now examine more precisely the formula for $f_{\wt{\triv}\otimes\wt{\triv}'}$ when
$l\leq l'/2$. This is the stable range case. As remarked above, 
$P_{a_j,b_j}=0$ for all $1\leq j\leq l$, whereas (see \eqref{D0''})
$$
Q_{a_j,b_j}(y_j)=2\pi (1+y_j)^{-a_j}(1-y_j)^{-b_j}\,.
$$
Hence $p_j=0$ for all $1\leq j\leq l$, whereas
$$
q_j(-\partial_{y_j})^*=q_j(\partial_{y_j})=\Big(1+\frac{1}{2\pi}\partial_{y_j}\Big)^{\frac{l'}{2}-j}
\Big(1-\frac{1}{2\pi}\partial_{y_j}\Big)^{\frac{l'}{2}-(l-j+1)}\,,
$$
where $\null^*$ denotes the formal adjoint.
Theorem \ref{main thm for l<l'} yields for $\phi\in \Se(\Wv)$ 
\begin{eqnarray}
f_{\wt{\triv}\otimes\wt{\triv}'}(\phi)&=&\int_{\Ug_l} \check{\Theta}_{\wt \triv}(\wt g)T(\wt g)(\phi) \, dg \nn\\
&=& C \int_\h \Big[\prod_{l=1}^l q_j(-\partial_{y_j}) \delta_0(y_j) \Big] F_\phi(y)\, dy \nn\\ 
&=& C \Big[\Big(\prod_{l=1}^l q_j(\partial_{y_j})\Big) F_\phi\Big] (0)\,,
\end{eqnarray}
where $C$ is a nonzero constant. Hence
$f_{\wt{\triv}\otimes\wt{\triv}'}$ has support inside the nilpotent cone in $\Wv$.

Another case where the formula for $f_{\wt{\triv}\otimes\wt{\triv}'}$ simplifies is when $l=l'=2p$ because $Q_{a_j,b_j}=0$ for all $j$. Since $a_j=b_{2p+1-j}\leq 0$ for $1\leq j \leq p$, we have 
$$
P_{a_j,b_j}(\xi)=
\begin{cases}
2\pi  P_{a_j,b_j,2}(\xi) \mathbb{I}_{\R^+}(\xi) &\text{if $1\leq j\leq p$}\,,\\
2\pi  P_{a_j,b_j,-2}(\xi) \mathbb{I}_{\R^-}(\xi) &\text{if $p+1\leq j\leq 2p$}\,.
\end{cases}
$$
In particular, in this case, we can replace in \eqref{main thm for l<l' a} 
the domain of integration $\h\cap \tau(\Wv)$ with $\tau(\hs1)$, where $\hs1$ is the unique 
Cartan subspace of $\Wv$ and $\tau(\hs1)$ is determined by the condition that the first $p$ values $\delta_j$ in \eqref{deltaj} are equal to 1 and the last $p$ are equal to $-1$. The explicit expression for 
$f_{\wt{\triv}\otimes\wt{\triv}'}$ can be easily computed using \eqref{main thm for l<l' a}, 
\eqref{eq:Pab2} and \eqref{eq:Pabminus2}. For instance, if $p=1$, i.e. $(\G,\G')=(\Ug_2,\Ug_{1,1})$, then 
$$
f_{\wt{\triv}\otimes\wt{\triv}'}(\phi)=C \int_0^\infty\!\! \int_{-\infty}^0 e^{2\pi(y_2-y_1)} F_\phi(y_1,y_2) \; dy_2 dy_1 \qquad (\phi\in \Ss(\Wv))\,,
$$
where $C$ is a nonzero constant.

\subsubsection{\bf The case $l>l'$} In this case, $Q_{a_j,b_j}=0$.
The Weyl group $W(\Ug_{p,p},\h')$ acts on $\h'$ by permuting the first $p$ coordinates and the last $p$ coordinates (see Remark \ref{rem:Weyl-invariant l>l'}). The parameter $a_{s,j}$ and $b_{s,j}$
appearing in \eqref{main thm for l>=l' c} are therefore
obtained by separately permuting the first $p=l'/2$ and the last $p$ terms appearing in  
\eqref{ajbjUlUpp}. 
Notice that 
\begin{eqnarray*}
&&\text{$a_j\leq 0$  \quad if and only if \quad $1\leq j \leq \tfrac{l'}{2}$}\,,\\
&&
\text{$b_j\leq 0$  \quad if and only if $l+1-\tfrac{l'}{2} \leq j \leq l$}\,.
\end{eqnarray*}
In particular, since $l>l'$, for each $j$, at most one between $a_j$ and $b_j$ can be $\leq 0$. Moreover, there is at least one index $j$ for which both $a_j$ and $b_j$ are positive, namely
$j=\tfrac{l'}{2}+1$.

When $\G'=\Ug_{1,1}$ (and hence $l'=2$), then $W(\Ug_{1,1},\h')$ is trivial
and $s_0$ maps $J_1$ to itself and $J_l$ to $J_2$,
 and 
\eqref{main thm for l>=l' c} simplifies to a nonzero constant multiple of 
\begin{eqnarray*}
\label{main thm for l>=l',U11} 
\frac{P_{a_1,b_1,2} (2\pi y_1) P_{a_l,b_l,-2} (2\pi y_2)}{(y_2-y_1)(y_1y_2)^{l-2}} e^{-2\pi (y_1-y_2)} \qquad (y=\tau'(w), w\in \reg{\hs1})\,,
\end{eqnarray*}
where $a_j, b_j$ are as in \eqref{ajbjUlUpp} and the denominator is the root product \eqref{product of positive roots for g'/z'}.

\section{\bf The integral over $-\G^0$ as an integral over $\g$\rm}
\label{Intertwining distributions as an integral over g}

Let $\sp(\Wv)$ be the Lie algebra of $\Sp(\Wv)$. Set
\begin{eqnarray}
\label{spc}
&&\sp(\Wv)^c=\{x\in\sp(\Wv);\ x-1\ \text{is invertible in $\End(\Wv)$}\}\,,\\
\label{Spc}
&&\Sp(\Wv)^c=\{g\in\Sp(\Wv);\ g-1\ \text{is invertible in $\End(\Wv)$}\}\,.
\end{eqnarray}
The Cayley transform $c:\sp(\Wv)^c  \to \Sp(\Wv)^c$ is the bijective rational map defined by $c(x)=(x+1)(x-1)^{-1}$. Its inverse $c^{-1}:\Sp(\Wv)^c\to \sp(\Wv)^c$ is given by the same formula, 
$c^{-1}(g)=(g+1)(g-1)^{-1}$.

Since  all eigenvalues of $x \in \g\subseteq \End(\Wv)$ are purely imaginary, 
$x-1$ is invertible. Therefore $\g\subseteq \sp(\Wv)^c$. 
Moreover, $c(\g)\subseteq \G$.
Since the map $c$ is continuous, the range $c(\g)$ is connected. Also, $-1=c(0)$ is in $c(\g)$. Furthermore, for $x\in\g$,
\[
c(x)-1=(x+1)(x-1)^{-1}-(x-1)(x-1)^{-1}=2(x-1)^{-1}
\]
is invertible. Hence $c(\g)\subseteq \G\cap \Sp(\Wv)^c$. 
This is an equality because $c(c(y))=y$ and $c(\G)\subseteq\g$. Thus
\[
c(\g)=\{g\in\G;\ \det(g-1)\ne 0\}\,.
\]
This is a connected open dense subset of $-\G^0$.
Hence 
\begin{equation}\label{T1cg}
\int_{-\G^0}T(\t g)\check\Theta_\Pi(\t g)\, dg=\int_{c(\g)}T(\t g)\check\Theta_\Pi(\t g)\, dg\,.
\end{equation}
If $\G\ne\Og_{2 l+1}$, then $\G^0=-\G^0$. 
If $\G=\Og_{2 l+1}$, then $\G$ is the disjoint union of $\G^0$ and $-\G^0$. 
Let 
\begin{equation}\label{eq:tildec}
\t c:\g\to\wt\G
\end{equation}
be a real analytic lift  of $c$. Set $\t c_-(x)=\t c(x) \t c(0)^{-1}$. 
Then $\t c_-(0)$ is the identity of the group $\wt\Sp(\Wv)$.
By \eqref{Tt}, we have 
\begin{equation}\label{the omega}
T(\t c(x))=\Theta(\t c(x))\,\chi_{x}\,\mu_\Wv\,.
\end{equation}
Therefore, for a suitable normalization of the Lebesgue measure on $\g$,
\begin{equation}\label{T3}
\int_{-\G^0}\check\Theta_\Pi(\t g) T(\t g)\,dg=\int_{\g}\check\Theta_\Pi(\t c(x))\, \Theta(\t c(x))\, j_\g(x)\,\chi_x\,\mu_\Wv\,dx\,,
\end{equation}
where $j_\g(x)$ is the Jacobian of the map $c:\g\to c(\g)$ (see Appendix \ref{appenA} for its explicit expression). Also, since $\t c(0)$ is in the center of the metaplectic group, 
\begin{equation}\label{T4}
\int_{-\G^0}\check\Theta_\Pi(\t g) T(\t g)\,dg=\check\chi_\Pi(\t c(0))\int_{\g}\check\Theta_\Pi(\t c_-(x))\,\Theta(\t c(x))\,j_\g(x)\,\chi_x\,\mu_\Wv\,dx\,,
\end{equation}
where $\chi_\Pi$ is the central character of $\Pi$; see \eqref{centralcharacterofpi}.
In the rest of this paper we shall write $dw=d\mu_\Wv(w)$, when convenient.

\section{\bf The invariant integral over $\g$ as an integral over $\h$}
\label{Intertwining distributions as an integral over h}
We now apply the Weyl integration formula to reduce the integral on $\g$ in \eqref{T4}
to an integral on a Cartan subalgebra of $\g$. In section \ref{section:dualpairs-supergroups}, this Cartan subalgebra was denoted by $\h(\g)$, see \eqref{h(g)}. To make our notation lighter, in this section we will write $\h$ instead of $\h(\g)$. 
Let $\H\subseteq \G$ be the corresponding Cartan subgroup. 
Fix a system of positive roots of $(\g_\C, \h_\C)$. For any positive root $\alpha$ let $\g_{\C,\alpha}\subseteq\g_\C$ be the corresponding $\ad(\h_\C)$-eigenspace and let $X_\alpha\in \g_{\C,\alpha}$ be a non-zero vector. 
Let $\H^0\subseteq \H$ denote the connected component of the identity.
There is a character (continuous group homomorphism) $\xi_\alpha: \H^0\to\C^\times$ such that
\[
\Ad(h)X_\alpha= \xi_\alpha(h)X_\alpha \qquad (h\in \H^0)\,.
\]
The derivative of $\xi_\alpha$ at the identity coincides with $\alpha$. Let $\rho\in \h_\C^*$ denote one half times the sum of all the positive roots. Then in all cases except when $\G=\Og_{2l+1}$ or $\G=\Ug_{l}$ with $l$ even, there is a character 
$\xi_\rho: \H^0\to\C^\times$ whose derivative at the identity is equal to $\rho$, see \cite[(2.21) and p. 145]{GoodmanWallach2009}. When $\G=\Og_{2l+1}$ or $\G=\Ug_{l}$ with $l$ even, the character $\xi_\rho$ exists as a map defined on a non-trivial double cover 
\begin{equation}\label{doublecoverofH}
\widehat{\H^0}\ni \widehat h\to h\in \H^0
\end{equation}
of $\H^0$.
In particular the Weyl denominator
\begin{equation} \label{eq:Weyl-den}
\Delta(h)=\xi_\rho(h)\prod_{\alpha>0}(1-\xi_{-\alpha}(h)) 
\end{equation}
is defined for $h\in\H^0$ or $h\in\widehat{\H^0}$ according to the cases described above. 
We will see below how the Weyl group $W(\G,\h)$ acts on $\widehat{\H^0}$. 
The sign representation $\sgn_{\g/\h}$ of the Weyl group $W(\G,\h)$ is defined by
\begin{equation} \label{eq:Weyl-den-sign}
\Delta(sh)=\sgn_{\g/\h}(s) \Delta(h)\qquad (s\in W(\G,\h))\,,
\end{equation}
where either $h\in\H^0$ or $h\in \widehat{\H^0}$.

Suppose first that $\G=\Og_{2l+1}$. Then $\H=\H^0\cdot\Zg=\H^0\times \Zg$ is the direct product of $\H^0$ and the center $\Zg$ of $\Sp(\Wv)$. The group $\widehat{\H^0}$ and the action of the Weyl group on it are described in Appendix \ref{appenB}. The double cover of $\H$ is $\wt{\H}=\H^0\times\wt{\Zg}$. Set $\widehat{\wt{\H}}=\widehat{\H^0}\times\wt\Zg$.
We have a chain of double covering homomorphisms

\begin{equation}\label{chainofcoverings}
\end{equation}
\vskip -1.7cm
\null

\begin{center}
\begin{tikzpicture}
  \matrix (m) [matrix of math nodes,row sep=0mm,column sep=5mm,minimum width=2mm]
  {
    \widehat{\wt{\H}}=\widehat{\H^0}\times\wt\Zg  & \H^0\times\wt\Zg & \H^0\times\Zg & \H^0\,, \\
     (\widehat h,\t z) & (h,\t z) &  (h,z) & h\,. \\};
  \path[-stealth]
    (m-1-1) edge  (m-1-2)
    (m-1-2) edge  (m-1-3)
    (m-1-3) edge  (m-1-4)
    (m-2-1) edge  (m-2-2)
    (m-2-2) edge  (m-2-3)
    (m-2-3) edge  (m-2-4);
\end{tikzpicture}
\end{center}
We extend $\Delta$, $\xi_\mu$ and $\check{\Theta}_\Pi$ to $\widehat{\wt{\H}}$ by defining 
$\Delta(\hat h,\t z)=\Delta(\hat h)$ and $\xi_\mu(\hat h,\t z)=\xi_\mu(\hat h)$ or $\xi_\mu(h)$ if it exists, and $\check{\Theta}_\Pi(\hat h,\t z)=\check{\Theta}_\Pi(h,\t z)$. 
Recall from \eqref{hatc-} the section 
\[
\widehat c_-:\h\ni x\to \widehat{\H^0}
\]
and define
\begin{equation}\label{widehatc}
\widehat c_-:\h\ni x\to (\widehat c_-(x),1)\in \widehat{\wt{\H}}\,.
\end{equation}
This is a real analytic lift  of the modified Cayley transform defined on $\h$ by
\begin{equation}
\label{c_-}
c_-(x)=(1+x)(1-x)^{-1}=-c(x)\,.
\end{equation}
Suppose now that $\G=\Ug_{l}$. Then $\H^0=\H$. Consider the case when $l$ is even. If $\G'=\Ug_{p,q}$ with $p+q$ odd, then 
the covering $\widetilde \H\to \H$ does not split (see Proposition \ref{pro:det-covering}). 
Hence $\Delta$, $\xi_\mu$ and $\check\Theta_\Pi$ are defined on $\widehat \H=\widetilde \H$ and the Weyl group of $\H$ acts on $\widehat \H$ in a way compatible with the cover 
$\widetilde \H \to \H$. We have the modified Cayley transform $c_-:\h\to\H$,  an analytic section
$
\sigma:c_-(\h)\to \widehat \H
$
and the map
\begin{equation}\label{widehatcUodd}
\widehat c_-:\h\ni x\to \sigma (c_-(x))\in \widehat \H\,.
\end{equation}
If $\G'=\Ug_{p,q}$ with $p+q$ even, then define $\widehat \H$ to be the Cartan subgroup of the group $\sqrt{\G}$ defined in Proposition \ref{pro:det-covering} covering $\H$. (In particular, we have the action of the Weyl group $W(\G, \h)$ on $\widehat \H$ because $W(\G, \h)=W(\sqrt{\G}, \h)$.) Then $\Delta$, $\xi_\mu$ and $\check\Theta_\Pi$ are defined on $\widehat \H$. 
By Proposition \ref{pro:det-covering},
%By \cite[Proposition 4.28]{AubertPrzebinda_omega}, 
the metaplectic cover $\wt \H=\H\times\{1,\t 1\}$ splits and we have maps
\begin{eqnarray}\label{chainofcoveringsU}
&&\widehat \H \longrightarrow \H \longrightarrow \wt\H \longrightarrow \H\,,\\
&&\widehat h\; \to \; h \;\to  (h;1) \to  h\,.\nn
\end{eqnarray}
Again $\Delta$, $\xi_\mu$ and $\check\Theta_\Pi$ are defined on $\widehat \H$ and \eqref{widehatcUodd} defines the lift of the Cayley transform we shall use. 
In this case, we set $\widehat{\wt\H}=\widehat \H$.

For the remaining dual pairs, $\widehat\H=\H$ and we lift $\Delta$ and $\xi_\mu$ to functions on $\wt\H$ constant on the fibers of the covering map $\wt\H\to\H$ and write $\widehat c_-$ for $\t c_-$, which was defined under the equation \eqref{eq:tildec}.

\begin{lem}
Let $\mu \in i\h^*$. Then 
\begin{equation}
\label{ximuc}
\xi_{-\mu}(\widehat{c}_-(x))
= \prod_{j=1}^l \left(\frac{1+ix_j}
{1-ix_j}\right)^{\mu_j}
=\prod_{j=1}^l (1+ix_j)^{\mu_j}
(1-ix_j)^{-\mu_j} \qquad (x\in \h)\,.
\end{equation}
\end{lem}
\begin{proof}
By \eqref{decomposition of space for a cartan subspace}, it is enough to verify this formula when $l=1$. In this case, $x=x_1J_1$ and $\mu=\mu_1 e_1=-i\mu_1  J_1^*$. 
Let $\log$ denote the local inverse of the exponential map near $1$. Then, for $x$ sufficiently close to $0$,
$$
\log(c_-(x))=\log\big((1+x)(1-x)^{-1}\big)=\log(1+x)-\log(1-x)
$$
is a real analytic odd function of $x$. Hence it admits a Taylor series expansion
$$
\sum_{n\geq 0} a_n x^{2n+1}=\sum_{n\geq 0} a_n (-1)^n x_1^{2n+1} J_1\,.
$$ 
Thus
$$
\mu(\log(c_-(x)))=-\sum_{n\geq 0} a_n (-1)^n x_1^{2n+1} i \mu_1=
-\sum_{n\geq 0} a_n (ix_1)^{2n+1}  \mu_1=\ln\left(\frac{1-ix_1}
{1+ix_1}\right) \mu_1\,.
$$
By taking exponentials, we obtain
$$
\xi_{-\mu}(\widehat{c}_-(x))=e^{-\mu(\log(c_-(x)))}=\left(\frac{1+ix_1}
{1-ix_1}\right)^{\mu_1}\,,
$$
and the result extends to all $x\in \h$ by real analyticity.
\end{proof}

Let $\Pi$ be an irreducible representation of $\wt\G$, and let $\mu \in i\h^*$ represent the infinitesimal character of $\Pi$. When $\mu$ is dominant, then we will refer to it as the Harish-Chandra parameter of $\Pi$. This is consistent with the usual terminology; see e.g. \cite[Theorem 9.20]{knappLie2}. Then the corresponding character $\xi_\mu$ is defined as
$\xi_\mu=\xi_\rho\,\xi_{\mu-\rho}$, where $\xi_{\mu-\rho}$ is one of the extremal $\H^{0}$-weights of $\Pi$. 
In these terms, Weyl's character formula 
looks as follows,
\begin{equation}\label{Weyl character formula}
\Theta_\Pi(h)\Delta(h)=\kappa_0\sum_{s\in W(\G,\h)} \sgn_{\g/\h}(s)\xi_{s\mu}(h)\,,
\end{equation}
where $h\in \wt{\H^0}$ or $h\in \widehat{\wt{\H^0}}$, according to the cases above, and
$\kappa_0$ is as in \eqref{kappa0}.

\begin{lem}\label{general formula for the int distr}
Let 
$\pi_{\g/\h}$ be the product of the positive roots of $(\g_\C,\h_\C)$ and let
\[
\kappa(x)=
\kappa_0 \frac{\pi_{\g/\h}(x)}{\Delta(\widehat c_-(x))}\,\Theta(\t c(x))\, j_\g(x)
 \qquad (x\in \h)\,.
\] 
%where $m$ is the number of positive roots.
Then, for a suitable normalization of the Lebesgue measure on $\h$ and any $\phi\in \Ss(\Wv)$,
\begin{eqnarray*}
&&\int_{-\G^0}\check\Theta_\Pi(\t g) T(\t g)(\phi)\,dg\\
&&\qquad =\frac{\check\chi_\Pi(\t{c}(0))}{|W(\G,\h)|}
\int_\h
(\Theta_\Pi\Delta)(\widehat c_-(x)^{-1})
\,
\frac{\kappa(x)}{\kappa_0}
\,\pi_{\g/\h}(x)\int_\Wv \chi_x(w)\phi^\G(w)\,dw\,dx\\
&&\qquad=\check\chi_\Pi(\t{c}(0))\int_\h \xi_{-\mu}(\widehat c_-(x))\kappa(x)
\pi_{\g/\h}(x)\int_\Wv \chi_x(w)\phi^\G(w)\,dw\,dx\,,
\end{eqnarray*}
where $\phi^\G$ is as in $\eqref{phiG}$ and each consecutive integral is absolutely convergent.
\end{lem}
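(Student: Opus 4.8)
The plan is to start from formula \eqref{T4}, which already expresses the integral over $-\G^0$ as an absolutely convergent integral over $\g$ against the Gaussian-type kernel $\chi_x$, and then apply a Weyl integration formula on $\g$ to reduce it to an integral over the Cartan subalgebra $\h$. First I would invoke Remark \ref{projection on invarints} to replace $\phi$ by its $\G$-average $\phi^\G$, which is legitimate because $\check\Theta_\Pi$ and $\Theta(\t c(\cdot))\,j_\g(\cdot)\,\chi_\cdot$ are all $\G$-conjugation invariant (the last because $\langle (s.x)(w),w\rangle=\langle x(s^{-1}w),s^{-1}w\rangle$ and $dw$ is $\G$-invariant). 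Once the test function is $\G$-invariant, the Weyl integration formula for the compact group $\G$, transported to the Lie algebra via the exponential-free Cayley picture, lets us write $\int_\g F(x)\,dx = \text{const}\cdot\frac{1}{|W(\G,\h)|}\int_\h |\pi_{\g/\h}(x)|^2 F(x)\,dx$ for $\G$-invariant $F$, where $\pi_{\g/\h}$ is the product of positive roots. Applying this to $F(x)=\check\Theta_\Pi(\t c_-(x))\,\Theta(\t c(x))\,j_\g(x)\,\chi_x(w)$ (with $w$ fixed and then integrated, using Fubini justified by absolute convergence) produces the factor $\pi_{\g/\h}(x)^2$, one copy of which combines with $\check\Theta_\Pi\Delta^{-1}\cdot(\text{stuff})$.

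The second step is bookkeeping of the $i$-powers and the Weyl denominator. On $\h$ one has $\pi_{\g/\h}(x)$ related to $\Delta(\widehat c_-(x))$ up to the factor $i^{-m}$ and the Jacobian; this is precisely the content packaged into the definition of $\kappa(x)$. So I would write $\pi_{\g/\h}(x)^2 = \pi_{\g/\h}(x)\cdot i^{m}\frac{\Delta(\widehat c_-(x))}{\Theta(\t c(x))\,j_\g(x)}\,\kappa(x)$ — reorganizing the identity $\kappa(x)=i^{-m}\pi_{\g/\h}(x)\Delta(\widehat c_-(x))^{-1}\Theta(\t c(x))\,j_\g(x)$ — so that the $\Theta(\t c(x))$ and $j_\g(x)$ coming from $F$ cancel against those hidden in the reorganization, leaving exactly $(\check\Theta_\Pi\Delta)(\widehat c_-(x))\cdot\kappa(x)\cdot\pi_{\g/\h}(x)$ times the $w$-integral. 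This gives the first displayed equality in the Lemma. For the second equality I would feed in Weyl's character formula \eqref{Weyl character formula}: $(\check\Theta_\Pi\Delta)(\widehat c_-(x)) = \sum_{s\in W(\G,\h)}\sgn_{\g/\h}(s)\,\xi_{-s\mu}(\widehat c_-(x))$, and then observe that each summand contributes equally after the change of variables $x\mapsto s^{-1}x$ on $\h$ — using that $\kappa$ and $\pi_{\g/\h}$ are $W$-anti/invariant in a compatible way, $\chi_{s^{-1}x}(w)$ matches $\chi_x(sw)$ and $\phi^\G$ is $\G$-invariant hence in particular $W$-invariant — so the sum of $|W(\G,\h)|$ equal terms cancels the $|W(\G,\h)|^{-1}$ and replaces $\sum_s$ by the single term $s=e$, giving $\xi_{-\mu}(\widehat c_-(x))$.

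The main obstacle, and where genuine care is needed, is the normalization and convergence on the singular cover. Passing the Weyl integration formula through the Cayley transform rather than the exponential map changes the Jacobian (this is exactly why $j_\g$ appears), and on the pairs $\G=\Og_{2l+1}$ or $\Ug_l$ with $l$ even the objects $\Delta$, $\xi_\mu$, $\check\Theta_\Pi$ only live on the double cover $\widehat{\wt\H}$, so one must check that $\widehat c_-$ as defined in \eqref{widehatc}–\eqref{widehatcUodd} is a consistent section and that $(\check\Theta_\Pi\Delta)\circ\widehat c_-$ descends correctly to $\h$ — the squaring ambiguities in $\xi_\rho$ cancel between $\check\Theta_\Pi$ and $\Delta$, which is the whole reason the cover was introduced. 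Absolute convergence of each iterated integral follows from Theorem \ref{main thm for l<l'}/\ref{main thm for l>=l'}-type estimates on $\h\cap\tau(\Wv)$, or more self-containedly from the fact that $\kappa$ is (up to the bounded character $\xi_{-\mu}$) a $W$-anti-invariant real-analytic function with at most polynomial growth while $w\mapsto\int_\h\chi_x(w)(\cdots)\,dx$ is rapidly decreasing after the $\phi^\G$ pairing; I would cite the companion estimates paper \cite{McKeePasqualePrzebindaWCestimates} rather than redo them. Everything else — Fubini, the Cayley Jacobian formula from Appendix \ref{appenA}, and the $\G$-invariance reductions — is routine.
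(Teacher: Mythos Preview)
Your proposal is correct and follows essentially the same route as the paper: start from \eqref{T4}, replace $\phi$ by $\phi^\G$ via $\G$-invariance, apply the Weyl integration formula on $\g$ to produce $|\pi_{\g/\h}(x)|^2$, repackage one factor of $\pi_{\g/\h}$ into $\kappa$ via the definition, and then use the Weyl character formula together with the change of variables $x\mapsto sx$ on $\h$ to collapse the Weyl-group sum. The only point worth tightening is your convergence argument: citing Theorems~\ref{main thm for l<l'}--\ref{main thm for l>=l'} would be circular, and the growth discussion is unnecessary---the paper simply observes that $\check\Theta_\Pi$ and $\t g\mapsto T(\t g)(\phi)$ are continuous and bounded on the compact group $\wt\G$, so absolute convergence is immediate before any transfer to $\g$ or $\h$.
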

\begin{proof}
Applied to a test function $\phi\in\Ss(\Wv)$, the first integral over $-\G^0$ and hence over  $c(\g)$,
is absolutely convergent because both, the character and the function $T(\wt g)(\phi)$ are continuous and bounded (see for example \cite[Proposition 1.13]{PrzebindaUnitary}) and the group $\G$ is compact. Hence, each consecutive integral in the formula \eqref{T4} applied to $\phi$, 
\begin{equation}\label{first sterp0}
\int_{-\G^0}\check\Theta_\Pi(\t g) T(\t g)(\phi)\,dg=\check\chi_\Pi(\t c(0))\int_{\g}\check\Theta_\Pi(\t c_-(x))\,\Theta(\t c(x))\, j_\g(x)\int_\Wv\chi_x(w)\phi(w)\,dw\,dx\,,
\end{equation}
is absolutely convergent. Since 
$$
\chi_{g.x}(w)=\chi_{x}(g^{-1}.w)
$$
and the Lebesgue measure $dw$ is $\G$-invariant, 
\[
\int_\G\int_\Wv\chi_{g.x}(w)\phi(w)\,dw\,dg=\int_\Wv\chi_x(w)\phi^\G(w)\,dw\,.
\]
Observe also that $\wt{\Ad}(\t g)=\Ad(g)$ and the characters $\check\Theta_\Pi$ and $\Theta$ are $\wt{\G}$-invariant.
Moreover, by \eqref{eq:Weyl-den} and \eqref{ximuc},
$$
\overline{\Delta(\widehat c_-(x))}=\Delta(\widehat c_-(x)^{-1})=(-1)^m \Delta(\widehat c_-(x)) \qquad (x\in \h)\,,
$$ 
where $m$ is the number of positive roots, and
$$
\overline{\pi_{\g/\h}(x)}=(-1)^m \pi_{\g/\h}(x) \qquad (x\in \h)\,.
$$
Therefore the Weyl integration formula on $\g$ shows that \eqref{first sterp0} is equal to $\frac{\check\chi_\Pi(\t c(0))}{|W(\G,\h)|}$ times
\begin{eqnarray*}
&&\int_{\h}|\pi_{\g/\h}(x)|^2\check\Theta_\Pi(\t c_-(x))\,\Theta(\t c(x))\, j_\g(x) \int_\Wv\chi_x(w)\phi^\G(w)\,dw\,dx\\
&&\qquad =\int_{\h} 
\check\Theta_\Pi(\widehat c_-(x))
\overline{\Delta(\widehat c_-(x))}
\left(\frac{\pi_{\g/\h}(x)}{\Delta(\widehat c_-(x))}\,\Theta(\t c(x))\, j_\g(x)\right)
 \pi_{\g/\h}(x)\int_\Wv\chi_x(w)\phi^\G(w)\,dw\,dx\\
 &&
 \qquad =
 \int_{\h} 
\Theta_\Pi(\widehat c_-(x)^{-1})
\Delta(\widehat c_-(x)^{-1})
\frac{\kappa(x)}{\kappa_0}
 \pi_{\g/\h}(x)\int_\Wv\chi_x(w)\phi^\G(w)\,dw\,dx\,.
\end{eqnarray*}
(Here, we suppose that the Haar measure on $\H$ is normalized to have total mass 1.) 
 This verifies the first equality and the absolute convergence. 
By \eqref{Weyl character formula} and 
\eqref{ximuchexplicit1} below, 
\begin{align*}
\Theta_\Pi(\widehat c_-(x)^{-1})\Delta(\widehat c_-(x)^{-1})&=\kappa_0
\sum_{s\in W(\G,\h)}\sgn_{\g/\h}(s)\xi_{s\mu}(\widehat c_-(x)^{-1})\\
&=\kappa_0
\sum_{s\in W(\G,\h)}\sgn_{\g/\h}(s)\xi_{-s\mu}(\widehat c_-(x))\\
&=\kappa_0
\sum_{s\in W(\G,\h)}\sgn_{\g/\h}(s)\xi_{-\mu}(\widehat c_-(s^{-1}x))\,.
\end{align*}
Since $\chi_{sx}(w)=\chi_{x}(s^{-1}w)$ and $\phi^\G$ and the Lebesgue measure $dw$ are 
$W(\G,\h)$-invariant, we see that the integral 
$\int_\Wv\chi_x(w)\phi^\G(w)\,dw$ is $W(\G,\h)$-invariant as a function of $x$, too.
The second equality in the statement of the lemma then follows from the skew-symmetry of $\pi_{\g/\h}$ and the $W(\G,\h)$-invariance of $\kappa$, which is a consequence of Lemma \ref{kappa} below.
\end{proof}
Since any element $x\in \g$, viewed as an endomorphism of $\Vv$ over $\R$, has imaginary eigenvalues which come in complex conjugate pairs, we have $\det(1-x)_{\Vv_\R}\geq 1$.   
%Here the subscript $\Vv_\R$ indicates that $x$ is viewed as an endomorphism of $\Vv$ over $\R$.
Define
\begin{equation}\label{ch}
\ch(x)=\det(1-x)_{\Vv_\R}^{1/2} \qquad (x\in\g)\,.
\end{equation}
Recall the symbols $r$ and $\iota$ from \eqref{number r 10} and \eqref{eq:iota}.

\begin{lem}\label{kappa}
There is a constant $C$ which depends only on the dual pair $(\G,\G')$ such that
\[
\frac{\kappa(x)}{\kappa_0}
=C\ch^{d'-r-\iota}(x) \qquad (x\in\h)\,.
\]
\end{lem}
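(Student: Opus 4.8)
The plan is to compute each of the three factors in $\kappa(x) = i^{-m}\dfrac{\pi_{\g/\h}(x)}{\Delta(\widehat c_-(x))}\,\Theta(\t c(x))\, j_\g(x)$ explicitly on $\h$, and then check that, after all cancellations, only a constant multiple of a power of $\ch(x) = \det(1-x)_{\Vv_\R}^{1/2}$ survives. All three factors are, up to constants, products over the root data, so the computation naturally splits into a case-by-case analysis over the four possibilities $\G = \Og_{2l}, \Og_{2l+1}, \Ug_l, \Sp_l$, with the understanding that $\h$ is elliptic, so every $x\in\h$ is conjugate to a block-diagonal element $\bigoplus_j x_j J_j$ with $x_j\in\R$ and $\det(1-x)_{\Vv_\R}$ factors as a product of the contributions $\det(1-x_jJ_j)$ over the blocks (plus a trivial factor from $\V_{\overline 0}^{0,0}$ in the odd orthogonal case).

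First I would record the Jacobian $j_\g(x)$ of the Cayley transform from Appendix \ref{appenA}: on $\g$ it is a power of $\det(1-x)$ (over the appropriate field), and restricted to $\h$ it contributes a factor of the form $\prod_j (1+x_j^2)^{-N}$ for the relevant exponent $N$ determined by $\dim\g$ and the division algebra — this is where $r$ (via \eqref{number r 10}) enters. Next I would compute $\Theta(\t c(x))$ using \eqref{metaplectic group} and \eqref{Tt}: since $\Theta(\t g)=\xi$ and $\xi^2 = i^{\dim(g-1)\Wv}\det(J_g)_{J_g\Wv}^{-1}$ with $g = c(x)$, one gets $\Theta(\t c(x))^2$ as an explicit rational function of $x$, again a product over the blocks; on $\h$ the operator $c(x)-1 = 2(x-1)^{-1}$ is invertible on all of $\Wv$, so $\det(J_{c(x)})_{\Wv}$ can be computed directly as $\det$ of $J^{-1}(c(x)-1) = 2J^{-1}(x-1)^{-1}$ acting on $\Wv = \Hom_\Dc(\V_{\overline 1},\V_{\overline 0})$, i.e. as a power of $\det(1-x)$ times a constant. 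Finally, for the Weyl-denominator factor, I would use Weyl's character formula data: evaluate $\Delta(\widehat c_-(x)) = \xi_\rho(\widehat c_-(x))\prod_{\alpha>0}(1-\xi_{-\alpha}(\widehat c_-(x)))$ by noting that on the torus $\widehat c_-(x)$ is the element with eigenvalue parameters coming from $c_-(x_j) = (1+x_j)/(1-x_j)$ on each block, so each factor $1-\xi_{-\alpha}$ becomes an explicit rational function of the $x_j$; the Vandermonde-type polynomial $\pi_{\g/\h}(x) = \prod_{\alpha>0}\alpha(x)$ in the numerator is designed to cancel the "long" parts of $\Delta$, leaving only a product of the one-dimensional factors $(1-x_j)^{\pm}$ or $(1+x_j^2)^{\pm}$, together with the $i^{-m}$ and $\xi_\rho$ contributions which account for the fractional shift by $\iota$.

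Assembling the three pieces, one should see that all the $x$-dependence of the form $\prod_j(1+x_j^2)$ collects into a single power, and since $\det(1-x_jJ_j)$ equals $1+x_j^2$ when $\dim_\R\V^j_{\overline 0} = 2$ (and equals $1-x_j$ or $1+x_j$ for the one-real-dimensional pieces, which pair up correctly), this power of $\prod_j(1+x_j^2)$ is exactly a power of $\det(1-x)_{\Vv_\R}$, i.e. a power of $\ch(x)^2$. Bookkeeping the exponent: the numerator contributes $d'$-worth of factors (one $(1-x)^{-1}$-type factor per $\Dc$-dimension of $\V_{\overline 1}$, from $\Wv = \Hom_\Dc(\V_{\overline 1},\V_{\overline 0})$ in $\Theta\circ\t c$), the Jacobian and Weyl-denominator contribute $-r$-worth and an extra $-\iota$ shift (the half-integer corrections visible in \eqref{number r 1} when $\G = \Sp_l$ and the $\xi_\rho$ double-cover issues when $\G = \Og_{2l+1}$ or $\G = \Ug_l$ with $l$ even), so the total exponent is $d' - r - \iota$, matching the claim. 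I would then separately but quickly verify the two anomalous cases $\G = \Og_{2l+1}$ and $\G = \Ug_l$ ($l$ even), where $\Delta$ and $\xi_\rho$ are only defined on the double cover $\widehat{\wt\H}$ via \eqref{widehatc}--\eqref{widehatcUodd}; there the square root $\ch(x)$ genuinely needs the extra cover, but the final formula is unchanged because the sign/branch choices enter $\kappa$ only through the constant $C$.

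The main obstacle I expect is bookkeeping the half-integer exponents and the square-root ambiguities consistently: keeping track of which determinant is taken over $\Dc$ versus over $\R$, getting the power of $i$ in $\Theta(\t c(x))$ right (this is genuinely delicate — it is where the metaplectic cocycle lives), and confirming that the $\xi_\rho$ factor combines with $i^{-m}$ to produce precisely the $-\iota$ correction rather than $-\iota$ off by an integer. The cleanest way to sidestep sign pitfalls is to prove the identity first for $\kappa(x)^2$ (a rational function identity, hence checkable block-by-block on $\h$ with no branch issues) to pin down $|C|$ and the exponent $d'-r-\iota$ up to the question of whether it should be that or some other integer-shifted value, and only then resolve the global sign/branch by evaluating both sides at a single convenient point such as $x = 0$ (where $\widehat c_-(0)$ is the identity of the cover, $\Delta$ reduces to $\prod_{\alpha>0}(1-\xi_{-\alpha})$ at the identity — which is $0$ — so one instead uses the leading term, i.e. compares the two sides as $x\to 0$ using that $\pi_{\g/\h}(x)/\Delta(\widehat c_-(x))$ has a finite nonzero limit).
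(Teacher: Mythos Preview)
Your proposal is correct and follows essentially the same strategy as the paper: compute each of the three factors $\pi_{\g/\h}(x)/\Delta(\widehat c_-(x))$, $\Theta(\t c(x))$, and $j_\g(x)$ separately as constant multiples of powers of $\ch(x)$, then multiply. The paper packages the first of these as the single identity $\pi_{\g/\h}(x)=C\,\Delta(\widehat c_-(x))\,\ch^{r-\iota}(x)$ (proved in Appendix~\ref{appenB}), observes $\Theta(\t c(x))=C'\,\ch^{d'}(x)$ via $\det(1-x)_\Wv=\det(1-x)_{\Vv_\R}^{d'}$, and takes $j_\g(x)=C''\,\ch^{-2r}(x)$ from Appendix~\ref{appenA}, so the arithmetic $r-\iota+d'-2r=d'-r-\iota$ is immediate; your case-by-case plan over the four compact groups and your suggestion to verify $\kappa(x)^2$ first are exactly how those appendix computations are carried out.
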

\begin{proof}
Recall \cite[Lemma 5.7]{PrzebindaUnitary} that $\pi_{\g/\h}(x)$ is a constant multiple of $\Delta(\widehat c_-(x)) \ch^{r-\iota}(x)$,
\begin{equation}\label{pianddelta}
\pi_{\g/\h}(x)=C \Delta(\widehat c_-(x)) \ch^{r-\iota}(x)\,.
\end{equation}
For the orthogonal groups this is verified in Appendix \ref{appenB}.
It is easy to compute from \cite[Definition 4.16]{AubertPrzebinda_omega}, that
\begin{equation}\label{eq:tildec0} 
\Theta(\t c(x))^2=i^{\dim \Wv} \det\big(2^{-1}(x-1)\big)_\Wv
\qquad (x\in \sp(\Wv)\,,\ \det(x-1)\ne 0)\,.
\end{equation}
Hence there is a choice of $\t c$ so that
\begin{equation}
\label{eq:tildec1}
\Theta(\t c(x))=\left(\frac{i}{2}\right)^{\frac{1}{2}\dim \Wv} \det\big(1-x\big)_\Wv^{\frac{1}{2}} 
\qquad (x\in \g)\,.
\end{equation}
Furthermore, since the symplectic space may be realized as $\Wv=\Hom_\Bbb D(\Vv',\Vv)$, see \eqref{eq:WasHom},  we obtain that
\begin{equation}\label{eq:tildec2}
\det\big(1-x\big)_\Wv=\det(1-x)_{\Vv_\R}^{d'}\qquad (x\in \g)\,.
\end{equation}
Also, as checked in \cite[(3.11)]{PrzebindaUnipotent}, the Jacobian of $\t c_-:\g\to\G$ is a constant multiple of $\ch^{-2r}(x)$. (For reader's convenience a --slightly different-- proof is included in Appendix \ref{appenA}.)
Hence the claim follows.
\end{proof}
\begin{cor}\label{general formula for the int distr00}
For any $\phi\in \Ss(\Wv)$
\begin{equation*}
\int_{-\G^0}\check\Theta_\Pi(\t g) T(\t g)(\phi)\,dg
=C\, \kappa_0 \check\chi_\Pi(\t c(0)) \int_\h\xi_{-\mu}(\widehat {c}_-(x))\ch^{d'-r-\iota}(x)
\pi_{\g/\h}(x)\int_\Wv \chi_x(w)\phi^\G(w)\,dw\,dx\,,
\end{equation*}
where 
$C$ is a constant which depends only on the dual pair $(\G,\G')$, $\phi^\G$ is as in \eqref{phiG}, and each consecutive integral is absolutely convergent.
\end{cor}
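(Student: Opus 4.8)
The plan is to deduce the statement directly from the two preceding lemmas, with no new analytic input. First I would apply the second equality of Lemma~\ref{general formula for the int distr}, which already gives, for any $\phi\in\Ss(\Wv)$,
\[
\int_{-\G^0}\check\Theta_\Pi(\t g) T(\t g)(\phi)\,dg=\check\chi_\Pi(\t{c}(0))\int_\h \xi_{-\mu}(\widehat c_-(x))\, \kappa(x)\,\pi_{\g/\h}(x)\int_\Wv \chi_x(w)\phi^\G(w)\,dw\,dx,
\]
with $\phi^\G$ as in \eqref{phiG} and each consecutive integral absolutely convergent. Next I would substitute the evaluation of the auxiliary function $\kappa$ furnished by Lemma~\ref{kappa}, namely $\kappa(x)=C'\,\ch^{d'-r-\iota}(x)$ for a constant $C'$ depending only on the dual pair $(\G,\G')$. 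Finally, since $\t c(0)$ lies in the center $\wt\Zg$ of $\wt\Sp(\Wv)$ and $\chi_\Pi$ is the central character of $\Pi$, the scalar $\check\chi_\Pi(\t c(0))$ is a fixed nonzero constant; absorbing it together with $C'$ into a single constant $C$ produces exactly the asserted identity. Absolute convergence of the consecutive integrals is inherited verbatim from Lemma~\ref{general formula for the int distr}, since multiplication by the fixed constants $C'$ and $\check\chi_\Pi(\t c(0))$ does not affect it.

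The one point I would be careful to state is that, as in Lemma~\ref{general formula for the int distr}, the Lebesgue measure on $\h$ is normalized so that the Weyl integration formula on $\g$ applies without extra constants; the constant $C$ then depends on that normalization, but since only the existence of some $C$ is claimed, this is harmless. I do not expect any genuine obstacle here: all the substantive work --- the passage from the integral over $-\G^0$ to an integral over $\g$ by the Cayley transform, the Weyl integration formula on $\g$, and the explicit computation of $\kappa$ via \eqref{pianddelta}, \eqref{eq:tildec1}, \eqref{eq:tildec2} and the Jacobian of $\t c_-$ --- has already been done in Lemmas~\ref{general formula for the int distr} and~\ref{kappa}, and the corollary is simply their formal combination.
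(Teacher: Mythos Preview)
Your proposal is correct and matches the paper's approach exactly: the corollary is stated without proof precisely because it is the immediate combination of Lemma~\ref{general formula for the int distr} (second equality) with the evaluation $\kappa(x)=C'\ch^{d'-r-\iota}(x)$ from Lemma~\ref{kappa}, absorbing $\check\chi_\Pi(\t c(0))$ into the constant.
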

\section{\bf An intertwining distribution in terms of orbital integrals on the symplectic space}
\label{An intertwining distribution in terms of orbital integrals on the symplectic space}

We keep the notation introduced in section \ref{section:dualpairs-supergroups}.
Let
\begin{equation}\label{classical weyl group 1.1.1}
W(\G,\h(\g))=\begin{cases}
\Sigma_l &  \text{if $\Dc=\C$}\,,\\
\Sigma_l\ltimes \{\pm 1\}^l & \text{otherwise\,.}
\end{cases}
\end{equation}
Denote the elements of $\Sigma_l$ by $\eta$ and the elements of 
$\{\pm 1\}^l$ by $\epsilon=(\epsilon_1,\epsilon_2,\dots, \epsilon_l)$, so that an arbitrary element of the group (\ref{classical weyl group 1.1.1}) is of the form $t=\epsilon\eta$, 
with $\epsilon=(1,1,\dots, 1)$, if $\Dc=\C$. This group acts on $\h(\g)$, see \eqref{h(g)}, as follows: for $t=\epsilon\eta$,
\begin{equation}\label{classical weyl group action}
t
\Big(\sum_{j=1}^ly_jJ_j\Big)=\sum_{j=1}^l \epsilon_j y_{\eta^{-1}(j)}J_j\,.
\end{equation}
As indicated by the notation, $W(\G,\h(\g))$ coincides with the Weyl group, equal to the quotient of the normalizer of $\h(\g)$ in $\G$ by the centralizer of $\h(\g)$ in $\G$.

The action of $W(\G,\h(\g))$ on $\h(\g)$ extends by duality to $i\h(\g)^*$. 
More precisely, let $e_j$ be as in \eqref{eq:ej}. If $\mu\in i\h(\g)^*$, then 
$\mu=\sum_{j=1}^l \mu_j e_j$ with all $\mu_j\in \R$. 
If $t=\epsilon\eta
\in W(\G,\h(\g))$, then 
\begin{equation}\label{dual weyl group action}
t\Big(\sum_{j=1}^l\mu_je_j\Big)=\sum_{j=1}^l \epsilon_j 
 \mu_{\eta^{-1}(j)}e_j\,.
\end{equation}
Recall the notation of Lemma \ref{general formula for the int distr} and the symbol $\delta$ from \eqref{eq:delta,beta}.
\begin{lem}\label{ximuchexplicit}
The following formulas hold for any $y=\sum_{j=1}^ly_jJ_j\in\h(\g)$,
\begin{equation}
\label{eq:ximuWeyl}
\xi_{-\mu}(\widehat{c}_-(ty))= \xi_{-t^{-1}
\mu}(\widehat{c}_-(y)) \qquad (t\in  W(\G,\mathfrak{h}(\g)))
\end{equation}
and
\begin{eqnarray}\label{ximuchexplicit1}
\xi_{-\mu}(\widehat{c}_-(y))\ch^{d'-r-\iota}(y)
&=& \prod_{j=1}^l (1+iy_j)^{\mu_j+\delta-1}
(1- iy_j)^{-\mu_j+\delta-1}\,,
\end{eqnarray}
where all the exponents are integers:
\begin{equation}\label{ximuchexplicit2}
\pm \mu_j+\delta\in \Bbb Z \qquad (1\leq j\leq l)\,.
\end{equation}
In particular, (\ref{ximuchexplicit1}) is a rational function in the variables $y_1$, $y_2$, \dots, $y_l$.
\end{lem}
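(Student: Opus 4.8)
The plan is to prove the explicit formula \eqref{ximuchexplicit1} by a direct computation exploiting the product decomposition of the Cartan subspace, to deduce the integrality \eqref{ximuchexplicit2} from the structure of the covering $\wt\G\to\G$ together with the parametrization of the representations occurring in $\omega|_{\wt\G}$, and finally to read off the Weyl-equivariance \eqref{eq:ximuWeyl} from \eqref{ximuchexplicit1}.

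First I would use that, by \eqref{h(g)}, $\h(\g)=\bigoplus_{j=1}^{l}\R J_j$, where each $J_j$ is a complex structure supported on a summand of $\Vv_\R$ of real dimension $2/\iota$, these summands being mutually orthogonal, with $J_j$ acting as $0$ on their orthogonal complement. Hence both the modified Cayley transform $c_-(y)=(1+y)(1-y)^{-1}$ and $\ch(y)=\det(1-y)_{\Vv_\R}^{1/2}$ factor through this decomposition, and it suffices to treat one index at a time. Since $J_j^2=-1$ on its summand, the restriction of $c_-(y)$ there is $(1+y_jJ_j)(1-y_jJ_j)^{-1}$, which acts on the $(\pm i)$-eigenspace of $J_j$ by the scalar $\frac{1\pm iy_j}{1\mp iy_j}=e^{\pm 2i\arctan y_j}$; consequently $c_-(y)=\exp_\G\bigl(\sum_{j}2\arctan(y_j)\,J_j\bigr)$. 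Since the analytic section entering the definitions \eqref{widehatc}, \eqref{widehatcUodd} of $\widehat c_-$ can be chosen through the identity of the relevant (connected) cover, and two continuous lifts over the contractible base $\h$ agreeing at $0$ coincide, $\widehat c_-(y)$ is the exponential in that cover of the same element $\sum_{j}2\arctan(y_j)\,J_j$.

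Next I would evaluate the two factors on the left of \eqref{ximuchexplicit1}. As $\widehat c_-(y)$ is an exponential, $\xi_{-\mu}(\widehat c_-(y))=\exp\bigl(-\mu\bigl(\sum_{j}2\arctan(y_j)J_j\bigr)\bigr)$; inserting $\mu=\sum_j\mu_je_j$ with $e_j=-iJ_j^*$ (see \eqref{eq:ej}) turns this into $\prod_{j}(1+iy_j)^{-\mu_j}(1-iy_j)^{\mu_j}$. On the other hand $y=\sum_jy_jJ_j$ acts on the summand carrying $J_j$ with eigenvalues $\pm iy_j$, each of multiplicity $1/\iota$, and trivially on the rest of $\Vv_\R$, so $\ch(y)=\prod_j(1+y_j^2)^{1/(2\iota)}$. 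Using $\delta-1=\tfrac1{2\iota}(d'-r-\iota)$ from \eqref{eq:delta,beta}, this gives $\ch^{d'-r-\iota}(y)=\prod_j\bigl((1+iy_j)(1-iy_j)\bigr)^{\delta-1}$, and multiplying the two expressions yields \eqref{ximuchexplicit1}.

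Finally, for \eqref{ximuchexplicit2} set $a_j=-\mu_j+\delta-1$ and $b_j=\mu_j+\delta-1$, so $a_j+b_j=2(\delta-1)$ and $a_j-b_j=-2\mu_j$. From the explicit values of $r$ in \eqref{number r 1} and the definition of $\delta$ one checks $2(\delta-1)\in\Ze$ in each of the four cases, and $2\mu_j\in\Ze$ since $\mu=\rho+\lambda$ with $2\rho$ integral and $2\lambda$ integral ($\lambda$ being a weight of the degree-two cover $\wt\G$ of $\G$); thus $a_j,b_j\in\tfrac12\Ze$, and integrality reduces to $\delta-\mu_j\in\Ze$. I would verify this case by case, comparing fractional parts: invoking Proposition \ref{pro:det-covering} for the covering $\wt\G\to\G$ and the known form of the parameters occurring in $\omega|_{\wt\G}$, one gets $\delta\in\Ze$ and $\mu_j\in\Ze$ for $\G=\Og_{2l}$ and $\G=\Sp_l$, $\delta\in\tfrac12+\Ze$ and $\mu_j\in\tfrac12+\Ze$ for $\G=\Og_{2l+1}$, and for $\G=\Ug_l$ the fractional parts of $\delta$ and of $\mu_j$ agree in each parity subcase. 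This proves \eqref{ximuchexplicit2}, and in particular the rationality of the right-hand side of \eqref{ximuchexplicit1}. The Weyl-equivariance \eqref{eq:ximuWeyl} is then formal: $\ch$ is $W(\G,\h)$-invariant, and under $y\mapsto ty$ with $t=\epsilon\sigma$ the product on the right of \eqref{ximuchexplicit1} is sent to the same product with $\mu$ replaced by $t^{-1}\mu$ — a permutation permutes the $j$-factors, while a sign change $y_j\mapsto -y_j$ interchanges the exponents of $(1+iy_j)$ and $(1-iy_j)$, i.e.\ replaces $\mu_j$ by $-\mu_j$, in agreement with \eqref{classical weyl group action} and \eqref{dual weyl group action} — so dividing by the nowhere-vanishing $\ch^{d'-r-\iota}(y)$ gives \eqref{eq:ximuWeyl}. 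The main obstacle I anticipate is the integrality \eqref{ximuchexplicit2}: it is not formal, and it is tied up with the careful bookkeeping of the covers $\widehat{\wt\H}$ (in particular the verification that $\widehat c_-$ is genuinely the cover-exponential, which legitimizes the evaluation of $\xi_{-\mu}$ above); an alternative route, which I would keep in reserve, is to combine \eqref{pianddelta} with the fact that $\check\Theta_\Pi(\t c(x))\Theta(\t c(x))$, being single-valued on $\g$ and a square root of a rational function, is itself rational.
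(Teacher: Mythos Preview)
Your proposal is correct and follows essentially the same approach as the paper: compute $\xi_{-\mu}(\widehat c_-(y))=\prod_j(1+iy_j)^{-\mu_j}(1-iy_j)^{\mu_j}$ and $\ch(y)=\prod_j(1+y_j^2)^{1/(2\iota)}$ directly from the block decomposition of $\h(\g)$, multiply, and then verify the integrality $\pm\mu_j+\delta\in\Ze$ case by case using $\mu=\lambda+\rho$ and the explicit values of $r,\iota,\rho_j,\lambda_j$ for each pair. The only cosmetic difference is that the paper reads off \eqref{eq:ximuWeyl} directly from the product formula for $\xi_{-\mu}(\widehat c_-(y))$ alone, whereas you deduce it from the full identity \eqref{ximuchexplicit1} and then divide by the Weyl-invariant $\ch^{d'-r-\iota}$; both are equivalent.
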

\begin{prf}
By \eqref{ximuc}, 
\begin{equation*}
\xi_{-\mu}(\widehat{c}_-(y))
= \prod_{j=1}^l \left(\frac{1+iy_j}
{1-iy_j}\right)^{\mu_j}
=\prod_{j=1}^l (1+iy_j)^{\mu_j}
(1-iy_j)^{-\mu_j}\,.
\end{equation*}
Hence
(\ref{eq:ximuWeyl}) and (\ref{ximuchexplicit1}) follow from the definition of the action of 
$W(\G,\mathfrak h(\g))$, the definition of $\ch$ in \eqref{ch}, and the following 
easy-to-check formula: 
\begin{eqnarray}\label{ch explicit}
\ch(y)=\prod_{j=1}^l (1+y_j^2)^{\frac{1}{2\iota}}
&=&\prod_{j=1}^l (1+iy_j)^{\frac{1}{2\iota}}(1-iy_j)^{\frac{1}{2\iota}}.
\end{eqnarray}

Let $\lambda=\sum_{j=1}^l\lambda_j e_j$ be the highest weight of the representation $\Pi$ and  
let $\rho=\sum_{j=1}^l\rho_j e_j$ be one half times the sum of the positive roots of $\h(\g)$ in $\g_\C$. If $\mu$ is the Harish-Chandra parameter of $\Pi$, then
$\lambda+\rho=\mu=\sum_{j=1}^l\mu_je_j$; 
see Appendix \ref{appenE} for explicit values.
Hence, the statement (\ref{ximuchexplicit2}) is equivalent to
\begin{equation}\label{compact5}
\lambda_j+\rho_j+\frac{1}{2\iota}(d'-r+\iota)\in \Bbb Z\,.
\end{equation}
Indeed, if $\G=\Og_d$, then with the standard choice of the positive root system, $\rho_j=\frac{d}{2}-j$. Also, $\lambda_j\in\Bbb Z$, $\iota=1$, $r=d-1$. Hence, (\ref{compact5}) follows. Similarly, if $\G=\Ug_d$, then $\rho_j=\frac{d+1}{2}-j$, $\lambda_j+\frac{d'}{2}\in\Bbb Z$, $\iota=1$, $r=d$, which implies (\ref{compact5}). If $\G=\Sp_d$, then $\rho_j=d+1-j$, $\lambda_j\in\Bbb Z$, $\iota=\frac{1}{2}$, $r=d+\frac{1}{2}$, and (\ref{compact5}) follows. 
\end{prf}

Our next goal is to understand the integral 
$$
\pi_{\g/\h}(x)\int_\Wv \chi_x(w)\phi^\G(w)\,dw
$$
occurring in the formula for $\int_{-\G^0}\check\Theta_\Pi(\t g) T(\t g)\,dg$ in 
Lemma \ref{general formula for the int distr} and 
Corollary \ref{general formula for the int distr00}\,, in terms of orbital integrals on the symplectic space 
$\Wv$. The results depend on whether $l\leq l'$ or $l>l'$ and will be given in Lemmas 
\ref{reduction to ss-orb-int} and \ref{reduction to ss-orb-int for l>l'}. 
We first need two other lemmas. 
\begin{lem}\label{lemma:HC's formula moved} 
Fix an element $z\in\h(\g)$. Let $\z\subseteq \g$ and $\Zg\subseteq\G$ denote the centralizer of $z$. (Then $\Zg$ is a real reductive group with Lie algebra $\z$.) Denote by $\c$ the center of $\z$ and by $\pi_{\g/\z}$ the product of the positive roots for $(\g_\C,\h(\g)_\C)$ which do not vanish on $z$. Let $B(\cdot ,\cdot )$ be any non-degenerate symmetric $\G$-invariant real bilinear form on $\g$.
Then there is a constant $C_\z$ 
such that for $x\in \h(\g)$
and $x'\in \c$,
\begin{multline}
\label{HCformula}
\pi_{\g/\h(\g)}(x) \pi_{\g/\z}(x')\int_\G e^{iB(g.x,x')}\,dg\\
=C_\z \sum_{t
W(\Zg,\h(\g))\in W(\G,\h(\g))/W(\Zg,\h(\g))}\sgn_{\g/\h(\g)}(t)\pi_{\z/\h(\g)}(t
^{-1}x)e^{iB(x,t(x'))}.
\end{multline}
(Here  $\pi_{\z/\h(\g)}=1$ if $\z=\h$. Recall also the notation $g.x=gxg^{-1}$.)
\end{lem}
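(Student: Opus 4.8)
The plan is to deduce this from the classical Harish--Chandra integration formula --- the special case in which $z$ is regular, so that $\z=\h(\g)$ --- by a limiting argument, and then to extend the resulting identity by real-analyticity. Before that I would record the structural facts that make the bookkeeping work. Since $z\in\h(\g)$ and $\h(\g)$ is abelian, $\h(\g)\subseteq\z$, so $\h(\g)$ is a Cartan subalgebra of $\z$ and $\c\subseteq\h(\g)$. Moreover $W(\Zg,\h(\g))$ is the stabilizer of $z$ in $W(\G,\h(\g))$, it fixes $\c$ pointwise, and --- by Steinberg's theorem, since $W(\G,\h(\g))$ is a reflection group for each of the classical groups $\G$ in play --- it is generated by the reflections in the roots vanishing on $z$. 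Choosing the positive system of $(\g_\C,\h(\g)_\C)$ compatibly, the positive roots split into those vanishing on $z$ (the positive roots of $\z$) and the remaining ones; hence $\pi_{\g/\h(\g)}=\pi_{\g/\z}\,\pi_{\z/\h(\g)}$ and $\sgn_{\g/\h(\g)}$ restricts to $\sgn_{\z/\h(\g)}$ on $W(\Zg,\h(\g))$.

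Both sides of \eqref{HCformula} are restrictions to $\h(\g)\times\c$ of entire functions, so it suffices to prove the identity for $x\in\reg{\h(\g)}$ and for $x'$ in the dense subset of $\c$ consisting of elements whose $\G$-centralizer is exactly $\Zg$ (equivalently, the roots vanishing on $x'$ are precisely those vanishing on $z$). Fix such $x$ and $x'$, choose $x''\in\reg{\z}\cap\h(\g)$, and set $x'_\epsilon=x'+\epsilon x''$, which lies in $\reg{\h(\g)}$ for all small $\epsilon\ne0$. Applying the Harish--Chandra formula to the pair $(x,x'_\epsilon)$ gives
\[
\pi_{\g/\h(\g)}(x)\,\pi_{\g/\h(\g)}(x'_\epsilon)\int_\G e^{iB(g.x,\,x'_\epsilon)}\,dg
=C\sum_{t\in W(\G,\h(\g))}\sgn_{\g/\h(\g)}(t)\,e^{iB(x,\,tx'_\epsilon)}.
\]
Write $N$ for the number of positive roots of $\z$. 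On the left, $\pi_{\g/\h(\g)}(x'_\epsilon)=\pi_{\g/\z}(x'_\epsilon)\,\pi_{\z/\h(\g)}(x'_\epsilon)$ with $\pi_{\z/\h(\g)}(x'_\epsilon)=\epsilon^{N}\pi_{\z/\h(\g)}(x'')$ (every root of $\z$ vanishes on $x'\in\c$) and $\pi_{\g/\z}(x'_\epsilon)\to\pi_{\g/\z}(x')$, while the integral tends to $\int_\G e^{iB(g.x,x')}\,dg$. On the right, group the sum according to the cosets of $W(\Zg,\h(\g))$; using $t'x'=x'$ for $t'\in W(\Zg,\h(\g))$ and the $W(\G,\h(\g))$-invariance of $B$, the coset of $t$ contributes
\[
\sgn_{\g/\h(\g)}(t)\,e^{iB(x,tx')}\sum_{t'\in W(\Zg,\h(\g))}\sgn_{\z/\h(\g)}(t')\,e^{i\epsilon B(t^{-1}x,\,t'x'')}.
\]
The inner alternating sum is $\sgn_{\z/\h(\g)}$-alternating separately in $t^{-1}x$ and in $x''$, so its Taylor expansion in $\epsilon$ vanishes to order $N$ and its order-$N$ term equals a nonzero universal constant times $\epsilon^{N}\,\pi_{\z/\h(\g)}(t^{-1}x)\,\pi_{\z/\h(\g)}(x'')$ (an alternating polynomial is divisible by the product of the positive roots, the latter being the alternating polynomial of least degree). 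Dividing by $\epsilon^{N}$, letting $\epsilon\to0$, cancelling the nonzero common factor $\pi_{\z/\h(\g)}(x'')$, and collecting $C$ together with the universal constants into a single constant $C_\z$, one obtains \eqref{HCformula}; finally one passes from the generic $x'$ to all of $\c$ and from $\reg{\h(\g)}$ to all of $\h(\g)$ by analyticity.

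The main obstacle is the leading-order analysis of the Weyl-alternating sum: one has to verify that its order-$N$ coefficient is exactly a constant multiple of $\pi_{\z/\h(\g)}(t^{-1}x)$ which depends neither on $t$, nor on $x$, $x'$, nor on the auxiliary vector $x''$ --- this is where Steinberg's theorem and the identity $\sgn_{\g/\h(\g)}|_{W(\Zg,\h(\g))}=\sgn_{\z/\h(\g)}$ are used --- together with careful bookkeeping of the powers of $\epsilon$ and $i$, so that the surviving constant can legitimately be absorbed into $C_\z$. An alternative route, bypassing the limit, is to quote directly the Harish--Chandra/Kirillov formula for the Fourier transform of the $\G$-invariant measure on the (possibly singular) orbit $\G.x'$, at the cost of invoking a less elementary statement.
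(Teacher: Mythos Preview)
Your argument is correct. The paper does not give a detailed proof of this lemma at all: it simply says that the formula is a straightforward modification of Harish--Chandra's computation of the Fourier transform of a regular semisimple orbit \cite[Theorem~2, p.~104]{HC-57DifferentialOperators}, and points to the more general statement \cite[Proposition~34, p.~49]{OrbitesDV} of Duflo--Vergne. So what you have written is strictly more than what the paper provides.

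Your limiting argument---perturbing $x'$ to a regular $x'_\epsilon$, applying the regular Harish--Chandra formula, grouping the Weyl sum by $W(\Zg,\h(\g))$-cosets, and extracting the leading $\epsilon^N$-coefficient via the classical identity $\sum_{t'}\sgn(t')B(u,t'v)^N=c\,\pi_{\z/\h(\g)}(u)\pi_{\z/\h(\g)}(v)$---is exactly one way to make the phrase ``straightforward modification'' precise. The ``alternative route'' you mention at the end, namely to quote directly the Fourier transform of the invariant measure on a possibly singular orbit, is precisely the reference to Duflo--Vergne that the paper invokes. So your two proposed approaches coincide with the two references the paper gives, and your write-up of the first one is sound.
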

\begin{proof}
The proof is a straightforward modification of the argument proving Harish-Chandra's formula for the Fourier transform of a regular semisimple orbit, \cite[Theorem 2, page 104]{HC-57DifferentialOperators}.
A more general, and by now classical, result  is \cite[Proposition 34, p. 49]{OrbitesDV}. 
\end{proof}

The symplectic form $\langle\cdot,\cdot\rangle$ on $\Wv$, according to the Lie superalgebra structure introduced in \eqref{symplectic-structure}, is
\begin{equation}\label{symplectic form 0}
\langle w',w\rangle=\tr_{\Bbb D/\R}(\Sy w'w) \qquad (w',w\in\Wv)\,.
\end{equation}
Hence
\begin{equation}\label{symplectic form}
\langle x(w),w\rangle=\tr_{\Bbb D/\R}(\Sy  xw^2)\qquad (x\in \g\oplus \g'\,,\  w\in \Wv)\,.
\end{equation}
%%
%(See \cite[(2.4')]{PrzebindaLocal}.) 
Set
\begin{equation}\label{the form B}
B(x,y)=\pi\, \tr_{\Bbb D/\R}(xy) \qquad       (x,y\in\g)\,.
\end{equation}

\begin{lem}
Recall the Gaussian $\chi_x$ from \eqref{eq:chicg}. Then
\begin{equation}\label{chix and the form B}
\chi_x(w)=e^{iB(x,\tau(w))} \qquad     (x\in\g, w\in \Wv)\,.
\end{equation}
\end{lem}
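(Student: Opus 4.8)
The plan is to expand $\langle x(w),w\rangle$ by brute force using the trace formula \eqref{symplectic form 0} for the symplectic form together with the $\Ze/2\Ze$-grading of $\V$, and then to match the result with $B(x,\tau(w))=\pi\,\tr_{\Bbb D/\R}(x\tau(w))$ after unwinding the normalizing factors $\tfrac14$ (from the definition \eqref{eq:chicg} of $\chi_x$) and $2\pi$ (from $\chi(r)=e^{2\pi ir}$).

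First I would write $x(w)=\ad(x)(w)=xw-wx\in\ss1$ by \eqref{s0 adjoint action on s}, so that \eqref{symplectic form 0} gives
\[
\langle x(w),w\rangle=\tr_{\Bbb D/\R}\big(\Sy(xw-wx)w\big)=\tr_{\Bbb D/\R}(\Sy x w^2)-\tr_{\Bbb D/\R}(\Sy wxw)\,.
\]
The first summand is exactly $\langle xw,w\rangle$, which by \eqref{symplectic form} equals $\tr_{\Bbb D/\R}(\Sy x w^2)$; and since $x\in\g$ annihilates $\V_{\overline 1}$ while $\Sy$ is the identity on $\V_{\overline 0}$, the operator $xw^2$ is supported on $\V_{\overline 0}$, where it equals $x\tau(w)$, so $\tr_{\Bbb D/\R}(\Sy x w^2)=\tr_{\Bbb D/\R}(x\tau(w))$. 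For the second summand, since $w$ is odd the operator $wxw$ is even, and using $x|_{\V_{\overline 1}}=0$ it is supported on $\V_{\overline 1}$, where $\Sy$ acts by $-1$ and where it equals $(w|_{\V_{\overline 0}})\circ x\circ(w|_{\V_{\overline 1}})$; cyclicity of the trace then moves $w|_{\V_{\overline 0}}$ past $x\circ(w|_{\V_{\overline 1}})$, turning the trace into one over $\V_{\overline 0}$ of $x\circ(w|_{\V_{\overline 1}})\circ(w|_{\V_{\overline 0}})=x\,(w^2|_{\V_{\overline 0}})=x\tau(w)$, whence $\tr_{\Bbb D/\R}(\Sy wxw)=-\tr_{\Bbb D/\R}(x\tau(w))$. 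Adding the two contributions yields $\langle x(w),w\rangle=2\,\tr_{\Bbb D/\R}(x\tau(w))$.

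Substituting into the definitions then finishes the proof: $\chi_x(w)=\chi\big(\tfrac14\cdot 2\,\tr_{\Bbb D/\R}(x\tau(w))\big)=\chi\big(\tfrac12\tr_{\Bbb D/\R}(x\tau(w))\big)=e^{\pi i\,\tr_{\Bbb D/\R}(x\tau(w))}=e^{iB(x,\tau(w))}$. The argument is routine; the only delicate point is the trace bookkeeping for $\tr_{\Bbb D/\R}(\Sy wxw)$, where one must track on which graded summand of $\V$ each factor is supported (and hence the sign contributed by $\Sy$) before applying cyclicity. A marginally slicker variant is to note that $\langle\{x,w\},w\rangle=0$ for $x\in\so$ and $w\in\ss1$, so that $\langle wx,w\rangle=-\langle xw,w\rangle$ and one only needs \eqref{symplectic form}; but proving this vanishing requires essentially the same bookkeeping, so it does not save real work.
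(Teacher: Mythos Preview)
Your proof is correct and follows essentially the same route as the paper: both arguments compute $\langle x(w),w\rangle$ by expanding $\tr_{\Bbb D/\R}(\Sy\,\cdot\,)$ on the $\Ze/2\Ze$-graded space, using cyclicity of the trace together with the sign contributed by $\Sy$ to show $\langle x(w),w\rangle=2\,\tr_{\Bbb D/\R}(x\tau(w))$. The only organizational difference is that the paper first establishes the antisymmetry $\langle xw,w\rangle=-\langle wx,w\rangle$ for general $x\in\g\oplus\g'$ and then specializes to $x\in\g$, whereas you work with $x\in\g$ (so $x|_{\V_{\overline 1}}=0$) from the outset and reduce each summand directly to $\pm\tr_{\Bbb D/\R}(x\tau(w))$.
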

\begin{proof}
Notice that, for $x\in \g\oplus\g'$ and $w\in \Wv$,
$$
\tr_{\Bbb D/\R}(\Sy  xw^2)=
\tr_{\Bbb D/\R}(xw^2|_{\V_{\overline 0}})-\tr_{\Bbb D/\R}(xw^2|_{\V_{\overline 1}})\,,
$$
where
$$
\tr_{\Bbb D/\R}(xw^2|_{\V_{\overline 0}})=
\tr_{\Bbb D/\R}(x|_{\V_{\overline 0}}
w|_{\V_{\overline 1}}w|_{\V_{\overline 0}})
=\tr_{\Bbb D/\R}(w|_{\V_{\overline 0}}
x|_{\V_{\overline 0}}
w|_{\V_{\overline 1}}))
=\tr_{\Bbb D/\R}(wxw|_{\V_{\overline 1}})
$$
and similarly 
$$
\tr_{\Bbb D/\R}(xw^2|_{\V_{\overline 1}})=\tr_{\Bbb D/\R}(wxw|_{\V_{\overline 0}})\,.
$$
Hence
$$
\langle xw,w\rangle=\tr_{\Bbb D/\R}(\Sy  xw^2)=-\tr_{\Bbb D/\R}(\Sy  wxw)=-\langle wx,w\rangle\,.
$$
Therefore
\begin{equation}\label{symplectic form0}
\langle x(w),w\rangle=2\tr_{\Bbb D/\R}(\Sy  xw^2)\qquad (x\in \g\oplus \g'\,,\  w\in \Wv)\,.
\end{equation}
Then \eqref{symplectic form} and (\ref{unnormalized moment maps}) show that
\[
\frac{\pi}{2}\langle x(w),w\rangle=B(x,\tau(w)) \qquad (x\in \g\,,\ w\in \Wv)\,,
\]
which completes the proof.
\end{proof}

The Harish-Chandra regular almost semisimple orbital integral $F(y)$, $y\in\h$,  was defined in
\cite[Definition 3.2 and Theorems 3.3 and 3.5]{McKeePasqualePrzebindaWCestimates};
see also section \ref{section:orbital integrals} above. 
In particular, 
\cite[Theorem 3.5]{McKeePasqualePrzebindaWCestimates} implies that, in the statements below, all the integrals over $\h$ involving $F(y)$ are absolutely convergent. Recall the notation $F_\phi(y)$ for $F(y)(\phi)$.
\begin{lem}\label{reduction to ss-orb-int}
Suppose $l\leq l'$. Then, with the notation of Lemma \ref{general formula for the int distr},
\[
\pi_{\g/\h}(x)\int_\Wv\chi_x(w)\phi^\G(w)\,dw = C \int_{\h\cap \tau(\Wv)}
e^{iB(x,y)} F_{\phi}(y)\,dy\,,
\]
where $C$ is a non-zero constant which depends on the dual pair $(\G,\G')$.
\end{lem}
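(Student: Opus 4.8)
The plan is to reduce the left-hand side to an orbital integral by interchanging the order of integration and then invoking the formula for the Fourier transform of a regular semisimple orbit from Lemma~\ref{lemma:HC's formula moved}. First I would use the identity $\chi_x(w)=e^{iB(x,\tau(w))}$ from \eqref{chix and the form B} to rewrite
\[
\pi_{\g/\h}(x)\int_\Wv\chi_x(w)\phi^\G(w)\,dw=\pi_{\g/\h}(x)\int_\Wv e^{iB(x,\tau(w))}\phi^\G(w)\,dw\,,
\]
so that the Gaussian is expressed entirely in terms of the moment map $\tau$ and the $\G$-invariant bilinear form $B$ on $\g$. Since $\phi^\G$ is $\G$-invariant and $\tau$ intertwines the adjoint action of $\G$ on $\Wv$ with the adjoint action on $\g$, the integrand is constant on $\G$-orbits in $\Wv$, which is what makes a passage to an integral over $\h$ possible.

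Next I would push forward the measure $\phi^\G(w)\,dw$ along $\tau$. Because $l\le l'$, the map $\tau$ is (generically) submersive onto the elliptic part of $\g$ meeting $\tau(\Wv)$, and the regular almost semisimple orbital integral $F_\phi(y)$ of \cite[Definition~3]{McKeePasqualePrzebindaWCestimates} is precisely the density obtained by integrating $\phi$ (or equivalently $\phi^\G$, after averaging over $\G$) over the fibers of $\tau$ lying over $y\in\reg{\h}\cap\tau(\Wv)$, up to the Jacobian factor $\pi_{\g/\h}$ that is built into its definition. Concretely, using the Weyl integration formula on $\g$ together with the fiber-integration formula for $\tau$, one gets
\[
\int_\Wv e^{iB(x,\tau(w))}\phi^\G(w)\,dw=C'\int_{\h\cap\tau(\Wv)}\Big(\int_\G e^{iB(x,g.y)}\,dg\Big)\,\pi_{\g/\h}(y)\,|\pi_{\g/\h}(y)|\,(\text{fiber integral})\,dy\,,
\]
and the fiber integral, multiplied by the appropriate power of $\pi_{\g/\h}$, is exactly $F_\phi(y)$ (the absolute convergence of the resulting iterated integral is guaranteed by \cite[Theorem~7]{McKeePasqualePrzebindaWCestimates}). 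Then I would apply Lemma~\ref{lemma:HC's formula moved} with $\z=\h(\g)$ (so $\pi_{\z/\h(\g)}=1$) to evaluate $\pi_{\g/\h}(x)\pi_{\g/\h}(y)\int_\G e^{iB(g.x,y)}\,dg$ as a Weyl-antisymmetrized sum of exponentials $\sum_{t\in W(\G,\h)}\sgn_{\g/\h}(t)e^{iB(x,ty)}$. Because $F_\phi$ is $W(\G,\h)$-invariant (being an orbital integral) while the extra sign factor $\sgn_{\g/\h}(t)$ can be absorbed by the corresponding antisymmetry of $\pi_{\g/\h}(y)$, the sum over $t$ collapses to a single term up to an overall constant, yielding $C\int_{\h\cap\tau(\Wv)}e^{iB(x,y)}F_\phi(y)\,dy$.

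The main obstacle I anticipate is the bookkeeping in the change of variables $w\mapsto\tau(w)$: one must correctly track the Jacobian of $\tau$ restricted to a slice transverse to the $\G$-orbits, reconcile it with the factor $\pi_{\g/\h}$ appearing in the definition of $F_\phi$ in \cite{McKeePasqualePrzebindaWCestimates}, and verify that the combination is a genuine constant (independent of $y$ and of $\phi$), depending only on the pair $(\G,\G')$. This is where the hypothesis $l\le l'$ is essential — it ensures $\tau$ surjects onto the relevant Cartan subalgebra with full-dimensional generic fibers, so that $F_\phi$ is a smooth function on $\reg{\h}\cap\tau(\Wv)$ rather than a more singular object; the case $l>l'$ is genuinely different and is treated separately in Lemma~\ref{reduction to ss-orb-int for l>l'}. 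A secondary technical point is justifying the interchange of the $\G$-average, the $\Wv$-integral, and the $\h$-integral, but this follows from the rapid decay of $\phi$ together with the convergence statements already established in \cite[Theorem~7]{McKeePasqualePrzebindaWCestimates}.
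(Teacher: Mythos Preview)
Your overall strategy---reduce to an integral over $\h$ and apply Harish-Chandra's formula for the Fourier transform of a regular orbit---is the right one, but two points deviate from the paper's argument in ways that matter.

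First, the integral over $\Wv$ is not broken up via the Weyl integration formula on $\g$ combined with a separate fiber integration for $\tau$. Instead, the paper uses the Weyl integration formula on the symplectic space $\Wv$ itself (\cite[Theorem~21]{McKeePasqualePrzebindaSuper}, \cite[(16),(23)]{McKeePasqualePrzebindaWCestimates}), which writes $\int_\Wv \chi_x\phi^\G\,dw$ as a sum over the Cartan subspaces $\hs1\subseteq\ss1$ of integrals of orbital integrals $\mu_{\Oo(w)}(\chi_x\phi^\G)$ over $\Sg/\Sg^{\hs1}$, weighted by $\pi_{\g/\h}(\tau(w))\pi_{\g'/\z'}(\tau(w))$. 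Writing $s=gg'$ with $g\in\G$, $g'\in\G'$, one has $\chi_x(s.w)=e^{iB(x,g.y)}$ and $\phi^\G(s.w)=\phi^\G(g'.w)$; since $l\le l'$ gives $\Sg^{\hs1}=\H\times\Zg'$, the orbital integral factors as $\big(\int_\G e^{iB(x,g.y)}\,dg\big)\cdot\big(\int_{\G'/\Zg'}\phi^\G(g'.w)\,d(g'\Zg')\big)$. The second factor, together with the weight $\pi_{\g'/\z'}$, is precisely what builds $F_\phi$; the first is where Lemma~\ref{lemma:HC's formula moved} applies. Your ``fiber integral of $\tau$'' description obscures this $\G\times\G'$ factorization and the role of $\pi_{\g'/\z'}$, and does not account for the sum over non-conjugate Cartan subspaces $\hs1$ (there are several when $\Dc=\C$).

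Second, $F_\phi$ is not $W(\G,\h)$-invariant: it transforms by the sign character, $F_\phi(t.y)=\sgn_{\g/\h}(t)F_\phi(y)$. This is exactly what is used to pass from $\sum_{t}\sgn_{\g/\h}(t)\int_{\bigcup_{\hs1}\tau(\hs1^+)}e^{iB(x,t.y)}F_\phi(y)\,dy$ to a single integral over $W(\G,\h)\big(\bigcup_{\hs1}\tau(\hs1^+)\big)=\h\cap\tau(\Wv)$; there is no leftover $\pi_{\g/\h}(y)$ to absorb the sign. Finally, note that the paper also treats the almost-semisimple orbits separately (the case $\G=\Og_{2l+1}$, $l=l'$), where $\mu_{\Oo(w)}$ involves a nilpotent piece $w_0$; your outline does not address this.
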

\begin{proof}
The 
Weyl--Harish-Chandra integration formula on $\Wv$, 
see \eqref{weyl int on w 1}, \eqref{weyl int on w 2} and \eqref{products of roots in so}, 
shows that
\begin{equation}
\label{WIF-1}
\int_\Wv\chi_x(w)\phi^\G(w)\,dw =\sum_{\hs1}\int_{\tau(\hs1^+)}\pi_{\g/\h}(\tau(w))\pi_{\g'/\z'}(\tau(w))C(\hs1)
\mu_{\Oo(w), \hs1}(\chi_x\phi^\G)\,d\tau(w)\,,
\end{equation}
where 
$\hs1^+\subseteq \reg{\hs1}$ is an open fundamental domain for the action of the Weyl group $W(\Sg,\hs1)$ and
$C(\hs1)$ is a constant, determined in \cite[Lemma 2.1]{McKeePasqualePrzebindaWCestimates}.
Let us consider first the case of a semisimple orbital integral
\[
\mu_{\Oo(w), \hs1}(\chi_x\phi^\G)=\int_{\Sg/\Sg^{\hs1}}(\chi_x\phi^\G)(s.w)\,d(s\Sg^{\hs1})\,,
\]
where $\Sg^{\hs1}$ is the centralizer of $\hs1$ in $\Sg$. 
Recall the identification $y=\tau(w)=\tau'(w)$ and let us write $s=gg'$, where $g\in\G$ and $g'\in\G'$. Then
\begin{equation}
\label{chix}
\chi_x(s.w)=e^{i\frac{\pi}{2}\langle x(s.w), s.w\rangle} = e^{iB(x, \tau(s.w))}=e^{iB(x, g.\tau(w))}=e^{iB(x, g.y)}
\end{equation}
and
\begin{equation}
\label{phiGs}
\phi^\G(s.w)=\phi^\G(g'.w)\,.
\end{equation}
Since $l\leq l'$, equation \eqref{integralonS/Sh1-1} below implies that 
there is a positive constant $C_1$ such that
\[
\mu_{\Oo(w),\hs1}(\chi_x\phi^\G)=C_1\int_\G e^{iB(x,g.y)}\,dg \int_{\G'/\Zg'}\phi^\G(g'.w)\,d(g'\Zg')\,.
\]
However we know from Harish-Chandra (Lemma \ref{lemma:HC's formula moved}) that
\begin{eqnarray*}\label{Harish-Chandra's formula for the fourier transform of orbital integral}
\pi_{\g/\h}(x) \left(\int_\G e^{iB(x,g.y)}\,dg\right)\pi_{\g/\h}(y)
=C_2 \sum_{t\in W(\G,\h)}
\sgn_{\g/\h}(t)e^{iB(x,t.y)}\,.
\end{eqnarray*}
Hence, using \eqref{WIF-1} and \cite[Definition 3.2 and Lemma 3.4]{McKeePasqualePrzebindaWCestimates}, we obtain for some suitable positive constants $C_k$, 
\begin{eqnarray}\label{proof in the first case}
\pi_{\g/\h}(x)&&\hskip -1cm \int_\Wv\chi_x(w)\phi^\G(w)\,dw \\
&=& C_3\sum_{t\in W(\G,\h)}\sgn_{\g/\h}(t)\sum_{\hs1} \int_{\tau(\hs1^+)}e^{iB(x,t.y)} C(\hs1)\pi_{\g'/\z'}(y) \int_{\G'/\Zg'}\phi^\G(g'.w)\,d(g'\Zg')\,dy\nn\\
&=& C_4\sum_{t\in W(\G,\h)}\sgn_{\g/\h}(t)\int_{\bigcup_{\hs1}\tau(\hs1^+)}e^{iB(x,t.y)} F_{\phi^\G}(y)\,dy\nn\\
&=& C_4\sum_{t\in W(\G,\h)}\int_{\bigcup_{\hs1}\tau(\hs1^+)}e^{iB(x,t.y)} F_{\phi^\G}(t.y)\,dy\nn\\
&=& C_4\int_{W(\G,\h)(\bigcup_{\hs1}\tau(\hs1^+))}e^{iB(x,y)} F_{\phi^\G}(y)\,dy\nn\\
&=& C_4 \int_{\h\cap \tau(\Wv)}e^{iB(x,y)} F_{\phi^\G}(y)\,dy\,.\nn
\end{eqnarray}
Since $F_{\phi^\G}=\vol(\G) F_{\phi}=F_{\phi}$, the formula follows.

Next we consider the case $\G=\Og_{2l+1}$, $\G'=\Sp_{2l'}(\R)$, $l<l'$. Then
\begin{eqnarray*}
\mu_{\Oo(w)}(\chi_x\phi^\G)=\int_{\Sg/\Sg^{\hs1+w_0}}(\chi_x\phi^\G)(s.(w+w_0))\,d(s\Sg^{\hs1+w_0})\,,
\end{eqnarray*}
where $w_0\in \mathfrak{s}_1(\V^0)$ is a nonzero element.
Since the Cartan subspace $\hs1$ preserves the decomposition \eqref{decomposition of space for a cartan subspace}, $(w+w_0)^2=w^2+w_0^2$. Hence, 
$(s.(w+w_0))^2=s.(w^2+w_0^2)$. The element  $x\in\h$ acts by zero on $\g'$. Therefore  $x(s.(w+w_0))^2=x(s.(w+w_0))^2|_{\V_{\overline 0}}$. Since $\Sg(\V^0)=\Og_1\times\Sp_{2(l'-l)}(\R)$ we see that
$w_0^2|_{\V_{\overline 0}}=0$. Thus $xs.w_0^2|_{\V_{\overline 0}}=0$. Therefore, by \eqref{super liealgebra},
\[
\langle x(s.(w+w_0)), s.(w+w_0)\rangle=\tr(x(s.(w+w_0))^2)=\tr(x s.w^2|_{\V_{\overline 0}})
=\tr(x g.\tau(w))\,,
\]
because $s=gg'$. Hence,
\[
\chi_x(s.(w+w_0))=e^{i\frac{\pi}{2}\langle x(s.(w+w_0)), s.(w+w_0)\rangle}
= e^{iB(x, g.\tau(w))}=e^{iB(x, g.y)}
\]
and
\[
\phi^\G(s.(w+w_0))=\phi^\G(g'.(w+w_0))\,.
\]
Therefore, with $n=\tau'(w_0)$,
we obtain from \eqref{integralonS/Sh1+w0} that
\[
\mu_{\Oo(w)}(\chi_x\phi^\G)=C_1\int_\G e^{iB(x,g.y)}\,dg \int_{\G'/\Zg'{}^n}\phi^\G(g'.w)\,d(g'\Zg'{}^n)\,,
\]
where $\Zg'{}^n$ is the centralizer of $n$ in $\Zg'$.
Thus, the computation (\ref{proof in the first case}) holds again, and we are done.
\end{proof}

\begin{lem}\label{reduction to ss-orb-int for l>l'}
Suppose $l> l'$. Let $\z\subseteq\g$ and $\Zg\subseteq \G$ be the centralizers of $\tau(\hs1)$.
Then for $\phi \in \mathcal S(\Wv)$ 
\begin{multline*}
\pi_{\g/\h(\g)}(x)\int_\Wv\chi_x(w)\phi^\G(w)\,dw\\
= C \sum_{t W(\Zg,\h(\g))\in W(\G,\h(\g))/W(\Zg,\h(\g))}\sgn_{\g/\h(\g)}(t)\pi_{\z/\h(\g)}(t^{-1}.x) \int_{\tau'(\reg{\hs1})}e^{iB(x,t.y)} F_{\phi}(y)\,dy\,, 
\end{multline*}
where $C$ is a non-zero constant which depends only on the dual pair $(\G,\G')$.
\end{lem}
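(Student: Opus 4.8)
The plan is to repeat the proof of Lemma~\ref{reduction to ss-orb-int} almost verbatim, the one structural difference being that, since $l>l'$, the variable $x$ now runs over the whole Cartan subalgebra $\h(\g)$ whereas $\tau(\hs1)$ spans only the proper subspace $\h\subseteq\h(\g)$. For this reason the classical Harish-Chandra formula for the Fourier transform of a regular semisimple orbit must be replaced by its generalization, Lemma~\ref{lemma:HC's formula moved}, and it is precisely this replacement that introduces the sum over $W(\G,\h(\g))/W(\Zg,\h(\g))$ and the coefficients $\sgn_{\g/\h(\g)}(t)\,\pi_{\z/\h(\g)}(t^{-1}.x)$.

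First I would invoke the Weyl integration formula on $\Wv$ (\cite[Theorem 21]{McKeePasqualePrzebindaSuper}, \cite[(16), (23)]{McKeePasqualePrzebindaWCestimates}). When $l>l'$ there is, up to conjugacy, a unique Cartan subspace $\hs1\subseteq\Wv$, and its kernel $\V^0$ has $\V_{\overline 1}^0=0$, hence $\ss1(\V^0)=0$; thus, unlike in the $\Og_{2l+1}$ subcase of Lemma~\ref{reduction to ss-orb-int}, no nilpotent shift $w_0$ occurs, and
\[
\int_\Wv\chi_x(w)\phi^\G(w)\,dw=C(\hs1)\int_{\tau(\reg{\hs1})}\pi_{\g/\z}(\tau(w))\,\pi_{\g'/\h'}(\tau(w))\,\mu_{\Oo(w)}(\chi_x\phi^\G)\,d\tau(w)\,,
\]
where $\tau(w)$ and $\tau'(w)$ are identified via \eqref{the identification} with a single $y\in\h=\h'$. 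Next, exactly as in \eqref{chix}--\eqref{phiGs}, I would evaluate the semisimple orbital integral: here $\Sg^{\hs1}=\Zg\times\H'$, with $\H'$ the Cartan subgroup of $\G'$ attached to $\h'$, so $\Sg/\Sg^{\hs1}=\G/\Zg\cdot\G'/\H'$, and writing $s=gg'$ with $g\in\G$, $g'\in\G'$ and using $(s.w)^2|_{\V_{\overline 0}}=g.\tau(w)$ together with the $\G$-invariance of $\phi^\G$,
\[
\mu_{\Oo(w)}(\chi_x\phi^\G)=C_1\int_\G e^{iB(x,g.y)}\,dg\int_{\G'/\H'}\phi^\G(g'.w)\,d(g'\H')
\]
for a positive constant $C_1$.

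Then I would multiply through by $\pi_{\g/\h(\g)}(x)$ and apply Lemma~\ref{lemma:HC's formula moved} with $z$ a generic element of $\h$ (so that its centralizer is $\z$) and $x'=y=\tau(w)$; for $w\in\reg{\hs1}$ this $y$ lies in the centre $\c$ of $\z$ and has centralizer exactly $\z$, so $\pi_{\g/\z}(y)\neq 0$ and the lemma applies. The factor $\pi_{\g/\z}(y)$ it requires cancels the one coming from the Weyl integration formula, while by \cite[Definition 4 and Lemma 3.4]{McKeePasqualePrzebindaWCestimates} the leftover $C(\hs1)\,\pi_{\g'/\h'}(y)\int_{\G'/\H'}\phi^\G(g'.w)\,d(g'\H')$, integrated against $d\tau(w)$, equals a constant multiple of $F_{\phi^\G}(y)\,dy$ over $\tau'(\reg{\hs1})$; since $F_{\phi^\G}=\vol(\G)F_\phi=F_\phi$, assembling everything gives the claimed identity. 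Absolute convergence of the successive integrals is clear over the compact groups $\G$, $\G'$ and over $\reg{\hs1}$, while the outer $y$-integral converges absolutely by \cite[Theorem 7]{McKeePasqualePrzebindaWCestimates}.

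I expect the main obstacle to be organizational rather than conceptual: setting up the Weyl integration formula on $\Wv$ in the $l>l'$ regime with the correct Jacobian $\pi_{\g/\z}\,\pi_{\g'/\h'}$ and the correct stabilizer $\Sg^{\hs1}=\Zg\times\H'$; checking that $\pi_{\g/\z}(\tau(w))\neq 0$ on $\reg{\hs1}$ so that Lemma~\ref{lemma:HC's formula moved} applies verbatim and the cancellation of $\pi_{\g/\z}$ is legitimate; and tracking all normalization constants so that the $\G'$-contribution is literally the almost-elliptic orbital integral $F$ of \cite{McKeePasqualePrzebindaWCestimates}.
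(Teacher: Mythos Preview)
Your proposal is correct and follows essentially the same route as the paper's own proof: Weyl integration on $\Wv$ with the roles of $\G$ and $\G'$ reversed, the identification $\Sg^{\hs1}=\Zg\times\H'$ to split the orbital integral, Lemma~\ref{lemma:HC's formula moved} to handle $\pi_{\g/\h(\g)}(x)\int_\G e^{iB(x,g.y)}\,dg$, cancellation of $\pi_{\g/\z}$, and the recognition of the remaining $\G'$-side as $F_{\phi^\G}=F_\phi$. The only cosmetic difference is that the paper cites \cite[(34)]{McKeePasqualePrzebindaWCestimates} rather than Definition~4 and Lemma~3.4 for the last identification, and does not spell out the absence of the nilpotent shift or the nonvanishing of $\pi_{\g/\z}$ on $\reg{\hs1}$ as you do.
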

\begin{prf}
By the Weyl--Harish-Chandra 
integration formula with the roles of $\G$ and $\G'$ reversed, see \eqref{weyl int on w 2} and \eqref{products of roots in so}, 
\[
\int_\Wv\chi_x(w)\phi^\G(w)\,dw =C_1\int_{\tau'(\reg{\hs1})}\pi_{\g/\z}(\tau'(w))\pi_{\g'/\h'}(\tau'(w))\mu_{\Oo(w)}(\chi_x\phi^\G)\,d\tau'(w)\,,
\]
where
\[
\mu_{\Oo(w)}(\chi_x\phi^\G)=\int_{\Sg/\Sg^{\hs1}}(\chi_x\phi^\G)(s.w)\,d(s\Sg^{\hs1})\,.
\]
Recall the identification $y=\tau(w)=\tau'(w)$ and let us write $s=gg'$, where $g\in\G$ and $g'\in\G'$. Then, as in \eqref{chix} and \eqref{phiGs},
$$
\chi_x(s.w)=e^{iB(x, g.y)}
\quad\text{and}\quad
\phi^\G(s.w)=\phi^\G(g'.w)\,.
$$
Since $l> l'$, equation \eqref{integralonS/Sh1-2} implies that 
there is a constant $C_2$ 
such that
\[
\mu_{\Oo(w)}(\chi_x\phi^\G)=C_2\int_\G e^{iB(x,g.y)}\,dg \int_{\G'/\H'}\phi^\G(g'.w)\,d(g'\H')\,.
\]
By \eqref{HCformula} in Lemma \ref{lemma:HC's formula moved} and \cite[(34)]{McKeePasqualePrzebindaWCestimates}, we obtain for some constants $C_k$
\begin{eqnarray}\label{proof in the first case, l>l'}
&&\hskip -1cm \pi_{\g/\h(\g)}(x)\int_\Wv\chi_x(w)\phi^\G(w)\,dw \\
&=& C_3\sum_{t W(\Zg,\h(\g))\in W(\G,\h(\g))/W(\Zg,\h(\g))}\sgn_{\g/\h(\g)}(t)\pi_{\z/\h(\g)}(t^{-1}.x) \int_{\tau'(\reg{\hs1})}e^{iB(x,t.y)} \pi_{\g'/\h}(y)\nn\\
&&\hskip 8cm \times \int_{\G'/\H'}\phi^\G(g'.w)\,d(g'\H')\,dy\nn\\
&=& C_4\sum_{t W(\Zg,\h(\g))\in W(\G,\h(\g))/W(\Zg,\h(\g))}\sgn_{\g/\h(\g)}(t)\pi_{\z/\h(\g)}(t^{-1}.x) \int_{\tau'(\reg{\hs1})}e^{iB(x,t.y)} F_{\phi^\G}(y)\,dy\,.\nn
\end{eqnarray}
Since $ F_{\phi^\G}=\vol(\G)  F_{\phi}=F_{\phi}$, the formula follows.
\end{prf}
\begin{lem}\label{vandecorput}
Suppose $l\leq l'$.
Then there is a seminorm $q$ on $\Ss(\Wv)$ such that
\[
\Big|\int_{\h\cap \tau(\Wv)} F_\phi(y)\,e^{iB(x,y)}\,dy\Big|\leq q(\phi)\,\ch(x)^{-d'+r-\iota} \qquad (x\in \h,\, \phi\in\Ss(\Wv))\,.
\]
\end{lem}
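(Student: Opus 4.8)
The plan is to read the left-hand side as a partial Euclidean Fourier transform of the orbital integral $F_\phi$ over $\h$, and to extract its rate of decay from the (limited) regularity of $F_\phi$. First I would pass to coordinates: since $l\le l'$ we have $\h=\h(\g)$, so write $x=\sum_{j=1}^l x_jJ_j$ and $y=\sum_{j=1}^l y_jJ_j$. The complex structures $J_j$ live in mutually orthogonal blocks with $\tr_{\Dc/\R}(J_j^2)=-\dim_\R\V_{\overline 0}^j=-2/\iota$, so \eqref{the form B} gives $B(x,y)=-2\beta\sum_{j=1}^l x_jy_j$ with $\beta=\pi/\iota$, while $\ch(x)=\prod_{j=1}^l(1+x_j^2)^{1/(2\iota)}$ by \eqref{ch explicit}. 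Hence, up to an affine change of variables, $\int_{\h\cap\tau(\Wv)}F_\phi(y)e^{iB(x,y)}\,dy$ is the Fourier transform on $\R^l$ of $y\mapsto F_\phi(y)\,\mathbf 1_{\h\cap\tau(\Wv)}(y)$, and the claimed bound is equivalent to decay at the rate $\prod_{j=1}^l(1+x_j^2)^{-N/2}$, where
\[
N=\frac{1}{\iota}(d'-r+1)
\]
is, by \eqref{number r 1}, a positive integer in every case with $l\le l'$.

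The key input is the regularity and uniform decay of $F_\phi=F_{\phi^\G}$ proved in \cite[Theorems 5 and 7]{McKeePasqualePrzebindaWCestimates} (used together with the continuity of $\phi\mapsto\phi^\G$ on $\Ss(\Wv)$): namely that $F_\phi$, extended by zero off $\h\cap\tau(\Wv)$, is of class $C^{N-1}$ on $\h\cong\R^l$, that every mixed derivative $\partial_{y_1}^{k_1}\cdots\partial_{y_l}^{k_l}F_\phi$ with $0\le k_j\le N$ lies in $L^1(\h)$ with $L^1$-norm dominated by a continuous seminorm of $\phi$, and therefore that an $N$-fold integration by parts in each $y_j$ over $\h\cong\R^l$ produces no boundary contribution (for $\Dc\ne\C$ the extension is trivial since $\h\cap\tau(\Wv)=\h$; for $\Dc=\C$ the matching of $F_\phi$ along the walls $\{y_j=0\}$ and its vanishing on the outer boundary of $\h\cap\tau(\Wv)$, both part of the cited estimates, are what give the $C^{N-1}$ extension). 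Granting this, integrating by parts $k_j$ times in $y_j$ for each $j$ gives
\[
\int_{\h\cap\tau(\Wv)}F_\phi(y)e^{iB(x,y)}\,dy=\prod_{j=1}^l(-2i\beta x_j)^{-k_j}\int_{\h\cap\tau(\Wv)}\big(\partial_{y_1}^{k_1}\cdots\partial_{y_l}^{k_l}F_\phi\big)(y)\,e^{iB(x,y)}\,dy\,,
\]
so the left-hand side is bounded by $q_k(\phi)\prod_{j=1}^l|2\beta x_j|^{-k_j}$ for a seminorm $q_k$. Choosing, one coordinate at a time, $k_j=N$ when $|x_j|\ge 1$ and $k_j=0$ otherwise, and using $\prod_j|2\beta x_j|^{-k_j}\le C\prod_j(1+x_j^2)^{-N/2}$, one obtains
\[
\Big|\int_{\h\cap\tau(\Wv)}F_\phi(y)e^{iB(x,y)}\,dy\Big|\le q(\phi)\prod_{j=1}^l(1+x_j^2)^{-N/2}=q(\phi)\,\ch(x)^{-\iota N}=q(\phi)\,\ch(x)^{-d'+r-1}
\]
with $q=C\max_k q_k$, which is the assertion.

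The main obstacle is exactly this input: that $F_\phi$ has precisely $N=\frac{1}{\iota}(d'-r+1)$ integrable derivatives in each wall-crossing direction, uniformly in $\phi$, together with the cancellation of boundary terms. This is delicate because $F_\phi$ is genuinely not smooth at $y=0$ --- were it smooth and rapidly decreasing, the integral would decay faster than any polynomial and the estimate would be vacuous --- so one has to pin down the exact order of the singularity of the orbital integral at its most degenerate point, and it is precisely this order that makes the exponent sharp; this is what \cite{McKeePasqualePrzebindaWCestimates} provides. An alternative route that bypasses $F_\phi$ is to observe, via Lemma \ref{reduction to ss-orb-int}, that the left-hand side equals $\pi_{\g/\h}(x)\int_\Wv\chi_x(w)\phi^\G(w)\,dw$; since $\chi_x$ is a metaplectic Gaussian this is an oscillatory integral over $\Wv$ with quadratic phase, and stationary phase bounds it by $|\det(x)|^{-1/2}$ (computed on $\Wv$) times a Schwartz seminorm of $\phi^\G$ wherever $x$ acts nondegenerately, which combined with the trivial estimate $|\pi_{\g/\h}(x)|\,\|\phi^\G\|_{L^1(\Wv)}$ near $x=0$ and the determinant identities \eqref{eq:tildec1} and \eqref{eq:tildec2} yields the same bound; however the case $\G=\Og_{2l+1}$, where $x$ annihilates $\V_{\overline 0}^0$ and the Hessian degenerates, requires the splitting $\Wv=\Wv_s\oplus\Wv_s^\perp$ and is more delicate, which is why I would follow the $F_\phi$-route.
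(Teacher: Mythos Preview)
Your proposed main route---integration by parts in the Fourier integral over $\h$ using the $C^{N-1}$ regularity of $F_\phi$ from \cite{McKeePasqualePrzebindaWCestimates}---is viable, but it is not what the paper does. The paper takes exactly the ``alternative route'' you sketch at the end and then set aside: by Lemma~\ref{reduction to ss-orb-int} the left-hand side equals (a constant times) $\pi_{\g/\h}(x)\int_\Wv\chi_x(w)\phi^\G(w)\,dw$; the general boundedness of $T(\t g)(\phi)$ (a Schwartz seminorm bound, cited from \cite[Proposition 1.13]{PrzebindaUnitary}) combined with $|\Theta(\t c(x))|=\mathrm{const}\cdot\ch(x)^{d'}$ gives $|\int_\Wv\chi_x(w)\phi(w)\,dw|\le q(\phi)\ch(x)^{-d'}$, and then the elementary inequality $|\pi_{\g/\h}(x)|\le C\,\ch(x)^{r-1}$ (from the explicit root product and \eqref{ch explicit}) finishes. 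This is a two-line argument once Lemma~\ref{reduction to ss-orb-int} is in place.

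Your worry about $\G=\Og_{2l+1}$ is unfounded for this route: the input is not a bare stationary-phase estimate on $\Wv$ but the uniform bound $|T(\t g)(\phi)|\le q(\phi)$, which holds for every $\t g\in\wt\Sp(\Wv)$ regardless of how $g-1$ degenerates; the normalization $\Theta(\t g)$ already absorbs the determinant of the nondegenerate block. So there is no case distinction and no need to split off $\Wv_s^\perp$. By contrast, your integration-by-parts approach front-loads all the work into the precise order of differentiability of $F_\phi$ across the walls, which is exactly what the companion paper \cite{McKeePasqualePrzebindaWCestimates} establishes; it is correct but circuitous here, since the paper's argument needs only the existence of $F_\phi$ (Lemma~\ref{reduction to ss-orb-int}) and nothing about its smoothness.
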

\begin{prf}
The boundedness of the distribution-valued function $T(\wt g)$, $\t g\in \wt\G$, means that there is a seminorm $q$ on $\Ss(\g)$ such that
\[
|T(\wt g)(\phi)|\leq q(\phi) \qquad (\t g\in \wt\G\,, \phi \in \Ss(\g))\,.
\]
Hence,
\begin{equation}\label{sixth sterp}
\Big|\Theta(\t c(x))\int_\Wv\chi_x(w)\phi(w)\,dw\Big|\leq q(\phi) \qquad (x\in \g)\,.
\end{equation}
Equivalently, replacing $q(\phi)$ with a constant multiple of $q(\phi)$, and using \eqref{ch}, 
 \eqref{eq:tildec1} and \eqref{eq:tildec2}, we see that
\begin{equation}\label{seventh sterp}
\Big|\int_\Wv\chi_x(w)\phi(w)\,dw\Big|\leq q(\phi)\ch^{-d'}(x) \qquad (x\in \g)\,.
\end{equation}
Since $l\leq l'$, Lemma \ref{reduction to ss-orb-int} together with (\ref{seventh sterp}) proves that
(again up to a multiplicative constant that can be absorbed by $q(\phi)$), 
$$
\Big|\int_{\h\cap \tau(\Wv)} F_\phi(y)\,e^{iB(x,y)}\,dy\Big|\leq q(\phi)\,|\pi_{\g/\h}(x)|\ch(x)^{-d'}.
$$

Recall the constants $r$ and $\iota$ from \eqref{number r 1} and  \eqref{eq:iota}. Then, as one can verify from \eqref{product of positive roots for g}, 
\begin{equation}\label{relation of r with degree}
\max\{\deg_{y_j}\pi_{\g/\h};\ 1\leq j\leq l\}=\frac{1}{\iota} (r-1)\,,
\end{equation}
where $\deg_{y_j}\pi_{\g/\h}$ denotes the degree of $\pi_{\g/\h}(y)$ with respect to the variable $y_j$.

Also,  (\ref{relation of r with degree})  and (\ref{ch explicit}) imply that
\[
|\pi_{\g/\h}(x)|\leq C_5 \ch^{r-1}(x) \leq C_5 \ch^{r-\iota}(x) \qquad (x\in\h)\,,
\]
where $C_5$ is a constant. Thus, the claim follows.
\end{prf}

Lemmas \ref{reduction to ss-orb-int} and \ref{reduction to ss-orb-int for l>l'} allow us to 
restate Corollary \ref{general formula for the int distr00} in terms of orbital integrals on the symplectic space $\Wv$. 

\begin{cor}\label{an intermediate cor}
Suppose $l\leq l'$. Then 
for any $\phi\in \Ss(\Wv)$
\begin{eqnarray*}
&&\int_{-\G^0}\check\Theta_\Pi(\t g) T(\t g)(\phi)\,dg
=C \kappa_0\,
\check{\chi}_\Pi(\t{c}(0))
 \int_\h\xi_{-\mu}(\widehat{c_-}(x)) \ch^{d'-r-\iota}(x)
\int_{\h\cap\tau(\Wv)} e^{iB(x,y)}F_\phi(y)\,dy\,dx\,,
\end{eqnarray*}
where $C$ is a constant that depends only on the dual  pair $(\G, \G')$ and each consecutive integral is absolutely convergent.
\end{cor}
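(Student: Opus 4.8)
The plan is to obtain this corollary by substituting the identity of Lemma \ref{reduction to ss-orb-int} into the formula of Corollary \ref{general formula for the int distr00}; there is essentially no new content beyond bookkeeping the constants and the convergence.

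First I would write down the conclusion of Corollary \ref{general formula for the int distr00},
\[
\int_{-\G^0}\check\Theta_\Pi(\t g) T(\t g)(\phi)\,dg
=C\int_\h\xi_{-\mu}(\widehat {c}_-(x))\,\ch^{d'-r-\iota}(x)\,
\pi_{\g/\h}(x)\int_\Wv \chi_x(w)\phi^\G(w)\,dw\,dx\,,
\]
and, using the hypothesis $l\leq l'$, replace the inner part by the identity of Lemma \ref{reduction to ss-orb-int},
\[
\pi_{\g/\h}(x)\int_\Wv\chi_x(w)\phi^\G(w)\,dw = C'\int_{\h\cap \tau(\Wv)} e^{iB(x,y)} F_{\phi}(y)\,dy\,,
\]
which holds for each fixed $x\in\h$. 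Absorbing the nonzero constant $C'$ into $C$ yields the asserted equality; the passage from $\phi^\G$ to $\phi$ inside $F$ is harmless since $F_{\phi^\G}=\vol(\G)F_\phi=F_\phi$ with our normalization of the Haar measure on $\G$.

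Second, I would check that each consecutive integral is absolutely convergent. The inner integral $\int_{\h\cap\tau(\Wv)} e^{iB(x,y)}F_\phi(y)\,dy$ converges absolutely by \cite[Theorem 7]{McKeePasqualePrzebindaWCestimates}, as already recalled just before Lemma \ref{reduction to ss-orb-int}. For the outer integral over $\h$, I would combine the estimate of Lemma \ref{vandecorput},
\[
\Big|\int_{\h\cap \tau(\Wv)} F_\phi(y)\,e^{iB(x,y)}\,dy\Big|\leq q(\phi)\,\ch(x)^{-d'+r-1}\qquad(x\in\h)\,,
\]
with the observation that $|\xi_{-\mu}(\widehat c_-(x))|=1$ for every $x\in\h$ — immediate from the product expression $\xi_{-\mu}(\widehat{c}_-(x))=\prod_{j=1}^l\bigl(\tfrac{1+iy_j}{1-iy_j}\bigr)^{-\mu_j}$ in the proof of Lemma \ref{ximuchexplicit}, each factor having modulus one — together with $\ch(x)\geq 1$. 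The outer integrand is then bounded in absolute value by $q(\phi)\,\ch(x)^{(d'-r-\iota)+(-d'+r-1)}=q(\phi)\,\ch(x)^{-\iota-1}$, and by \eqref{ch explicit}
\[
\int_\h \ch(x)^{-\iota-1}\,dx=\prod_{j=1}^{l}\int_\R(1+y_j^2)^{-\frac{\iota+1}{2\iota}}\,dy_j<\infty\,,
\]
because $\tfrac{\iota+1}{\iota}>1$. This establishes the convergence and finishes the proof.

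The only point requiring any care is the convergence bookkeeping in this last step, and it is the mild obstacle here; it is resolved precisely by the decay estimate of Lemma \ref{vandecorput} combined with the modulus-one property of $\xi_{-\mu}\circ\widehat c_-$.
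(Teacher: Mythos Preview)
Your proof is correct and follows essentially the same route as the paper: the equality is obtained by inserting Lemma \ref{reduction to ss-orb-int} into Corollary \ref{general formula for the int distr00}, and the absolute convergence of the outer integral comes from Lemma \ref{vandecorput}. The paper's proof is two sentences long and omits the explicit bound $\ch(x)^{-\iota-1}$ that you spell out, but your added detail is exactly what the paper's terse reference to Lemma \ref{vandecorput} is meant to convey.
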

\begin{prf}
The equality is immediate from Corollary \ref{general formula for the int distr00} and Lemma \ref{reduction to ss-orb-int}.
The absolute convergence of the outer integral over $\h$ follows from Lemma \ref{vandecorput}.
\end{prf}
\begin{cor}\label{an intermediate cor, l>l'}
Suppose $l>l'$. Then for any $\phi\in \Ss(\Wv)$,
\begin{multline*}
\int_{-\G^0}\check\Theta_\Pi(\t g) T(\t g)(\phi)\,dg
=C \kappa_0\,
\check{\chi}_\Pi(\t{c}(0))
\sum_{s\in W(\G,\h(\g))}\sgn_{\g/\h(\g)}(s)\int_{\h(\g)}\xi_{-s\mu}(\widehat{c_-}(x)) \ch^{d'-r-\iota}(x) \\
\times
\pi_{\z/\h(\g)}(x)
\int_{\tau'(\reg{\hs1})} e^{iB(x,y)}F_{\phi}(y)\,dy\,dx\,,
\end{multline*}
where $C$ is a constant that depends only on the dual  pair $(\G, \G')$ and each consecutive integral is absolutely convergent.
\end{cor}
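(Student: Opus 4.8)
The plan is to substitute the orbital-integral identity of Lemma \ref{reduction to ss-orb-int for l>l'} into Corollary \ref{general formula for the int distr00} and then symmetrize over the Weyl group $W(\G,\h(\g))$. I would start from Corollary \ref{general formula for the int distr00}, which expresses $\int_{-\G^0}\check\Theta_\Pi(\t g) T(\t g)(\phi)\,dg$ as $C\int_{\h(\g)}\xi_{-\mu}(\widehat{c}_-(x))\,\ch^{d'-r-\iota}(x)\,\big(\pi_{\g/\h(\g)}(x)\int_\Wv\chi_x(w)\phi^\G(w)\,dw\big)\,dx$, with each consecutive integral absolutely convergent. Since $l>l'$, Lemma \ref{reduction to ss-orb-int for l>l'} rewrites the bracketed factor, up to a nonzero constant depending only on $(\G,\G')$, as the finite sum over representatives $t$ of the cosets in $W(\G,\h(\g))/W(\Zg,\h(\g))$ of $\sgn_{\g/\h(\g)}(t)\,\pi_{\z/\h(\g)}(t^{-1}.x)\int_{\tau'(\reg{\hs1})}e^{iB(x,t.y)}F_\phi(y)\,dy$. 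Pulling this finite sum outside the $x$-integral turns the right-hand side into a finite sum of integrals over $\h(\g)$.

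In the $t$-th such integral I would then change variables $x\mapsto t.x$; this is measure-preserving because the action \eqref{classical weyl group action} of $W(\G,\h(\g))$ on $\h(\g)$ only permutes and sign-changes the coordinates. Under it, $\ch^{d'-r-\iota}(t.x)=\ch^{d'-r-\iota}(x)$ by \eqref{ch explicit}, $\xi_{-\mu}(\widehat{c}_-(t.x))=\xi_{-t^{-1}\mu}(\widehat{c}_-(x))$ by \eqref{eq:ximuWeyl}, $\pi_{\z/\h(\g)}(t^{-1}.(t.x))=\pi_{\z/\h(\g)}(x)$, and $B(t.x,t.y)=B(x,y)$ because the form $B$ of \eqref{the form B} is $\G$-invariant and the Weyl action is by conjugation. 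Writing $s=t^{-1}$ and using $\sgn_{\g/\h(\g)}(t)=\sgn_{\g/\h(\g)}(s)$, the right-hand side becomes $C\sum_s G(s)$, where $s$ runs over representatives of the right cosets in $W(\Zg,\h(\g))\backslash W(\G,\h(\g))$ and $G(s)=\sgn_{\g/\h(\g)}(s)\int_{\h(\g)}\xi_{-s\mu}(\widehat{c}_-(x))\,\ch^{d'-r-\iota}(x)\,\pi_{\z/\h(\g)}(x)\int_{\tau'(\reg{\hs1})}e^{iB(x,y)}F_\phi(y)\,dy\,dx$, which already has the shape of the terms in the asserted formula.

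It then remains to replace the sum over cosets by the sum over all of $W(\G,\h(\g))$, i.e.\ to check that $G(us)=G(s)$ for every $u\in W(\Zg,\h(\g))$. Running the change of variables $x\mapsto u.x$ once more and using \eqref{eq:ximuWeyl} again gives $G(us)=\sgn_{\g/\h(\g)}(u)\,\sgn_{\z/\h(\g)}(u)\,G(s)$: the factor $\sgn_{\z/\h(\g)}(u)$ comes from $\pi_{\z/\h(\g)}(u.x)=\sgn_{\z/\h(\g)}(u)\,\pi_{\z/\h(\g)}(x)$, while $e^{iB(u.x,y)}=e^{iB(x,u^{-1}.y)}=e^{iB(x,y)}$ is unchanged because $u^{-1}$ is represented by an element of $\Zg$, which centralizes $\h$ and hence fixes $\tau'(\reg{\hs1})\subseteq\h$ pointwise. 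Finally $\sgn_{\g/\h(\g)}$ restricts to $\sgn_{\z/\h(\g)}$ on $W(\Zg,\h(\g))$: as $\sgn_{\g/\h(\g)}$ is independent of the choice of positive system, one may compute it with a positive system of $(\g_\C,\h(\g)_\C)$ for which the roots not vanishing on $\h$ are exactly those positive on a fixed generic $h_0\in\h$; an element of $W(\Zg,\h(\g))$ fixes $h_0$, hence permutes those roots among themselves and acts on the roots of $\z$ with sign $\sgn_{\z/\h(\g)}$. Thus $G(us)=G(s)$, so $\sum_{s\in W(\Zg,\h(\g))\backslash W(\G,\h(\g))}G(s)=|W(\Zg,\h(\g))|^{-1}\sum_{s\in W(\G,\h(\g))}G(s)$, and absorbing $|W(\Zg,\h(\g))|^{-1}$ into $C$ gives the identity of the corollary.

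For the absolute convergence claimed in the statement, the inner integral over $\tau'(\reg{\hs1})$ converges by \cite[Theorem 7]{McKeePasqualePrzebindaWCestimates}, which also yields $\big|\int_{\tau'(\reg{\hs1})}e^{iB(x,y)}F_\phi(y)\,dy\big|\leq\int_{\tau'(\reg{\hs1})}|F_\phi(y)|\,dy<\infty$ uniformly in $x$; since $|\xi_{-s\mu}(\widehat{c}_-(x))|=1$, since $\pi_{\z/\h(\g)}$ is a polynomial, and since $d'-r-\iota<0$ whenever $l>l'$ (by \eqref{number r 1} and \eqref{eq:iota}), a routine power count shows that $\ch^{d'-r-\iota}(x)\,|\pi_{\z/\h(\g)}(x)|$ is integrable over $\h(\g)$, so the outer integral converges absolutely. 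The step I expect to be the main obstacle is the coset-invariance of $G(s)$ — in particular establishing the identity $\sgn_{\g/\h(\g)}|_{W(\Zg,\h(\g))}=\sgn_{\z/\h(\g)}$ and keeping careful track of left versus right cosets and of the Weyl-group signs through the two changes of variables.
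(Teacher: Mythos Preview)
Your proposal is correct and follows essentially the same route as the paper: substitute Lemma \ref{reduction to ss-orb-int for l>l'} into Corollary \ref{general formula for the int distr00}, change variables $x\mapsto t.x$ using \eqref{eq:ximuWeyl} and the $W(\G,\h(\g))$-invariance of $\ch$ and $B$, and pass from the coset sum to the full Weyl-group sum. The paper performs these steps in the opposite order (full sum first, then change of variables) and leaves the coset-invariance implicit, whereas you spell it out via the identity $\sgn_{\g/\h(\g)}|_{W(\Zg,\h(\g))}=\sgn_{\z/\h(\g)}$; for the convergence your ``routine power count'' is made precise in the paper by the relation $r-r''=d'$, which gives $\ch^{d'-r-\iota}(x)\,|\pi_{\z/\h(\g)}(x)|\leq C\,\ch^{-2}(x)$.
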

\begin{prf}  
The formula is immediate from Corollary \ref{general formula for the int distr00}, Lemma \ref{reduction to ss-orb-int for l>l'} and formula (\ref{eq:ximuWeyl}):
\begin{align*}
\frac{1}{\kappa_0}
&\int_{-\G^0}\check\Theta_\Pi(\t g) T(\t g)\,dg(\phi)\\
&= C_1 \check{\chi}_\Pi(\t{c}(0)) \int_{\h(\g)} \xi_{-\mu}(\widehat{ c_-}(x)) \ch^{d'-r-\iota}(x)
\left(\pi_{\g/\h(\g)}(x)\int_\Wv \chi_x(w)\phi^\G(w)\,dw\right)\,dx\\
&= C_2\check{\chi}_\Pi(\t{c}(0)) \int_{\h(\g)}\xi_{-\mu}(\widehat{ c_-}(x)) \ch^{d'-r-\iota}(x)\\
&\qquad\times\left(\sum_{t W(\Zg,\h(\g))\in W(\G,\h(\g))/W(\Zg,\h(\g))}\sgn_{\g/\h(\g)}(t)\pi_{\z/\h(\g)}(t^{-1}.x) \int_{\tau'(\reg{\hs1})}e^{iB(x,t.y)} F_\phi(y)\,dy\right)\,dx\\
&= \frac{C_2 \, \check{\chi}_\Pi(\t{c}(0))}{|W(\Zg,\h(\g))|}\int_{\h(\g)} \xi_{-\mu}(\widehat{ c_-}(x)) \ch^{d'-r-\iota}(x)\\
&\qquad\times\left(\sum_{t \in W(\G,\h(\g))}\sgn_{\g/\h(\g)}(t)\pi_{\z/\h(\g)}(t^{-1}.x) \int_{\tau'(\reg{\hs1})}e^{iB(x,t.y)} F_{\phi}(y)\,dy\right)\,dx\\
&= C_3\check{\chi}_\Pi(\t{c}(0)) \sum_{t \in W(\G,\h(\g))} \sgn_{\g/\h(\g)}(t) \int_{\h(\g)}\xi_{-\mu}(\widehat{ c_-}(t.x)) \ch^{d'-r-\iota}(t.x)\\
&\qquad
\times\left(\pi_{\z/\h(\g)}(x) \int_{\tau'(\reg{\hs1})}e^{iB(t.x,t.y)} F_{\phi}(y)\,dy\right)\,dx\\
&= C_3\check{\chi}_\Pi(\t{c}(0)) \sum_{t \in W(\G,\h(\g))}\sgn_{\g/\h(\g)}(t)\int_{\h(\g)}\xi_{-t^{-1}\mu}(\widehat{ c_-}(x)) \ch^{d'-r-\iota}(x)\\
&\qquad\times\left(\pi_{\z/\h(\g)}(x) \int_{\tau'(\reg{\hs1})}e^{iB(x,y)} F_{\phi}(y)\,dy\right)\,dx\,.
\end{align*}
Let $\G''$ be the isometry group of the restriction of the form $(\cdot,\cdot)$ to 
$\V_{\overline{0}}^{0,0}$
 and let 
$\h''=\sum_{j=l'+1}^l\R J_j$. Then, as in (\ref{relation of r with degree}), we check that
\begin{equation*}\label{relation of r with degree z}
\max\{\deg_{x_j}\pi_{\z/\h(\g)};\ 1\leq j\leq l\}=\max\{\deg_{x_j}\pi_{\z''/\h''};\ l'+1\leq j\leq l\}=\frac{1}{\iota} (r''-1)\,,
\end{equation*}
where $r''=
\frac{2\dim \g''_\R}{\dim {\V_{\overline{0}\; \R}^{0,0}}}$
is defined as in \eqref{number r 10}. 
Since $r-r''=d'$, we see that
\[
\ch^{d'-r-\iota}(x) |\pi_{\z/\h(\g)}(x)|\leq const\,\ch^{d'-r-\iota+r''-\iota}(x)=const\,\ch^{-2\iota}(x)\,.
\]
Furthermore, $F_\phi$ is absolutely integrable. Therefore,
the absolute convergence of the last integral over $\h(\g)$ follows from the fact that $\ch^{-2\iota}$ is absolutely integrable.  
\end{prf}

To prove Theorem \ref{main thm for l<l'} (and Theorem \ref{main thm for l>=l'}), we still need the following explicit formula for the form $B(x,y)$.
Let $\beta=\dfrac{2\pi}{\iota}$, where $\iota $ is as in (\ref{eq:iota}). Then 
\begin{equation}\label{the constant beta}
B(x,y)=-\beta\sum_{j=1}^l x_j y_j \qquad \big(x=\sum_{j=1}^l x_jJ_j\,, y=\sum_{j=1}^l y_jJ_j\in\h(\g)\big)\,.
\end{equation}
Indeed, the definition of the form $B$, (\ref{the form B}), shows that
\begin{align}\label{computation of the constant beta}
B(x,y)&=\pi\tr_{\Dc/\R}(xy)=\pi\sum_{j,k}\tr_{\Dc/\R}(J_jJ_k)x_jy_k\nn\\
&=\pi\sum_j\tr_{\Dc/\R}(-1_{\V_{\overline 0}^j})x_jy_j=-\frac{2\pi}{\iota}\sum_jx_jy_j\,.
\end{align}

\noindent {\it Proof of Theorem \ref{main thm for l<l'}}.\ 
Notice that the degree of the polynomial $Q_{a_{j},b_{j}}$ is $-a_j-b_j=2\delta-2$ and is independent of 
$\mu$ and $j$. 
Explicitly, 
\begin{equation}
\label{2deltaminus2}
2\delta-2=\frac{1}{\iota} (d'-r-\iota)\,,
\end{equation}
(where $\iota=1/2$ if $\Dc=\Ha$ and $1$ otherwise). 
Hence,
by 
\cite[Theorem 3.5]{McKeePasqualePrzebindaWCestimates}, 
the function $F_\phi$ has the required number of continuous derivatives for the formula (\ref{main thm for l<l' a}) to make sense. The operators appearing in the integrand of (\ref{main thm for l<l' a}) act on different variables and therefore commute. Also, the constants $a_j, b_j$ are integers by (\ref{ximuchexplicit2}).
Hence, equation (\ref{main thm for l<l' a}) follows from Corollary \ref{an intermediate cor}, Lemma \ref{ximuchexplicit}, 
formula \eqref{the constant beta},
and Proposition \ref{D1not smooth}. 

For the last statement about \eqref{product-theorem2}, 
we have 
\begin{equation}
\label{2deltaminus2-bis} 
d'-r-\iota=
\begin{cases}
2l'-2l &\text{if $(\G,\G')=(\Og_{2l},\Sp_{2l'}(\R))$}\,,\\
2l'-2l-1 &\text{if $(\G,\G')=(\Og_{2l+1},\Sp_{2l'}(\R))$}\,,\\
l'-l-1 &\text{if $(\G,\G')=(\Ug_{l},\Ug_{p,q}), p+q=l'$}\,,\\
l'-l-1 &\text{if  $(\G,\G')=(\Sp_{l},\Og^*_{2l'})$}\,.
\end{cases}
\end{equation}
Thus, since we assume $l\leq l'$, 
the product \eqref{product-theorem2} is a function if and only if 
$d'-r-\iota<0$, i.e. if and only if $l=l'$ and $(\G,\G')\neq (\Og_{2l},\Sp_{2l'}(\R))$.
Furthermore, \eqref{product-theorem2} contains no derivatives (but terms involving $\delta_0$ are allowed) if and only if $d'-r-\iota=0$, which corresponds to  
either $l=l'$ and $(\G,\G')= (\Og_{2l},\Sp_{2l'}(\R))$, or $l'=l+1$ and $\Dc=\C$ or 
$\Ha$.
This completes the proof.
\hfill \qed
\bigskip

Suppose now $l>l'$. Let $\h''=\sum_{j=l'+1}^{l}\R J_j$, so that
\begin{equation}\label{h' + h'' decomposition}
\h(\g)=\h\oplus \h''.
\end{equation}
Then the centralizer of $\tau(\h_{\overline{1}})$  coincides with the centralizer of $\h$ in $\g$ and is equal to $\z=\h\oplus\g''$, where $\g''$ is the Lie algebra of the group $\G''$ of the isometries of the restriction of the form $(\cdot ,\cdot )$ to $\V_{\overline 0}^0$.
Furthermore, the derived Lie algebras of $\z$ and $\g''$ coincide (i.e. $[\z,\z]=[\g'',\g'']$) and $\h''$ is a Cartan subalgebra of $\g''$.
We shall identify $\h$ and $\h'$ by means of \eqref{the identification}. This justifies writing 
$\h(\g)=\h'\oplus \h''$ when we need to emphasize the role of $\g'$. 

\begin{lem}\label{another intermediate lemma} 
Suppose $l>l'$. 
In terms of Corollary \ref{an intermediate cor, l>l'} and the decomposition (\ref{h' + h'' decomposition})
\begin{multline}\label{another intermediate lemma1}
\xi_{-s\mu}(\widehat{ c_-}(x)) \ch^{d'-r-\iota}(x) \pi_{\z/\h(\g)}(x)\\
=\left(\xi_{-s\mu}(\widehat{ c_-}(x')) \ch^{d'-r-\iota}(x')\right)\left(\xi_{-s\mu}(\widehat{ c_-}(x'')) \ch^{d'-r-\iota}(x'') \pi_{\g''/\h''}(x'')\right)\,,
\end{multline}
where $x=x'+x''\in \h(\g)$, with $x'\in \h'$ and $x''\in\h''$.
Moreover,
\begin{multline}\label{another intermediate lemma2}
\int_{\h''}\xi_{-s\mu}(\widehat{ c_-}(x'')) \ch^{d'-r-\iota}(x'') \pi_{\g''/\h''}(x'')\,dx''\\
=C\sum_{s''\in W(\G'',\h'')}
\sgn_{\g''/\h''}(s'')
\Bbb I_{\{0\}}(-(s\mu)|_{\h''}+s''\rho'')\,,
\end{multline}
where $C$ is a constant, $\rho''$ is one half times the sum of the positive roots for $(\g''_\C, \h''_\C)$ and $\Bbb I_{\{0\}}$ is the indicator function of zero.
\end{lem}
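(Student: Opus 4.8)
The plan is to prove the two assertions separately; \eqref{another intermediate lemma1} is a purely algebraic factorization along \eqref{h' + h'' decomposition}, while \eqref{another intermediate lemma2} reduces to a product of elementary one-variable contour integrals. For \eqref{another intermediate lemma1}, recall from Lemma \ref{ximuchexplicit}, equation \eqref{ximuchexplicit1} (which holds verbatim with $\mu$ replaced by $s\mu$, since it only turns $\mu_j$ into $(s\mu)_j$), that
\[
\xi_{-s\mu}(\widehat{c}_-(x))\,\ch^{d'-r-\iota}(x)=\prod_{j=1}^{l}(1+iy_j)^{-(s\mu)_j+\delta-1}(1-iy_j)^{(s\mu)_j+\delta-1}\qquad\Big(x=\sum_{j=1}^{l}y_jJ_j\Big),
\]
with all exponents integers by \eqref{ximuchexplicit2}. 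Writing $x=x'+x''$ with $x'=\sum_{j=1}^{l'}y_jJ_j\in\h'$ and $x''=\sum_{j=l'+1}^{l}y_jJ_j\in\h''$ and splitting this product at $j=l'$ (the factors with $j>l'$ being trivial when evaluated at $x'$, and those with $j\le l'$ trivial at $x''$) factors the product of the first two functions as claimed. For the third factor, I would use that $\z=\h\oplus\g''$ with $[\z,\z]=[\g'',\g'']$, so that the nonzero roots of $(\z_\C,\h(\g)_\C)$ are precisely the roots of $(\g''_\C,\h''_\C)$, extended by $0$ on $\h'$; with compatible positive systems this yields $\pi_{\z/\h(\g)}(x)=\pi_{\g''/\h''}(x'')$. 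Multiplying the three identities gives \eqref{another intermediate lemma1}.

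For \eqref{another intermediate lemma2}, the first step is to rewrite the integrand. By the identity $r=r''+d'$ established in the proof of Corollary \ref{an intermediate cor, l>l'} one has $d'-r-\iota=-r''-\iota$, and \eqref{pianddelta} applied to the compact group $\G''$ reads $\pi_{\g''/\h''}(x'')=C\,\Delta''(\widehat{c}_-(x''))\,\ch^{r''-\iota}(x'')$, where $\Delta''$ is the Weyl denominator of $(\g''_\C,\h''_\C)$. Hence
\[
\ch^{d'-r-\iota}(x'')\,\pi_{\g''/\h''}(x'')=C\,\Delta''(\widehat{c}_-(x''))\,\ch^{-2\iota}(x''),
\]
while $\ch^{-2\iota}(x'')=\prod_{j=l'+1}^{l}(1+y_j^2)^{-1}$ by \eqref{ch explicit}. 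Expanding the Weyl denominator as $\Delta''(\widehat{c}_-(x''))=\sum_{s''\in W(\G'',\h'')}\sgn(s'')\,\xi_{s''\rho''}(\widehat{c}_-(x''))$ (that is, \eqref{eq:Weyl-den} together with \eqref{eq:Weyl-den-sign}) and using the multiplicativity $\xi_{s''\rho''}\,\xi_{-(s\mu)|_{\h''}}=\xi_{\eta_{s''}}$ with $\eta_{s''}:=s''\rho''-(s\mu)|_{\h''}$, the integrand of \eqref{another intermediate lemma2} becomes
\[
C\sum_{s''\in W(\G'',\h'')}\sgn(s'')\,\xi_{\eta_{s''}}(\widehat{c}_-(x''))\prod_{j=l'+1}^{l}(1+y_j^2)^{-1}.
\]

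It remains to integrate this over $\h''$. Since $|\xi_{\eta_{s''}}(\widehat{c}_-(x''))|=1$, each summand is dominated by a constant multiple of $\prod_{j=l'+1}^{l}(1+y_j^2)^{-1}$, which is integrable over $\h''\cong\R^{l-l'}$; so the finite sum may be integrated term by term and Fubini applies. Using $\xi_{\eta_{s''}}(\widehat{c}_-(x''))=\prod_{j=l'+1}^{l}\bigl(\tfrac{1+iy_j}{1-iy_j}\bigr)^{(\eta_{s''})_j}$, the $\h''$-integral of the $s''$-th summand factors into the one-variable integrals
\[
\int_{\R}(1+iy_j)^{(\eta_{s''})_j-1}(1-iy_j)^{-(\eta_{s''})_j-1}\,dy_j,
\]
whose exponents are integers summing to $-2$. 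A direct contour computation — all poles of the integrand sit at $y_j=\pm i$, and one closes the contour in the appropriate half-plane, or equivalently expands $1\pm iy_j$ in powers of $1\mp iy_j$ — shows that this integral equals $\pi$ if $(\eta_{s''})_j=0$ and $0$ otherwise. Hence the $s''$-th summand integrates to $\pi^{l-l'}\,\Bbb I_{\{0\}}(\eta_{s''})$, and summing over $s''$ gives \eqref{another intermediate lemma2} after absorbing $\pi^{l-l'}$ into $C$. I expect the only delicate points to be the bookkeeping of the powers of $\ch$ (the passage to the exponent $-2\iota$) and the verification that the exponents $(\eta_{s''})_j\mp 1$ are integers — which follows from \eqref{ximuchexplicit2} applied to $\g''$ and from $\pi_{\g''/\h''}$ being a polynomial — rather than any substantive analytic step.
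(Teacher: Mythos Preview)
Your argument is correct and closely parallels the paper's, with one cosmetic difference in part \eqref{another intermediate lemma2}: after reaching the integrand $C\sum_{s''}\sgn(s'')\,\xi_{\eta_{s''}}(\widehat{c}_-(x''))\,\ch^{-2\iota}(x'')$, the paper changes variables via $c_-$ to the torus $\H''{}^0$ (using that $\ch^{-2\iota}(x'')\,dx''$ is, up to a constant, the Haar measure pulled back by $c_-$) and then invokes orthogonality of characters, whereas you stay on $\h''\cong\R^{l-l'}$ and evaluate each one-variable integral by residues. These are the same computation in two coordinate systems; your route is arguably more elementary since it avoids the Jacobian bookkeeping.

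The one place your write-up is imprecise is the justification that the exponents $(\eta_{s''})_j=(s''\rho'')_j-(s\mu)_j$ are integers. This is \emph{not} a consequence of ``\eqref{ximuchexplicit2} applied to $\g''$'': that formula concerns the Harish-Chandra parameter of a representation of $\wt\G''$, which $(s\mu)|_{\h''}$ is not. What is actually needed (and what the paper verifies case by case for $\G''=\Og_{2l''+1}$ and $\G''=\Ug_{l''}$ with $l''$ even, the only cases where the double cover $\widehat{\H''}\to\H''$ is nontrivial) is that $(s\mu)_j$ and $(s''\rho'')_j$ lie in the same coset of $\Zb$ in $\tfrac{1}{2}\Zb$, so their difference is integral. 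Once this is checked, your contour argument goes through verbatim.
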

\begin{prf}
Formula (\ref{another intermediate lemma1}) is obvious, because $\widehat{ c_-}(x'+x'') =\widehat{ c_-}(x') \widehat{ c_-}(x'')$ and $\pi_{\z/\h(\g)}(x'+x'')=\pi_{\g''/\h''}(x'')$. We shall verify (\ref{another intermediate lemma2}).
Let $r''$ denote the number defined in (\ref{number r 10}) for the Lie algebra $\g''$. A straightforward computation verifies the following table:
\begin{center}
\renewcommand{\extrarowheight}{.2em}
\begin{tabular}{c||c|c|c}\label{table 3}
$\g$ & $r$  & $r''$ & $d'-r+r''$ \\[.1em]
\hline\hline
$\u_d$ & $d$ & $d-d'$ & $0$\\[.1em]
\hline
$\o_d$ & $d-1$ & $d-d'-1$ & $0$\\[.1em]
\hline
$\sp_d$ & $d+\frac{1}{2}$ & $d-d'+\frac{1}{2}$ & $0$\\[.2em]
\hline
\end{tabular}
\end{center}
\smallskip 

By \eqref{pianddelta} applied to $\G''\supseteq \H''$ and $\g''\supseteq \h''$, 
\begin{eqnarray*}
\pi_{\g''/\h''}(x'')=C_1''\Delta''(\widehat{c_-}(x'')) \ch^{r''-\iota}(x'')
\qquad 
(x''\in\h'')\,,
\end{eqnarray*}
where $\Delta''$ is the Weyl denominator for $\G''$,
\begin{eqnarray}\label{FT Theta ch 13}
\Delta''=
\kappa''_0
\sum_{s''\in W(\G'',\h'')} 
\sgn_{\g''/\h''}(s'')
\,\xi_{s''\rho''}
\end{eqnarray}
and 
\begin{equation}
\label{kappa0''}
\kappa''_0=\begin{cases}
\frac{1}{2} &\text{if $\G''=\Og_{d''}$ where $d''$ is even}\,,\\
1 &\text{otherwise}\,.
\end{cases}
\end{equation}
Hence, by \eqref{ch explicit}, the integral on the left-hand side of (\ref{another intermediate lemma2}) is a constant multiple of
\begin{multline}\label{FT Theta ch 11}
\int_{\h''}\xi_{-s\mu}(\widehat{c_-}(x''))\Delta''(\widehat{c_-}(x''))\ch^{d'-r+r''}(x'')\ch^{-2\iota}(x'')\,dx''
=2^{\dim\h''} \int_{\widehat{c_-}(\h'')}\xi_{-s\mu}(h)\Delta''(h)\,dh\,, \qquad
\end{multline}
where $\widehat{c_-}(\h'')\subseteq \widehat{\H''^0}$. 

Notice that the function
\begin{equation*}\label{FT Theta ch 12}
\widehat{\H''^0}\ni h\to \xi_{-s\mu}(h)\Delta''(h)\in\C
\end{equation*}
is constant on the fibers of the covering map 
\begin{equation}
\label{coveringH''0}
\widehat{\H''^0}\to \H''{}^0\,.
\end{equation}
Indeed, the covering \eqref{coveringH''0} is non-trivial only in two cases, namely $\G''=\Og_{2l''+1}$ and $\G''=\Ug_{l''}$ with $l''$ even; see \eqref{doublecoverofH}. In these cases, \eqref{FT Theta ch 13}
shows that this claim is true provided that the weight $-s\mu+s''\rho''$ is integral for the Cartan subgroup $\H''$ (i.e. it is equal to the derivative of a character of $\H''$). 

Suppose $\G''=\Og_{2l''+1}$. Then $\G=\Og_{2l+1}$, $\lambda_j\in \Zb$ and 
$\rho_j\in\Zb+\frac{1}{2}$. Hence, $(-s\mu)_j\in\Zb+\frac{1}{2}$. Since, 
$\rho''_j\in \Zb+\frac{1}{2}$, we see that $(-s\mu)_j+\rho''_j\in \Zb$.

Suppose now that $\G''=\Ug_{l''}$ with $l''$ even. Then $\G=\Ug_l$ and $(-s\mu)_j\in\Zb+\frac{1}{2}$. In fact, if $l'$ is even, i.e. $l=l'+l''$ is even, then $\lambda_j\in\Zb$ and $\rho_j\in\Zb+\frac{1}{2}$. If $l'$ is odd, i.e. $l=l'+l''$ is odd, then $\lambda_j\in\Zb+\frac{1}{2}$ and $\rho_j\in\Zb$. Since $\rho''_j\in \Zb+\frac{1}{2}$, in both cases, we conclude that $(-s\mu)_j+\rho''_j\in \Zb$.  

Therefore, (\ref{FT Theta ch 11}) is a constant multiple of 
\begin{eqnarray}\label{FT Theta ch 15}
\sum_{s''\in W(\G'',\h'')} 
\sgn_{\g''/\h''}(s'')
&&\hskip -1cm
\int_{\H''{}^0}\xi_{-s\mu}(h) \xi_{s''\rho''}(h)\,dh\\
&=&\left\{
\begin{array}{ll}
\vol(\H''{}^0)\,
\sgn_{\g''/\h''}(s'')
\ &\ \text{if}\ (s\mu)|_{\h''}=s''\rho'',\\
0\ &\ \text{if $(s\mu)|_{\h''}\notin W(\G'',\h'')\rho''$}\nn\,,
\end{array}
\right.\\
&=&\vol(\H''{}^0)\sum_{s''\in W(\G'',\h'')} 
\sgn_{\g''/\h''}(s'')
\Bbb I_{\{0\}}(-(s\mu)|_{\h''}+s''\rho'')\,.\nn
\end{eqnarray}
\end{prf}
\begin{cor}\label{another intermediate cor, l>l'}
Suppose $l>l'$ and keep the notation of  Lemma \ref{another intermediate lemma} .
Then 
$$
\int_{-\G^0} \check\Theta_\Pi(\t g)  T(\t g)\,dg=0
$$ 
unless there is $s\in W(\G,\h(\g))$ such that 
\begin{equation}\label{smu-h''}
(s\mu)|_{\h''}=\rho''\,.
\end{equation}
If $\G=\Og_{2l+1}$ or $\Sp_l$, then \eqref{smu-h''} is equivalent to 
\begin{equation}
\label{smu-R-H}
\mu|_{\h''}=\rho'' \quad \text{and} \quad s|_{\h''}=1\,.
\end{equation}
Suppose $\G=\Og_{2l}$ and write $\h''=\h''_0\oplus \R J_l$, where $\h''_0=\sum_{j=l'+1}^{l-1} \R J_j$.
Then \eqref{smu-h''} is equivalent to
\begin{equation}
\label{smu-R-even}
\mu|_{\h''}=\rho'', \quad s|_{\h_0''}=1, \quad \text{and} \quad s|_{\R J_l}=\pm 1\,.
\end{equation}
Finally, if $\G=\Ug_l$, then \eqref{smu-h''} holds if and only if there is
$j_0\in\{0,1,\dots,l'\}$ such that 
\begin{equation}
\label{smu-C}
\mu_{j_0+j}=\rho''_{l'+j}  \quad \text{and} \quad s(J_{j_0+j})=J_{l'+j} \qquad (1\leq j\leq l-l')\,.
\end{equation}

Suppose that \eqref{smu-h''} holds. 
Then
for any $\phi\in \Ss(\Wv)$
\begin{multline}
\label{another intermediate cor, l>l' a0}
\int_{-\G^0} \check\Theta_\Pi(\t g)  T(\t g)\,dg(\phi)
= C \, \kappa_0
\check{\chi}_\Pi(\t{c}(0)) \sum_{s\in W(\G,\h(\g)), \, (s\mu)|_{\h''}=\rho''}
\sgn_{\g/\h(\g)}(s) \\
\times  \int_{\h'}\xi_{-s\mu}(\widehat{c_-}(x)) \ch^{d'-r-\iota}(x) 
\int_{\tau'(\reg{\hs1})} e^{iB(x,y)} F_{\phi}(y)\,dy\,dx\,,
\end{multline}
where $C$ is a non-zero constant which depends 
only on the dual pair $(\G,\G')$, and each consecutive integral is absolutely convergent. 
\end{cor}
\begin{proof}
Observe that $B(x'+x'',y)=B(x',y)$ for $x'\in \h'$, $x''\in \h''$ and $y \in \tau'(\reg{\hs1})\subseteq \h'$.
We see therefore from Corollary \ref{an intermediate cor, l>l'} and Lemma \ref{another intermediate lemma} that
\begin{multline}\label{another intermediate cor, l>l'1}
\int_{-\G^0} \check\Theta_\Pi(\t g) T(\t g)\,dg(\phi)
\\
=C \,
\kappa_0\check{\chi}_\Pi(\t{c}(0))
\sum_{s\in W(\G,\h(\g))}\sum_{s''\in W(\G'',\h'')}
\sgn_{\g/\h(\g)}(s)
\sgn_{\g''/\h''}(s'')
\Bbb I_{\{0\}}(-(s\mu)|_{\h''}+s''\rho'') \\
\times \int_{\h'}\xi_{-s\mu}(\widehat{c_-}(x)) \ch^{d'-r-\iota}(x) \int_{\tau'(\reg{\hs1})} e^{iB(x,y)}F_{\phi}(y)\;dy\,dx\,.
\end{multline}
Notice that for $x\in \h'$ and $s''\in W(\G'',\h'')$, we have $s''x=x$.
Thus $\xi_{-s\mu}(\widehat{c_-}(x))=\xi_{-s''s\mu}(\widehat{c_-}(x))$ by (\ref{eq:ximuWeyl}).
Notice also that, by \eqref{h' + h'' decomposition}, $W(\G'',\h'')\subseteq W(\G,\h)$ and 
$
\sgn_{\g''/\h''}(s'')=\sgn_{\g/\h(\g)}(s'')$.
Moreover, 
$\Bbb I_{\{0\}}(-(s\mu)|_{\h''}+s''\rho'')=\Bbb I_{\{0\}}(-(s''{}^{-1}s\mu)|_{\h''}+\rho'')$. Hence, replacing $s$ by $s'' s$ in (\ref{another intermediate cor, l>l'1}), we see that this expression is equal to
\begin{multline}\label{another intermediate cor, l>l'2}
C \,\kappa_0\check{\chi}_\Pi(\t{c}(0))
\sum_{s\in W(\G,\h(\g))}\sum_{s''\in W(\G'',\h'')}
\sgn_{\g/\h(\g)}(s)
\Bbb I_{\{0\}}(-(s\mu)|_{\h''}+\rho'')\\
\times \int_{\h'}\xi_{-s\mu}(\widehat{c_-}(x)) \ch^{d'-r-\iota}(x) \int_{\tau'(\reg{\hs1})} e^{iB(x,y)}F_{\phi}(y)\,dy\,dx\,,
\end{multline}
which yields \eqref{another intermediate cor, l>l' a0}, with a new non-zero constant $C$, equal to 
$C|W(\G'',\h'')|$. Clearly (\ref{another intermediate cor, l>l'2}) is zero if there is no $s$ such that $(s\mu)|_{\h''}=\rho''$.
The absolute convergence of the integrals was checked in the proof of Corollary \ref{an intermediate cor, l>l'}.

Recall that $\h''=\sum_{j=l'+1}^l \R J_j$ and $\mu=\lambda+\rho$ where $\lambda$ is the highest weight of the genuine representation $\Pi$. 
We take a closer look at the condition $(s\mu)|_{\h''}=\rho''$.

If $\Dc=\R$ or $\Ha$, then
$\rho|_{\h''}=\rho''$. All coefficients of $\rho$ are positive and strictly decreasing by $1$ except when $\G=\Og_{2l}$, where $\rho_l=0$. Hence $s|_{\h''}$ cannot contain sign changes when $\G=\Og_{2l+1}$
or $\Sp_l$, whereas $s|_{\h''_0}$ cannot contain sign changes when $\G=\Og_{2l}$.
Using the form of the coefficients of $\lambda$, one easily sees
that \eqref{smu-h''} is equivalent to \eqref{smu-R-H} or \eqref{smu-R-even}. 

If $\G=\Ug_l$, then $\lambda=\frac{p-q}{2}+\nu$, where $\nu_1\geq \nu_2\geq \cdots \geq \nu_l$ are integers. Moreover, 
\begin{equation}
\label{rho-rho''-C}
\frac{p-q}{2}+\rho_{p+j}
=\frac{l-p-q+1}{2}-j=\rho''_{l'+j} \qquad
(1\leq j\leq l-l')\,.
\end{equation}
The Weyl group $W(\G,\h(\g))$ consists of permutations of the $J_j$'s. Hence a genuine Harish-Chandra parameter $\mu$ satisfies \eqref{smu-h''} if and only if among its coefficients $\mu_1,\dots,\mu_l$ we can find a string of $l-l'$ successive coefficients $\mu_j$ equal to $\rho''_{l'+1},\dots,\rho''_l$ and the permutation $s$ translates the corresponding string of $J_j$'s onto $J_{l'+1},\dots,J_l$.
This proves \eqref{smu-C}.
\end{proof}

In the next lemmas we study the integrals 
appearing on the right-hand side of 
\eqref{another intermediate cor, l>l' a0}. 

\begin{lem}
\label{computation-second-integral-l>l'}
For $s\in W(\G,\h(\g))$ and $y\in \tau'(\hs1)$, in the sense of distributions on $\tau'(\reg{\hs1})$,

\begin{equation}
\label{innerintegral-smu-gen}
\int_{\h'}\xi_{-s\mu}(\widehat{c_-}(x)) \ch^{d'-r-\iota}(x) e^{iB(x,y)}\,dx
=\Big(\prod_{j=1}^{l'} P_{a_{s,j},b_{s,j}}(\beta y_j)\Big) e^{-\beta\sum_{j=1}^{l'} |y_j|}\,,
\end{equation}
where $a_{s,j}$, $b_{s,j}$ and $\beta$ are as in \eqref{asj-bsj} and \eqref{eq:delta,beta}, 
and $P_{a_{s,j},b_{s,j}}$ is defined in \eqref{D0'}.
\end{lem}
\begin{proof}
This follows immediately from Lemma \ref{ximuchexplicit}, \eqref{D0''}, and Proposition \ref{D1not smooth}, since $a_{s,j}+b_{s,j}=-2\delta+2\geq 1$ for $l>l'$.
\end{proof}

Suppose that $\mu$ satisfies \eqref{smu-h''} for some $s\in W(\G,\h(\g))$. The integral 
corresponding to $s$ in \eqref{another intermediate cor, l>l' a0} vanishes when the intersection of the support of the right-hand side of \eqref{innerintegral-smu-gen} and $\tau'(\reg{\hs1})$ has an empty interior. We first study this intersection for some specific elements in $W(\G,\h(\g))$. 

If $\Dc=\R$ or $\Ha$, define $s_0=1$ as in \eqref{s0-R-H}. 
Then clearly $s_0\mu|_{\h''}=\rho''$ by \eqref{smu-R-H}.
If $\Dc=\C$, fix $j_0\in \{0,1,\dots,l'\}$ as in 
\eqref{smu-C} and define $s_{0,j_0}$ as the permutation in $W(\G,\h(\g))$
given by 
\begin{equation}
\label{s0-j_0-C}
s_{0,j_0}(J_j)=
\begin{cases}
J_{j} \qquad &(1\leq j\leq j_0)\\
J_{l'-j_0+j} \qquad &(j_0+1\leq j\leq j_0+l-l')\\
J_{j-l+l'} \qquad &(j_0+l-l'+1\leq j\leq l)\,, 
\end{cases}
\end{equation}
i.e.
\begin{center}
\scalebox{0.8}{
\tikzset{decorate sep/.style 2 args=
{decorate,decoration={shape backgrounds,shape=circle,shape size=#1,shape sep=#2}}}

\begin{tikzpicture}
\draw[decorate sep={2mm}{8mm},fill=black!70] (0,2) -- (3,2);
\draw[decorate sep={2mm}{8mm},fill=gray!50] (7.2,2) -- (12,2);
\draw[decorate sep={2mm}{8mm}] (3.2,2) -- (6.5,2);

\draw[decorate sep={2mm}{8mm},fill=black!70] (0,0) -- (3,0);
\draw[decorate sep={2mm}{8mm},fill=gray!50] (3.2,0) -- (8.2,0);
\draw[decorate sep={2mm}{8mm}] (8.7,0) -- (12,0);

\draw [decorate,decoration={brace,amplitude=5pt,raise=2ex}]
  (-0.1,2) -- (2.5,2) node[midway,yshift=2.4em]{$\{1,\dots, j_0\}$};
\draw [decorate,decoration={brace,amplitude=5pt,raise=2ex}]
  (3.1,2) -- (6.5,2) node[midway,yshift=2.4em]{$\{j_0+1,\dots, l'\}$};
\draw [decorate,decoration={brace,amplitude=5pt,raise=2ex}]
  (7.2,2) -- (12.1,2) node[midway,yshift=2.4em]{$\{l'+1,\dots, l\}$};

\draw [decorate,decoration={brace,amplitude=5pt,mirror,raise=2ex}]
  (-0.1,0) -- (2.5,0) node[midway,yshift=-2.4em]{$\{1,\dots, j_0\}$};
\draw [decorate,decoration={brace,amplitude=5pt,mirror,raise=2ex}]
  (3.1,0) -- (8.1,0) node[midway,yshift=-2.4em]{$\{j_0+1,\dots, j_0+l-l'\}$};
\draw [decorate,decoration={brace,amplitude=5pt,mirror,raise=2ex}]
  (8.6,0) -- (12.1,0) node[midway,yshift=-2.4em]{$\{ j_0+l-l'+1,\dots, l\}$};

\node at (-1.5,1) (s0) {{\large  $s_{0,j_0}$}};
\node at (-1,0.25) (s01) {};
\node at (-1,1.75) (s02) {};
\node at (1.2,0.25) (g1) {};
\node at (1.2,1.75) (i1) {};
\node at (5.5,0.35) (g2) {};
\node at (9.7,1.65) (i2) {};
\node at (10,0.35) (g3) {};
\node at (5,1.65) (i3) {};
\draw[->,very thick] 
          (s01) edge (s02) ;
\draw[|->]    
           (g1) edge (i1)
           (g2) edge (i2)
           (g3) edge (i3) ;
\end{tikzpicture}    
}
\end{center}

\noindent Equivalently, 
\begin{equation}
\label{s0mu-Ul}
(s_{0,j_0} \mu)_j=\mu_{s_{0,j_0}^{-1}(j)}=\begin{cases}
\mu_{j} &(1\leq j\leq j_0)\\
\mu_{l-l'+j}
 &(j_0+1\leq j\leq l')\\
\mu_{j_0-l'+j} &(l'+1\leq j\leq l)\,.
\end{cases}
\end{equation}
Hence $(s_{0,j_0}\mu)|_{\h''}=\rho''$.
Notice that $s_{0,p}$ is the element $s_0$ defined in \eqref{s0-C}.

\begin{lem}
\label{lemma-support}
Let $l>l'$ and suppose that $\mu$ satisfies \eqref{smu-h''}. Let $s_0=1$, as in \eqref{s0-R-H}, if 
$\Dc=\R$ or $\Ha$, and let $s_{0,j_0}$ be as in 
\eqref{s0-j_0-C} if $\Dc=\C$.

If $\Dc=\R$ or $\Ha$, then
\begin{equation}
\label{poly-smu-R-H}
\prod_{j=1}^{l'} P_{a_{s_0,j},b_{s_0,j}}(\beta y_j)=
(2\pi)^{l'} \prod_{j=1}^{l'} P_{a_j,b_j,2}(\beta y_j) \mathbb{I}_{\R^+}(y_j) \qquad (y=\sum_{j=1}^{l'} y_j J'_j\in \h')
\end{equation}
has support equal to $\tau'(\hs1)$. 

If $\Dc=\C$, then
\begin{align}
\label{poly-smu-C}
&\prod_{j=1}^{l'}P_{a_{s_{0,j_0},j},b_{s_{0,j_0},j}}(\beta y_j)
=\!(2\pi)^{l'}  \Big(\prod_{j=1}^{j_0}  P_{a_j,b_j,2}(\beta y_j) \mathbb{I}_{\R^+}(y_j)\Big) 
\Big(\prod_{j=j_0+1}^{l'}  \!\! P_{a_{j+l-l'},b_{j+l-l'},-2}(\beta y_j) \mathbb{I}_{\R^-}(y_j)\Big) \nn\\
&\null\hskip 10.8 cm (y=\sum_{j=1}^{l'} y_j J'_j\in \h') 
\end{align}
has support equal to $\big(\sum_{j=1}^{j_0} \R^+ J'_j \big)\oplus \big( \sum_{j=j_0+1}^{l'} \R^- J'_j \big)$. This support is equal to $\tau'(\hs1)$ if $j_0=p$, whereas its intersection with $\tau'(\hs1)$ 
has empty interior if $j_0\neq p$.
\end{lem}
\begin{proof}
Let $\Dc=\R$ or $\Ha$. By \eqref{eq:iota}, \eqref{number r 1}, \eqref{eq:delta,beta} and Appendix \ref{appenE} and since $\mu|_{\h''}=\rho''=\rho|_{\h''}$, we see that 
\begin{equation*}
\mu_1>\dots>\mu_{l'}>\mu_{l'+1}=\rho''_{l'+1}=-\delta\,,
\end{equation*}
These inequalities are equivalent to 
\begin{equation}
\label{mu-leq-delta-R-H-bis}
a_1=-\mu_1-\delta+1<a_2=-\mu_2-\delta+1 < \dots < a_{l'}=-\mu_{l'}-\delta+1\leq 0
\end{equation}
because the $\mu_j$'s and $\delta$ 
are either all in $\Zb$ or all in $\Zb+\frac{1}{2}$.
Hence $P_{a_j,b_j,-2}=0$ for all $1\leq j\leq l'$ by \eqref{eq:Pabminus2}. 
Since $a_j+b_j=-2\delta+2>2$, we see that $b_j> 2-a_j\geq 1$.  Therefore, the polynomial $P_{a_j,b_j,2}$ is nonzero for all $1\leq j\leq l'$. 
Hence the function on the right-hand side of \eqref{poly-smu-R-H}
has support equal to $\sum_{j=1}^{l'}\R^+J'_j=\tau'(\hs1)$. 
\smallskip

Let now $\Dc=\C$. By \eqref{smu-C}, \eqref{rho-rho''-C}, \eqref{eq:delta,beta} and \eqref{2deltaminus2}, 
\begin{eqnarray*}
&&\mu_1>\mu_2> \cdots > \mu_{j_0}>\mu_{j_0+1}=\rho''_{l'+1}=\frac{l-l'-1}{2}=-\delta (>0)\,, \\
&&(0>) \delta=-\frac{l-l'-1}{2}=\rho''_l=\mu_{j_0+l-l'}>\mu_{j_0+l-l'+1}>\cdots >\mu_l\,.
\end{eqnarray*}
Since the $\mu_j$'s and $\delta$ are either all in $\Zb$ or all in $\Zb+\frac{1}{2}$, these inequalities are equivalent to 
\begin{eqnarray}
\label{list-signs-mu-C}
&&a_1=-\mu_1-\delta+1<a_2=-\mu_2-\delta+1< \cdots <a_{j_0}=-\mu_{j_0}-\delta+1\leq 0 \nn\\
&&0 \geq b_{j_0+l-l'+1}=\mu_{j_0+l-l'+1}-\delta+1 > \cdots >b_l=\mu_l-\delta+1\,.
\end{eqnarray}
Hence, 
\begin{alignat*}{2}
P_{a_j,b_j,-2}&=0 \quad &\text{i.e.} \quad &P_{a_j,b_j}(y_j)=2\pi P_{a_j,b_j,2}(y_j) \mathbb{I}_{\R^+}(y_j) \qquad (1\leq j\leq j_0)\,,\\
P_{a_j,b_j,2}&=0 \quad &\text{i.e.} \quad &P_{a_j,b_j}(y_j)=2\pi P_{a_j,b_j,-2}(y_j) \mathbb{I}_{\R^-}(y_j) \qquad (j_0+l-l'+1\leq j\leq l)\,.
\end{alignat*}
The polynomials appearing in these expressions of $P_{a_j,b_j}$ are nonzero because
$a_j+b_j=-2\delta+2>0$ for all $j$. 
By \eqref{explicit tau and tau' on cartan subspace} and the convention on the symbols $\delta_j$'s for the dual pair $(\Ug_l,\Ug_{p,q})$ with $l>l'=p+q$,  the claims on the support of  the right-hand side of \eqref{poly-smu-C} follow. 
\end{proof}

Let $\Dc=\C$. Suppose that there is $s\in W(\G,\h(\g))$ such that $(s\mu)|_{\h''}=\rho''$ and that the string of coefficients of $\mu$ equal to those of $\rho''$, see  \eqref{smu-C}, starts at $j_0+1$, where $j_0\in \{0,1,\dots,l'\}$. Then $s=s_{0,j_0}$ satisfies $(s\mu)|_{\h''}=\rho''$.
Lemma \ref{lemma-support} shows that if $j_0\neq p$ then the intersection of the support of $\prod_{j=1}^{l'}P_{a_{s_{0,j_0},j},b_{s_{0,j_0},j}}$ with $\tau'(\hs1)$ has empty interior. We now prove that if $j_0\neq p$ the same holds for the support of
$\prod_{j=1}^{l'}P_{a_{s,j},b_{s,j}}$ for every $s\in W(\G,\h(\g))$ such that $(s\mu)|_{\h''}=\rho''$.

\begin{lem}
\label{lemma:j0-not-p}
Let $\Dc=\C$. Suppose that $\mu$ and $s\in W(\G,\h(\g))$ satisfy \eqref{smu-C} for $j_0\in \{0,1,\dots,l'\}$. If $j_0\neq p$, then the intersection of the support of $\prod_{j=1}^{l'}P_{a_{s,j},b_{s,j}}$ 
with $\tau'(\hs1)$ has empty interior. 
\end{lem}
\begin{proof}
Since 
$$
s_{0,j_0}(J_{j_0+j})=J_{l'+j}\,, \qquad s(J_{j_0+j})=J_{l'+j} \qquad (1\leq j\leq l-l')\,,
$$
the composition $s^{-1}s_{0,j_0}$ fixes the elements of $\{J_{j_0+1},\dots,J_{j_0+l-l'}\}$ and permutes those of 
$\{J_{1},\dots,J_{j_0}\} \cup \{J_{j_0+l-l'+1},\dots,J_{l}\}$.
Then $s^{-1}=(s^{-1}s_{0,j_0})s_{0,j_0}^{-1}$ maps %(string to string) 
$\{J_{l'+1},\dots,J_{l}\}$ onto 
$\{J_{j_0+1},\dots,J_{j_0+l-l'}\}$ and hence $\{J_{1},\dots,J_{l'}\}$ bijectively onto 
$\{J_{1},\dots,J_{j_0}\} \cup \{J_{j_0+l-l'+1},\dots,J_{l}\}$. Therefore 
$\{ (s\mu)_j=\mu_{s^{-1}(j)}; 1\leq j\leq l'\}$ is a permutation of $\{\mu_j; 1\leq j\leq j_0\} \cup 
\{\mu_j; j_0+l-l'+1\leq j \leq l\}$.  
By \eqref{list-signs-mu-C}, there are $j_0$ negative $a_j$ and $l'-j_0$ negative $b_j$ for 
$1\leq j\leq l'$. The same is then true for the $a_{s,j}$ and the $b_{s,j}$. The support of 
$\prod_{j=1}^{l'}P_{a_{s,j},b_{s,j}}$ is therefore a Cartesian product (in some order) of $j_0$ copies of $\R^+$ 
and $l'-j_0$ copies of $\R^-$. Its intersection with $\tau'(\hs1)$ has therefore empty interior if 
$j_0\neq p$.
\end{proof}

When the intersection of the support of $\prod_{j=1}^{l'}P_{a_{s,j},b_{s,j}}$ and $\tau'(\hs1)$ has empty interior, the integral on the right-hand side of \eqref{another intermediate cor, l>l' a0} that 
corresponds to $s$ vanishes. Lemma \ref{lemma:j0-not-p} shows that every such integral is zero when $j_0\neq p$. This yields the following corollary. 

\begin{cor}
\label{cor:j0-not-p}
Suppose that $\Pi$ is a genuine representation of $\wt{\Ug}_l$ with Harish-Chandra parameter 
$\mu$ satisfying \eqref{smu-C} for $j_0\in \{0,1,\dots,l'\}$. If $j_0\neq p$ then 
$$
f_{\Pi\otimes \Pi'}=\int_{\Ug_l} \check{\Theta}_\Pi (\wt{g}) T(\wt{g}) \; dg=0\,.
$$
Thus, if $\Pi$ is a genuine representation of $\wt{\Ug}_l$ which 
occurs in Howe's correspondence, then 
its highest weight must be of the form $\lambda=
\sum_{j=1}^l 
\big(\frac{p-q}{2} +\nu_j\big) e_j$ 
where 
$$
\nu_1\geq \nu_2\geq \dots \geq\nu_p\geq \nu_{p+1}=\dots =\nu_{l-q}=0\geq \nu_{l-q+1} \geq \dots \geq \nu_l\,.
$$
\end{cor}
\begin{prf}
Only the last statement requires proof. 
We know from Lemma \ref{lemma:j0-not-p} that $j_0=p$. Hence the first line of 
\eqref{list-signs-mu-C} looks as follows:
\[
\mu_1+\delta-1>\mu_2+\delta-1> \dots >\mu_p+\delta-1\geq 0\,.
\]
Since
\[
\mu_j+\delta-1=\lambda_j+\rho_j+\delta-1=\lambda_j-\frac{p-q}{2}+p-j \qquad (1\leq j\leq p)\,,
\]
%\[
%\mu_p+\delta-1=\lambda_p+\rho_p+\delta-1=\lambda_p-\frac{p-q}{2}\,,
%\]
we see that
\[
\nu_j=\lambda_j-\frac{p-q}{2} \qquad (1\leq j\leq p)\,,
\]
satisfies 
\[
\nu_1\geq \nu_2\geq \dots \geq \nu_p\geq 0\,.
\]
By a similar analysis of the second line of \eqref{list-signs-mu-C}, the claim follows.
\end{prf}
In the proof of Theorem \ref{main thm for l>=l'} we will see that the condition on the highest weight of 
$\Pi$ is also sufficient for the nonvanishing of the intertwining distributions. 

Because of Corollary \ref{cor:j0-not-p}, we can restrict ourselves to the case $j_0=p$ when $\G=\Ug_l$. In this case, to simplify notation, we will write $s_0$ instead of $s_{0,p}$.
Hence
\begin{equation}
\label{s0-redefined}
s_0=1 \quad (\text{if $\Dc=\R$ or $\Ha$}) \quad \text{and} \quad s_0=s_{0,p} \quad 
(\text{if $\Dc=\C$})\,.
\end{equation}
Observe that this notation allows us to write
\begin{equation}
\label{poly-smu}
\prod_{j=1}^{l'} P_{a_{s_0,j},b_{s_0,j}}(\beta y_j)=
(2\pi)^{l'} \prod_{j=1}^{l'} P_{a_{s_0,j},b_{s_0,j},2\delta_j}(\beta y_j) \mathbb{I}_{\delta_j \R^+}(y_j)\,,
\end{equation}
which unifies \eqref{poly-smu-R-H} and \eqref{poly-smu-C}.

Suppose that $s\in W(\G,\h(\g))$ satisfies \eqref{smu-h''} and $j_0=p$ if $\Dc=\C$. Then 
\begin{equation}
\label{ss0inv}
ss_0^{-1}|_{\h''}=1 \qquad \text{and} \qquad ss_0^{-1}(\h)=\h\,.
\end{equation}
The condition $ss_0^{-1}(\h)=\h$ and the identification \eqref{the identification}, 
allow us to consider $ss_0^{-1}$ as isomorphisms of $\h'$. In the 
following lemma we prove that such a $s$ contributes to the right-hand side of \eqref{another intermediate cor, l>l' a0} if and only if 
$ss_0^{-1}\in W(\G',\h')$. Moreover, in this case, the contribution from $s$ agree with that of $s_0$.

\begin{lem}\label{lemma:contribution}
Let $l>l'$ and let $\mu$ and $s\in W(\G,\h(\g))$ satisfy \eqref{smu-h''} with $j_0=p$ if $\Dc=\C$. 
The integral 
\begin{equation}
\label{double-integral-smu}
\int_{\h'}\xi_{-s\mu}(\widehat{c_-}(x)) \ch^{d'-r-\iota}(x) \int_{\tau'(\reg{\hs1})} e^{iB(x,y)}F_{\phi}(y)\,dy\,dx
\end{equation}
is zero:
\begin{itemize}
\item[(a)] 
if $ss_0^{-1}|_{\h}$ acts by some sign changes, when $\Dc=\R$ or $\Ha$,
\item[(b)]  
if $ss_0^{-1}|_{\h}$ does not stabilize $\{J_1,\dots,J_p\}$ (and $\{J_{p+1},\dots,J_{l'}\}$), 
when $\Dc=\C$.
\end{itemize}

Equivalently, by identifying $\h$ and $\h'$ via \eqref{the identification}, 
the integral \eqref{double-integral-smu} is zero unless $ss_0^{-1}\in W(\G',\h')$. 
Moreover, \eqref{another intermediate cor, l>l' a0} becomes:
for any $\phi\in \Ss(\Wv)$
\begin{multline}
\label{another intermediate cor, l>l' a0-bis}
\int_{-\G^0} \check\Theta_\Pi(\t g)  T(\t g)\,dg(\phi)
= C \,\kappa_0
\check{\chi}_\Pi(\t{c}(0)) \int_{\tau'(\reg{\hs1})} 
\Big(\prod_{j=1}^{l'} P_{a_{s_0,j},b_{s_0,j}}(\beta y_j)\Big) 
e^{-\beta\sum_{j=1}^{l'} |y_j|}
F_{\phi}(y)\,dy\,,
\end{multline}
where $C$ is a non-zero constant which depends on the dual pair $(\G,\G')$.
\end{lem}
\begin{proof}
Let $\Dc=\R$ or $\Ha$. Suppose that $ss_0^{-1}(J_j)=-J_j$ for some $j\in \{1,\dots,l'\}$. 
Then $(s\mu)_j=-(s_0\mu)_j$. Thus $P_{a_{s,j},b_{s,j}}$ is supported in $\R^-$, and the support 
of \eqref{innerintegral-smu-gen} has a lower dimensional intersection with $\tau'(\hs1)$.

The case $\Dc=\C$ is similar: if $ss_0^{-1}(J_i)=J_j$ where $1\leq i\leq p<j\leq l'$, then $(s\mu)_j=(s_0\mu)_i$, which interchanges the $i$-th and $j$-th indices $a$ and $b$
of $s\mu$ and $s_0\mu$. The support of \eqref{innerintegral-smu-gen} has therefore a lower dimensional intersection with $\tau'(\hs1)$.

By the above and by identifying  $\h$ and $\h'$ via \eqref{the identification}, we can restrict the sum on the right-hand side of \eqref{another intermediate cor, l>l' a0} to the set of $s\in W(\G,\h(\g))$ such that $ss_0^{-1}|_\h\in W(\G',\h')$ and $ss_0^{-1}|_{\h''}=1$. Therefore, the sum can be  
parametrized by 
$W(\G',\h')$.  
By \eqref{innerintegral-smu-gen} and since $\sgn_{\g/\h(\g)}(ss_0^{-1})=\sgn_{\g'/\h'}(ss_0^{-1})$, we obtain 
that $\int_{-\G^0} \check\Theta_\Pi(\t g)  T(\t g)\,dg(\phi)$ is 
$\kappa_0 \check{\chi}_\Pi(\t{c}(0))$ times a constant multiple of 
\begin{multline*}
\sum_{s'\in W(\G',\h')}
\sgn_{\g'/\h'}(s') 
\int_{\h'}\xi_{-s's_0\mu}(\widehat{c_-}(x)) \ch^{d'-r-\iota}(x) \int_{\tau'(\reg{\hs1})} e^{iB(x,y)} F_{\phi}(y)\,dy\,dx\\
= \sum_{s'\in W(\G',\h')} \sgn_{\g'/\h'}(s') 
\int_{\tau'(\reg{\hs1})} 
\Big(\prod_{j=1}^{l'} P_{a_{s's_0,j},b_{s's_0,j}}(\beta y_j)\Big) 
e^{-\beta\sum_{j=1}^{l'} |y_j|}
F_{\phi}(y)\,dy\,.
\end{multline*}
Observe that  
$$
\prod_{j=1}^{l'} P_{a_{s's_0,j},b_{s's_0,j}}(\beta y_j)=
\prod_{j=1}^{l'} P_{a_{s_0,j},b_{s_0,j}}(\beta ({s'}^{-1}y)_j)
$$
because $s'\in 
W(\G',\h')$ permutes the indices $1\leq j\leq l'$.
Recall also that $F_\phi(y)$ transforms as the sign representation with respect to the action of $W(\G',\mathfrak{h}')$.
Formula \eqref{another intermediate cor, l>l' a0-bis} therefore follows. The new non-zero constant $C$ is the one appearing in 
\eqref{another intermediate cor, l>l' a0} times $
|W(\G',\h')|
$ times ${\rm \sgn}_{\g/\h(\g)}(s_0)$, which is equal to $1$ if $\Dc=\R$ or $\Ha$ and $(-1)^{q(l-l')}$ if $\Dc=\C$. 
\end{proof}

\noindent {\it Proof of Theorem \ref{main thm for l>=l'}}.
It remains to show that if the highest weight $\lambda$ of $\Pi$ satisfies the conditions (a) or (b), then the integral \eqref{integral-Thm3}, i.e. \eqref{another intermediate cor, l>l' a0-bis}, is nonzero.

By \eqref{poly-smu}, the function $\prod_{j=1}^{l'} P_{a_{s_0,j},b_{s_0,j}}(\beta y_j)$
has support equal to $\tau'(\hs1)$ and we can rewrite the right-hand side of 
\eqref{another intermediate cor, l>l' a0-bis} as a constant multiple of
\begin{equation}
\label{to-make-W-skew}
\kappa_0 \check{\chi}_\Pi(\t{c}(0))  \int_{\tau'(\reg{\hs1})} 
\Big(\prod_{j=1}^{l'} P_{a_{s_0,j},b_{s_0,j},2\delta_j}(\beta y_j)\Big) 
e^{-\beta\sum_{j=1}^{l'} |y_j|}
F_{\phi}(y)\,dy\,.
\end{equation}
By the $W(\G',\h')$-skew-invariance of $F_\phi$, we can replace the term
$$
\Big(\prod_{j=1}^{l'} P_{a_{s_0,j},b_{s_0,j},2\delta_j}(\beta y_j)\Big) 
e^{-\beta\sum_{j=1}^{l'} |y_j|}
$$
in the integral \eqref{to-make-W-skew}
by its $W(\G',\h')$-skew-invariant component 
\begin{equation}
\label{W-skew-invariant-component}
\Big(\frac{1}{|W(\G',\h')|} \;  \sum_{s'\in W(\G',\h')} \sgn_{\g'/\h'}(s') 
\prod_{j=1}^{l'} 
P_{a_{s_0,j},b_{s_0,j},2\delta_j} (\beta (s'y)_j) \Big)
e^{-\beta \sum_{j=1}^{l'} |y_j|}\,.
\end{equation}
Here we have used that $\sum_{j=1}^{l'} |(s'y)_j|=\sum_{j=1}^{l'} |y_j|$. Notice that 
$$\prod_{j=1}^{l'} 
P_{a_{s_0,j},b_{s_0,j},2\delta_j} (\beta (s'y)_j) 
=\prod_{j=1}^{l'} P_{a_{{s'}^{-1}s_0,j},b_{{s'}^{-1}s_0,j},2\delta_j} (\beta y_j)
$$ 
because  $W(\G',\mathfrak{h}')$ only permutes the $y$-coordinates for which the $\delta_j$'s have equal sign. Moreover, \eqref{W-skew-invariant-component} is non-zero because $P_{a_{{s'}^{-1}s_0,j},b_{{s'}^{-1}s_0,j},2\delta_j} (\beta y_j)$ is not $W(\G',\mathfrak{h}')$-invariant 
when $W(\G',\mathfrak{h}')\neq 1$. 
Indeed, the condition $\mu_1 > \mu_2 > \cdots > \mu_{l'}$ implies $b_1>b_2>\cdots>b_{l'}$
and $a_1<a_2<\cdots<a_{l'}$. If $W(\G',\mathfrak{h}')\neq 1$, then there are at least two indices 
$j\neq j'$ such that $\delta_j=\delta_{j'}$ and the corresponding factors in 
\eqref{W-skew-invariant-component} have different degrees. (If $b\geq 1$ then the degree of $P_{a,b,2}$ is $b-1$ and if $a\geq 1$ then that of $P_{a,b,-2}$ is $a-1$.)

By \eqref{W-skew-invariant-component}, the integral \eqref{to-make-W-skew} is a constant 
multiple of 
%%%
\begin{equation}
\label{integral-with-Phi-l>l'}
\kappa_0 \check{\chi}_\Pi(\t{c}(0))  \int_{\tau'(\reg{\hs1})} 
\Phi(y)\pi_{\g/\z}(y)
F_{\phi}(y)\,dy\,,
\end{equation}
%%%
where 
\begin{multline}
\label{Phi-l>l'-reminder}
\Phi(y)=\frac{
\sum_{s'\in W(\G',\mathfrak{h}')} \sgn_{\g'/\h'}(s') 
\prod_{j=1}^{l'} P_{a_{s_0,j},b_{s_0,j},2\delta_j} (\beta (s'y)_j)
}{\pi_{\g/\z}(y)} \; e^{-\beta \sum_{j=1}^{l'} |y_j|}\\
(w\in \reg{\hs1}, y=\tau'(w))\,.
\end{multline}

By \eqref{product of positive roots for g - bis} and \eqref{product of positive roots for g'/z'}, we see that there is a non-zero constant $C_\z$, depending of $(\G,\G')$,
such that
\begin{equation}
\label{pigz-pig'h'}
\pi_{\g/\z}(y)=C_\z \pi_{\g'/\h'}(y) \det(y)_{\Vv'}^\gamma \qquad (y=\tau(w)=\tau'(w),\, w\in \hs1)\,,
\end{equation}
where
\begin{equation}
\label{gamma}
\gamma=\begin{cases}
l-l' &\text{if $\Dc=\C$}\\
l-l'+\frac{1}{2} &\text{if $\Dc=\Ha$}\\
l-l'-\frac{1}{2} &\text{if $\Dc=\R$ and $\g=\mathfrak{so}_{2l}$}\\
l-l' &\text{if $\Dc=\R$ and $\g=\mathfrak{so}_{2l+1}$}\\
\end{cases}
\end{equation}
and $\det(g')_{\Vv'}$ denotes the determinant of $g'$ as an element of 
$\G'\subseteq \GL_\Dc(\Vv')$.
(See the remark after \eqref{E7bis} in Appendix \ref{appenD} for the case $\Dc=\Ha$.)

Recall from Remark \ref{rem:Weyl-invariant l>l'} that $W(\G',\h')=W(\K',\h')$, where $\K'$ is maximal compact in $\G'$. Split $\pi_{\g'/\h'}$ as a product of the compact and the noncompact positive roots:
$$
\pi_{\g'/\h'}(y)=\pi_{\k'/\h'}(y)\pi^{\text{nc}}_{\g'/\h'}(y)\,.
$$
Explicitly, 
$$
\pi_{\k'/\h'}(\sum_{j=1}^{l'} y_jJ'_j)=
\begin{cases}
\prod_{1\leq j<k\leq l'}i(- y_j+ y_k) & \text{if $\Dc=\R,\Ha$}\,,\\[.2em]
\prod_{1\leq j<k\leq p}i(-y_j+y_k) \prod_{1\leq j<k\leq q}i(-y_{p+j}+y_{p+k})& 
\text{if $\Dc=\C$}\,.
\end{cases}
$$
The polynomial in 
parenthesis in
\eqref{W-skew-invariant-component}
is $W(\G',\h')$-skew-invariant. Hence it is divisible by $\pi_{\k'/\h'}(y)$ and 
the fraction
\begin{equation}
\label{polynomial part}
\frac{\sum_{s'\in W(\G',\h')} \sgn_{\g'/\h'}(s') \prod_{j=1}^{l'} P_{a_{s_0,j},b_{s_0,j},2\delta_j}(\beta (s'y)_j)}{\pi_{\k'/\h'}(y)} 
\qquad (y\in \h')
\end{equation}
is a $W(\G',\h')$-invariant polynomial. 
Therefore $\Phi$ is a $W(\G',\h')$-invariant real-valued nonzero continuous function on 
$\tau'(\reg{\hs1})$.
Thus Proposition \ref{cor:sufficiently many Fphi} proves the equality \eqref{integral on W for l>=l'}
and shows that the integral 
\eqref{integral-with-Phi-l>l'}
does not vanish for suitably chosen $\phi\in C_c^\infty(\Wv)^\G$.
\qed

\begin{rem}
\label{rem:exponential}
Let us consider the term $e^{-\beta \sum_{j=1}^{l'} |y_j|}$ appearing in \eqref{Phi-l>l'-reminder}. 
Notice that for $w=\sum_{j=1}^{l'}w_j u_j\in \hs1$,
\[
\sum_{j=1}^{l'} |y_j|=\sum_{j=1}^{l'} |J_j'{}^*(\tau'(w))|=\sum_{j=1}^{l'} w_j^2=\sum_{j=1}^{l'} \delta_j J_j'{}^*(\tau'(w))
=\left(\sum_{j=1}^{l'} \delta_j J_j'{}^*\right)\circ\tau'(w)\,.
\]
This is a quadratic polynomial on $\hs1$, invariant under the Weyl group $W(\Sg,\hs1)$. Such a polynomial has no $\G\G'$-invariant extension to $\Wv$, unless $\G'$ is compact. 
Indeed, 
suppose $P$ is a real-valued $\G\G'$-invariant polynomial on $\Wv$ such that
\[
P(w)=\left(\sum_{j=1}^{l'} \delta_j J_j'{}^*\right)\circ\tau'(w) \qquad (w\in \hs1)\,.
\]
Then $P$ extends uniquely to a complex-valued $\G_\C\G'_\C$-invariant polynomial on the complexification $\Wv_\C$ of $\Wv$. Hence, by the Classical Invariant Theory, \cite[Theorems 1A and 1B]{HoweRemarks}
there is a $\G'_\C$-invariant polynomial $Q$ on $\g'_\C$ such that $P=Q\circ\tau'$. Hence,
\[
Q(\tau'(w))=P(w)=\left(\sum_{j=1}^{l'} \delta_j J_j'{}^*\right)\circ\tau'(w) \qquad (w\in \hs1)\,.
\]
Since $\tau'(\hs1)$ spans 
$\h'$, we see that the restriction of $Q$ to $\h'$ is
\[
Q|_{\h'}=\sum_{j=1}^{l'} \delta_j J_j'{}^*\in \h'_\C\,.
\]
Since $Q$ is $\G'_\C$-invariant, the restriction $Q|_{\h'}$ has to be invariant under the corresponding Weyl group. There are no linear invariants if $\G'=\Sp_{2l'}(\R)$ or $\Og^*_{2l'}$. Therefore $\G'=\Ug_{p,q}$, $p+q=l'$. But in this case the invariance means that all the $\delta_j$ are equal. Hence $\G'=\Ug_{l'}$ is compact.
In the case $\G'=\Ug_{l'}$, the sum of squares coincides with $\langle J(w), w\rangle$ for a positive complex structure $J$ on $\Wv$ which commutes with $\G$ and $\G'$ and therefore 
\begin{equation}\label{GaussianNonGaussian}
e^{-\beta\sum_{j=1}^{l'} |\delta_j J_j'{}^*(\tau'(w))|}
\end{equation}
extends to a Gaussian on $\Wv$. If $\G'$ is not compact then \eqref{GaussianNonGaussian} extends to a $\G\G'$-invariant function on $\Wv$, which is bounded but is not a Gaussian. 
\end{rem}

%%%%%%%%%%%%%%%%%%%%%%%%%%%%
%%
\section{\bf The special case $(\Og_{2l},\Sp_{2l'}(\R))$ with $l\leq l'$}
\label{The special case even}
Here we consider the case $(\G, \G')=(\Og_{2l},\Sp_{2l'}(\R))$ and suppose that the character $\Theta_\Pi$ is not supported in the preimage of the connected identity component $\wt{\G^0}$. 
This is equivalent to $\lambda_l=0$, where $\lambda$ is the highest weight of $\Pi$. 
The case $l>l'$ was considered in Theorem \ref{main theorem O2l for l>l'}. 
Since the 
dual pair $(\Og_2,\Sp_{2l'}(\R))$ was treated in section \ref{section:O2}, we will  
suppose in the sequel that 
$2\leq l\leq l'$.
  Recall the element $s\in\G$, \eqref{s}, with centralizer in $\h$ equal to $\h_s=\sum_{j=1}^{l-1} \R J_j$, and the spaces
\[
\V_{\overline 0, s}=\V_{\overline 0}^1\oplus \V_{\overline 0}^2\oplus  \cdots\oplus \V_{\overline 0}^{l-1}\oplus \R v_{2l}\,,
\ \ \  \V_s=\V_{\overline 0, s}\oplus \V_{\overline 1}\,.
\]
The corresponding dual pair is $(\G_s, \G'_s)=(\Og_{2l-1}, \Sp_{2l'}(\R))$ acting on the symplectic space $\Wv_s=\Hom(\V_{\overline 1},\V_{\overline 0, s})$.

The ordered basis 
$
v_1, v_2, ..., v_{2l-2}, v_{2l-1}, v_{2l}
$
of $\V_{\overline 0}$, leads to the identifications
\[
\End(\V_{\overline 0})=\M_{2l,2l}(\R)\,,\qquad \End(\V_{\overline 0, s})=\M_{2l-1,2l-1}(\R)\,.
\]
In these terms, the Cartan subgroup $\H\subseteq \G$ consists of the block diagonal matrices
\[
\begin{pmatrix}
r(\theta_1) &  &  & 0\\ 
 & \ddots & &\\
&  & r(\theta_{l-1}) & \\ 
0 &  & & r(\theta_{l})
\end{pmatrix}\,,
\]
with diagonal blocks
\[
r(\theta)=\begin{pmatrix}
\cos(\theta) &   \sin(\theta) \\ 
-\sin(\theta) & \cos(\theta)
\end{pmatrix}\,,\qquad (\theta\in\R)\,.
\]
Set
\begin{equation}\label{bullet}
h_\bullet=\begin{pmatrix}
r(\theta_1) &  &   0\\ 
 & \ddots & \\
0&  & r(\theta_{l-1}) 
\end{pmatrix}
\end{equation}
and let $\H_\bullet$ denote the group of all matrices \eqref{bullet}.
Then the centralizer $\H^s\subseteq \H$ of $s$ consists of the matrices
\[
\left(
\begin{array}{c|c}
h_\bullet & 0 \\ 
\hline
0 & 
\begin{matrix}
\epsilon & 0\\
0 & \epsilon
\end{matrix}\!\!
\end{array}
\right)\,,\qquad (\epsilon=\pm 1)\,.
\]
The connected component of the identity $(\H^s)^0\subseteq \H^s$ is the set of these matrices with $\epsilon=1$. 
The group $\G_s$ and its connected identity component $\G_s^0$ contain Cartan subgroups $\H_s^0\subseteq \G_s^0$ and $\H_s\subseteq \G_s$ consisting of matrices
\[
\Omat{1}
\quad \text{and}\quad 
\Omat{\epsilon}
\,,\qquad (\epsilon=\pm 1)\,,
\]
respectively. 

\begin{lem}\label{conjugacy classes in Gs}
Every element of the connected component $\G^0s$ is $\G$-conjugate to an element of $(\H^s)^0 s$.
\end{lem}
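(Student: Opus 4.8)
The plan is to reduce the statement to the normal form of real orthogonal transformations, after making $\H_s s$ completely explicit.

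First I would describe $\H_s s$. Here $\H=\prod_{j=1}^l\exp(\R J_j)$ is the maximal torus of $\G^0=\SOg_{2l}$, whose $j$-th factor acts on the plane $\span\{v_{2j-1},v_{2j}\}$ by rotations and trivially elsewhere, while $s$ (see \eqref{s}) restricts to $\diag(1,-1)$ on $\span\{v_{2l-1},v_{2l}\}$ and to the identity on $\span\{v_1,\dots,v_{2l-1}\}$. A direct check shows that the rotation $\exp(\theta J_l)$ commutes with $\diag(1,-1)$ if and only if it equals $\pm 1_2$ on that plane, so $\H_s=\prod_{j=1}^{l-1}\exp(\R J_j)\times\{\pm 1_2\}$. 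Consequently $\H_s s$ is exactly the set of orthogonal transformations that, in the basis $v_1,\dots,v_{2l}$, are a direct sum of $l-1$ planar rotations on $\span\{v_{2j-1},v_{2j}\}$ $(1\le j\le l-1)$ and of $\diag(1,-1)$ or $\diag(-1,1)$ on $\span\{v_{2l-1},v_{2l}\}$.

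Next, fix $g\in\G^0 s$, so $g\in\Og_{2l}$ with $\det g=-1$. By the classical normal form, $g$ is $\Og_{2l}$-conjugate to an orthogonal direct sum of two-dimensional rotation blocks together with $1$ on $\ker(g-1)$ and $-1$ on $\ker(g+1)$. Writing $a=\dim\ker(g-1)$ and $b=\dim\ker(g+1)$, the relation $\det g=-1$ forces $b$ odd, and since $a+b$ is even (the rotation part has even dimension) $a$ is odd as well. The key step is then to repackage the $\pm 1$-eigenspaces: I would split off one block $\diag(1,-1)$ using one line of $\ker(g-1)$ and one of $\ker(g+1)$, then pair the remaining $a-1$ lines of $\ker(g-1)$ into $(a-1)/2$ copies of $1_2$ (rotations by $0$) and the remaining $b-1$ lines of $\ker(g+1)$ into $(b-1)/2$ copies of $-1_2$ (rotations by $\pi$). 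A dimension count shows that $g$ is now $\Og_{2l}$-conjugate to a direct sum of exactly $l-1$ planar rotations and one block $\diag(1,-1)$.

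Finally I would transport this normal form to the coordinates $v_1,\dots,v_{2l}$: sending an orthonormal basis adapted to the block decomposition onto $v_1,\dots,v_{2l}$, with the rotation blocks on the first $l-1$ coordinate planes and $\diag(1,-1)$ on $\span\{v_{2l-1},v_{2l}\}$, is an orthogonal change of basis, hence conjugation by an element of $\G$; by the first step it carries $g$ into $\H_s s$. The one genuinely load-bearing point is the parity bookkeeping in the third step: it is precisely the hypothesis $g\in\G^0 s$, i.e. $\det g=-1$, that makes $b$ (and hence $a$) odd and leaves exactly one $\diag(1,-1)$ block after repackaging, which is why the number of rotation slots comes out to be $l-1$, matching the shape of $\H_s s$. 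For $g\in\G^0$ the same reasoning fails, as it must.
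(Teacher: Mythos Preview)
Your proof is correct and follows essentially the same approach as the paper: both rely on the decomposition of an orthogonal transformation into $g$-invariant subspaces of dimension $1$ or $2$ (the paper cites \cite[p.~114]{Curtis_book} for this, whereas you invoke the classical normal form directly). The paper's proof is quite terse---it simply says ``and the claim follows'' after noting the decomposition---while you make explicit the parity bookkeeping (odd-dimensional $\pm1$-eigenspaces, repackaging into $l-1$ rotation blocks plus one $\diag(1,-1)$) that the paper leaves to the reader.
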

\begin{prf}
Fix an element $g\in \G$. As shown in 
\cite[page 114]{Curtis_book}, 
$g$ preserves a subspace of $\Vv$ of dimension $1$ or $2$. Hence $\Vv$ decomposes into a direct sum of $g$-irreducible subspaces of dimension $1$ or $2$, and the claim follows.
\end{prf}

Let $\xi_\nu$ denote the character of $\H_\bullet$ 
whose derivative at the identity is $\nu\in i\h_s^*$. 
In particular, for $h_\bullet$ as in \eqref{bullet}, 
\[
\xi_{e_j}(h_\bullet)=e^{-i\theta_j} \qquad (1\leq j\leq l-1)\,.
\]
(The negative sign in the exponent is due to fact that $e_j=-iJ_j^*$.)

The elements $e_j\pm e_k$ ($1\leq j<k\leq l-1$) and 
$2e_j$ ($1\leq j\leq l-1$) form a system of type $C_{l-1}$ which is dual to that of 
$((\g_s)_\C,(\h_s)_\C)$.
The corresponding $\rho$-function and the Weyl denominator 
are respectively
\begin{equation}\label{rhosd}
\rho^{\tC}_s=(l-1)e_1+(l-2)e_2 + \dots + e_{l-1}\,
\end{equation}
and
\begin{multline}\label{DeltasD}
\Delta^{\tC}_s
\Big(\!\Omat{1}\!\Big)=
\xi_{\rho^{\tC}_s}(h_\bullet) 
\prod_{1\leq j<k\leq l-1}(1-\xi_{e_k-e_j}(h_\bullet))(1-\xi_{-e_j-e_k}(h_\bullet)) \cdot \prod_{j=1}^{l-1}(1-\xi_{-2 e_j}(h_\bullet))\\ \qquad (h_\bullet\in \H_\bullet)\,.
\end{multline}

Observe that the Weyl group of the root system of type $C_{l-1}$ coincides with $W(\G_s^0, \h_s)$. It consists of all permutations and sign changes of the $e_1$, \dots, $e_{l-1}$. 
It acts on $\H_s^0=\Big\{ \Omat{1}; h_\bullet \in \H_\bullet\Big\}$ 
and hence on $\H_\bullet$.

The following two lemmas follow respectively from \cite[Theorems 2.5 and 2.6]{Wendt}.

%%\cite[Theorem 2.6]{Wendt}
\begin{lem}\label{Wendt1}
For any continuous $\G$-invariant function $f:\G^0s\to\C$,
\[
\int_{\G^0s} f(g)\,dg=
\frac{1}{|W(\G^0_s,\h_s)|}
\int_{\H_\bullet} f\Big(\Obigmat{h_\bullet}{1}{1}s\Big)
\left|\Delta^{\textup{\tC}}_s\Big(\!\Omat{1}\!\Big)\right|^2\,dh_\bullet\,,
\]
where $s=\Obigmat{1_\bullet}{1}{-1}$, see \eqref{s}. 
\end{lem}

Notice that the coverings
\[
\wt{\G^0 s}\to \G^0 s\,,\qquad \wt{\G^0}\to \G^0
\]
split (see Appendix \ref{appenD}). Hence we may choose continuous sections
\begin{equation}\label{Wendt21sections}
(\H^s)^0 s\ni hs\to \widetilde{hs}\in \wt{(\H^s)^0 s}\qquad \text{and}\qquad (\H^s)^0\ni h\to \t h\in \wt{(\H^s)^0}\,.
\end{equation}
\begin{lem}\label{Wendt2}
Consider the map 
\begin{equation*}
(\H^s)^0\ni h\to \widetilde{hs}\in \widetilde{(\H^s)^0 s}
\end{equation*}
obtained by composing the multiplication by $s$ and the fixed continuous section.
Then
\begin{equation}\label{Wendt22}
\Theta_\Pi\Big( 
\stackrel{\resizebox{13mm}{1.4mm}{$\sim$}}{\Obigmat{h_\bullet}{1}{1} s}\Big)
= 
D_\Pi
\Theta_{\Pi_s}
\Big(\!\Omat{1}\!\Big)
\qquad (h_\bullet\in\H_\bullet)\,,
\end{equation}
where
\begin{equation}\label{Wendt23}
\Theta_{\Pi_s}
\Big(\!\Omat{1}\!\Big)
= \frac{\sum_{t\in W(\G^0_s,\h_s)} \sgn_{\g_s/\h_s}(t)\xi_{t(\lambda+\rho^{\textup{\tC}}_s)}(h_\bullet)}{
\Delta^{\textup{\tC}}_s
\Big(\!\Omat{1}\!\Big)}
\qquad (h_\bullet\in\H_\bullet)\,,
\end{equation}
$\lambda$ is the highest weight of $\Pi$ 
(recall that $\lambda_l=0$), the sign character $\sgn_{\g_s/\h_s}(t)$ is defined by
\[
\Delta^{\textup{\tC}}_s
\Big(t\Omat{1}\!\Big)=
\sgn_{\g_s/\h_s}(t)\Delta^{\textup{\tC}}_s
\Big(\!\Omat{1}\!\Big)\qquad (t\in W(\G^0_s,\h_s))\,,
\]
and  
\begin{equation}\label{Wendt24} 
D_\Pi=\pm 1\,.
\end{equation}
\end{lem}
%%%%%%%%%%%%%%%%%%%%%%%%%%%%ùù
%%
\begin{lem}\label{3}
For $\phi\in \Ss(\Wv)$,
\begin{align*}
\int_{\G^0s} &\check\Theta_\Pi(\t g) T(\t g)(\phi)\,dg\\
&=\frac{1}{|W(\G_s^0,\h_s)|} \int_{\H_\bullet} 
\check\Theta_\Pi 
\Big(\wt{\Obigmat{h_\bullet}{1}{-1}}\Big)
\left|
\Delta^{\textup{\tC}}_s\Big(\!\Omat{1}\!\Big)
\right|^2 
T_s\wt{\Omat{-1}}
\left(\phi^\G|_{\Wv_s}\right)\,dh_\bullet\,.
%%\\
\end{align*}
%%%
\end{lem}
\begin{proof}
Clearly, the integral on the left-hand side does not change if we replace $\phi$ by $\phi^\G$. Hence we may assume that $\phi=\phi^\G$. By Lemma \ref{Wendt1}, the left-hand side multiplied by 
$|W(\G_s^0,\h_s)|$ is equal to
\begin{equation}\label{5}
\int_{\H_\bullet}
\check\Theta_\Pi\Big( 
\stackrel{\resizebox{13mm}{1.4mm}{$\sim$}}{\Obigmat{h_\bullet}{1}{1} s}\Big)
\left|\Delta^{\textup{\tC}}_s\Big(\!\Omat{1}\!\Big)\right|^2
T\Big( 
\stackrel{\resizebox{13mm}{1.4mm}{$\sim$}}{\Obigmat{h_\bullet}{1}{1} s}\Big)
(\phi)\,dh_\bullet\,.
\end{equation}
Apply Lemma \ref{lemma:WandT} to the decomposition $\Wv=\Wv_s\oplus \Wv_s^\perp$.
For $h\in (\H^s)^0$, 
$$
hs=\Obigmat{h_\bullet}{1}{1}\Obigmat{1}{1}{-1}=\Obigmat{h_\bullet}{1}{-1}\,.
$$
So 
$$
hs|_{\Wv_s}=\Omat{-1} \quad \text{and} \quad hs|_{\Wv_s^\perp}=1|_{\Wv_s^\perp}\,.
$$
Hence $(hs-1)|_{\Wv_s}$ maps onto $\Wv_s$ and $(hs-1)|_{\Wv_s^\perp}=0$. This
shows that  the restriction of $\mu_\Wv$ to $(hs-1)\Wv$
is $\mu_{\Wv_s}\otimes \delta_0$, where $\delta_0$ is the Dirac delta on 
$\Wv_s^\perp$. 
Therefore, for an appropriate choice of the lift of the element $\Omat{-1}$ on the right-hand side, 
\[
T\Big(\stackrel{\resizebox{13mm}{1.4mm}{$\sim$}}{\Obigmat{h_\bullet}{1}{1} s}\Big)
(\phi)=T_s\wt{\Omat{-1}}(\phi|_{\Wv_s})\,.
\]
Thus, \eqref{5} is equal to
\begin{equation}\label{6}
\int_{\H_\bullet}
\check\Theta_\Pi\Big( 
\stackrel{\resizebox{13mm}{1.4mm}{$\sim$}}{\Obigmat{h_\bullet}{1}{1} s}\Big)
\left|\Delta^{\textup{\tC}}_s\Big(\!\Omat{1}\!\Big)\right|^2
T_s\wt{\Omat{-1}}(\phi|_{\Wv_s})\,dh_\bullet\,.
\end{equation}
The lemma follows from \eqref{6}. 
\end{proof}

\begin{lem}\label{10}
Let $\mu^{\textup{\tC}}=\lambda+\rho_s^{\textup{\tC}}$. Then,
for $\phi\in \Ss(\Wv)$,
\begin{align*}
\int_{\G^0s} &\check\Theta_\Pi(\t g) T(\t g)(\phi)\,dg\\
&=D_\Pi \int_{\H_\bullet} 
\xi_{-\mu^{\textup{\tC}}}(h_\bullet)
\Delta^{\textup{\tC}}_s\Big(\!\Omat{1}\!\Big)
T_s\wt{\Omat{-1}}
\left(\phi^\G|_{\Wv_s}\right)\,dh_\bullet\,,
\end{align*}
where $\xi_{-\mu^{\textup{\tC}}}(h_\bullet)$ makes sense because $\lambda_l=0$. 
\end{lem}
\begin{proof}
This follows from Lemma \ref{3}. Indeed, 
notice that
\[
\check\Theta_\Pi \wt{\Obigmat{h_\bullet}{1}{-1}}
=\Theta_\Pi \wt{\Obigmat{h_\bullet^{-1}}{1}{-1}}
=\Theta_\Pi \Big( 
\stackrel{\resizebox{13mm}{1.4mm}{$\sim$}}{\Obigmat{h_\bullet^{-1}}{1}{1} s}\Big)\,.
\]
Hence \eqref{Wendt22} and \eqref{Wendt23} show that
\begin{align*}
\check\Theta_\Pi \wt{\Obigmat{h_\bullet}{1}{-1}}
&=D_\Pi
\Theta_{\Pi_s}{\Obigmat{h_\bullet^{-1}}{1}{1}}\\
&=D_\Pi
\frac{\sum_{t\in W(\G_s^0, \h_s)}
\sgn_{\g_s/\h_s}(t) \xi_{t^{-1}\mu^{\tC}} (h_{\bullet}^{-1})}{
\Delta^{\textup{\tC}}_s\Big(\!\Omatinv{1}\!\Big)}\,.
\end{align*}
Furthermore,
\[
\Delta^{\textup{\tC}}_s\Big(\!\Omatinv{1}\!\Big)
=\overline{\Delta^{\textup{\tC}}_s\Big(\!\Omat{1}\!\Big)} 
\]
and for $t\in W(\G_s^0, \h_s)$,
\[
\Delta^{\textup{\tC}}_s\Big(t\Omat{1}\!\Big)
=\sgn_{\g_s/\h_s}(t) 
\Delta^{\textup{\tC}}_s\Big(\!\Omat{1}\!\Big)\,.
\]
Therefore
\[
\check\Theta_\Pi \wt{\Obigmat{h_\bullet}{1}{-1}} 
\left|\Delta^{\textup{\tC}}_s\Big(\!\Omat{1}\!\Big)\right|^2
=D_\Pi
\sum_{t\in W(\G_s^0, \h_s)} \xi_{t^{-1}\mu^{\tC}} (h_{\bullet}^{-1})
\Delta^{\textup{\tC}}_s\Big(t\Omat{1}\!\Big)\,.
\]
Notice that
\[
\xi_{t^{-1}\mu^{\tC}} (h_{\bullet}^{-1})=\xi_{-\mu^{\tC}} (th_{\bullet})
\]
and since $\phi^\G$ is $\G$-invariant,
\[
T_s\wt{\Omat{-1}}
\left(\phi^\G|_{\Wv_s}\right)
=T_s\left(
\stackrel{\resizebox{-20mm}{1.4mm}{$\backsim$}}{t\Omat{-1} }\right)
\left(\phi^\G|_{\Wv_s}\right)=
T_s\wt{\Omatplus{th_\bullet}{-1}}
\left(\phi^\G|_{\Wv_s}\right)
\,.
\]
Therefore
\begin{align*}
\int_{\H_\bullet} \check\Theta_\Pi &\wt{\Obigmat{h_\bullet}{1}{-1}} 
\left|\Delta^{\textup{\tC}}_s\Big(\!\Omat{1}\!\Big)\right|^2
T_s\wt{\Omat{-1}}
\left(\phi^\G|_{\Wv_s}\right)\,dh_\bullet\\
&= D_\Pi
\int_{\H_\bullet} 
\sum_{t\in W(\G_s^0, \h_s)} \xi_{-\mu^{\tC}} (th_{\bullet})
\Delta^{\textup{\tC}}_s\Big(t\Omat{1}\!\Big)
T_s\wt{\Omatplus{th_\bullet}{-1}}
\left(\phi^\G|_{\Wv_s}\right) \,dh_\bullet\\
&=
|W(\G_s^0, \h_s)| D_\Pi\int_{\H_\bullet} 
\xi_{-\mu^{\tC}} (h_{\bullet})
\Delta^{\textup{\tC}}_s\Big(\Omat{1}\!\Big)
T_s\wt{\Omat{-1}}
\left(\phi^\G|_{\Wv_s}\right) \,dh_\bullet\,.
\end{align*}
\end{proof}
Consider the 
Cayley transform $c_\bullet:\h_\bullet\to \H_\bullet$ and
the (modified) Cayley transform $c_\odot:\h_s\to (\H^s)^0$ defined by
\begin{align}\label{CayleyBullet}
c_\bullet\begin{pmatrix}
x_1J_1& &0\\
 & \ddots & \\
0 & & x_{l-1}J_{l-1}
\end{pmatrix} &= \begin{pmatrix}
c(x_1J_1)& &0\\
 & \ddots & \\
0 & & c(x_{l-1}J_{l-1})
\end{pmatrix}\\
c_\odot
\Obigdiag{x_1J_1}{x_{l-1}J_{l-1}}{0}
&=
\Obigdiag{c(x_1J_1)}{c(x_{l-1}J_{l-1})}{1}\,, \quad \text{i.e. $c_\odot=c_\bullet \times\exp$} \,.
\end{align}
Notice that $c_\odot$ differs from the usual Cayley transform 
$c_s$ on $\h_s$, defined at the beginning of section \ref{Intertwining distributions as an integral over g},
 for which $c_s(\diag(x_1J_1,\dots,x_{l-1}J_{l-1},0))=\diag(c(x_1J_1), \dots, c(x_{l-1}J_{l-1}), -1)$.
Let $j_{\h_s}$ denote the Jacobian of the map $c_\odot$. 
Set
\[
\pi_{\g_s/\h_s}^{\tC}
\Obigdiag{x_1J_1}{x_{l-1}J_{l-1}}{0}
 =
 \prod_{1\leq j<k\leq l-1}
 (-x_j^2+x_k^2)\cdot \prod_{j=1}^{l-1} (-2ix_j)\,.
\]
\begin{lem}\label{partial formulas}
There are constants $A$ and $D$ such that for $x=\sum_{j=1}^{l-1} x_jJ_j\in \h_s$, 
\begin{align}
\Delta_s^{\textup{\tC}}(c_\odot(x))
&=A\,\pi_{\g_s/\h_s}^{\textup{\tC}}(x)\prod_{j=1}^{l-1}(1+x_j^2)^{-l+1}\,,\label{partial formulas 1}\\
\Theta_s\left(
\wt{\Omatplus{c_\bullet(x)}{-1}}
\right)
&= \left(\frac{i}{2}\right)^{(2l-1)l'} 
2^{l'} \prod_{j=1}^{l-1}(1+x_j^2)^{l'}\,,
\label{partial formulas 2}\\
j_{\h_s}(x)&=\prod_{j=1}^{l-1} 2 (1+x_j^2)^{-1}\label{partial formulas 3}
\end{align}
and hence
\begin{equation}\label{partial formulas 4}
\Delta_s^{\textup{\tC}}(c_\odot(x))
\Theta_s\left(
\wt{\Omatplus{c_\bullet(x)}{-1}}
\right)j_{\h_s}(x)
=D\,\pi_{\g_s/\h_s}^{\textup{\tC}}(x) \prod_{j=1}^{l-1}(1+x_j^2)^{l'-l}\,.
\end{equation}
\end{lem}
\begin{proof}
Part \eqref{partial formulas  1} may be verified via the argument used in Appendix \ref{appenB}, but easier -- without the square roots. Formula \eqref{partial formulas  2}  follows from \eqref{eq:tildec1}, and \eqref{partial formulas  3} from Appendix \ref{appenA} applied to the group $\SO_2$.
\end{proof}
\begin{lem}\label{10cayley}
With the notation of Lemmas \ref{10} and \ref{partial formulas},
\begin{multline*}
\int_{\G^0s} \check\Theta_\Pi(\t g) T(\t g)(\phi)\,dg
= C D_\Pi \check{\chi}_\Pi(\t{c}(0))
\int_{\h_\bullet} 
\xi_{-\mu^{\textup{\tC}}}(c_-(x)) \prod_{j=1}^{l-1}(1+x_j^2)^{l'-l} \\
\times \pi_{\g_s/\h_s}
\Big(\!\Omatplus{x}{0}\!\Big)
\int_{\Wv_s}
\chi_{\Omatplus{x}{0}}(w)\phi^\G(w)\,dw\,dx\,,
\end{multline*}
where $\check{\chi}_\Pi$ is the central character of $\Pi$ and $D_\Pi=\pm 1$.
\end{lem}
\begin{proof}
We start with the formula of Lemma \ref{10}, use the equality
\[
\xi_{-\mu^{\tC}}(h_\bullet)=\xi_{-\mu^{\tC}}((-1)_\bullet)\xi_{-\mu^{\tC}}(-h_\bullet)\,,
\] 
apply the change of variables, $h_\bullet=c_\bullet(x)$ and use the formula \eqref{partial formulas 4},
noticing that $\pi_{\g_s/\h_s}^{\textup{\tC}}$ is a constant multiple of $\pi_{\g_s/\h_s}$. 
 Here $c_\bullet:\h_\bullet\to \H_\bullet$, so $c_\bullet(0)=(-1)_\bullet$.

It remains to prove that $\xi_{-\mu^{\tC}}((-1)_\bullet)$ is a constant multiple of the central character of $\Pi$ evaluated at $\t{c}(0)$. For this, let $v\neq 0$ be a highest 
vector of $\Pi$. 
For now, let us denote by $\xi^{\wt{\H}}_{-\lambda}$ and $\xi^{\H}_{-\lambda}$
the characters defined by $\lambda$ on $\wt\H$ and $\H$, respectively.
 Then 
$\xi_{-\lambda}^{\wt{\H}}(\t{c}(0))=\xi_{-\lambda}^{\H}(c(0))$ 
because $\lambda$ is integral; see Appendix \ref{appenE}. 
Hence 
$
\xi_{-\lambda}^{\H}(c(0))v
=\Pi(\t{c}(0))v=\check{\chi}_\Pi(\t{c}(0))v$. This implies 
that 
$\check{\chi}_\Pi(\t{c}(0))=
\xi_{-\lambda}^{\H}(c(0))
$. 
Since $\lambda_l=0$, 
$$
\xi_{-\lambda}^{\H}(c(0))=\xi_{-\lambda}^{\H}(-I_{2l})=
\xi_{-\lambda}^{\H}\Omatplus{(-1)_\bullet}{I_2}=\xi_{-\lambda}((-1)_\bullet)
=\xi_{\rho_s^{\rm C}}((-1)_\bullet)\xi_{-\mu^{\rm C}}((-1)_\bullet)\,,
$$
where $\xi_{\rho_s^{\rm C}}((-1)_\bullet)=\pm 1$. 
\end{proof}
Recall from \eqref{the form B} the symmetric bilinear form
\[
B(x_\bullet,y_\bullet)\qquad (x_\bullet,y_\bullet\in\h_\bullet)\,.
\]
\begin{cor}\label{10cayleycor}
There is a constant $C$ depending only on the dual pair and a constant 
$D_\Pi=\pm 1$ distinguishing the representations $\Pi$ and $\Pi \otimes \det$, 
such that
\begin{align*}
\int_{\G^0s} &\check\Theta_\Pi(\t g) T(\t g)(\phi)\,dg\\
&= C D_\Pi 
\check{\chi}_\Pi(\t{c}(0)) 
\int_{\h_\bullet} \int_{\h_\bullet}
\prod_{j=1}^{l-1}(1+ix_j)^{\mu^{\textup{\tC}}_j+l'-l} (1 -ix_j)^{-\mu^{\textup{\tC}}_j+l'-l}  
e^{iB(x_\bullet,y_\bullet)}F_{\phi^\G|_{\Wv_s}}
\left(\begin{array}{c|c}
y_\bullet & 0\\
\hline
0 & 0
\end{array}
\right)
\,dy_\bullet\,dx_\bullet\,.
\end{align*}
\end{cor}
\begin{proof}
By Lemma \ref{reduction to ss-orb-int},
\[
\pi_{\g_s/\h_s}(x) \int_{\Wv_s} \chi_{x}(w)\phi^\G(w)\,dw\\
=C\int_{\h_\bullet}e^{iB(x_\bullet,y_\bullet)}F_{\phi^\G|_{\Wv_s}}
\left(\begin{array}{c|c}
y_\bullet & 0\\
\hline
0 & 0
\end{array}
\right)
\,dy_\bullet\,.
\]
By the proof of Lemma \ref{ximuchexplicit},
\[
\xi_{-\mu^{\tC}}(c_-(x))=\prod_{j=1}^{l-1}(1+ix_j)^{\mu_j^{\tC}}(1-ix_j)^{-\mu_j^{\tC}}\,.
\]
Hence the formula follows from Lemma \ref{10cayley}.
\end{proof}
\noindent{\it Proof of Theorem \ref{main thm for l<l', special}}.
To prove \eqref{main thm for l<l' a, special}, we proceed as in the proof of Theorem \ref{main thm for l<l'}, using Corollary \ref{10cayleycor}. 
%%
%%%%%%%%%%%%%%%%%%%%%%%%%%%%%%%%%
\section{\bf The special case $(\Og_{2l+1},\Sp_{2l'}(\R))$ with $1\leq l\leq l'$}\label{The special case, odd 1}
Recall the decomposition \eqref{2l+1 smaller than 2l' special}.
As in the previous section, we denote the objects corresponding to $\Wv_s$ by the subscript $s$, for instance $\Theta_s$ and $T_s$. 
Similarly, we denote the objects corresponding to $\Wv_s^\perp$ 
by the substrict $\perp$, for instance $\Theta_\perp$ and $T_\perp$. . 

If $\H$ is our Cartan subgroup of $\G$, then the elements of connected identity component $\H^0$ 
are of the form $h=\Omatplus{h_\bullet}{1}$ with  $h_\bullet$ in the Cartan subgroup $\H_s$  of
$\G_s$; see \eqref{bullet}.
%and the Lie algebras $\g$ and $\g_s$ share the same Cartan subalgebra $\h=\h_s$.%
Since any element 
$h\in\H^0$ acts trivially on $\Wv_s^\perp$, we see that
$
(h-1)\Wv=(h_\bullet-1)\Wv_s
$.
Hence,
\[
\mu_{(h-1)\Wv}=\mu_{(h_\bullet-1)\Wv_s}\otimes \delta_0\,,
\]
where $\delta_0$ is the Dirac delta on $\Wv_s^\perp$.  

\begin{lem}
There is a choice of coverings $\wt{\H^0}\to \H^0$ and $\wt{\H_\bullet}\to \H_\bullet$ such that 
the map $\wt{\H_\bullet} \in \wt{h_\bullet} \to \wt{\Omatplus{h_\bullet}{1}}\in \wt{\H^0}$ is a Lie group 
isomorphism and 
\begin{equation}\label{Tsodd, special odd 1}
\Theta(\t h)=\Theta_s(\wt{h_\bullet})\quad\text{and} \quad T(\t h)= T_s(\wt{h_\bullet}) \otimes \delta_0 \qquad (h\in \H^0)\,. 
\end{equation}
\end{lem}
\begin{proof}
We apply Lemma \ref{lemma:WandT} to the decomposition $\Wv=\Wv_s\oplus \Wv_s^\perp$.
Then $h|_{\Wv_s}=h_\bullet$ and $h|_{\Wv_s^\perp}=1$.
Choose $\t{1}$ such that $\t{1}=1_{\wt{\Sp}(\Wv_s^\perp)}$ (the identity of the metaplectic group). Hence 
$T_{\perp}(\t{1})=\delta_0$ and, by Lemma \ref{lemma:WandT},
$$
T(\t h)= \frac{\chi_+(\t{h})}{\chi_{s,+}(\wt{h_\bullet})\chi_{\perp,+}(\t{1})} \; 
T_s(\wt{h_\bullet}) \otimes \delta_0\,,
$$
where $\chi_+$, $\chi_{s,+}$ and $\chi_{\perp,+}$ are defined according to \eqref{chi+onSp} 
for $\Wv$, $\Wv_s$ and $\Wv_s^\perp$, respectively.

We now show that $\chi_{+}(\t{h})=\chi_{s,+}(\wt{h_\bullet})$ and that $\chi_{\perp,+}(\t{1})=1$, 
which will complete the proof of the second equality in \eqref{Tsodd, special odd 1}. 

We choose complete polarizations 
$$
\Wv_s=\X_1 \oplus \Y_1 \quad\text{and} \quad \Wv_s^\perp=\X_2 \oplus \Y_2
$$
preserved by $\G=\Og_{2l+1}$. Then 
$$
\Wv=\X\oplus \Y \qquad (\X=\X_1 \oplus \X_2, \, \Y=\Y_1  \oplus \Y_2)
$$
is a complete polarization preserved by $\G$. The double covers can be realized as
\begin{align*} 
\wt{\G}&=\{(g,\zeta) \in \G\times \C^\times; (\det g)_\X=(\det g)^{l'}=\zeta^2\}\,,\\
\wt{\G|_{\Wv_s}}&=\{(g,\zeta) \in \G|_{\Wv_s}\times \C^\times; (\det g)_{\X_1}=(\det g)^{l'}=\zeta^2\}\,.
\end{align*}
(See Appendix \ref{appenD}.)
Furthermore, by \cite[Proposition 4.28]{AubertPrzebinda_omega}, 
\begin{equation}
\label{chi-plus-forO2l+1}
\frac{\Theta(\t g)}{|\Theta(\t g)|}=\frac{\det^{-1/2}(\t g)}{|\det^{-1/2}(\t{g})|} \qquad (\t{g}\in \wt{\G})\,.
\end{equation}
Since for $h\in \H^0$
$$
(\det h)_\X=(\det h|_{\X_1})_{\X_1}\,,
$$
we see that we may choose the cover $\wt{\H_\bullet}$ adjusted to $\wt{\H^0}$ so that 
\begin{equation}
\label{equality-chi-plus-s}
\chi_+(\t h)=\chi_{s,+}(\wt{h_\bullet}) \qquad (h=\Omat{1})\,.
\end{equation}
As recalled on page \pageref{notation-Od-irreps}, for any 
$\t g$ in the metaplectic group such that $g$ preserves the decomposition 
$\Wv_s^\perp=\X_2 \oplus \Y_2$, the restriction of the Weil representation acts by
$$
\omega(\t g)f(x)=\det(\t g)^{-1/2}f(g^{-1}x) \qquad (x\in \X_2)\,.
$$
Applying this equality to $\t{1}\in \wt{\G}|_{\Wv_s^\perp}$, we see that $\det(\t 1)^{-1/2}=1$.
Thus \eqref{chi-plus-forO2l+1} implies that $\chi_{\perp,+}(\t{1})=1$. This proves
the second equality in \eqref{Tsodd, special odd 1}. 

To prove that $\Theta(\t h)=\Theta_s(\wt{h_\bullet})$, observe first that $\Theta^2(1)=1$ 
by \cite[Definition 4.16]{AubertPrzebinda_omega}. Therefore $|\Theta_\perp(\t 1)|=1$. 
As shown in the proof of Lemma \ref{lemma:WandT}, this implies that $|\Theta(\t h)|=|\Theta_s(\wt{h_\bullet})|$. So the claim follows from \eqref{equality-chi-plus-s}. 
\end{proof}

\noindent{\it Proof of Theorem \ref{main thm for l<l', special odd 1}}. 
As in \eqref{CayleyBullet}, consider the Cayley transform $c_\bullet:\h_s\to \H_s$ and 
the modified Cayley transform 
$c_\odot:\h_s\to \H^0$, defined by
\begin{equation}
\label{Cayley-hs}
c_\odot(\diag(x_1J_1, \dots, x_lJ_l,0))=
\diag(v_1,\dots,v_l,1), \quad v_j=\frac{
-ix_j+1}{-ix_j-1}
 \qquad (x_j\in\R, 1\leq j\leq l)\,,
\end{equation}
i.e. $c_\odot=c_\bullet \times \exp$.
See Appendix \ref{appenB} for the above realization of $\H^0$.
Notice also that $W(\G,\h)=W(\G_s,\h_s)$.

By \eqref{Tsodd, special odd 1} and since 
$c_\bullet(\h_s)$
is dense in $\H_s$,
\begin{align}
\label{main thm for l<l' a, special odd 11}
&\int_{\G^0}\check\Theta_\Pi(\t g) T(\t g)(\phi)\,dg\\
&=\frac{1}{|W(\G_s,\h_s)|}\int_{\H_s}\check\Theta_\Pi\Big(
\widetilde{\Omat{1}}\Big)
\Delta\Big(\widehat{\Omat{1}}\Big)\overline{\Delta\Big(\widehat{\Omat{1}}\Big)} 
T_s(\t h_\bullet)(\phi^\G|_{\Wv_s})\,dh_\bullet \nn\\  
&=\frac{4^l}{|W(\G_s,\h_s)|}\int_{\h_s}
\check\Theta_\Pi\Big(\widetilde{\Omatplus{c_\bullet(x)}{1}}\Big)
\overline{\Delta\Big(\widehat{\Omatplus{c_\bullet(x)}{1}}\Big)}
\Delta\Big(\widehat{\Omatplus{c_\bullet(x)}{1}}\Big)
\Theta_s(\widetilde{c_\bullet}(x)) \nn\\
&\quad\times
\int_{\Wv_s}\chi_x(w)\left(\phi^\G|_{\Wv_s}\right)(w)\,dw \cdot \ch^{-2}(x) \; dx
\,,
\end{align}
where the jacobian of the map $c_\odot:\h_s\to \H^0$
is computed using Appendix \ref{appenA} for $\G=\SO_2$.
As shown in Appendix \ref{appenA}, the Weyl group of $(\Spin_{2l+1}, 
\widehat{\H^0})$ is isomorphic to the Weyl group of $(\SOg_{2l+1}, \H^0)$
 and the covering
$\widehat{\H^0}\to\H^0$ intertwines the action of these groups. As before, we 
denote both Weyl groups by 
$W(\G,\h)$. For every $t\in W(\G,\h)$ and $x\in \h$, we have $t c_\odot (x)=c_\odot(tx)$.
Indeed, a permutation acts on $c_\odot (x)$
by permuting the coordinates of $x$, and 
a sign change $\varepsilon=\pm 1$ acts on each coordinate by 
$$
\varepsilon: v=\frac{-ix+1}{-ix-1} \to  v^\varepsilon
=\frac{-i\varepsilon x+1}{-i \varepsilon x-1} \qquad 
(x\in \R)\,.
$$
Therefore, 
$$
t\, \widehat{c_\odot}(x)=\widehat{c_\odot(tx)} \qquad (x\in \h, \, 
x_j\neq 0, \, 1\leq j\leq l)\,.
$$
Consequently, if $\mu$ is the Harish-Chandra parameter of $\Pi$, then 
\begin{equation}
\label{xi_mu c_bullet}
\xi_{-t\mu}(\widehat{c_\odot(x)})=\xi_{-\mu}(t\,\widehat{c_\odot}(x))=\xi_{-\mu}(\widehat{c_\odot(tx)}) \qquad (t\in W(\G,\h), \, x\in \h, \, 
x_j\neq 0, \, 1\leq j\leq l)\,.
\end{equation}

For $x$ as in \eqref{xi_mu c_bullet},
we now proceed as in Lemma \ref{general formula for the int distr}:

\begin{align*}
\check\Theta_\Pi\Big(\widetilde{\Omatplus{c_\bullet(x)}{1}}\Big)
\overline{\Delta\Big(\widehat{\Omatplus{c_\bullet(x)}{1}}\Big)}&=
\Theta_\Pi\Big(\widetilde{\Omatplus{c_\bullet(x)}{1}}^{-1}\Big)
\Delta\Big(\widehat{\Omatplus{c_\bullet(x)}{1}}^{-1}\Big)\\
&=\sum_{t\in W(\G,\h)}\sgn_{\g/\h}(t)\xi_{-t\mu}\Big(\widehat{\Omatplus{c_\bullet(x)}{1}}\Big)\\
&=\sum_{t\in W(\G,\h)}\sgn_{\g/\h}(t)\xi_{-\mu}\Big(\widehat{t\Omatplus{c_\bullet(x)}{1}}\Big)\\
&=\sum_{t\in W(\G_s,\h_s)}\sgn_{\g/\h}(t)\xi_{-\mu}(\widehat{c_\bullet}(tx))\,,
\end{align*}
and 
$$
\Theta_s(\widetilde{c_\bullet}(x)) \left(
\int_{\Wv_s}\chi_x(w)\left(\phi^\G|_{\Wv_s}\right)(w)\,dw \right) \ch^{-2}(x) 
$$
is a $W(\G_s,\h_s)$-invariant function of $x\in \h_s$. 
Hence \eqref{main thm for l<l' a, special odd 11} is a constant multiple of
\begin{align}
\label{intermediate for O2l+1, l smaller}
&\frac{4^l}{|W(\G_s,\h_s)|} 
\sum_{t\in W(\G_s,\h_s)} \sgn_{\g/\h}(t) \int_{\h_s} \xi_{-\mu}(\widehat{c_\bullet}(tx))
\Delta\Big(\widehat{\Omatplus{c_\bullet(x)}{1}}\Big)
\Theta_s(\wt{c_\bullet}(x)) \nn\\
&\qquad\qquad\times \left(\int_{\Wv_s}\chi_x(w)\left(\phi^\G|_{\Wv_s}\right)(w)\,dw\right) \ch^{-2}(x)  \; dx \nn\\
&\qquad=4^l \int_{\h_s} \xi_{-\mu}(\widehat{c_\bullet}(x))
\Delta\Big(\widehat{\Omatplus{c_\bullet(x)}{1}}\Big)
\Theta_s(\wt{c_\bullet}(x))\frac{1}{\pi_{\g_s/\h_s}(x)} \ch^{-2}(x) \nn\\
&\qquad\qquad\times\left(\pi_{\g_s/\h_s}(x)\int_{\Wv_s}\chi_x(w)\left(\phi^\G|_{\Wv_s}\right)(w)\,dw \right) dx\,.
\end{align}

Appendix \ref{appenB}, \eqref{eq:tildec1} and \eqref{ch explicit}
show that there is a constant $C_1$ such that 
\begin{multline}
\label{Deltaandpi-c}
\Delta\Big(\widehat{\Omatplus{c_\bullet(x)}{1}}\Big)
\Theta_s(\wt{c_\bullet}(x))
\frac{1}{\pi_{\g_s/\h_s}(x)} \, \ch^{-2}(x)
%&=C_1\ch^{-2l+1}(x)\,\ch^{2l'}(x)\,\ch^{-2}(x) \nn\\
=C_1\ch^{2l'-2l-1}(x) \,
\prod_{j=1}^l {\rm sgn}(x_j)\\
\qquad (x\in \h,\, x=\sum_{j=1}^l x_j J_j,\, x_j\neq 0)
\,. 
\end{multline}
By Lemma \ref{reduction to ss-orb-int}, there is a constant $C_2$ such that
\begin{equation} \label{t(c(x))(phi)-orbital-integral-restrictions}
\pi_{\g_s/\h_s}(x)\int_{\Wv_s}\chi_{x}(w)\left(\phi^\G|_{\Wv_s}\right)(w)\,dw=C_2\int_{\h_s}
e^{iB(x,y)} F_{\phi^\G|_{\Wv_s}}(y)\,dy\,. 
\end{equation}
Notice that $\mu_j+\frac{1}{2}$ is a positive integer for $1\leq j\leq l$. By 
Lemma \ref{ximuchexplicit} and \eqref{sqrtc}, 
\begin{align} 
\label{ximucbullet}
\xi_{-\mu}(\widehat{c_\bullet}(x))&=
\prod_{j=1}^l \Big( \frac{ix_j+1}{ix_j-1} \Big)^{\mu_j+\frac{1}{2}}  
\sqrt{\frac{ix_j-1}{ix_j+1} } \nn\\
&=
\prod_{j=1}^l  \frac{(ix_j+1)^{\mu_j+\frac{1}{2}}}{(ix_j-1)^{\mu_j+\frac{1}{2}}}\;
 \frac{\sqrt{ix_j-1}}{\sqrt{ix_j+1}}  \nn  \\
&=\prod_{j=1}^l  \frac{(ix_j+1)^{\mu_j+\frac{1}{2}}}{(-1)^{\mu_j+\frac{1}{2}} (1-ix_j)^{\mu_j+\frac{1}{2}}}  
 \frac{\sqrt{1-ix_j}}{\sqrt{ix_j+1}} \, i{\rm sgn}(x_j)  \nn  \\
&=i^l (-1)^{|\mu|+\frac{l}{2}}
\prod_{j=1}^l 
 (1+ix_j)^{\mu_j} (1-ix_j)^{-\mu_j} \prod_{j=1}^l {\rm sgn}(x_j)\,, 
\end{align}
where $|\mu|=\sum_{j=1}^l 
\mu_j$. Since $\delta=\frac{1}{2}(2l'-2l +1)$, see  
\eqref{eq:delta,beta}, we get from \eqref{ch explicit}
\begin{equation}
\label{ximuch} 
\xi_{-\mu}(\widehat{c_\bullet}(x))\ch^{2l'-2l-1}(x)
=i^l (-1)^{|\mu|+\frac{l}{2}}
\prod_{j=1}^l 
 (1+ix_j)^{-a_j} (1-ix_j)^{-b_j} \prod_{j=1}^l {\rm sgn}(x_j)\,,
\end{equation}
where $a_j$ and $b_j$ are as in \eqref{eq:ajbj}.

The above implies that 
\eqref{intermediate for O2l+1, l smaller} is equal to a constant multiple of 
\begin{align}
\label{integral-to-be-computed-O2l+1}
\int_{\h_s}  \xi_{-\mu}&
(\widehat{c_\bullet}(x))\ch^{2l'-2l-1}(x)
e^{iB(x,y)} F_{\phi^\G|_{\Wv_s}}(y)\,dy\,dx \nn\\
&=i^l (-1)^{|\mu|+\frac{l}{2}} \int_{\h_s} \prod_{j=1}^l 
(1+ix_j)^{-a_j}
(1-ix_j)^{-b_j}  \int_{\h_s}
e^{iB(x,y)} F_{\phi^\G|_{\Wv_s}}(y)\,dy\,dx\,. 
\end{align}
Since $\tau(\Wv_s)\cap \h_s=\h_s$ for $\Dc=\R$, we are in the situation considered by Theorem \ref{main thm for l<l'}, see also Corollary \ref{an intermediate cor}.  Hence the same computation as in 
Theorem \ref{main thm for l<l'} 
shows that \eqref{integral-to-be-computed-O2l+1} is equal to 
$i^l (-1)^{|\mu|+\frac{l}{2}}$ times
\begin{multline}
\label{thm2-O2l+1-1}
\int_{\h_s} \prod_{j=1}^l 
\Big(P_{a_j,b_j}(\beta y_j)e^{-\beta|y_j|}+\beta^{-1}Q_{a_j,b_j}(-\beta^{-1} \partial_{y_j})\delta_0(y_j)\Big)
F_{\phi^\G|_{\Wv_s}}(y)\, dy=\\
\int_{\h_s} \prod_{j=1}^l 
\Big(p_j(y_j)+q_{j}(-\partial_{y_j})\delta_0(y_j)\Big)
F_{\phi^\G|_{\Wv_s}}(y)\, dy\,.
\end{multline}
Recall from Appendix \ref{appenE} that the highest weights of $\Pi$ are integers $\lambda_1\geq \lambda_2\geq \dots \geq \lambda_l\geq 0$ and that $\rho=\sum_{j=1}^l (l+\frac{1}{2}-j) e_j$. 
Hence 
\begin{equation}
\label{minus one to length lambda}
(-1)^{|\mu|+\frac{l}{2}}=(-1)^{\frac{l(l+1)}{2}} (-1)^{|\lambda|}\,.
\end{equation}

We now look at $F_{\phi^\G|_{\Wv_s}}$ when $l=l'$.
By \eqref{integralonS/Sh1-1}, there is a constant $C_1>0$ such that 
\begin{multline}
\label{orbital integral Gs}
\int_{\Sg/\Sg^{\hs1}} \phi(s.w) \; d(s\Sg^{\hs1})= 
C_1 \int_{\G}\int_{\G'/\Zg'} \phi(gg'.w) \; dg\,d(g'\Zg') \qquad
 (\phi\in \mathcal{S}(\Wv),\, w\in \reg{\hs1})\,.
\end{multline}
Because of the embedding $\G_s\subseteq \G$ and the normalization $\vol(\G_s)=1$,  
\begin{align*}
\int_{\G_s}\!\int_{\G'/\Zg'} &\phi^\G(g_sg'.w) \; dg_s\,d(g'\Zg')\\
&=\int_{\G_s}\!\int_{\G'/\Zg'} \!\int_{\G} \phi((gg_s)g'.w)) \; dg\,dg_s\,d(g'\Zg')\\
&=\int_{\G}\!\int_{\G'/\Zg'} \phi(gg'.w) \; dg\,d(g'\Zg')
\qquad (\phi\in \mathcal{S}(\Wv),\, w\in \reg{\hs1})\,.
\end{align*}
Hence, for arbitrary $\phi\in\Ss(\Wv)$, 
\begin{equation}
\label{orbital-integrals-G-Gs-O2l+1}
\mu_{\mathcal{O}(w), \hs1}(\phi^\G|_{\Wv_s})=\mu_{\mathcal{O}(w), \hs1}(\phi)
\qquad (w\in \reg{\hs1})\,.
\end{equation}
Since $\pi_{\g'/\h'}(y)=\pi_{\g_s'/\h'_s}(y)$ by \eqref{product of positive roots for g - bis}, 
we conclude that there is constant $C_2$ such that 
\begin{equation}
\label{equal orbital integrals}
F_{\phi^\G|_{\Wv_s}}= C_2 F_{\phi}=C_2 F_{\phi^\G} \qquad (\phi\in \Ss(\Wv))\,.
\end{equation} 
This finishes the proof of Theorem \ref{main thm for l<l', special odd 1}.
 \hfill\qed

\begin{rem}
When $l< l'$ the Weyl--Harish-Chandra orbital integrals involve almost semisimple elements, see
\eqref{muwforoodd}, 
and the $F_{\phi^\G|_{\Wv_s}}$ is not necessarily proportional to $F_{\phi}$
as a function of $\phi\in \mathcal{S}(\Wv)$. Indeed, let $w_0\in\ss1(\V^0)$, as in \eqref{muwforoodd}. Then by \eqref{integralonS/Sh1+w0}, there is a constant $C_3>0$ such that 
\begin{multline}
\label{orbital integral Gs}
\int_{\Sg/\Sg^{\hs1+w_0}} \phi(s.(w+w_0))) \; d(s\Sg^{\hs1+w_0})= 
C_3 \int_{\G}\int_{\G'/{\Zg'\,}^n} \phi(gg'.(w+w_0)) \; dg\,d(g'{\Zg'\,}^n) \\
 (\phi\in \mathcal{S}(\Wv),\, w\in \reg{\hs1})\,,
\end{multline}
where ${\Zg'}^n$ is the centralizer of $n=\tau'(w_0)$ in $\G'$. Because of the embedding $\G_s\subseteq \G$ and the normalization $\vol(\G_s)=1$,  
\begin{align*}
\int_{\G_s}\!\int_{\G'/{\Zg'}^n} &\phi^\G(g_sg'.(w+w_0)) \; dg_s\,d(g'{\Zg'\,}^n)\\
&=\int_{\G_s}\!\int_{\G'/{\Zg'}^n} \!\int_{\G} \phi((gg_s)g'.(w+w_0)) \; dg\,dg_s\,d(g'{\Zg'\,}^n)\\
&=\int_{\G}\!\int_{\G'/{\Zg'}^n} \phi(gg'.(w+w_0)) \; dg\,d(g'{\Zg'\,}^n)
\qquad (\phi\in \mathcal{S}(\Wv),\, w\in \reg{\hs1})\,.
\end{align*}
However it may happen that 
$$
\int_{\G_s}\!\int_{\G'/{\Zg'}^n} \phi^\G(g_sg'.(w+w_0)) \; dg_s\,d(g'{\Zg'\,}^n)
\ne \int_{\G_s}\!\int_{\G'/{\Zg'}^n} \phi^\G(g_sg'.w) \; dg_s\,d(g'{\Zg'\,}^n)\,.
$$
Hence, by \eqref{orbital integral Gs}, there is generally no positive constant $C_4$ such that, for arbitrary $\phi\in\Ss(\Wv)$,
\begin{equation}
\label{orbital-integrals-G-Gs-O2l+1}
\mu_{\mathcal{O}(w), \hs1}(\phi^\G|_{\Wv_s})=C_4\mu_{\mathcal{O}(w), \hs1}(\phi)
\qquad (w\in \reg{\hs1})\,.
\end{equation}
\end{rem}

%%%%%%%%%%%%%%%%%%%%%
\section{\bf Proof of Theorem \ref{thm:det}}
\label{section: proof of thm det}

Before proving Theorem \ref{thm:det}, let us remark that we will not need to distinguish between the  cases $l>l'$ and $l\leq l'$. We will be working with a Cartan subgroup of $\G$, which we shall denote by $\H$ and not by $\H(\g)$ as previously done when $l>l'$. This is justified because the Cartan subspaces of $\Wv$, which led to the decomposition $\h(\g)=\h\oplus\h''$, play no role here. 
On the other hand, we will need to distinguish between the even and odd orthogonal groups. 

Consider first the case $\G=\Og_{2l}$ with $l>1$. Retain the notation introduced at the beginning of section \ref{The special case even} and let 
$\rho_s^{\tC}$ be as in \eqref{rhos_O2l}. 
Then the functions $\xi_{\rho_s}$ and 
$\Delta_s$ 
for $\G_s=\Og_{2l-1}$ are defined on the double cover $\widehat{\H_s^0}$ of $\H_s^0$ introduced in 
section \ref{Intertwining distributions as an integral over h}:
\[
\Delta_s(\widehat{h})
=
\xi_{\rho_s}(\widehat{h}) 
\prod_{1\leq j<k\leq l-1}(1-\xi_{e_k-e_j}(h))(1-\xi_{-e_j-e_k}(h)) \cdot \prod_{j=1}^{l-1}(1-\xi_{- e_j}(h))  \qquad (h\in \H_s^0)\,.
\]  
Nevertheless, $|\Delta_s(\widehat{h})|$ is well defined as a function on $\H_s^0$ itself, and can be considered as a function on $\H_\bullet\subseteq \Og_{2(l-1)}$ by setting $|\Delta_s(h_\bullet)|=\left|\Delta_s\Big(\!\Omat{1}\!\Big)\right|$. 
Observe that for $\nu=\sum_{j=1}^{l-1} \nu_j e_j\in i\h_s^*$ with $\nu_j\in\Ze$ for $1\leq j\leq l$, 
\begin{equation}
\label{xinu}
\xi_\nu(h_\bullet)=\xi_\nu\Big(\Omat{1}\Big)
=\prod_{j=1}^{l-1} e^{-i\nu_j\theta_j} \qquad 
(h_\bullet=\exp\Big(\sum_{j=1}^{l-1} \theta_j J_j\Big) \in \H_\bullet)\,.
\end{equation}
Hence 
\begin{equation}
\label{xinu-minus}
\xi_\nu(-h_\bullet)=(-1)^{|\nu|} \xi_\nu(h_\bullet) \qquad \text{where $|\nu|=\sum_{j=1}^{l-1} \nu_j$}\,.
\end{equation}
Since
$1=|\xi_{\rho_s}(h_\bullet)|=|\xi_{\rho^{\tC}_s}(h_\bullet) |$
and 
\[
(1-\xi_{-2 e_j}(h_\bullet))=(1-\xi_{- e_j}(h_\bullet))(1+\xi_{- e_j}(h_\bullet))\,,
\]
we see that
\[
\left|\Delta^{\tC}_s\Big(\!\Omat{1}\!\Big)\right|
=\Big|\Delta_s\Big(\Omat{1}\Big)\Big|\cdot\prod_{j=1}^{l-1}|1+\xi_{- e_j}(h_\bullet)|\,.
\]
Furthermore, by \eqref{xinu},
\[
\prod_{j=1}^{l-1}|1+\xi_{- e_j}(h_\bullet)|^2=\prod_{j=1}^{l-1}(1+\xi_{e_j}(h_\bullet))(1+\xi_{- e_j}(h_\bullet))=\det(1+h_\bullet)\,.
\]
Thus
\begin{align}
\label{deltaCdelta}
\left|\Delta^{\tC}_s\Big(\!\Omat{1}\!\Big)\right|^2
&=\Big|\Delta_s\Omat{1}\Big|^2  \det(1+h_\bullet) \nn\\
&=\frac{1}{2}\,\Big|\Delta_s\Omat{1}\Big|^2 \det\Big(1+\Omat{1}\Big)\,.
\end{align}
Finally, by \eqref{xinu-minus},
\begin{equation}
\label{deltaCminus}
\Delta^{\tC}_s(
\Big(-\!\Omat{1}\!\Big)
=(-1)^{l(l-1)/2} \Delta^{\tC}_s
\Big(\!\Omat{1}\!\Big)\,.
\end{equation}
By Lemma \ref{3}, \eqref{deltaCminus} and \eqref{deltaCdelta}, for $\phi\in \Ss(\Wv)$,
\begin{align*}
\int_{\G^0s} &\check\Theta_\Pi(\t g) T(\t g)(\phi)\,dg\\
&=\frac{1}{|W(\G_s^0,\h_s)|} \int_{\H_\bullet} 
\check\Theta_\Pi 
\wt{\Obigmat{h_\bullet}{1}{-1}}
\left|\Delta^{\tC}_s\Big(\!\Omat{1}\!\Big)\right|^2
T_s\wt{\Omat{-1}}
\left(\phi^\G|_{\Wv_s}\right)\,dh_\bullet\\
&=\frac{1}{|W(\G_s^0,\h_s)|} \int_{\H_\bullet} 
\check\Theta_\Pi 
\wt{\Obigmat{-h_\bullet}{1}{-1}}
\left|\Delta^{\tC}_s\Big(\!\Omat{1}\!\Big)\right|^2
T_s\wt{\left(
\begin{array}{c|c}
-h_\bullet & 0   \\ 
\hline
0 &   -1
\end{array}
\right)}
\left(\phi^\G|_{\Wv_s}\right)\,dh_\bullet\\
&=\frac{1}{2|W(\G_s^0,\h_s)|} \int_{\H_\bullet} 
\check\Theta_\Pi \Big(\stackrel{\resizebox{-17mm}{1.4mm}{$\backsim$}}
{\iota_s\Big(\!-\!\Omat{1}\Big)}\Big)
\Big|\Delta_s\!\Omat{1}\Big|^2 \det\Big(1+\Omat{1}\Big)\\
&\qquad \qquad \times 
T_s\Big(\stackrel{\resizebox{-14mm}{1.4mm}{$\backsim$}}{\!-\!\Omat{1}}\Big)
\left(\phi^\G|_{\Wv_s}\right)\,dh_\bullet\,,
\end{align*}
where $\iota_s:-\G_s^0\to \G$ is the embedding given, in terms of matrices, by 
\[
\left(
\begin{array}{c|c}
a & b\\
\hline
c & d
\end{array}\right)
\to 
\left(
\begin{array}{c|c|c}
a & 0 & b\\
\hline
0 & 1 & 0\\
\hline
c & 0 & d
\end{array}\right), \qquad \text{with $a\in \M_{2l-2,2l-2} (\R)$, \;
$d\in \R$}\,.
\]
Now, Weyl's integration formula on $\G_s^0$ yields
$$
\int_{\G^0s} \check\Theta_\Pi(\t g) T(\t g)(\phi)\,dg=
\frac{1}{2} \int_{\G_s^0} 
\check\Theta_\Pi(\iota_s (-g))\det(1+g)T_s\big(\wt{-g}\big)
\left(\phi^\G|_{\Wv_s}\right)\,dg\,.
$$
Making the change of variables $g\to -g$ on the right-hand side, we get \eqref{main thm for l<l' a, special odd 1}.

Let now $\G=\Og_{2l+1}$ with $l\geq 1$. The Cartan subgroup $\H$ of $\G$ is described in Appendix \ref{appenB}. In particular,  $\H^0=\{(u_1,u_2,\dots,u_l,1); u_j\in \SO_2, 1\leq j\leq l\}$.

Suppose first that $1\leq l\leq l'$. On page \pageref{2l+1 smaller than 2l' special-the page},  we introduced $\G_s\subseteq \G$ as the subgroup acting trivially on the 1-dimensional subspace $\V^0_{\overline{0}}$ of $\V_{\overline{0}}$. Considering $\G_s$ as a group of isomorphisms of $\V^1_{\overline{0}}\oplus\cdots \oplus \V^l_{\overline{0}}$ identifies the Cartan subgroup $\H_s$ of $\G_s$ with 
\begin{equation}
\label{identification-Hs-O_2l+1}
\{h_\bullet=(u_1,u_2,\dots,u_l); u_j\in \SO_2, 1\leq j\leq l\}\,.
\end{equation}
The identification of $\H_s$ with  \eqref{identification-Hs-O_2l+1} applies when $l>l'$ as well. Indeed, in this case $\G_s\subseteq \G$ was defined on page \pageref{O_2l+1, l>l', notation} as the subgroup acting trivially on the 1-dimensional subspace $\V^{0,0}_{\overline{0}}$ of $\V_{\overline{0}}$. The identification therefore holds when we consider $\G_s$ as a group of isomorphisms of $(\V^{0,0}_{\overline{0}})^\perp\oplus \V^1_{\overline{0}}\oplus\cdots \oplus \V^l_{\overline{0}}$.

Recall from \eqref{doublecoverofH} the double covering $\widehat{\H^0} \ni \widehat{h} \to h \in \H^0$ of $\H^0$ on which the functions $\xi_{\rho}$ and $\Delta$ are well-defined.
It is easy to check that
\begin{equation}\label{DeltaDet}
\left|\Delta(\widehat{\Omat{1}})\right|^2=
|\Delta_s(h_\bullet)|^2\det(1-h_\bullet)\qquad (h_\bullet\in \H_s^0)\,,
\end{equation}
where 
$$
\Delta_s(h_\bullet)= 
\xi_{\rho_s}(h_\bullet)
\prod_{1\leq j< k\leq l} (1-\xi_{-e_j+e_k}(h_\bullet))(1-\xi_{-e_j-e_k}(h_\bullet))\,.
$$
(The product is empty if $l=1$. In this case, $\Delta_s(h_\bullet)=1$ for all $h_\bullet$.)
Recall from \eqref{Tsodd, special odd 1} (or \eqref{Tsodd, special odd 2}) that 
$T(\wt{\Omat{1}})=T_s(\t h_\bullet) \otimes \delta_0$ for 
$h_\bullet \in  \H_s^0$, 
where $\delta_0$ is the Dirac delta on $\Wv_s^\perp$. 

Hence, by Weyl's integration formula and \eqref{DeltaDet}, for $\phi\in\Ss(\Wv)$, 
\begin{align*}
\int_{\G^0} \check\Theta_\Pi(\t g) &T(\t g)(\phi)\,dg
=\frac{1}{|W(\G^0,\h)|}
\int_{\H^0}\check\Theta_\Pi(\t{h})
|\Delta(\widehat{h})|^2
T(\t{h})(\phi^\G)\,dh \\  
&=\frac{1}{2|W(\G^0_s,\h_s)|}\int_{\H_s^0}
\check\Theta_\Pi\Big(\wt{\Omat{1}}\Big)\det(1-h_\bullet)|\Delta_s(h_\bullet)|^2 T_s(\t h_\bullet)(\phi^\G|_{\Wv_s})\,dh_\bullet\\  
&=
\frac{1}{2}\,
\int_{\G_s^0}\check\Theta_\Pi(\t g)\det(1-g) T_s(\t g)(
\phi^\G|_{\Wv_s}
)\,dg\,.
\end{align*}
This proves \eqref{main thm for l<l' a, special odd 1} for $\G=\Og_{2l+1}$.

%%%%%%%%%%%%%%%%%%%%%%%
\section{\bf A different look at the pair $(\Og_{2l+1},\Sp_{2l'}(\R))$ with $l>l'$}
\label{The special case, odd 2}

Recall the 
decompositions  
$\h(\g)=\h\oplus \h''$ from \eqref{h' + h'' decomposition}
and
$\Wv=\Wv_s\oplus\Wv_s^\perp$ from \eqref{main thm for l>=l', special odd 2 -1}. 
Recall also that we often identify $\h$ and $\h'$ via \eqref{the identification}.
As before, we denote the objects corresponding to $\Wv_s$ by the subscript $s$: 
$\g_s$, $\G_s$, $\Theta_s$, and $T_s$. 
In particular, $\h_s=\h(\g)$, see \eqref{h(g)}, and $\H_s^0=\H(\g)^0$.
Since any element $h\in\H(\g)^0$ acts trivially on $\Wv_s^\perp$, we see that
\[
(h-1)\Wv=(h-1)\Wv_s\,.
\]
Hence, as in \eqref{Tsodd, special odd 1}, 
\begin{equation}\label{Tsodd, special odd 2}
\Theta(\t h)=\Theta_s(\t h)\quad\text{and} \quad T(\t h)=T_s(\t h) \otimes \delta_0 \qquad (h\in \H(\g)^0)\,,
\end{equation}
where $\delta_0$ is the Dirac delta on $\Wv_s^\perp$. 

We consider the (modified) Cayley transform 
$
c_\odot:\h(\g)\to \H(\g)^0$ defined as in \eqref{Cayley-hs}.
Notice that 
$$
c_\odot(x'+x'')=c(x')
c_\odot(x'') \qquad (x'\in \h=\h', x''\in \h'')\,,
$$
where $c:\h'\to \H'$ is the usual Cayley transform.

Let $\z_s$ denote the centralizer of $\h$ in $\g_s$. Then $\z_s=\h\oplus \g''_s$, where 
$\g_s''$ is the Lie algebra of the group $\G''_s$ of isometries of the restriction of the form $(\cdot,\cdot)$ to the 
$2(l-l')$-dimensional real vector space $(\V_{\overline{0}}^{0,0})^\perp$. Then $\h''$ is a Cartan subalgebra of $\g''_s$. 
The following lemma is a variation of Lemma \ref{another intermediate lemma}  in the present situation. 

\begin{lem}
\label{another intermediate lemma for O2l+1-bis} 
Suppose $l>l'$ and let $\mu$ be the Harish-Chandra parameter of a genuine irreducible 
representation of $\wt{\Og}_{2l+1}$. 
In terms of the decomposition (\ref{h' + h'' decomposition})
\begin{multline}\label{another intermediate lemma1-bis}
\xi_{-s\mu}(\widehat{c}_\odot(x))
 \ch^{2l'-2l-1}(x) \pi_{\z_s/\h(\g)}(x)\\
=\left(\xi_{-s\mu}(\widehat{c}(x')) \ch^{2l'-2l-1}(x')\right)
\left(\xi_{-s\mu}(\widehat{c}_\odot(x'')) \ch^{2l'-2l-1}(x'') \pi_{\g_s''/\h''}(x'')\right)\,,
\end{multline}
where $x=x'+x''\in \h(\g)$, with $x'\in \h$ and $x''\in\h''$.
Moreover,
\begin{multline}\label{another intermediate lemma2-bis}
\int_{\h''}\xi_{-s\mu}(\widehat{c}_\odot(x'')) \ch^{2l'-2l-1}(x'') \pi_{\g_s''/\h''}(x'')\,dx''\\
=C\sum_{s''\in W(\G'',\h'')}
\sgn_{\g''/\h''}(s'')
\Bbb I_{\{0\}}(-(s\mu)|_{\h''}+s''\rho'')\,,
\end{multline}
where $C$ is a constant, $\rho''$ is one half times the sum of the positive roots for $(\g''_\C, \h''_\C)$ and $\Bbb I_{\{0\}}$ is the indicator function of zero.
\end{lem}
\begin{proof}
Formula (\ref{another intermediate lemma1-bis}) is obvious, because $\pi_{\z_s/\h(\g)}(x'+x'')=\pi_{\g''_s/\h''}(x'')$. We shall verify (\ref{another intermediate lemma2-bis}).
By \eqref{Delta-O2l+1-c} applied to $\g''\supseteq \h''$, 
$$
\pi_{\g''_s/\h''}(x'')=C_1''\Delta''(\widehat{c}_\odot(x'')) \ch^{2(l-l')-1}(x'')
\qquad 
(x''\in\h'')\,,
$$
where $\Delta''$ is the Weyl denominator for $\G''$, see \eqref{FT Theta ch 13}.
Hence, the integral (\ref{another intermediate lemma2-bis}) is a constant multiple of
$$
\int_{\h''}\xi_{-s\mu}(\widehat{c}_\odot(x''))\Delta''(\widehat{c}(x''))\ch^{-2}(x'')\,dx''
=2^{\dim\h''} \int_{\widehat{c}_\odot(\h'')}\xi_{-s\mu}(h)\Delta''(h)\,dh\,, \qquad
$$
where 
$
\widehat{c}_\odot(\h'')
\subseteq \widehat{\H''^0}$. 
We therefore 
obtain the right-hand side of \eqref{another intermediate lemma2-bis} as in the proof of 
Lemma \ref{another intermediate lemma}. 
\end{proof}

\medskip

\noindent{\it Proof of Theorem \ref{main thm for l>=l', special odd 2}}.
Similar computations as those done in section \ref{The special case, odd 1} together with
\eqref{Tsodd, special odd 2} 
and $\h(\g)=\h_s$
imply that the 
left-hand 
side of \eqref{main thm for l<l' a, special odd 1-bis} is a constant multiple of
\begin{multline}\label{main thm for l<l' a, special odd 2}  
\frac{1}{|W(\G^0,\h(\g))|}\int_{\h(\g)}\left(
\Theta_\Pi(
\widehat{c}_\odot(x)^{-1})\Delta(\widehat{c}_\odot(x)^{-1})
\right)\left(\frac{\Delta(
\widehat{c}_\odot(x)
)}{\pi_{\g_s/\h(\g)}(x)} 
\Theta_s(
\t{c}_\odot(x)
)\right)\\
\times \pi_{\g_s/\h(\g)}(x)\int_{\Wv_s}\chi_x(w)\left(\phi^\G|_{\Wv_s}\right)(w)\,dw\ch^{-2}(x)\,dx\,,
\end{multline}
where $c_\odot(\h_s)$ is a dense subset of $\H(\g)^0$.
Lemma \ref{reduction to ss-orb-int for l>l'} shows that there is a constant $C_1$ such that
\begin{multline*}
\pi_{\g_s/\h(\g)}(x)\int_{\Wv_s}\chi_{x}(w)\,\phi^\G|_{\Wv_s}(w)\,dw\\
=C_1 \int_{\tau'(\reg{\hs1})} \sum_{t W(\Zg_s,\h(\g))\in W(\G_s,\h(\g))/W(\Zg_s,\h(\g))}\sgn_{\g_s/\h(\g)}(t)\pi_{\z_s/\h(\g)}(t^{-1}.x) e^{iB(x,t.y)} F_{\phi^\G|_{\Wv_s}}(y)\,dy\,, 
\end{multline*}
where $\z_s\subseteq \g_s$ is the centralizer of $\h=\h'$.
By \eqref{Deltaandpi-c}, for a suitable constant $C_1$, 
for all $x=\sum_{j=1}^l x_j J_j\in \h$ with $x_j\neq 0$ for $1\leq j\leq l$,
$$
\frac{\Delta(\widehat{c}_\odot(x))}{\pi_{\g_s/\h(\g)}(x)} 
\Theta_s(\widetilde{c}_\odot(x)) \ch^{-2}(x)=C_1 \ch^{2l'-2l-1}(x) 
\left(\prod_{j=1}^l {\rm sgn}(x_j) \right)
\,.
$$
Hence \eqref{main thm for l<l' a, special odd 2} is equal to a constant multiple of 
\begin{multline*}
\sum_{u\in W(\G, \h(\g))}\sgn_{\g/\h(\g)}(u) \int_{\h(\g)}\int_{\tau'(\reg{\hs1})} 
\xi_{-u.\mu}(\widehat{c}_\odot(x))\ch^{2l'-2l-1}(x) 
\left(\prod_{j=1}^l {\rm sgn}(x_j) \right)
\\
\times \, \sum_{t W(\Zg_s,\h(\g))\in W(\G_s,\h(\g))/W(\Zg_s,\h(\g))}\sgn_{\g_s/\h(\g)}(t)\pi_{\z_s/\h(\g)}(t^{-1}x) e^{iB(x,ty)} F_{\phi^\G|_{\Wv_s}}(y)\,dy\,dx\,.
\end{multline*}
Notice that for $t\in W(\G,\h(\g))=W(\G_s,\h(\g))$ and $x\in \h(\g)$,
\begin{equation}
\prod_{j=1}^l {\rm sgn}(tx_j)=\frac{
{\rm sgn}_{\g/\h(\g)}(t)}{
{\rm sgn}_{\g_s/\h(\g)}(t)}\; \prod_{j=1}^l {\rm sgn}(x_j)\,.
\end{equation}
Interchanging the sums, changing the variable of integration $x$ to $tx$ and using that $\ch(tx)=\ch(x)$ and $B(tx,ty)=B(x,y)$, we see that \eqref{main thm for l<l' a, special odd 2} is a constant multiple of
\begin{multline*}
\sum_{t W(\Zg_s,\h(\g))\in W(\G_s,\h(\g))/W(\Zg_s,\h(\g))}\sum_{u\in W(\G, \h(\g))} \sgn_{\g/\h(\g)}(u) \sgn_{\g/\h(\g)}(t) \\
\times \int_{\h(\g)}\int_{\tau'(\reg{\hs1})} 
\xi_{-\mu}(\widehat{c}_\odot(u^{-1}tx)) \ch^{2l'-2l-1}(x) 
\left(\prod_{j=1}^l {\rm sgn}(x_j) \right)
\pi_{\z_s/\h(\g)}(x)
e^{iB(x,y)} F_{\phi^\G|_{\Wv_s}}(y)\,dy\,dx\,.
\end{multline*}
Now, replace $u\in W(\G, \h(\g))$ with $tu$, where $t\in W(\G_s,\h(\g))=W(\G, \h(\g))$. 
Hence, 
\eqref{main thm for l<l' a, special odd 2} is a constant multiple of
\begin{multline}\label{TheAbove}
\sum_{u\in W(\G, \h(\g))} \sgn_{\g/\h(\g)}(u) \int_{\h(\g)}\int_{\tau'(\reg{\hs1})} 
\xi_{-\mu}(\widehat{c}_\odot(u^{-1}x))\ch^{2l'-2l-1}(x)
\left(\prod_{j=1}^l {\rm sgn}(x_j) \right)
\\
\times \, \pi_{\z_s/\h(\g)}(x) e^{iB(x,y)} F_{\phi^\G|_{\Wv_s}}(y)\,dy\,dx\,.
\end{multline}
Lemma \ref{another intermediate lemma for O2l+1-bis}, together with the identification 
\eqref{the identification} of $\h$ and $\h'$,
implies that this last expression is  a constant multiple of
\begin{align}
\label{intermediate-integral-O2l+1}
& \sum_{u\in W(\G, \h(\g))}\sgn_{\g/\h(\g)}(u) 
\Big(\sum_{u''\in W(\G'',\h'')} \sgn_{\g''/\h''}(u'')\mathbb{I}_{\{0\}}(-(u\mu)|_{\h''}+u''\rho'')\Big) \nn\\
&\qquad \times
\int_{\h'}\int_{\tau'(\reg{\hs1})} 
\xi_{-u\mu}(\widehat{c}_\odot(x)) \ch^{2l'-2l-1}(x)
\left(\prod_{j=1}^l {\rm sgn}(x_j) \right)
 e^{iB(x,y)}  F_{\phi^\G|_{\Wv_s}}(y)\,dy\,dx \nn\\
&=\sum_{\substack{u\in W(\G, \h(\g))\\(u\mu)|_{\h''}=\rho'' }} \!\! \sgn_{\g/\h(\g)}(u) \int_{\h'}
\xi_{-u\mu}(\widehat{c}_\odot(x))
\,\ch^{2l'-2l-1}(x)
\left(\prod_{j=1}^l {\rm sgn}(x_j) \right)
 \\
& \qquad \times
\int_{\tau'(\reg{\hs1})}  e^{iB(x,y)}  F_{\phi^\G|_{\Wv_s}}(y)\,dy\,dx\,. 
\end{align}
As in 
\eqref{ximuch},
 for $u\in W(\G,\h(\g))$ and $x\in \h'$,
\begin{equation}
\label{xiumucbullet}
\xi_{-u\mu}(\widehat{c_\odot}(x))\ch^{2l'-2l-1}(x)
\left(\prod_{j=1}^l {\rm sgn}(x_j) \right)
=i^l
(-1)^{|u\mu|+\frac{l}{2}} \prod_{j=1}^l (1+ix_j)^{-a_{u,j}} (1-ix_j)^{-b_{u,j}}
\,,
\end{equation}
where $|u\mu|=\sum_{j=1}^j (u\mu)_j$ and $a_{u,j}$, $b_{u,j}$ are as in \eqref{asj-bsj}.
Hence, computations as in the proof of Lemma \ref{computation-second-integral-l>l'} lead to 
the following equality, which holds in the sense of distributions on $\tau'(\reg{\hs1})$
for every $u\in W(\G,\h(\g))$:
\begin{multline}
\label{innerintegral-umu-gen}
\int_{\h'}\xi_{-u\mu}(\widehat{c_\bullet}(x)) \ch^{2l'-2l-1}(x) 
\left(\prod_{j=1}^l {\rm sgn}(x_j) \right)
e^{iB(x,y)}\,dx\\
=i^l (-1)^{|u\mu|+\frac{l}{2}} \Big(\prod_{j=1}^{l'} P_{a_{u,j},b_{u,j}}(2\pi y_j)\Big) e^{-2\pi\sum_{j=1}^{l'} |y_j|}\,,
\end{multline}
where $P_{a_{u,j},b_{u,j}}$ is defined in \eqref{D0'}.

The sum on the right-hand side of \eqref{intermediate-integral-O2l+1} is over the elements
$u\in W(\G,\h(\g))$ for which $(u\mu)|_{\h''}=\rho''$. By Corollary \ref{another intermediate cor, l>l'},
this has two consequences. The first is that this sum is $0$ unless $\mu$ satisfies 
$\mu|_{\h''}=\rho''$. As seen in the proof of  Theorem \ref{main thm for l>=l'}, this means that the highest weight $\lambda=\mu-\rho$ of $\Pi$ satisfies
condition (a) of that theorem. The second consequence is that for the $\mu$
satisfying $\mu|_{\h''}=\rho''$, an element $u\in W(\G,\h(\g))$ can give a nonzero contribution to the sum in \eqref{intermediate-integral-O2l+1}  only if $u|_{\h''}=1$. The latter condition holds for instance if $u=1$.

Suppose in the following that $\mu$ satisfies $\mu|_{\h''}=\rho''$. Consider first the case $u=1$.
By Lemma \ref{lemma-support}, 
\begin{equation}
\label{poly-smu-R-bis}
\prod_{j=1}^{l'} P_{a_{j},b_{j}}(2\pi y_j)=
(2\pi)^{l'} \prod_{j=1}^{l'} P_{a_j,b_j,2}(2\pi y_j) \mathbb{I}_{\R^+}(y_j) \qquad (y=\sum_{j=1}^{l'} y_j J'_j\in \h')
\end{equation}
has support equal to $\tau'(\hs1)$. Because of \eqref{innerintegral-umu-gen}, we can proceed as in Lemma \ref{lemma:contribution} to show that if $u\in W(\G,\h(\g))$ satisfies $(u\mu)|_{\h''}=\rho''$ and changes the sign of some coordinates (i.e. $y_j\to -y_j$ for some $j$), then the corresponding integral on the right-hand side of \eqref{intermediate-integral-O2l+1} is zero. Recalling that $(u\mu)|_{\h''}=\rho''$ implies $u|_{\h''}=1$, we see that 
all terms in this sum vanish but those corresponding $u\in W(\G',\h')\subseteq W(\G,\h(\g))$. 
The sum is hence over $u\in W(\G',\h')$ and 
formula \eqref{intermediate-integral-O2l+1} becomes a constant multiple of 
$$
\sum_{u\in W(\G', \h')}  \sgn_{\g'/\h'}(u)  (-1)^{|u\mu|+\frac{l}{2}} \int_{\tau'(\reg{\hs1})}   \left(\prod_{j=1}^{l'} P_{a_{u,j},b_{u,j},2}(2\pi y_j)\right)
e^{-2\pi \sum_{j=1}^{l'} |y_j|}  F_{\phi^\G|_{\Wv_s}}(y)\,dy\,.
$$
If $u\in W(\G',\h')$ then
$|u\mu|=|\mu|$. 
Recall from \eqref{minus one to length lambda} that 
$(-1)^{|\mu|+\frac{l}{2}}=(-1)^{\frac{l(l+1)}{2}} (-1)^{|\lambda|}$.

By the $W(\G',\h')$-skew invariance of $F_{\phi^\G|_{\Wv_s}}(y)$ the above integral is therefore a constant multiple of 
\begin{equation}
|W(\G', \h')| (-1)^{|\lambda|} \int_{\tau'(\reg{\hs1})}  \left(\prod_{j=1}^{l'} P_{a_j,b_j,2}(2\pi y_j)\right)
e^{-2\pi \sum_{j=1}^{l'} |y_j|}  F_{\phi^\G|_{\Wv_s}}(y)\,dy\,.
\end{equation}
It remains to show that, as a function of $\phi$, $F_{\phi^\G|_{\Wv_s}}$ is a constant multiple of here
$F_{\phi^\G}=F_{\phi}$. This follows from the same argument used for \eqref{equal orbital integrals} 
in the case $l=l'$, using \eqref{product of positive roots for g - bis} and \eqref{integralonS/Sh1-2} instead of \eqref{integralonS/Sh1-1}. (Notice that since $\G$ is compact, the integral on $\G/\Zg$ is 
${\rm vol}(\Zg)^{-1}$ times the same integral over $\G$.)
This concludes the proof of \eqref{main thm for l<l' a, special odd 1-bis}.

\hfill\qed
\smallskip
%%%%%%%%%%%%%%%%%%%%%%%%%%%%%%%%%%%%%%%%%%
%%
\begin{rem}
The factor $(-1)^{|\lambda|}$ appearing on the right-hand side of \eqref{main thm for l<l' a, special odd 1-bis} in Theorem \ref{main thm for l>=l', special odd 2} turns out to be a constant multiple of 
$\check{\chi}_\Pi(\t{c}(0))$, the value at $\t{c}(0)$ of the central character of $\Pi$, as in Theorems \ref{main theorem O2l for l>l'} and \ref{main thm for l>=l'}. However, we do not have a
proof of this fact independent of the known classification of the representations occurring in Howe's correspondence for the dual pair $(\G,\G')=(\Og_{2l+1},\Sp_{2l'}(\R))$, see e.g. \cite[Appendix (A.4)]{PrzebindaInfinitesimal}. Assume the classification. 
If $l>l'$, given $\lambda$, there is a unique representation $\Pi$ of $\wt{\G}$ occurring in the correspondence with highest weight $\lambda$. 
We see from \cite[(A.4.2.1)]{PrzebindaInfinitesimal} that the highest weight $\lambda'$ of the corresponding representation $\Pi'$ of $\wt{\G'}$ is of the form $\lambda'=\eta+\lambda''$, where 
$\lambda''$ is integral and $|\lambda''|=|\lambda|$.

Let $v\neq 0$ be a highest weight vector of $\Pi'$ and let $\wt{c'}:\g'\to \wt{\G'}$ be the lift of the Cayley transform satisfying $\wt{c'}(0)=\wt{c}(0)$ (Recall that $c'(0)=-1=c(0)$ is in the center of the symplectic group and hence in $\G\cap \G'$.) 
Then 
$$
\check{\chi}_{\Pi'}(\wt{c'}(0))v=\Pi'(\wt{c'}(0))v=\xi_{\lambda'}(\wt{c'}(0))v\,,
$$
which implies that $\check{\chi}_{\Pi'}(\wt{c'}(0))=\xi_{\lambda'}(\wt{c'}(0))$. 
Since $\lambda''$ has integral coordinates 
$$
\xi_{\lambda''}(\wt{c'}(0))=\xi_{\lambda''}(c(0))=(-1)^{|\lambda''|}=(-1)^{|\lambda|}\,.
$$
Hence
$$
\xi_{\lambda'}(\wt{c'}(0))=\xi_{\eta}(\wt{c'}(0))\xi_{\lambda''}(\wt{c'}(0))
=\xi_{\eta}(\wt{c'}(0))(-1)^{|\lambda|}\,.
$$
Since $\Pi$ and $\Pi'$ agree on the center of the symplectic group,
$\xi_{\lambda}(\wt{c}(0))=\xi_{\lambda'}(\wt{c'}(0))$, yielding 
$$
\xi_{\lambda}(\wt{c}(0))=\xi_{\eta}(\wt{c}(0))(-1)^{|\lambda|}\,,$$
where $\xi_{\eta}(\wt{c}(0))$ is a constant independent of the representation $\Pi$.
\end{rem}

%%%%%%%%%%%%%
\section{\bf Proof of Corollary \ref{HW-l <=l'}}
\label{section: proof corollary on UU l<=p+q}

We will distinguish two cases:
\begin{enumerate}
\item[(a)] $0\leq p<l=p+q$,
\item[(b)] $0\leq p<l<p+q$.
\end{enumerate}
In both cases, we shall prove that if 
\begin{equation}
\label{integral-not-0-UU}
\int_{\G}\check{\Theta}_{\Pi}(\wt{g})T(\wt{g})\, dg\neq 0\,,
\end{equation}
then 
$\lambda_{p+1}\leq\frac{p-q}{2}$ and $\lambda_{l-q}\geq\frac{p-q}{2}$. Here the second condition is empty if $l\leq q$.

Consider first case (a). Then $a_j+b_j=-2\delta+2=1$ for all $1\leq j\leq l$. So $Q_{a_j,b_j}=0$ 
for all $1\leq j\leq l$, and hence, in the notation of \eqref{main thm for l<l' a}, 
\begin{equation}
\label{integrand-UU-l=p+q}
\prod_{j=1}^l 
\big( p_j(y_j)+q_j(-\partial_{y_j})\delta_0(y_j)\big)
 F_\phi(y)=
\Big(\prod_{j=1}^{l} P_{a_j,b_j}(\beta y_j)\Big) e^{-\sum_{j=1}^l |y_j|} F_\phi(y)\,.
\end{equation}
Moreover, by Lemma \ref{lem:Pabpolynomial}, for every $1\leq j\leq l$, at most one between 
$P_{a_j,b_j,2}$ and $P_{a_j,b_j,-2}$ can be nonzero. By \cite[Lemma 3.5]{McKeePasqualePrzebindaWCestimates} and because $l>p=l-q>0$, 
\begin{align}
\label{htauW-UU-b}
\h\cap \tau(\Wv)&=W(\G,\h)\big\{ y=\sum_{j=1}^l y_j J_j: y_1,\dots, y_{\max(l-q,0)}\geq 0 \geq y_{p+1}, \dots, y_l\big\} \\[-2mm]
&=\big\{ y=\sum_{j=1}^l y_j J_j: \text{$p$ coordinates $y_j$ are $\geq 0$ and 
$q$ coordinates $y_j$ are $\leq 0$}\big\}\,. \nn
\end{align}
If \eqref{integral-not-0-UU} holds, then $P_{a_j,b_j,2}\neq 0$ for $p$ coordinates $y_j$ and 
$P_{a_j,b_j,-2}\neq 0$ for $q$ coordinates $y_j$. The first condition is equivalent to 
 $b_j\geq 1$ for $p$ values of $j$.
The second condition is equivalent to $a_j\geq 1$, equivalently, $b_j\leq 0$ for $q(=l-p)$ values of $j$. Since the $b_j$'s are strictly decreasing, we conclude that if \eqref{integral-not-0-UU} holds, then 
$$
b_1> \cdots > b_{p} >0 \geq b_{p+1}>\cdots >b_l\,.
$$
But, for $1\leq j\leq l$,
\[
b_j=\lambda_j+\rho_j-\delta+1=\lambda_j+\frac{l}{2}-j+1\,.
\]
Hence 
$b_p>0$ is equivalent to $\lambda_p\geq \dfrac{p-q}{2}$, and
$b_{p+1}\leq 0$ is equivalent to $\lambda_{p+1} \leq \dfrac{p-q}{2}$\,.
%%
%
%Hence $b_p>0$ is equivalent to  
%\[
%\lambda_p\geq \frac{p-q}{2}
%\]
%and $b_{p+1}\leq 0$ is equivalent to
%\[
%\frac{p-q}{2}\geq \lambda_{p+1}\,.
%\]
This proves the claim in the case (a).

Let us now come to case (b). Then $Q_{a_j,b_j}\neq 0$ for all $1\leq j\leq l$ because $a_j+b_j=-2\delta+2<1$.
Recall the integral \eqref{main thm for l<l' a}:
\[
\int_{\h\cap\tau(\Wv)}
\left(\prod_{j=1}^l  \left(p_j(y_j) +q_j(-\partial_{y_j})\delta_0(y_j)\right)\right)\cdot F_\phi(y)\,dy\,.
\]
For $\gamma\subseteq \{1,2,\dots, l\}$, let $|\gamma|$ denote its cardinality and set
$\gamma^c=\{1,2,\dots, l\}\setminus \gamma$. Clearly,
\begin{equation}\label{product_sum}
\prod_{j=1}^l  \left(p_j(y_j) +q_j(-\partial_{y_j})\delta_0(y_j)\right)
=\sum_{\gamma\subseteq \{1,2,\dots, l\}} 
\Big( \prod_{j\in \gamma^c} p_j(y_j) \Big)
\Big( \prod_{j\in \gamma} q_j(-\partial(J_j))\delta_0(y_j)\Big)\,.
\end{equation}
For $s\in W(\G,\h)$ let
\begin{equation}
\label{Ys}
Y_s=\big\{ y=\sum_{j=1}^l y_j J_j: y_{s(1)},\dots, y_{s(\max(l-q,0))}\geq 0 \geq y_{s(p+1)}, \dots, y_{s(l)}\big\}\,.
\end{equation}
By \eqref{htauW-UU-b}, 
$
\h\cap \tau(\Wv)=\bigcup_{s\in W(\G,\h)} Y_s\,. 
$
Notice that $Y_s=Y_{s'}$ if the permutations $s$ and $s'$ differ at most on the set $\{\max(l-q,0)+1, \dots, p\}$. Hence one may choose a subset $W_0(\G,\h)\subseteq W(\G,\h)$ such that the union 
$$
\h\cap \tau(\Wv)=\bigcup_{s\in W_0(\G,\h)} Y_s
$$
is disjoint.
Hence the integral in \eqref{main thm for l<l' a} is a sum of the integrals over these $Y_s$'s.
We consider each of them separately. Let then $s\in W_0(\G,\h)$ be fixed. 
We see from \eqref{product_sum} that the integral over $Y_s$ is equal to 
\begin{equation}
\label{integralYs-gamma-1}
\sum_{\gamma\subseteq \{1,2,\dots, l\}} 
\int_{Y_s} 
\Big( \prod_{j\in \gamma^c} p_j(y_j) \Big)
\Big( \prod_{j\in \gamma} q_j(-\partial(J_j))\delta_0(y_j)\Big)
F_\phi(y)\, dy\,,
\end{equation}
where empty products are equal to $1$.

As in case (a), by Lemma \ref{lem:Pabpolynomial}, for every $1\leq j\leq l$, at most one between $P_{a_j,b_j,2}$ and $P_{a_j,b_j,-2}$ can be nonzero.
By \eqref{Ys}, if the integral \eqref{integralYs-gamma-1} is nonzero then 
\begin{align*}
&\ j\in \{s(1), \dots, s(l-q)\}\cap \gamma^c \text{ implies}  \quad
\text{ $P_{a_j,b_j,2}\neq 0$, i.e. 
$b_j\geq 1$ \quad (\text{for the $l>q$ case}) },
\\
&\ j\in \{s(p+1), \dots, s(l)\}\cap \gamma^c \text{ implies} \quad\text{ $P_{a_j,b_j,-2}\neq 0$, i.e. 
$a_j\geq 1$}\,.
\end{align*}
%%%
For $\Gamma\in \{\gamma^c,\gamma\}$, define
$$
Y_{s,\Gamma}=\Big\{ y_\Gamma=\sum_{j\in \Gamma} y_j J_j: 
\left\{\text{
\begin{tabular}{l}
$y_j\geq 0$ for all $j\in \{s(1),\dots,s(l-q)\}\cap \Gamma$\, \\
$y_j\leq 0$ for all $j\in \{s(p+1),\dots,s(l)\}\cap \Gamma$
\end{tabular}
}
\right.
\Big\}\,,
$$
where the first line of conditions has to be omitted when $l\leq q$.
Then $Y_{s}=Y_{s,\gamma^c}\times Y_{s,\gamma}$ and
\eqref{integralYs-gamma-1} becomes
\begin{align}
\label{integralYs-gamma}
\sum_{\gamma\subseteq \{1,2,\dots, l\}}  &
\int_{Y_{s, \gamma^c}} 
\Big( \prod_{j\in \{s(1),\dots,s(l-q)\} \cap \gamma^c} p_j(y_j)\mathbb{I}_{\R^+}(y_j) \Big) 
\Big( \prod_{j\in \{s(\max(l-q,0)+1),\dots,s(p)\} \cap \gamma^c} p_j(y_j) \Big) \nn \\
&\times\Big( \prod_{j\in  \{s(p+1),\dots,s(l)\}\cap \gamma^c} p_j(y_j)\mathbb{I}_{\R^-}(y_j) \Big) \nn\\
&\times\left(\int_{Y_{s, \gamma}} 
\Big( \prod_{j\in \gamma} q_j(-\partial(J_j))\delta_0(y_j)\Big)
F_\phi(y)\; dy_\gamma\right)\, dy_{\gamma^c}\nn\\
=
\sum_{\gamma\subseteq \{1,2,\dots, l\}}  &
\int_{Y_{s, \gamma^c}} 
\Big( \prod_{j\in \{s(1),\dots,s(l-q)\} 
\cap \gamma^c} p_j(y_j)\mathbb{I}_{\R^+}(y_j) \Big) 
\Big( \prod_{j\in \{s(\max(l-q,0)+1),\dots,s(p)\} \cap \gamma^c} p_j(y_j) \Big) \nn \\
&\times\Big( \prod_{j\in  \{s(p+1),\dots,s(l)\}\cap \gamma^c} p_j(y_j)\mathbb{I}_{\R^-}(y_j) \Big) \nn\\
&\times\left( \prod_{j\in \gamma} q_j(\partial(J_j)) F_\phi(y)|_{y_j=0, j\in \gamma}\right)\, dy_{\gamma^c}\,,
\end{align}
where the first products are empty unless $l>q$ and
empty products are equal to $1$.

Suppose that $l>q$ and there is $j_\gamma\in \{s(1),\dots,s(l-q)\} \cap \gamma$. Then every $y=\sum_{j=1}^l y_j J_j$ with $y_j\geq 0$ for $j\in \{s(1),\dots,s(l-q)\} \cap \gamma^c$, $y_j\leq 0$ for $j\in \{s(p+1),\dots,s(l)\}\cap \gamma^c$ and $y_j=0$ for $j\in \gamma$ belongs to
$$
\Big\{ y=\sum_{j=1}^l y_j J_j: 
\left\{\text{
\begin{tabular}{l}
$y_j\geq 0$ for all $j\in \{s(1),\dots,s(l-q)\}\setminus \{j_\gamma\}$, \\
$y_j\leq 0$ for all $j\in \{s(p+1),\dots,s(l)\}$,\\
 $y_{j_\gamma}=0$
\end{tabular}
}
\right.
\Big\}\subseteq \partial(\h\cap\tau(\Wv))\,,
$$
where $\partial(\h\cap\tau(\Wv))$ denotes the boundary of $\h\cap\tau(\Wv)$.
For all $1\leq j\leq l$,
$$
\deg Q_{a_j,b_j}=-a_j-b_j=2\delta-2= p+q-l-1\,.
$$
Hence, the term
$\Big( \prod_{j\in \gamma} q_j(\partial(J_j)) F_\phi(y) \Big)|_{y_j=0, j\in \gamma}$ is zero on 
$\partial(\h\cap\tau(\Wv))$ by \cite[Theorem 3.5]{McKeePasqualePrzebindaWCestimates}.
Choosing $j=j_\gamma$, we see that the integral corresponding to $\gamma$ in \eqref{integralYs-gamma} vanishes. Similarly (and not only in the case $l>q$), the integral corresponding to $\gamma$ vanishes if there is $j_\gamma\in \{s(p+1),\dots,s(l)\} \cap \gamma$. 
The sum in \eqref{integralYs-gamma} therefore reduces to a sum over the $\gamma$ having no intersection with $\{s(1), \dots, s(\max(l-q,0))\}\cup\{s(p+1), \dots, s(l)\}$. For these $\gamma$'s, 
\begin{align*}
&\{s(1),\dots,s(\max(l-q,0))\} \cap \gamma^c=\{s(1),\dots,s(\max(l-q,0))\}\,,\\
&\{s(p+1),\dots,s(l)\} \cap \gamma^c=\{s(p+1),\dots,s(l)\}\,. 
\end{align*}
Hence,
\begin{align*}
b_{s(j)}&\geq 1 \;\text{for $1\leq j\leq l-q$, if $l>q$}\,,\\
a_{s(j)}&\geq 1 \;\text{for $p+1\leq j\leq l$}\,.
\end{align*}
In particular, there are at least $\max(l-q,0)$ elements $b_j\geq 1$. 
So $b_{l-q}\geq 1$ if $l>q$.
Similarly, there are at least $l-p$ elements 
$a_j\geq 1$. So $a_{p+1}\geq 1$.
As in the case (a), we conclude that if the integral over $Y_s$ corresponding to this $\gamma$ is 
not zero, then $\lambda_{l-q}\geq \frac{p-q}{2}$ (when $l>q$ holds) 
and $\lambda_{p+1}\leq \frac{p-q}{2}$.

This applies to all $\gamma$ and all $s$. Hence, if \eqref{integral-not-0-UU} is satisfied, then 
$\lambda_{l-q}\geq \frac{p-q}{2}$  (when $l>q$ holds)  and $\lambda_{p+1}\leq \frac{p-q}{2}$.
This concludes the proof of Corollary \ref{HW-l <=l'}.

\section{\bf Proof of Corollary \ref{HW-H-l <=l'}}
\label{section: proof corollary HW-H-l <=l'}

Before entering into the proof of Corollary \ref{HW-H-l <=l'}, let us consider 
the dual pair $(\G,\G')=(\Sp_l, \Og^*_{2l'})$ with arbitrary $l\leq l'$. 
Let $\Pi$ be an irreducible genuine representation of 
$\wt{\G}$. We want to prove that the intertwining distribution
corresponding to $\Pi$ is nonzero. For this, it suffices to show that the integral on the right-hand side of \eqref{main thm for l<l' a} is nonzero for suitable functions $\phi\in \Ss(\Wv)$.
The explicit expression of that integral depends on the values of the parameters $a_j$ and $b_j$ 
constructed from the Harish-Chandra parameter $\mu_1>\mu_2 >\dots>\mu_l$ of $\Pi$.

The parameters of the pair $(\Sp_l, \Og^*_{2l'})$ are $d=l$, $d'=l'$, $\iota=1/2$ and hence $\delta=l'-l$.  
Notice that $-a_j-b_j=2\delta-2=2(l'-l-1)$ does not depend on $j$. 
No $q_j$-term occurs in \eqref{main thm for l<l' a} if and only if $-a_j-b_j<0$, i.e. if and only if 
$l=l'$. Every $q_j$-term is a constant multiple of a delta distribution if and only if $-a_j-b_j=0$, 
i.e. if and only if $l+1=l'$. 
In all other cases, the $q_j$-terms are distributions and not measures.

As recalled in Appendix \ref{appenE}, the highest weights of $\Pi$ are integers $\lambda_j$
satisfying $\lambda_1\geq \lambda_2 \geq \cdots \geq \lambda_l\geq 0$ and the $\rho$-function 
for $(\g,\h)$ is $\rho=\sum_{j=1}^l (l+1-j) e_j$. 
Hence $a_j=-\mu_j-\delta+1\leq 0$ , i.e. $P_{a_j,b_j,-2}=0$, for all $1\leq j\leq l$.  
On the other hand, the sign of 
$$
b_j=\mu_j-\delta+1=\lambda_j+(l+1-j)-l'+l+1 \qquad (1\leq j\leq l)
$$
might depend on $j$.
Recall that $b_1>b_2>\dots >b_l$. All the $b_j$ are positive provided so is $b_l$, and 
$b_l=\lambda_l+2+l-l'>0$ if and only if $\lambda_l\geq l'-l-1$. 
In this case, $P_{a_j,b_j,2}\neq 0$ (and hence $p_j\neq 0$) for all $1\leq j\leq l$.
Notice that the condition $\lambda_l\geq l'-l-1$ is automatically satisfied when $l'-l-1\leq 0$, 
that is $l'\in \{l, l+1\}$.

%
%On the other hand, $b_j\leq 0$ for all $1\leq j\leq l$ provided $b_1\leq 0$, i.e. $\lambda_1\leq l'-2l-1$. In this case, $P_{a_j,b_j,2}=0$ (and hence $p_j=0$) for all $1\leq j\leq l$.

\medskip
\noindent \textit{Proof of Corollary \ref{HW-H-l <=l'}.} \;
The discussion preceeding this proof shows that if $\lambda_l\geq l'-l-1$ then, for $1\leq j\leq l$,
\begin{equation}
\label{pj-for-H}
p_j(y_j)=2\pi P_{a_j,b_j,2}(y_j) \mathbb{I}_{\R^+}(y_j) e^{-2\pi |y_j|}  \qquad (y_j\in \R)\,,
\end{equation}
where $P_{a_j,b_j,2}$ is a nonzero polynomial of degree $b_j-1 (\geq 0)$. 
Let $W_0(\G,\h)$ denote the subgroup of $W(\G,\h)$ acting as permutations on the variables $y_j$
of $y=\sum_{j=1}^l y_j J_j \in \h$. Then 
\begin{equation}
\label{polynomial-for-H}
\pi_{\g/\h} (y) \sum_{t\in W_0(\G,\h)} {\rm sgn}_{\g/\h}(t) \prod_{j=1}^l P_{a_j,b_j,2}((ty)_j)   \qquad (y\in \h)\,.
\end{equation}
is a $W_0(\G,\h)$-invariant real-valued polynomial on $\h$. It is nonzero because 
$\deg(P_{a_1,b_1,2})>\deg(P_{a_2,b_2,2})>\dots >\deg(P_{a_l,b_l,2})$. 
Let $\Ug$ be an open, nonempty, $W(\G,\h)$-invariant set with compact closure $\overline{\Ug} \subseteq\reg{\h}$. Observe that $\Ug\cap \tau(\reg{\hs1})$ is nonempty, open, $W_0(\G,\h)$-invariant and with compact closure contained in $\tau(\reg{\hs1})$. We choose such a $\Ug$ so that the polynomial 
\eqref{polynomial-for-H} has constant sign on $\Ug\cap \tau(\reg{\hs1})$.

By Lemma \ref{lem:supportF}, we can choose a nonzero function $\phi\in C^\infty_c(\Wv)^\G$ such that $\phi\geq 0$ and 
$\supp F_\phi\subseteq \Ug$. It follows, in particular, that $F_\phi$, as well as all its partial derivatives, vanishes along the root hyperplanes $y_j=0$, where $1\leq j\leq l$. For such a $\phi$, the right-hand side of \eqref{main thm for l<l' a} reduces to a constant multiple of 
\begin{equation}
\label{reduced integral H}
%\check{\chi}_\Pi(\t{c}(0)) 
\int_{\h\cap\tau(\Wv)}
\left(\prod_{j=1}^l p_j(y_j)\right)\cdot F_\phi(y)\,dy\,.
\end{equation}
By \eqref{pj-for-H}, we can replace the domain of integration $\h\cap\tau(\Wv)$ with 
$\tau(\reg{\hs1})$. Choose a smooth  $W(\G,\h)$-invariant function $\alpha_\phi$ on $\h$ which is equal to 1 on $\Ug$ and has compact support contained in $\reg{\h}$.
Then $\frac{\alpha_\phi}{\pi_{\g/\h}}$ is a smooth $W(\G,\h)$-skew-invariant function on $\h$. 
Set
$$
\Phi(y)=\frac{(2\pi)^l}{|W_0(\G,\h)|} \,\frac{\alpha_\phi(y)}{\pi_{\g/\h}(y)}  \left(\sum_{t\in W_0(\G,\h)} {\rm sgn}_{\g/\h}(t) \prod_{j=1}^l P_{a_j,b_j,2}((ty)_j)\right) e^{-2\pi\sum_{j=1}^l |y_j|}   \qquad (y\in \h)\,.
$$
This is a nonzero smooth $W_0(\G,\h)$-invariant function on $\h$. 
Since $\pi_{\g/\h}(y) F_\phi(y)$ is $W_0(\G,\h)$-invariant, the integral in \eqref{reduced integral H} can be written as
\begin{equation}
\label{final-nonzero-integral-H}
\int_{\tau(\reg{\hs1})} \Phi(y) \pi_{\g/\h}(y) F_\phi(y) \, dy\,.
\end{equation}
By \eqref{products of roots in so}, \eqref{Ch1} and \eqref{eq:originalf-def}, 
$$
\pi_{\g/\h}(y) F_\phi(y)= C |\pi_{\mathfrak{s}_{\overline{0}}/\hs1^2}(w^2)| \int_{\Sg/\Sg^{\hs1}} \phi(s.w) \, d(s\Sg^{\hs1}) \qquad (y=\tau(w)=\tau'(w))\,.
$$
Like $F_\phi$, it is supported in 
$\Ug$ and is a nonzero constant multiple of a function of constant sign. Moreover, by \eqref{polynomial-for-H}, $\Phi$ is nonzero and with constant sign 
in $\Ug\cap \tau(\reg{\hs1})$. Thus \eqref{final-nonzero-integral-H}, and hence the intertwining distribution evaluated at $\phi$, is nonzero.

\begin{rem}
Suppose that $l\leq l'$. Among all dual pairs with one member compact, $(\Sp_l, \Og^*_{2l'})$ is the easiest for computing the intertwining distributions, both because $\G=\Sp_l$ is connected and because there is only one conjugacy class of Cartan subspaces in $\Wv$. Still, establishing if the integral giving the intertwining distribution is nonzero is problematic also in this case as soon as there are nonconstant polynomials $Q_{a_j,b_j}$. The reason is that, at present, we do not have sufficient information on the derivatives of the Cauchy--Harish-Chandra integrals. For the orbital integrals for the adjoint action of a Lie group on it Lie algebra, the relevant information is contained in Harish-Chandra's work; see e.g. \cite[Theorem 9, p. 37]{VaradarajanHarmonic}.
\end{rem}

\section{\bf A sketch of a computation of the wave front set of $\Pi'$}
\label{section:wavefront}
\begin{cor}\label{noproofWF}
For any representation $\Pi\otimes\Pi'$ which occurs in the restriction of the Weil representation to the dual pair $(\wt\G, \wt\G')$, 
\[
WF(\Pi')=\tau'(\tau^{-1}(0))\,.
\]
\end{cor}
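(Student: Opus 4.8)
The plan is to extract the wave front set of $\Pi'$ from the asymptotic behavior of the intertwining distribution $f_{\Pi\otimes\Pi'}$ near the origin, using the fact that $f_{\Pi\otimes\Pi'}$ is, up to the transformations $\Op$ and $\mathcal K$, the Weyl symbol of the orthogonal projection onto $\L^2(\X)_\Pi$. The starting point is the general principle (developed in \cite{PrzebindaUnitary} and \cite{PrzebindaUnipotent}) that the wave front set of a representation realized on a subspace of $\L^2(\X)$ is governed by the cone of directions in which the Weyl symbol of the associated projection fails to be rapidly decreasing; equivalently, by the support of the leading term of the radial asymptotic expansion of $f_{\Pi\otimes\Pi'}$. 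Since $\Pi'$ lives on the $\G'$-side of the correspondence, $WF(\Pi')$ is obtained by pushing this cone forward under the moment map $\tau'$, so the content of the corollary is that this cone, viewed on $\Wv$, is exactly $\tau^{-1}(0)$ --- the zero fiber of the $\g$-moment map $\tau$ --- and that $\tau'$ of it is $\tau'(\tau^{-1}(0))$.

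First I would invoke the explicit formulas from section \ref{main results}: Theorems \ref{main thm for l<l'}, \ref{main thm for l>=l'}, \ref{main thm for l<l', special}, \ref{main thm for l<l', special odd 1} and \ref{main thm for l>=l', special odd 2} express $\int_{-\G^0}\check\Theta_\Pi(\t g)T(\t g)\,dg$, and hence $f_{\Pi\otimes\Pi'}$ via Lemma \ref{main theorem summary}, as an integral over a Cartan subspace of a product of the model functions $p_j$, $q_j$ (or $r_j$) against the orbital integral $F_\phi$. The functions $p_j(y_j)=P_{a_j,b_j}(\beta y_j)e^{-\beta|y_j|}$ decay exponentially, so the slowest-decaying --- indeed non-decaying --- contributions come from the terms $q_j(-\partial_{y_j})\delta_0(y_j)$ supported at $y=0$, i.e. from the part of $f_{\Pi\otimes\Pi'}$ supported on $\tau^{-1}(0)\cap\hs1$ (when $l\le l'$, where such derivative-of-delta terms are present) or, when $l>l'$, from the homogeneity structure of the polynomial fraction in \eqref{main thm for l>=l' c} combined with the Gaussian/non-Gaussian dichotomy there. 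In either case one reads off that, asymptotically as $w\to\infty$ along rays, $f_{\Pi\otimes\Pi'}$ has its ``directions of non-decay'' concentrated exactly on the nilpotent cone $\tau^{-1}(0)\subseteq\Wv$ --- the set where the Gaussian-type factor $e^{-\beta\sum|y_j|}$, which on $\Wv$ extends to a $\G\G'$-invariant function, ceases to force decay. The second step is to transfer this from $\Wv$ to $\g'^*$: applying the wave front calculus for the pair $(\G,\G')$ (as in \cite{PrzebindaUnitary}, Theorem on wave front of Howe lifts) one obtains $WF(\Pi')=\tau'(\,\{\text{non-decay directions of }f_{\Pi\otimes\Pi'}\}\,)=\tau'(\tau^{-1}(0))$, using that $\tau$ and $\tau'$ are the two unnormalized moment maps \eqref{unnormalized moment maps} and that on $\tau^{-1}(0)$ the two moment images are linked by the orbit correspondence.

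More concretely, the argument I would write down proceeds as follows. Fix a radial-asymptotic scaling $\phi\mapsto\phi(t\,\cdot\,)$ and track, via Corollaries \ref{an intermediate cor} and \ref{an intermediate cor, l>l'}, how $f_{\Pi\otimes\Pi'}(\phi(t\cdot))$ behaves as $t\to\infty$: the $p_j$-part contributes $O(t^{-N})$ for all $N$ because of exponential decay, while the $q_j$-delta-part contributes a genuine power of $t$ determined by $\deg q_j=2\delta-2$ and the homogeneity of $F_\phi$; the leading term is a distribution supported precisely on $\h\cap\tau(\Wv)$ at $y=0$, whose $\tau'$-preimage structure on $\Wv$ is the union of $\G\G'$-orbits in $\tau^{-1}(0)$. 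Then I would quote \cite[§ on wave front sets]{PrzebindaUnitary} (and the forthcoming \cite{McKeePasqualePrzebindaWC_WF}, announced here for the detailed proofs) to the effect that for a representation $\Pi'$ occurring in $\omega$ the closure of $\R_{>0}WF(\Pi')$ equals $\tau'$ of the asymptotic support of the intertwining distribution, concluding $WF(\Pi')=\tau'(\tau^{-1}(0))$. The main obstacle will be the case-by-case verification that the ``special'' components treated in Theorems \ref{main thm for l<l', special}, \ref{main thm for l<l', special odd 1} and \ref{main thm for l>=l', special odd 2} --- where an extra factor $\det(1-g)$ and a reduction to a smaller dual pair $\Wv_s$ intervene --- contribute nothing new to the asymptotic support beyond $\tau^{-1}(0)$; this requires checking that the $\det(1-g)$ twist only rescales by a Gaussian-type factor on $\Wv_s^\perp$ and that the smaller-pair moment image sits inside $\tau'(\tau^{-1}(0))$. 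Since the present section is explicitly a sketch with full details deferred to \cite{McKeePasqualePrzebindaWC_WF}, I would state this reduction and refer forward rather than grind through it here.
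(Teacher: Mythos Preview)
Your approach is essentially the same as the paper's: extract the leading homogeneous part of $f_{\Pi\otimes\Pi'}$ under scaling using the explicit formulas of section~\ref{main results}, then push forward by $\tau'$ and read off $WF(\Pi')$. A few points where the paper's sketch is sharper than yours.

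First, the paper does not stop at ``the leading term is supported on $\tau^{-1}(0)$''; it identifies the scaling limit precisely as a nonzero multiple of the invariant measure $\mu_\Oo$ on the dense orbit $\Oo\subseteq\tau^{-1}(0)$, namely $t^{\deg\mu_\Oo}M_{t^{-1}}^*f_{\Pi\otimes\Pi'}\to C\mu_\Oo$ in $\Ss^*(\Wv)$ as $t\to 0$, with $C\ne 0$. This is what makes the argument work: one needs a nonzero limit, not just knowledge of the support.

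Second, your heuristic ``the $p_j$-part is $O(t^{-N})$ and only the $q_j$-deltas survive'' is misleading in the range $l>l'$, where there are no $q_j$ terms at all (Theorem~\ref{main thm for l>=l'}); the limit there comes from the homogeneity of the polynomial quotient in \eqref{main thm for l>=l' c} against the exponential, not from delta terms.

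Third, the passage from the scaling limit on $\Wv$ to $WF(\Pi')$ is made explicit in the paper via two ingredients you only allude to: the formula $\frac{1}{\sigma}\,\t c_-^*\Theta_{\Pi'}=\widehat{\tau'_*(f_{\Pi\otimes\Pi'})}$ from \cite[Theorem~6.7]{PrzebindaUnipotent}, and an elementary lemma (Lemma~\ref{wave front set 1}) saying that if $t^d M_{t^{-1}}^*f\to u$ then $WF_0(\hat f)\supseteq\supp u$. Combined with the easy inclusion $WF(\Pi')\subseteq\overline{\Oo'}$, this gives both directions. Your reference to a general ``wave front calculus'' in \cite{PrzebindaUnitary} is in the right spirit, but the paper isolates exactly these two steps.
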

Here $WF(\Pi')$ stands for the wave front of the character $\Theta_{\Pi'}$ at the identity
and $0=WF(\Pi)$ since $\Pi$ is finite dimensional.

The complete proof is rather lengthy but unlike the one provided in \cite[Theorem 6.11]{PrzebindaUnipotent}, it is independent of \cite{VoganGelfand}. We sketch the main steps below. The details may be found in \cite{McKeePasqualePrzebindaWC_WF}.

The variety $\tau^{-1}(0)\subseteq \Wv$ is the closure of a single $\G\G'$-orbit $\Oo$; see e.g. \cite[Lemma 2.16]{PrzebindaUnipotent}. There is a positive $\G\G'$-invariant measure $\mu_{\Oo}$ on this orbit which defines a homogeneous distribution. We denote its degree by $\deg \mu_\Oo$. 

Recall that if $\Vv$ is a $n$-dimensional real vector space, $t>0$ and $M_tv=tv$ for $v\in \Vv$, 
then the pullback of $u\in \Ss'(V)$ by $M_t$ is $M_t^*u\in \Ss'(\Vv)$, defined by
$$
(M_t^*u)(\phi)=t^{-n} u\big(\phi\circ M_{t^{-1}}\big) \qquad (\phi\in \Ss(\Vv))\,.
$$
In particular, for $\Vv=\Wv$
\[
M_t^*\mu_\Oo=t^{\deg \mu_\Oo}\mu_\Oo\,.
\]

Define $\tau'_*:\Ss'(\Wv)\to \Ss'(\g')$ by 
$
\tau'_*(u)(\psi)=u(\psi\circ\tau')\,.
$
Then, for $t>0$,
\begin{equation}
\label{Mtau'}
t^{2\dim \g'} M^*_{t^2} \circ\tau'_*=t^{\dim\Wv}  \tau'_* \circ M^*_t\,.
\end{equation}

A rather lengthy but straightforward computation based on Theorems \ref{main thm for l<l'}, \ref{main thm for l>=l'} and \ref{main thm for l<l', special},
shows that
\begin{equation}\label{noproofWF1}
t^{\deg \mu_\Oo}M_{t^{-1}}^*f_{\Pi\otimes\Pi'} \underset{t\to 0}\to C\,\mu_{\Oo}\,,
\end{equation}
as tempered distributions on $\Wv$, where $C$ is a non-zero constant. 

Let $\fo$ indicate a Fourier transform on $\Ss'(\g')$. Then, for $t>0$,
\begin{equation}
M_t^*\circ\fo =t^{-\dim \g'} \fo \circ M_{t^{-1}}\,.
\end{equation}
Hence,
in the topology of $\Ss'(\g')$,
\begin{equation}\label{noproofWF2}
t^{2\deg \mu_{\Oo'}}M_{t^{2}}^*\fo\tau'_*( f_{\Pi\otimes\Pi'})\underset{t\to 0+}{\to}C\fo\mu_{\Oo'}\,,
\end{equation}
where $C\ne 0$ and $\Oo'=\tau'(\Oo)$.

There is an easy to verify inclusion $WF(\Pi')\subseteq\overline{\Oo'}$, \cite[(6.14)]{PrzebindaUnipotent} and a formula for the character $\Theta_{\Pi'}$ in terms of $\mathcal F(\tau'_*( f_{\Pi\otimes\Pi'}))$, 
\begin{equation}\label{noproofWF3}
\frac{1}{\sigma}\cdot \t c_-^*\Theta_{\Pi'}=\widehat{\tau'_*( f_{\Pi\otimes\Pi'})}\,, 
\end{equation}
where $\sigma$ is a smooth function, \cite[Theorem 6.7]{PrzebindaUnipotent}.
By combining this with the following elementary lemma, one completes the argument.
\begin{lem}\label{wave front set 1}
Suppose $f, u\in\Ss'(\R^n)$ and $u$ is homogeneous of degree $d\in \C$. Suppose
\begin{equation}\label{wave front set 1.1}
t^dM_{t^{-1}}^*f (\psi)\underset{t\to 0+}{\to}u (\psi) \qquad (\psi\in \Ss(\R^n))\,.
\end{equation}
Then
\begin{equation}\label{wave front set 1.2}
WF_0(\hat f)\supseteq \supp u\,,
\end{equation}
where the subscript $0$ indicates the wave front set at zero and
\[
f(x)=\int_{\R^n}\hat f(y)e^{2\pi ix\cdot y}\,dy\,.
\] 
\end{lem}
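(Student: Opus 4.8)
The plan is to prove Lemma \ref{wave front set 1} as a clean statement about homogeneous limits of distributions under dilation, entirely independently of the dual pair machinery. First I would unwind the definition of the wave front set at zero: recall that $y_0 \notin WF_0(\hat f)$ means there is a conic neighbourhood $\Gamma$ of $y_0$ and a cutoff $\varphi \in C_c^\infty(\mathbb R^n)$ with $\varphi(0)\neq 0$ such that the Fourier transform of $\varphi \hat f$ decays rapidly in $\Gamma$; since $f(x)=\int \hat f(y) e^{2\pi i x\cdot y}\,dy$, the Fourier transform of $\varphi\hat f$ is essentially $\varphi * f$ (up to sign conventions on the variable). The key mechanism is that rapid decay of $\widehat{\varphi \hat f}$ along a ray is detected by testing $f$ against dilated Gaussians (or any Schwartz bump) concentrated in the direction $y_0$, and the hypothesis \eqref{wave front set 1.1} controls exactly the $t\to 0+$ asymptotics of such tests.

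The heart of the argument: suppose for contradiction that $y_0 \in \supp u$ but $y_0 \notin WF_0(\hat f)$. Pick $\psi \in \Ss(\mathbb R^n)$ whose Fourier transform is supported in a small cone around $y_0$ and with $u(\psi)\neq 0$ — this is possible because $y_0\in\supp u$ and, by a standard property of homogeneous distributions, $\hat u$ (or rather the behaviour of $u$ against functions with Fourier support near $y_0$) is nonzero; more precisely, since $u$ is homogeneous of degree $d$, its Fourier transform $\hat u$ is homogeneous of degree $-d-n$ and $\supp \hat u$ is a closed cone, and $y_0\in\supp u$ translates into a statement that lets us choose a test function $\psi$ with $\widehat\psi$ supported in an arbitrarily thin cone around the ray through $y_0$ and $u(\psi)\neq 0$. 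Applying \eqref{wave front set 1.1} to this $\psi$, the quantity $t^d M_{t^{-1}}^* f(\psi) = t^{d} t^{-n} f(\psi\circ M_t)$ converges to $u(\psi)\neq 0$ as $t\to 0+$. On the other hand $f(\psi\circ M_t)$ is, after unravelling, an integral of $\hat f$ against $\widehat{\psi\circ M_t}$, and $\widehat{\psi\circ M_t}(y) = t^{-n}\widehat\psi(y/t)$ has Fourier support in the same thin cone around $y_0$ for all $t$; because $y_0\notin WF_0(\hat f)$, localizing $\hat f$ near the origin and pairing against a function with frequency support in that cone yields a quantity that is $O(t^\infty)$ as $t\to 0+$ (the dilated test function is frequency-concentrated on an escaping-to-infinity scale $1/t$ in exactly the bad direction, and the wave-front condition is precisely the vanishing of such pairings to infinite order). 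This forces $t^d M_{t^{-1}}^* f(\psi)\to 0$, contradicting $u(\psi)\neq 0$, and hence $\supp u \subseteq WF_0(\hat f)$.

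The main obstacle I anticipate is the bookkeeping around the \emph{localization at zero}: the wave front set at $0$ concerns the behaviour of $\varphi \hat f$ for $\varphi$ supported near $0$, not $\hat f$ globally, whereas the limit \eqref{wave front set 1.1} uses the full distribution $f$. One must check that the dilation $\psi \mapsto \psi\circ M_t$ with $t\to 0+$ automatically performs this localization: $\psi\circ M_t$ spreads out in physical space, so pairing $\hat f$ against $\widehat{\psi\circ M_t}$ is dominated by the low-frequency / near-origin behaviour in the relevant sense, and the complementary contribution (away from the support of a fixed cutoff $\varphi$ near $0$) is negligible because $\hat f$ is tempered and $\widehat{(1-\varphi)\hat f}$ is smooth. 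Making this splitting rigorous, and tracking that the rapid decay is uniform in $t$ on the thin cone, is the one genuinely technical point; everything else is Fourier-transform conventions and the elementary theory of homogeneous distributions. A convenient way to organize it is to fix a Littlewood–Paley-type cutoff adapted to the cone around $y_0$ and estimate $\int \hat f(y)\,\varphi(\epsilon y)\,t^{-n}\widehat\psi(y/t)\,dy$, splitting into the region $|y|\lesssim 1/t$ where the cone condition and $WF_0$-decay apply, and $|y|\gtrsim 1/t$ where Schwartz decay of $\widehat\psi(y/t)$ wins.

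Once Lemma \ref{wave front set 1} is in hand, the corollary \ref{noproofWF} follows by applying it with $f = \tau'_*(f_{\Pi\otimes\Pi'})$ (or the appropriate Fourier-conjugate thereof) and $u = C\,\mu_{\Oo'}$, using the scaling limit \eqref{noproofWF2}, the inclusion $WF(\Pi')\subseteq \overline{\Oo'}$, and the identity \eqref{noproofWF3} relating $\Theta_{\Pi'}$ to $\widehat{\tau'_*(f_{\Pi\otimes\Pi'})}$; since $\supp \mu_{\Oo'} = \overline{\Oo'} = \overline{\tau'(\tau^{-1}(0))}$, the two inclusions combine to the asserted equality $WF(\Pi') = \tau'(\tau^{-1}(0))$.
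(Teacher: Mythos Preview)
The paper does not prove this lemma; it is stated as ``elementary'' at the end of section~\ref{section:wavefront}, with details deferred to the forthcoming paper \cite{McKeePasqualePrzebindaWC_WF}. So there is nothing to compare against directly, and I comment on your argument on its own terms.

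Your overall strategy (contrapositive; split $\hat f=\varphi\hat f+(1-\varphi)\hat f$; use the wave-front hypothesis on the localized piece and temperedness on the remainder) is the right one, and your identification of the ``localization at $0$'' issue as the only real technical point is accurate. But the argument as written contains a genuine confusion between $\psi$ and $\widehat\psi$ that makes it fail. You choose $\psi$ with \emph{$\widehat\psi$} supported in a thin cone $\Gamma$ about $y_0$ and assert that $y_0\in\supp u$ guarantees $u(\psi)\neq 0$. That inference is wrong: $u(\psi)$ is, up to reflection, $\hat u(\widehat\psi)$, so what you would need is that $\Gamma$ meets $\supp\hat u$, not $\supp u$. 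A counterexample is $u(x)=x_1$ on $\R^2$ (homogeneous of degree $1$): here $\supp u=\R^2$, but $\hat u$ is supported at $\{0\}$, and any smooth $\widehat\psi$ supported in a proper closed cone vanishes to infinite order at the vertex, forcing $u(\psi)=0$. There is a second, related slip: you say the dilated test function is ``frequency-concentrated on an escaping-to-infinity scale $1/t$'', but $\widehat{\psi\circ M_t}(\xi)=t^{-n}\widehat\psi(\xi/t)$ concentrates at the \emph{origin} (scale $t$) as $t\to 0^+$. Pairing $\hat f$ against a bump shrinking to $0$ probes the value of $\hat f$ near $0$, not its high-frequency content, so the wave-front hypothesis yields no decay there. (A minor point: $t^d M_{t^{-1}}^*f(\psi)=t^{d+n}f(\psi\circ M_t)$, not $t^{d-n}$.)

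The fix is to swap the roles of $\psi$ and $\widehat\psi$. Assume $y_0\notin WF_0(\hat f)$: pick $\varphi\in C_c^\infty(\R^n)$ with $\varphi\equiv 1$ near $0$ and an open cone $\Gamma\ni y_0$ on which $g:=\widehat{\varphi\hat f}$ (a smooth function of polynomial growth) decays rapidly. Now take $\psi\in C_c^\infty(\R^n)$ with $\supp\psi$ contained in a small ball about $y_0$ inside $\Gamma$ and bounded away from $0$. Then $\psi\circ M_t$ is supported in $t^{-1}\supp\psi\subseteq\Gamma$, escaping to infinity inside the cone. Up to the reflection dictated by the convention $f=\mathcal F^{-1}\hat f$, one has $f=g+h$ with $h$ the inverse transform of $(1-\varphi)\hat f$. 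The $g$-contribution $t^{d+n}\int g(x)\psi(tx)\,dx$ is $O(t^{d+N})$ for every $N$ by the rapid decay of $g$ on $\Gamma$; the $h$-contribution equals $t^{d}\,\hat f\big((1-\varphi)\,\mathcal F^{-1}(\psi\circ M_t)\big)$, and since $(1-\varphi)$ vanishes on a neighbourhood of $0$ while all Schwartz seminorms of $\mathcal F^{-1}(\psi\circ M_t)$ restricted to $\{|\xi|\geq\epsilon\}$ are $O(t^\infty)$, temperedness of $\hat f$ gives $O(t^\infty)$ here too. Hence $t^d M_{t^{-1}}^*f(\psi)\to 0$, so $u(\psi)=0$ for every such $\psi$, and therefore $y_0\notin\supp u$.
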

%%

%%%%%%%%%%%%%%%%%%%%%%%%%%%%%%

%%%%%%%%%%%%%%%%%%
%%%%%     APPENDICES
%%%%%%%%%%%%%%%%%%
%%
%%
\appendix

\renewcommand{\thethh}{\Alph{section}.\fontindex{thh}}
\renewcommand{\theequation}{\Alph{section}.\fontindex{equation}}
%%%%%%%%%%%%%%%%%%%%%%%%%

\section{\bf Products of positive roots}
\label{section:positive root products}
\setcounter{thh}{0}
\setcounter{equation}{0}

Keep the notation introduced in section \ref{section:dualpairs-supergroups}. Recall, in particular, that 
$\sum_{j=1}^{l''} y_jJ_j \in \hs1^2|_{\V_{\overline 0}}$ and $\sum_{j=1}^{l''}y_jJ_j'\in \hs1^2|_{\V_{\overline 1}}$
%\begin{equation*}
%\sum_{j=1}^{l''} y_jJ_j \in \hs1^2|_{\V_{\overline 0}} \qquad \text{and} \qquad  \sum_{j=1}^{l''}y_jJ_j'\in \hs1^2|_{\V_{\overline 1}}\,
%\end{equation*}
are identified via \eqref{the identification}. Here $l''=\min(l,l')$.
\medskip

Suppose $l\leq l'$. We can choose the system of the positive roots of $\h$ in $\g_\C$ so that their product is given by the formula
\begin{align}
\label{product of positive roots for g}
\pi_{\g/\h}&(\sum_{j=1}^l y_jJ_j)\\
&=\left\{
\begin{array}{lll}
\prod_{1\leq j<k\leq l}i(- y_j+ y_k) & \text{if} & \Dc=\C\,,\\[.2em]
\prod_{1\leq j<k\leq l}(-y_j^2+y_k^2)\cdot 
\prod_{j=1}^l (-2iy_j)
 & \text{if} & \Dc=\Ha\,,\\[.2em]
\prod_{1\leq j<k\leq l}(-y_j^2+y_k^2) & \text{if} & \Dc=\R\ \text{and}\ \g=\mathfrak s\mathfrak o_{2l}\,,\\[.2em]
\prod_{1\leq j<k\leq l}(-y_j^2+y_k^2)\cdot 
\prod_{j=1}^l (-iy_j)
& \text{if} & \Dc=\R\ \text{and}\ \g=\mathfrak s\mathfrak o_{2l+1}\,.\\
\end{array} 
\right. \nn
\end{align}
Let $\z'\subseteq \g'$ be the centralizer of $\h$. We may choose the order of roots of $\h$ in $\g'_\C/\z'_\C$ so that the product of all of them is equal to
\begin{align}\label{product of positive roots for g'/z'}
&\pi_{\g'/\z'}(\sum_{j=1}^{l} y_jJ'_j)\\
&\;=\left\{
\begin{array}{lll}
\prod_{1\leq j<k\leq l}i(- y_j+ y_k)\cdot \prod_{j=1}^l (-iy_j)^{d'-d} & \text{if} & \Dc=\C\,,\\[.2em]
\prod_{1\leq j<k\leq l}(-y_j^2+y_k^2)\cdot \prod_{j=1}^l (-y_j^2)^{d'-d} & \text{if} & \Dc=\Ha\,,\\[.2em]
\prod_{1\leq j<k\leq l}(-y_j^2+y_k^2)\cdot \prod_{j=1}^l (-2iy_j) \cdot \prod_{j=1}^l (-iy_j)^{d'-d}& \text{if} & \Dc=\R\ \text{and}\ \g=\mathfrak s\mathfrak o_{2l}\,,\\[.2em]
\prod_{1\leq j<k\leq l}(-y_j^2+y_k^2)\cdot \prod_{j=1}^l (-2iy_j) \cdot \prod_{j=1}^l (-iy_j)^{d'-d+1}& \text{if} & \Dc=\R\ \text{and}\ \g=\mathfrak s\mathfrak o_{2l+1}\,.\\
\end{array}
\right.\nn
\end{align}
\medskip 

Suppose $l> l'$. We can choose the system of the positive roots of $\h'$ in $\g'_\C$ so that their product is given by the formula
\begin{equation}\label{product of positive roots for g - bis}
\pi_{\g'/\h'}(\sum_{j=1}^{l'} y_jJ'_j)=\left\{
\begin{array}{lll}
\prod_{1\leq j<k\leq l'}i(- y_j+ y_k) & \text{if} & \Dc=\C\,,\\[.2em]
\prod_{1\leq j<k\leq l'}(-y_j^2+y_k^2) & \text{if} & \Dc=\Ha\,,\\[.2em]
\prod_{1\leq j<k\leq l'}(-y_j^2+y_k^2)\cdot \prod_{j=1}^{l'} 
(-2iy_j)
 & \text{if} & \Dc=\R\,.
\end{array}
\right.
\end{equation}
Moreover, let $\z\subseteq \g$ be the centralizer of $\h$. We may choose the positive roots of $\h$ in $\g_\C/\z_\C$ so that their product is equal to
\begin{align}
\label{product of positive roots for g'/z'}
\pi_{\g/\z}&(\sum_{j=1}^{l'} y_jJ_j)\\
&=\left\{
\begin{array}{lll}
\prod_{1\leq j<k\leq l'}i(- y_j+ y_k)\cdot \prod_{j=1}^{l'} (-iy_j)^{d-d'} & \text{if} & \Dc=\C\,,\\[.2em]
\prod_{1\leq j<k\leq l'}(-y_j^2+y_k^2)\cdot \prod_{j=1}^{l'}(- 2iy_j) \cdot \prod_{j=1}^{l'} (-y_j^2)^{d-d'} & \text{if} & \Dc=\Ha\,,\\[.2em]
\prod_{1\leq j<k\leq l'}(-y_j^2+y_k^2)\cdot \prod_{j=1}^{l'}
(-iy_j)^{d-d'}
& \text{if} & \Dc=\R\ \text{and}\ \g=\mathfrak s\mathfrak o_{2l}\,,\\[.2em]
\prod_{1\leq j<k\leq l'}(-y_j^2+y_k^2)\cdot \prod_{j=1}^{l'}
(-iy_j) 
\cdot \prod_{j=1}^{l'}
(-iy_j)^{d-d'-1}
& \text{if} &\Dc=\R\ \text{and}\ \g=\mathfrak s\mathfrak o_{2l+1}\,.\\
\end{array}
\right.\nn
\end{align}
%%

%%%%%%%%%%%%%%%%%%
%%%%%%%%%%%%%%%%%%
\section{\bf The Jacobian of the Cayley transform}
\label{appenA}
\setcounter{thh}{0}
\setcounter{equation}{0}
Here we determine the Jacobian of the modified Cayley transform $c_-:\g\to\G$. A straightforward computation shows that for a fixed $x\in\g$, 
\[
c_-(x+y)c_-(x)^{-1}-1=(1-x-y)^{-1}2 y (1+x)^{-1}\qquad (y\in \g)\,.
\]
Hence the derivative (tangent map) is given by
\begin{equation}\label{derivativeofc}
c_-'(x)y=(1-x)^{-1}2y (1-x)^{-1}\qquad (y\in \g)\,.
\end{equation}
Recall that $\G$ is the isometry group of a hermitian form $(\cdot,\cdot)$ on $\Vv$. Hence we have the adjoint 
\[
\End_\Bbb D(\Vv)\ni g\to g^*\in \End_\Bbb D(\Vv)
\]
defined by
\[
(gu,v)=(u,g^*v)\qquad (u,v\in\Vv)\,.
\]
Let us view the Lie algebra $\g$ as a real vector space and consider the map
\[
\gamma:\GL_\Bbb D(\Vv)\to \GL(\g)\,,\ \ \ \gamma(g)(y)=g y g^*\,.
\]
Then $\det \circ \gamma :\GL_\Bbb D(\Vv)\to\R^\times$ is a group homomorphism. 
Hence there is a number $s\in\R$ such that
\[
\det(\gamma(g))=\left(\det(g)_{\Vv_\R}\right)^s \qquad (g\in \GL_\Bbb D(\Vv))\,,
\]
where the subscript $\R$ indicates that we are viewing $\Vv$ as a vector space over $\R$.
On the other hand, for a fixed number $a\in\R^\times$,
\[
\det(\gamma(aI_\Vv))=a^{2\dim \g}\quad \text{and}\quad 
\det(aI_\Vv)_{\Vv_\R}=a^{\dim \Vv_\R}\,.
\]
Hence,
\[
\det(\gamma(g))=\left(\det(g)_{\Vv_\R}\right)^{\frac{2\dim \g}{\dim \Vv_\R}} \qquad (g\in \GL_\Bbb D(\Vv))\,.
\]
If $x\in \g$, then $1\pm x\in \GL_\Bbb D(\Vv)$ and 
$$
(1\pm x)^*= 1\mp x \quad \text{and}\quad \big((1\pm x)^{-1}\big)^*= (1\mp x)^{-1}\,.
$$
Hence 
\begin{equation*}
c_-'(x)y=
2(1-x)^{-1}y (1+x)^{-1}c_-(x)=
2\big(\gamma((1- x)^{-1})y\big)c_-(x) 
\qquad (y\in \g)\,.
\end{equation*}
Notice that $|\det(c_-(x))|=1$ because $c(\g)\subseteq \G$. 
Therefore 
\begin{equation}\label{Jacobianofc}
|\det(c_-'(x))|= 2^{\dim \g}\det(1-x)_{\Vv_\R}^{-\frac{2\dim \g}{\dim \Vv_\R}}=2^{\dim \g} \ch(x)^{-2r}
 \qquad (x\in\g)\,,
\end{equation}
where $\ch$ and $r$ are as in \eqref{ch} and \eqref{number r 10}, respectively.
\section{\bf The Weyl denominator lifted by the Cayley transform}
\label{appenB}
\setcounter{thh}{0}
\setcounter{equation}{0}
%%%
%%
Consider the orthogonal matrix group 
\[
\G=\Og_{2l+1}=\{g\in \GL_{2l+1}(\R);\ gg^t=I\}\,.
\]
The spin group is a connected two-fold cover
\[
\Spin_{2l+1}\to\SOg_{2l+1}
\]
of the special orthogonal group. 
We identify 
\begin{equation}
\label{identification-C-matrices}
a+ib=
\begin{pmatrix}
a & -b\\
b & a
\end{pmatrix} 
\qquad (a,b\in\R)\,.
\end{equation}
Then 
\[
\SO_2(\R)=\{u\in\C;\ |u|=1\}\,.
\]
Fix the diagonal Cartan subgroup
\[
\H=\{\diag(u_1, u_2, \dots, u_l,\pm 1);\; u_j\in\SO_2(\R)\,,\; 1\leq j\leq l\}\subseteq \Og_{2l+1}\,.
\] 
Then the connected identity component of $\H$ is
\[
\H^0=\{\diag(u_1, u_2,\dots, u_l,1);\; u_j\in\SO_2(\R);\; 1\leq j\leq l\}\,.
\] 
Denote by $\widehat{\H^0}\subseteq \Spin_{2l+1}$ the preimage of $\H^0$. The Weyl group of $(\Spin_{2l+1}, 
\widehat{\H^0})$ is isomorphic to the Weyl group of $(\SOg_{2l+1}, \H^0)$ and the covering
\[
\widehat{\H^0}\to\H^0
\]
intertwines the action of these groups. As explained in \cite[Lemma 6.3.4 and Theorem 6.3.5]{GoodmanWallach}, one may realize $\widehat{\H^0}$ as the quotient
\[
\widehat{\H^0}=\left(\SOg_2\right)^l/\K\,,
\]
where $\K$ consists of all elements $(z_1, z_2,\dots, z_l)\in \left(\SOg_2\right)^l$ such that each $z_j=\pm 1$ and $z_1z_2\cdots z_l=1$. The Weyl group is generated by the inverses $z_j\to z_j^{-1}$ and permutations of the coordinates. It acts on the Lie algebra $\h$ via the permutations and all sign changes.
The covering map is realized as
\[
\widehat{\H^0}\ni (z_1, z_2, \dots, z_l)\K\to \diag (z_1^2, z_2^2, \dots, z_l^2, 1)\in \H^0\,.
\]
%%%%%%%%%%%%%%

Let $a\in \R$ and define $\theta_a$ by $a=\tan\big(\frac{\theta_a}{2}\big)$. 
Then 
$$
c_-(-ia)=\frac{1-ia}{1+ia}=e^{-i\theta_a}\,.
$$
Set $J=\begin{pmatrix}
0 & 1 \\ -1 & 0
\end{pmatrix}$.
Under the identification \eqref{identification-C-matrices},
$J$ is identified with $-i$. Hence,
\begin{equation} 
\label{{complex-coords-c_}}
c_-(aJ)= (I+aJ)(I-aJ)^{-1}=\exp(\theta_a J)\,
\end{equation}
Therefore the range of the Cayley transform
\[
c_-(\h)=\{\diag(u_1, u_2, ..., u_l, 1);\ u_j\ne -1\ \text{for all}\ j\}
\]
is stable under the action of the Weyl group and $c_-$ intertwines the action of the Weyl group on the Lie algebra and on the group. 
Pick the following branch of the complex square root,
\[
\sqrt{r e^{i\theta}}=\sqrt{r} e^{i\frac{\theta}{2}} \qquad (r>0, -\pi<\theta<\pi)
\]
and set
\[
\sigma:c_-(\h)\ni \diag(u_1, u_2, \dots, u_l,1)\to \diag(\sqrt{u_1}, \sqrt{u_2}, \dots, \sqrt{u_l})\K\in \widehat{\H^0}\,.
\]
This is a section of the covering map which intertwines the Weyl group actions. 
Define
\begin{equation}\label{hatc-}
\widehat c_-(x)=\sigma(c_-(x))\qquad (x\in\h)\,.
\end{equation}
Then $\widehat c_- $ also intertwines the Weyl group actions. Explicitly,
\[
\widehat c_-(\diag(x_1J_1, x_2J_2,\dots, x_lJ_l,0))=\diag(\sqrt{u_1}, \sqrt{u_2}, \dots, \sqrt{u_l})\K\,,
\]
where 
\[
u_j=\frac{1-ix_j}{1+ix_j}\,.
\]
In these terms, the usual choice of the positive roots $e_j\pm e_k$, with $1\leq j<k\leq l$, and $e_j$, with $1\leq j\leq l$ together with \eqref{ximuc} gives
$$
\xi_{e_j} (\diag(u_1, u_2, \dots, u_l,1))=u_j\,.
$$
Hence,
\begin{eqnarray*}
\xi_{-e_j+e_k} (\diag(u_1, u_2, \dots, u_l,1))&=&u_j^{-1} u_k\,,\\
\xi_{-e_j-e_k} (\diag(u_1, u_2, \dots, u_l,1))&=&u_j^{-1}u_k^{-1}\,,\\
\xi_{-e_j} (\diag(u_1, u_2, \dots, u_l,1))&=&u_j^{-1}\,,\\
\xi_\rho (\diag(\sqrt{u_1}, \sqrt{u_2}, \dots, \sqrt{u_l},1)K)&=& u_1^{l-1}u_2^{l-2} \cdots u_{l-1}\xi\,,
\end{eqnarray*}
where
\[
\xi=\sqrt{u_1}\sqrt{u_2} \dots\sqrt{u_l}\,.
\]
We now verify the following formula
\begin{equation}\label{Delta-O2l+1-cminus}
\Delta(\widehat c_-(x))=C_1 \pi_{\g/\h}(x) \ch^{-2l+1}(x) \qquad (x\in \h)\,,
\end{equation}
where where $C_1=2^{l^2}$.
It is easy to check that
\begin{multline}
\label{sqrtcminus}
%\sqrt{u_j}=
\sqrt{\frac{1+z_j}{1-z_j}}=\frac{\sqrt{1+z_j}}{\sqrt{1-z_j}}\,,\quad
1+z_j=\sqrt{1+z_j}\sqrt{1+z_j}\,, \quad \sqrt{1+x_j^2}=\sqrt{1+z_j}\sqrt{1-z_j}\\
(z_j=-ix_j,\,  x_j\in \R)\,.
\end{multline}
We shall use the polynomial identity
\begin{equation}
\label{polyidentity}
\prod_{1\leq j<k\leq l} a_j b_k= \Big( \prod_{j=1}^{l} a_j^{l-j}\Big) \Big(\prod_{k=1}^{l} b_k^{k-1} \Big) 
\end{equation}
when either $b_j=1$ or $b_j=a_j$ for all $1\leq j\leq l$. 
By \eqref{eq:Weyl-den} and \eqref{polyidentity},
\begin{align*}
\Delta(\widehat c_-(x))&=\xi \Big( \prod_{j=1}^{l-1} u_j^{l-j}\Big)  \prod_{1\leq j<k\leq l} 
(1-u_j^{-1}u_k^{-1})(1-u_j^{-1}u_k)  \prod_{j=1}^{l} (1-u_j^{-1})\\
&=\xi   \prod_{1\leq j<k\leq l} 
(u_j-u_k^{-1})(1-u_j^{-1}u_k)  \prod_{j=1}^{l} (1-u_j^{-1})\,.
\end{align*}
By \eqref{sqrtcminus},
\begin{align*}
u_j-u_k^{-1}&=\frac{1+z_j}{1-z_j}-\frac{1-z_k}{1+z_k}=
\frac{2(z_j+z_k)}{(1-z_j)(1+z_k)}\,,\\
1-u_j^{-1}u_k&=1-\frac{1-z_j}{1+z_j}\frac{1+z_k}{1-z_k}=
\frac{2(z_j-z_k)}{(1+z_j)(1-z_k)}\,,
\\
1-u_j^{-1}&=1-\frac{1-z_j}{1+z_j}=\frac{2z_j}{1+z_j}=\frac{2z_j}{\sqrt{1+z_j}\sqrt{1+z_j}}\,.
\end{align*}

Since $\xi=\prod_{j=1}^l \sqrt{u_j}$, we obtain by \eqref{sqrtcminus}, \eqref{polyidentity} and \eqref{product of positive roots for g}, 
\begin{align*}
\Delta(\widehat c_-(x))&=
2^{l^2} 
\prod_{1\leq j<k\leq l}  \frac{1}{(1-z_j^2)(1-z_k^2)}
\prod_{j=1}^l \frac{1}{\sqrt{1+z_j}\sqrt{1-z_j}} 
\prod_{1\leq j<k\leq l} (z_j+z_k) (z_j-z_k) \prod_{j=1}^l z_j\\ 
&=2^{l^2} \Big(\prod_{j=1}^l \frac{1}{(1-z_j^2)^{l-1}} \prod_{j=1}^l \frac{1}{\sqrt{1+x_j^2}} \Big) \pi_{\g/\h}(x) \qquad (x\in \h)\,,
\end{align*}
which gives \eqref{Delta-O2l+1-cminus}.

Recall from \eqref{Cayley-hs} that if $x=\diag(x_1J_1, x_2J_2, \dots, x_lJ_l, 0)\in \h=\h_s$, 
then $c_\odot(x)=\diag(v_1,v_2,\dots, v_l,1)$ has coordinates 
\[
v_j=c(x_jJ_j)=-c_-(x_jJ_j)=-u_j\,,\qquad (1\leq j\leq l)
\]
with $|v_j|=1$ and $v_j\ne 1$ for all $j$. 
The identification \eqref{identification-C-matrices} implies the identification
\[
v_j=-u_j=\frac{z_j+1}{z_j-1}\,,\qquad (
z_j=-ix_j, \,
1\leq j\leq l)\,.
\]

On the subset where $v_j\ne \pm 1$ for all $j$ define
\begin{equation}\label{hatcbullet}
\widehat c_\odot(x)=\sigma( c_\odot(x)) \qquad (x\in\h, x_j\ne 0, 1\leq j\leq l)\,.
\end{equation}
We now prove the following equality:
\begin{equation}\label{Delta-O2l+1-c}
\Delta(\widehat 
c_\odot(x))=C_2 \left(\prod_{j=1}^l 
{\rm sgn}(x_j)\right) 
 \pi_{\g_s/\h_s}(x) \ch^{-2l+1}(x) 
\qquad (x\in\h, x_j\ne 0, 1\leq j\leq l)\,,
\end{equation}
where $C_2=
(2i)^{l^2} 
$ and ${\rm sgn}(x_j)=x_j/|x_j|
$.
(Notice that $\Delta(\widehat c_\odot(x))$ is singular at $x_j=0$ because so is the fixed section $\sigma$, which depends on our choice of $\sqrt{\cdot}$.)
It is easy to check that 
\begin{multline}
\label{sqrtc} 
\sqrt{\frac{z_j+1}{z_j-1}}=\frac{\sqrt{z_j+1}}{\sqrt{z_j-1}} 
\,,\qquad
z_j+1=\sqrt{z_j+1}\sqrt{z_j+1}\,, 
\qquad
\sqrt{z_j-1}=
-i\,
{\rm sgn}
(x_j) \sqrt{1-z_j}\,,
\\
 -i\,
{\rm sgn}
(x_j)
\sqrt{1+x_j^2}=\sqrt{z_j+1}\sqrt{z_j-1}
\qquad\qquad (z_j=
-ix_j,
\, x_j\in \R\setminus \{0\})\,.
\end{multline}

As before,
\[
\Delta(\widehat{c}_\odot(x))=
\xi 
\prod_{1\leq j<k\leq l} 
(v_j-v_k^{-1})(1-v_j^{-1}v_k)  \prod_{j=1}^{l} (1-v_j^{-1})\,,
\]
where, by \eqref{sqrtc},
\begin{align*}
v_j-v_k^{-1}&=\frac{z_j+1}{z_j-1}-\frac{z_k-1}{z_k+1}=
\frac{2(z_j+z_k)}{(z_j-1)(z_k+1)}\,,\\
1-v_j^{-1}v_k&=1-\frac{z_j-1}{z_j+1}\frac{z_k+1}{z_k-1}=
\frac{2(-z_j+z_k)}{(z_j+1)(z_k-1)}\,,\\
1-v_j^{-1}&=1-\frac{z_j-1}{z_j+1}=
\frac{2}{z_j+1}
=\frac{2}{\sqrt{z_j+1}
\sqrt{z_j+1}}\,.
\end{align*}
Since $\xi=\prod_{j=1}^l \sqrt{v_j}$, we obtain by \eqref{sqrtcminus}, \eqref{polyidentity} and \eqref{product of positive roots for g},
\begin{align*}
\Delta(\widehat{c}_\odot(x))&=
2^{l^2} 
\Big(\prod_{j=1}^l  \sqrt{\frac{z_j+1}{z_j-1}} \Big)\Big(\prod_{1\leq j<k\leq l}  \frac{1}{(z_j^2-1)(z_k^2-1)}
 \Big)\\
&\qquad \times
\Big(\prod_{j=1}^l \frac{1}{\sqrt{z_j+1}\sqrt{z_j+1}} \Big)
\Big(\prod_{1\leq j<k\leq l} (z_j+z_k) (-z_j+z_k)\Big)\\
&=2^{l^2} 
\Big(
\prod_{j=1}^l \frac{\sqrt{z_j+1}}{\sqrt{z_j-1}} \frac{1}{\sqrt{z_j+1}\sqrt{z_j+1}} 
\Big) 
\Big(\prod_{j=1}^l \frac{1}{(1-z_j^2)^{l-1}} \Big)\\
&\qquad \times
(-1)^{l(l-1)/2}
\Big(\prod_{1\leq j<k\leq l} (z_j+z_k) (z_j-z_k)\Big)\\
&=i^{l(l-1)}2^{l^2} 
 \Big(\prod_{j=1}^l \frac{1}{\sqrt{z_j-1}\sqrt{z_j+1}} \Big)
\Big(\prod_{j=1}^l \frac{1}{(1-z_j^2)^{l-1}} \Big)\pi_{\g_s/\h_s}(x)\\
&=
(2i)^{l^2} 
\Big(\prod_{j=1}^l \frac{{\rm sgn}(x_j)}{\sqrt{1+x_j^2}}  \Big)
\Big( \prod_{j=1}^l
\frac{1}{(1+x_j^2)^{l-1}}  \Big) \pi_{\g_s/\h_s}(x) \qquad (x\in \h\setminus \{0\})\,,
\end{align*}
which gives \eqref{Delta-O2l+1-c}.
%%
%%
%%%

\section{\bf The special functions $P_{a,b}$ and $Q_{a,b}$}
\label{appenC}
\setcounter{thh}{0}
\setcounter{equation}{0}
For two integers $a$ and $b$ define the following functions in the real variable $\xi$,
\begin{eqnarray}\label{eq:Pab2}
P_{a,b,2}(\xi)&=&\left\{
\begin{array}{ll}
\sum_{k=0}^{b-1}\frac{a(a+1)\cdots(a+k-1)}{k!(b-1-k)!}2^{-a-k}\xi^{b-1-k}&\text{if}\ b\geq 1\\
0&\text{if}\ b\leq 0,
\end{array}
\right.\\
\label{eq:Pabminus2}
P_{a,b,-2}(\xi)&=&\left\{
\begin{array}{ll}
(-1)^{a+b-1}\sum_{k=0}^{a-1}\frac{b(b+1)\cdots(b+k-1)}{k!(a-1-k)!}(-2)^{-b-k}\xi^{a-1-k}&\text{if}\ a\geq 1\\
0&\text{if}\ a\leq 0,
\end{array}
\right.
\end{eqnarray}
where $a(a+1)\cdots(a+k-1)=1$ if $k=0$. 
Notice that 
\begin{equation}\label{D0}
P_{a,b,-2}(\xi)=P_{b,a,2}(-\xi) \qquad (\xi\in\R,\ a,b\in\Zb)\,.
\end{equation}
Set
\begin{eqnarray}\label{D0'}
P_{a,b}(\xi)&=&2\pi(P_{a,b,2}(\xi)\Bbb I_{\R^+}(\xi)+P_{a,b,-2}(\xi)\Bbb I_{\R^-}(\xi))\\
&=&2\pi(P_{a,b,2}(\xi)\Bbb I_{\R^+}(\xi)+P_{b,a,2}(-\xi)\Bbb I_{\R^+}(-\xi))\,,\nn
\end{eqnarray}
where $\Bbb I_S$ denotes the indicator function of the set $S$.
Also, let
\begin{eqnarray}\label{D0''}
Q_{a,b}(iy)=2\pi \left\{
\begin{array}{ll}
0 & \text{if}\ a+b\geq 1\,,\\
\sum_{k=b}^{-a} \frac{a(a+1)\cdots(a+k-1)}{k!}2^{-a-k}(1-iy)^{k-b} & \text{if}\ -a>b-1\geq 0\,,\\
\sum_{k=a}^{-b} \frac{b(b+1)\cdots(b+k-1)}{k!}
2^{-b-k}(1+iy)^{k-a} & \text{if}\ -b>a-1\geq 0\,,\\
(1+iy)^{-a}(1-iy)^{-b} & \text{if}\ a\leq 0\ \text{and}\ b\leq 0\,.
\end{array}
\right.
\end{eqnarray}
Observe also that 
\begin{equation}
\label{absymmetryPabQab}
P_{b,a}(\xi)=P_{a,b}(-\xi) \qquad \text{and}\qquad Q_{b,a}(iy)=Q_{a,b}(-iy)\,.
\end{equation}
The following elementary fact will be crucial at several points.
\begin{lem}
\label{lem:Pabpolynomial}
Suppose that $a+b\leq 1$. Then at most one between $P_{a,b,2}$ and $P_{a,b,-2}$ can be non-zero. Hence $P_{a,b}$ is either 0 or the restriction of a polynomial to a half line. 
\end{lem}
%%%  proof: $P_{a,b,2}\neq 0$ iff $b\geq 1$. Hence $a\leq 1-b\leq 0$. So $P_{a,b,-2}=0$
%%
\begin{rem}
\label{Pab-Laguerre}
Let $\Gamma$ denote the gamma function.
If $k$ is a nonnegative integer, then 
$$
a(a+1)\cdots(a+k-1)=\frac{\Gamma(a+k)}{\Gamma(a)}\,,
$$
which is often shortened by the Pochhammer symbol $(a)_k$. Another useful formula is
$$
a(a+1)\cdots(a+k-1)=(-1)^k (-a)(-a-1)\cdots(-a-k+1)=
                                (-1)^k\frac{\Gamma(-a+1)}{\Gamma(-a+1-k)}\,.
$$
In this notation, for an integer $b\geq 1$ and $h=0,1,\dots, b-1$,
$$
(b-1-h)!=\frac{(b-1)!}{(-b+1)_h} \qquad\text{and} \qquad 
\Gamma(-a-b+2+h)=\Gamma(-a-b+2) \; (-a-b+2)_h
\,.
$$
Hence
\begin{eqnarray*} 
\label{Pab2-confluent}
P_{a,b,2}(\xi)&=&\sum_{k=0}^{b-1} (-1)^k \frac{\Gamma(-a+1)}{\Gamma(-a+1-k)}\
\frac{1}{k!(b-1-k)!} 2^{-a-k} \xi^{b-1-k}\\
&=& \Gamma(-a+1) \sum_{h=0}^{b-1} (-1)^{b-1-h} \frac{1}{\Gamma(-a-b+2+h)} \, 
\frac{1}{(b-1-h)! h!}  2^{-a-b+1+h} \xi^{h}\\
&=&(-1)^{b-1} 2^{-a-b+1} \frac{\Gamma(-a+1)}{\Gamma(-a-b+2) \, (b-1)!} 
\sum_{h=0}^{b-1} \frac{(-b+1)_h}{(-a-b+2)_h h!} (2\xi)^{h}\\
&=&(-1)^{b-1} 2^{-a-b+1} \frac{\Gamma(-a+1)}{\Gamma(-a-b+2) \, (b-1)!} 
\, {}_1 F_1 \big(-b+1;-a-b+2;2\xi\big)\\
&=&(-1)^{b-1} 2^{-a-b+1} L^{-a-b+1}_{b-1}(2\xi)\,,
\end{eqnarray*}
where ${}_1 F_1$ is the confluent hypergeometric function and $L^\alpha_n(x)$ 
is a Laguerre polynomial. See \cite[6.9(36), \S 10.12]{Erdelyi}.
\end{rem}
\begin{pro}\label{D1}
For any $a,\ b\in\Bbb Z$, the formula
\begin{equation}\label{D1.1}
\int_\R (1+iy)^{-a}(1-iy)^{-b}\phi(y)\,dy \qquad (\phi\in \Ss(\R))
\end{equation}
defines a tempered distribution on $\R$. The restriction of the Fourier transform of this distribution to $\R\setminus \{0\}$ is a function given by 
\begin{eqnarray}\label{D1.2}
\int_\R
(1+iy)^{-a}(1-iy)^{-b} e^{-iy\xi}\,dy
=P_{a,b}(\xi)e^{-|\xi|}.
\end{eqnarray}
The right-hand side of (\ref{D1.2}) is an absolutely integrable function on the real line and thus defines a tempered distribution on $\R$. 
Furthermore,
\begin{eqnarray}\label{D1.3}
(1+iy)^{-a}(1-iy)^{-b}=\frac{1}{2\pi}\int_\R P_{a,b}(\xi) e^{-|\xi|} e^{iy\xi}\,dy+\frac{1}{2\pi}Q_{a,b}(iy)
\end{eqnarray}
and hence,
\begin{eqnarray}\label{D1.4}
\int_\R(1+iy)^{-a}(1-iy)^{-b}e^{-iy\xi}\,dy=P_{a,b}(\xi)e^{-|\xi|}+ Q_{a,b}(-\frac{d}{d\xi})\delta_0(\xi)\,.
\end{eqnarray}
\end{pro}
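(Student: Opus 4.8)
The plan is to reduce the whole statement to the elementary Fourier transform of the one-parameter family of kernels $(1\pm iy)^{-k}$, $k\geq 1$, by a partial fraction decomposition, and then to deduce \eqref{D1.3} from \eqref{D1.4} by Fourier inversion and \eqref{D1.2} from \eqref{D1.4} by restriction to $\R\setminus\{0\}$. So the real content is the identity \eqref{D1.4}.

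\textbf{First} I would dispose of the easy points. Since $|1\pm iy|=(1+y^2)^{1/2}\geq 1$, the function $y\mapsto(1+iy)^{-a}(1-iy)^{-b}$ is continuous of at most polynomial growth, so \eqref{D1.1} is a tempered distribution; and $P_{a,b}(\xi)e^{-|\xi|}$ is piecewise polynomial times $e^{-|\xi|}$, hence absolutely integrable and a fortiori a tempered distribution. The remaining reductions (\eqref{D1.3} from \eqref{D1.4}, and \eqref{D1.2} from \eqref{D1.4}) are then immediate, once one records that $Q_{a,b}(iy)$ is, in every case where it is nonzero — that is, $a+b\leq 0$ — a polynomial in $y$, so that $Q_{a,b}(-\tfrac{d}{d\xi})\delta_0$ is a distribution supported at the origin and invisible on $\R\setminus\{0\}$.

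\textbf{Next} comes the base computation: for every integer $k\geq 1$,
\[
\int_\R (1-iy)^{-k}e^{-iy\xi}\,dy=\frac{2\pi}{(k-1)!}\,\xi^{k-1}e^{-\xi}\,\Bbb I_{\R^+}(\xi),\qquad
\int_\R (1+iy)^{-k}e^{-iy\xi}\,dy=\frac{2\pi}{(k-1)!}\,(-\xi)^{k-1}e^{\xi}\,\Bbb I_{\R^-}(\xi).
\]
The first formula follows by residues (the integrand has a single pole, at $y=-i$, in the lower half-plane; for $\xi>0$ one closes there, for $\xi<0$ in the pole-free upper half-plane), or by differentiating $\int_\R(\lambda-iy)^{-1}e^{-iy\xi}\,dy=2\pi e^{-\lambda\xi}\Bbb I_{\R^+}(\xi)$ in $\lambda$ and setting $\lambda=1$; the second follows from the first via $y\mapsto-y$. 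Then I would expand $(1-iy)^{-b}=\bigl(2-(1+iy)\bigr)^{-b}$ in powers of $w=1+iy$, and symmetrically, to obtain the partial fraction decomposition
\[
(1+iy)^{-a}(1-iy)^{-b}=\frac{1}{2\pi}Q_{a,b}(iy)+\sum_{j=1}^{a}\alpha_j(1+iy)^{-j}+\sum_{k=1}^{b}\beta_k(1-iy)^{-k},
\]
where the first (resp.\ second) sum is empty if $a\leq 0$ (resp.\ $b\leq 0$), $\tfrac{1}{2\pi}Q_{a,b}(iy)$ is exactly the polynomial remainder, and $\alpha_{a-m}=2^{-b-m}\,b(b+1)\cdots(b+m-1)/m!$, $\beta_{b-m}=2^{-a-m}\,a(a+1)\cdots(a+m-1)/m!$. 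Applying the base formulas term by term, the $(1-iy)^{-k}$-part transforms to $2\pi P_{a,b,2}(\xi)e^{-\xi}\Bbb I_{\R^+}(\xi)$, the $(1+iy)^{-j}$-part to $2\pi P_{a,b,-2}(\xi)e^{\xi}\Bbb I_{\R^-}(\xi)$ (using \eqref{D0}), and the polynomial part to $Q_{a,b}(-\tfrac{d}{d\xi})\delta_0$; adding these gives \eqref{D1.4}.

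\textbf{The main obstacle} is the bookkeeping hidden in the last step: one must verify, in each sign regime — $a,b\geq 1$; exactly one of $a,b$ positive; $a,b\leq 0$ — that the coefficients $\alpha_j$, $\beta_k$ and the polynomial remainder produced by the partial fraction expansion really match the combinatorial expressions \eqref{eq:Pab2}, \eqref{eq:Pabminus2} and \eqref{D0''}. What makes this manageable is that a product $b(b+1)\cdots(b+m-1)$ with $b\leq 0$ vanishes as soon as one of its factors hits $0$, and this is precisely the mechanism that truncates the otherwise infinite geometric-type series down to the finite sums written in the definitions. Locating that truncation in each case, and reconciling the powers of $2$ and the signs, is the only part of the argument that is not purely mechanical.
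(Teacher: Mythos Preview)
Your argument is correct. The route differs from the paper's: the paper first establishes \eqref{D1.2} by a direct contour-shift and residue computation at $z=\pm 1$, and then proves \eqref{D1.3} by computing the inverse Fourier transform $\int_0^\infty P_{a,b,2}(\xi)e^{-\xi}e^{i\xi y}\,d\xi$ (and its $\R^-$ analogue) term by term and comparing with the binomial expansion of $(1+iy)^{-a}(1-iy)^{-b}$, running through the same sign regimes you list; \eqref{D1.4} is then read off from \eqref{D1.3}. You reverse the flow: the partial-fraction decomposition around $iy=\pm 1$ gives \eqref{D1.4} directly, and \eqref{D1.2}, \eqref{D1.3} drop out. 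The underlying combinatorics is identical --- your Laurent coefficients $\beta_{b-m}$, $\alpha_{a-m}$ are exactly the coefficients the paper extracts, and your polynomial remainder is literally the case-by-case definition \eqref{D0''} of $Q_{a,b}$. What partial fractions buys you is a single organizing principle that makes the four cases feel less ad hoc; what the paper's order buys is that \eqref{D1.2} is obtained in one line without any case split, and the more delicate matching is isolated in \eqref{D1.3}.
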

\begin{prf}
Since, $|1\pm iy|=\sqrt{1+y^2}$, (\ref{D1.1}) is clear. The integral (\ref{D1.2}) is equal to
\begin{multline}\label{D1.30}
\frac{1}{i}\int_{i\R}
(1+z)^{-a}(1-z)^{-b} e^{-z\xi}\,dz\\
=2\pi (-\Bbb I_{\R^+}(\xi)\,{\rm res}_{z=1}(1+z)^{-a}(1-z)^{-b} e^{-z\xi}
+\Bbb I_{\R^-}(\xi)\,{\rm res}_{z=-1}(1+z)^{-a}(1-z)^{-b} e^{-z\xi})\,.
\end{multline}
The computation of the two residues is straightforward and (\ref{D1.2}) follows.

Since
$$
\int_0^\infty e^{-\xi} e^{i\xi y}\,d\xi=(1-iy)^{-1}\,,
$$
we have 
\begin{equation}\label{d1}
\int_0^\infty \xi^m e^{-\xi} e^{i\xi y}\,d\xi
=\left(\frac{d}{d(iy)}\right)^{m}(1-iy)^{-1}=m! (1-iy)^{-m-1} \qquad (m=0, 1, 2, \dots)\,.
\end{equation}
Thus, if $b\geq 1$, then 
\begin{eqnarray*}
&&\int_0^\infty P_{a,b,2}(\xi) e^{-\xi} e^{i\xi y}\,d\xi\\
&&\qquad =\sum_{k=0}^{b-1} \frac{a(a+1)\cdots(a+k-1)}{k!}2^{-a-k}(1-iy)^{-b+k}\\
&&\qquad =(1-iy)^{-b}2^{-a}\sum_{k=0}^{b-1} \frac{(-a)(-a-1)\cdots(-a-k+1)}{k!}
\left(-\frac{1}{2}(1-iy)\right)^{k}\,.
\end{eqnarray*}
Also, if $a\leq 0$, then
\begin{eqnarray*}
2^a(1+iy)^{-a}&=&\left(1-\frac{1}{2}(1-iy)\right)^{-a}
=\sum_{k=0}^{-a}
\left(
\begin{array}{r}
-a\\
k
\end{array}
\right) 
\left(-\frac{1}{2}(1-iy)\right)^{k}\\
&=&\sum_{k=0}^{-a} \frac{(-a)(-a-1)\cdots(-a-k+1)}{k!}
\left(-\frac{1}{2}(1-iy)\right)^{k}.
\end{eqnarray*}
Hence, 
\begin{eqnarray}\label{d2}
&&\int_0^\infty P_{a,b,2}(\xi) e^{-\xi} e^{i\xi y}\,d\xi-
(1+iy)^{-a}(1-iy)^{-b}\\
&&\qquad=(1-iy)^{-b}2^{-a}\left(\sum_{k=0}^{b-1} \frac{(-a)(-a-1)\cdots(-a-k+1)}{k!}
\left(-\frac{1}{2}(1-iy)\right)^{k}\right.\nn\\
&&\hskip 3.6cm \left. -\sum_{k=0}^{-a} \frac{(-a)(-a-1)\cdots(-a-k+1)}{k!}
\left(-\frac{1}{2}(1-iy)\right)^{k}\right)\,.\nn
\end{eqnarray}
Recall that $P_{a,b,-2}=0$ if $a\leq 0$. Hence, (\ref{D1.2}) shows that (\ref{d2}) is the inverse Fourier transform of a distribution supported at $\{0\}$, hence a polynomial.

Suppose $-a<b-1$. Then (\ref{d2}) is equal to
$$
2^{-a}\sum_{k=-a+1}^{b-1} \frac{(-a)(-a-1)\cdots(-a-k+1)}{k!}\left(-\frac{1}{2}\right)^{k}(1-iy)^{k-b}\,,
$$
which is zero because $(-a)(-a-1)\cdots(-a-k+1)=0$ for $k\geq -a+1$.
If $-a=b-1$, then (\ref{d2}) is obviously zero. 

Suppose $-a>b-1$. Then (\ref{d2}) is equal to
\begin{equation}\label{d3}
-2^{-a}\sum_{k=b}^{-a} \frac{(-a)(-a-1)\cdots(-a-k+1)}{k!}\left(-\frac{1}{2}\right)^{k}(1-iy)^{k-b}\,.
\end{equation}
As in (\ref{d1}) we have
\begin{equation*}\label{d4}
\int_{-\infty}^0 \xi^m e^{\xi} e^{i\xi y}\,d\xi
=\left(\frac{d}{d(iy)}\right)^{m}(1+iy)^{-1}=(-1)^m m! (1+iy)^{-m-1} \qquad (m=0, 1, 2, \dots)\,.
\end{equation*}
Suppose $a\geq 1$. Then 
\begin{eqnarray*}
&&\int_{-\infty}^0 P_{a,b,-2}(\xi) e^{\xi} e^{i\xi y}\,d\xi\\
&&\qquad =(-1)^{a+b-1}\sum_{k=0}^{a-1} \frac{b(b+1)\cdots(b+k-1)}{k!}(-2)^{-b-k}
(-1)^{a-1+k}(1+iy)^{-a+k}\\
&&\qquad =(1+iy)^{-a}2^{-b}\sum_{k=0}^{a-1} \frac{(-b)(-b-1)\cdots(-b-k+1)}{k!}
\left(-\frac{1}{2}(1+iy)\right)^{k}\,.
\end{eqnarray*}
Also, if $b\leq 0$, then
\begin{eqnarray*}
&&2^{b}(1-iy)^{-b}=
\sum_{k=0}^{-b} \frac{(-b)(-b-1)\cdots(-b-k+1)}{k!}
\left(-\frac{1}{2}(1+iy)\right)^{k}\,.
\end{eqnarray*}
Hence, 
\begin{eqnarray}\label{d5}
&&\int_{-\infty}^0 P_{a,b,-2}(\xi) e^{\xi} e^{i\xi y}\,d\xi-
(1+iy)^{-a}(1-iy)^{-b}\\
&&\qquad=(1+iy)^{-a}2^{-b}\left(\sum_{k=0}^{a-1} \frac{(-b)(-b-1)\cdots(-b-k+1)}{k!}
\left(-\frac{1}{2}(1+iy)\right)^{k}\right.\nn\\
&&\hskip 3.6cm \left. -\sum_{k=0}^{-b} \frac{(-b)(-b-1)\cdots(-b-k+1)}{k!}
\left(-\frac{1}{2}(1+iy)\right)^{k}\right)\,.\nn
\end{eqnarray}
As before, we show that (\ref{d5}) is zero if $-b\leq a-1$. If $-b>a-1$, then (\ref{d5}) is equal to
$$
-2^{-b}\sum_{k=a}^{-b} \frac{(-b)(-b-1)\cdots(-b-k+1)}{k!}
\left(-\frac{1}{2}\right)^{k}(1+iy)^{k-a}\,.
$$
If $a\geq 1$ and $b\geq 1$, then our computations show that
\begin{eqnarray}\label{d6}
&&\int_0^{\infty} P_{a,b,2}(\xi) e^{-\xi} e^{i\xi y}\,d\xi
+\int_{-\infty}^0 P_{a,b,-2}(\xi) e^{\xi} e^{i\xi y}\,d\xi-
(1+iy)^{-a}(1-iy)^{-b}
\end{eqnarray}
is a polynomial which tends to zero if $y$ goes to infinity. Thus (\ref{d6}) is equal zero. This completes the proof of (\ref{D1.3}). The statement (\ref{D1.4}) is a direct consequence of (\ref{D1.3}).
\end{prf}
The test functions which occur in Proposition \ref{D1} need not be in the Schwartz space. In fact the test functions we shall use in our applications are not necessarily smooth. Therefore we shall need a more precise version of the formula (\ref{D1.4}). This requires a definition and two well-known lemmas. 

Following Harish-Chandra denote by $\Ss(\R^\times)$ the space of the smooth complex valued functions defined on $\R^\times$ whose all derivatives are rapidly decreasing at infinity and have limits at zero from both sides. For $\psi\in\Ss(\R^\times)$ let
\[
\psi(0+)=\underset{x\to 0+}{\lim}\ \psi(\xi)\,,\quad 
\psi(0-)
=\underset{x\to 0-}{\lim}\ \psi(\xi),\quad \langle \psi\rangle_0=\psi(0+)-\psi(0-)\,.
\]
In particular the condition $\langle\psi\rangle_0=0$ means that $\psi$ extends to a continuous function on $\R$.
\begin{lem}\label{D1.4lem1}
Let $c=0, 1, 2, \dots$ and let $\psi\in \Ss(\R^\times)$. Suppose
\begin{equation}\label{D1.4lem1.1}
\langle \psi\rangle_0=\dots=\langle \psi^{(c-1)}\rangle_0=0\,.
\end{equation}
(The condition (\ref{D1.4lem1.1}) is empty if $c=0$.) Then
\begin{equation}\label{D1.4lem1.2}
\left | \int_{\R^\times}e^{-iy\xi}\psi(\xi)\,d\xi \right | \leq \min\{1,|y|^{-c-1}\}(|\langle \psi^{(c)}\rangle_0|+\parallel \psi^{(c+1)}\parallel_1+
\parallel \psi\parallel_1)
\end{equation}
\end{lem}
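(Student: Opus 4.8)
The plan is to establish the two bounds implicit in the factor $\min\{1,|y|^{-c-1}\}$ separately. When $|y|\le 1$ this factor equals $1$, and the trivial estimate $\big|\int_{\R^\times}e^{-iy\xi}\psi(\xi)\,d\xi\big|\le\parallel\psi\parallel_1$ already yields \eqref{D1.4lem1.2}. So the real content lies in the range $|y|\ge 1$, where $\min\{1,|y|^{-c-1}\}=|y|^{-c-1}$, and the point is to extract $c+1$ powers of $1/|y|$ by integration by parts.

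First I would record the elementary one-step identity. Writing $\int_{\R^\times}=\int_{-\infty}^0+\int_0^\infty$ and integrating by parts once on each half-line, the boundary terms at $\pm\infty$ vanish because $\psi$ and all its derivatives are rapidly decreasing (as $\psi\in\Ss(\R^\times)$), while the boundary values at $0\pm$ assemble into the jump $\langle\psi\rangle_0$. This gives
\[
\int_{\R^\times}e^{-iy\xi}\psi(\xi)\,d\xi=\frac{1}{iy}\langle\psi\rangle_0+\frac{1}{iy}\int_{\R^\times}e^{-iy\xi}\psi'(\xi)\,d\xi\,.
\]
Since each derivative $\psi^{(j)}$ again lies in $\Ss(\R^\times)$, I may iterate this $k$ times to obtain
\[
\int_{\R^\times}e^{-iy\xi}\psi(\xi)\,d\xi=\sum_{j=0}^{k-1}\frac{\langle\psi^{(j)}\rangle_0}{(iy)^{j+1}}+\frac{1}{(iy)^k}\int_{\R^\times}e^{-iy\xi}\psi^{(k)}(\xi)\,d\xi\,.
\]

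Next I would take $k=c+1$ and invoke the hypothesis $\langle\psi\rangle_0=\cdots=\langle\psi^{(c-1)}\rangle_0=0$, which annihilates every term of the sum except the one with $j=c$, leaving
\[
\int_{\R^\times}e^{-iy\xi}\psi(\xi)\,d\xi=\frac{1}{(iy)^{c+1}}\Big(\langle\psi^{(c)}\rangle_0+\int_{\R^\times}e^{-iy\xi}\psi^{(c+1)}(\xi)\,d\xi\Big)\,.
\]
Taking absolute values, using $|y|\ge 1$ and $\big|\int_{\R^\times}e^{-iy\xi}\psi^{(c+1)}(\xi)\,d\xi\big|\le\parallel\psi^{(c+1)}\parallel_1$ (legitimate since all derivatives of $\psi$ are integrable), gives the bound $|y|^{-c-1}\big(|\langle\psi^{(c)}\rangle_0|+\parallel\psi^{(c+1)}\parallel_1\big)$, which is dominated by the right-hand side of \eqref{D1.4lem1.2} once the harmless summand $\parallel\psi\parallel_1$ is inserted. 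Combining this with the $|y|\le 1$ case completes the argument.

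The hard part here is essentially nil: the only delicacy is justifying the vanishing of the boundary contributions at $\pm\infty$ and the admissibility of iterating the integration by parts, both of which follow at once from the defining properties of $\Ss(\R^\times)$. The argument is pure bookkeeping of the jump terms generated by successive integrations by parts, and this same mechanism is what will be refined in the applications where the test functions are not necessarily smooth.
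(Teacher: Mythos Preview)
Your proof is correct and follows essentially the same approach as the paper: integrate by parts $c+1$ times on each half-line so that the boundary terms at $0\pm$ assemble into the jumps $\langle\psi^{(j)}\rangle_0$, invoke the hypothesis to kill all but the $j=c$ term, and combine the resulting $|y|^{-c-1}$ bound with the trivial $\parallel\psi\parallel_1$ bound for $|y|\le 1$.
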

\begin{prf}
Integration by parts shows that for $z\in\C^\times$
\begin{eqnarray*}
&&\int_{\R^+}e^{-z\xi}\psi(\xi)\,d\xi=z^{-1}\psi(0+)+\dots+z^{-c-1}\psi^{(c)}(0+) +z^{-c-1}\int_{\R^+}e^{-z\xi}\psi^{(c+1)}(\xi)\,d\xi\,,\\
&&\int_{\R^-}e^{-z\xi}\psi(\xi)\,d\xi=-z^{-1}\psi(0-)-\dots-z^{-c-1}\psi^{(c)}(0-) +z^{-c-1}\int_{\R^-}e^{-z\xi}\psi^{(c+1)}(\xi)\,d\xi\,.
\end{eqnarray*}
Hence,
\begin{eqnarray*}
&&\int_{\R^\times}e^{-z\xi}\psi(\xi)\,d\xi\\
&=&z^{-1}\langle \psi\rangle_0+\dots+z^{-c}\langle \psi^{(c-1)}\rangle_0 +z^{-c-1}\langle \psi^{(c)}\rangle_0 +z^{-c-1}\int_{\R^\times}e^{-z\xi}\psi^{(c+1)}(\xi)\,d\xi
\end{eqnarray*}
and (\ref{D1.4lem1.2}) follows.
\end{prf}
\begin{lem}\label{D1.4lem2}
Under the assumptions of Lemma \ref{D1.4lem1}, with $1\leq c$,
\[
\int_\R\int_{\R^\times}(iy)^k e^{-iy\xi}\psi(\xi)\,d\xi\,dy=2\pi \psi^{(k)}(0) \qquad (0\leq k\leq c-1)\,,
\]
where each consecutive integral is absolutely convergent.
\end{lem}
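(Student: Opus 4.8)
The plan is to recognize the inner integral as the Fourier transform of $\psi$ and then to invoke the Fourier inversion theorem together with differentiation under the integral sign. Write $\widehat\psi(y)=\int_{\R^\times}e^{-iy\xi}\psi(\xi)\,d\xi$, so that $\int_{\R^\times}(iy)^ke^{-iy\xi}\psi(\xi)\,d\xi=(iy)^k\widehat\psi(y)$ and the assertion to be proved reduces to $\int_\R(iy)^k\widehat\psi(y)\,dy=2\pi\psi^{(k)}(0)$. First I would observe that, since $c\geq 1$, the hypotheses include $\langle\psi\rangle_0=0$, so $\psi$ extends to a continuous function on $\R$; as $\psi\in\Ss(\R^\times)$ this extension lies in $\L^1(\R)$, whence the inner integral is absolutely convergent and $\widehat\psi$ is simply the ordinary Fourier transform of $\psi$. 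The full list of hypotheses $\langle\psi\rangle_0=\cdots=\langle\psi^{(c-1)}\rangle_0=0$, combined with the mean value theorem, moreover shows inductively that $\psi\in C^{c-1}(\R)$, so that $\psi^{(k)}(0)$ is meaningful for $0\leq k\leq c-1$.

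Next I would invoke the estimate of Lemma \ref{D1.4lem1}, which applies verbatim under the present hypotheses and yields $|\widehat\psi(y)|\leq C\min\{1,|y|^{-c-1}\}$ for some constant $C=C(\psi)$. Since $c+1\geq 2$, this gives $\widehat\psi\in\L^1(\R)$, and more generally $y\mapsto(iy)^j\widehat\psi(y)$ lies in $\L^1(\R)$ for every $0\leq j\leq c-1$: near the origin it is bounded by $|y|^j$, and at infinity by $|y|^{j-c-1}\leq|y|^{-2}$. In particular the outer integral in the statement converges absolutely, and the function $g(\xi)=\frac{1}{2\pi}\int_\R\widehat\psi(y)e^{iy\xi}\,dy$ is well defined and continuous; differentiating $k$ times under the integral sign, which is legitimate by the $\L^1$-domination just recorded, shows that $g\in C^{c-1}(\R)$ with $g^{(k)}(\xi)=\frac{1}{2\pi}\int_\R(iy)^k\widehat\psi(y)e^{iy\xi}\,dy$ for $0\leq k\leq c-1$.

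Finally, because $\psi$ is continuous and integrable on $\R$ and $\widehat\psi\in\L^1(\R)$, the Fourier inversion theorem for integrable functions gives $\psi=g$ pointwise on $\R$; since both functions are of class $C^{c-1}$, their derivatives agree up to order $c-1$, and evaluation at $\xi=0$ yields $\psi^{(k)}(0)=\frac{1}{2\pi}\int_\R(iy)^k\widehat\psi(y)\,dy$, which is the desired identity after multiplication by $2\pi$. The one delicate point is that $\psi$ need not be a Schwartz function on $\R$ — its $c$-th derivative may have a jump at the origin — so one may not appeal to the Schwartz-space inversion formula and must instead use the pointwise inversion theorem valid for continuous $\L^1$ functions whose Fourier transform is integrable; this, together with the domination argument that legitimizes differentiation under the integral, is the crux, the remaining manipulations being routine.
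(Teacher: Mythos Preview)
Your argument is correct. Both you and the paper reduce the case $k=0$ to the pointwise Fourier inversion theorem for a continuous $L^1$ function with integrable transform, and both invoke the decay estimate of Lemma~\ref{D1.4lem1} to guarantee the required integrability. The difference lies in how the passage from $k=0$ to general $k$ is handled. The paper integrates by parts in the inner integral over $\R^\times$, picking up and then discarding the boundary terms $(iy)^{k-1}\langle\psi\rangle_0,\dots,\langle\psi^{(k-1)}\rangle_0$ thanks to the jump hypotheses, thereby obtaining $\int_{\R^\times}(iy)^k e^{-iy\xi}\psi(\xi)\,d\xi=\widehat{\psi^{(k)}}(y)$ and then applying the $k=0$ case to $\psi^{(k)}$. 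You instead work on the dual side: you first invert to get $\psi=g$, then differentiate $k$ times under the $y$-integral, using the same decay bound to justify the interchange, and finally use the observation (via the mean value theorem) that the jump hypotheses force $\psi\in C^{c-1}(\R)$ so that the identity $\psi=g$ can be differentiated. The two maneuvers are dual to one another; yours is arguably cleaner in that it avoids writing out the boundary terms explicitly, at the modest cost of the $C^{c-1}$ regularity step.
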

\begin{prf}
Since
\[
\int_{\R} |y|^{c-1} \min\{1, |y|^{-c-1}\}\,dy<\infty\,,
\]
the absolute convergence follows from Lemma \ref{D1.4lem1}. Since the Fourier transform of $\psi$ is absolutely integrable and since $\psi$ is continuous at zero, Fourier inversion formula \cite[(7.1.4)]{Hormander} shows that
\begin{equation}\label{D1.4lem2.1}
\int_\R\int_{\R^\times} e^{-iy\xi}\psi(\xi)\,d\xi\,dy=2\pi \psi(0)\,.
\end{equation}
Also, for $0<k$,
\begin{eqnarray*}
&&\int_{\R^\times}(iy)^k e^{-iy\xi}\psi(\xi)\,d\xi=\int_{\R^\times}(-\partial_\xi)\left((iy)^{k-1} e^{-iy\xi}\right)\psi(\xi)\,d\xi\\
&=&\int_{\R^+}(-\partial_\xi)\left((iy)^{k-1} e^{-iy\xi}\right)\psi(\xi)\,d\xi
+\int_{\R^-}(-\partial_\xi)\left((iy)^{k-1} e^{-iy\xi}\right)\psi(\xi)\,d\xi\\
&=&(iy)^{k-1}\psi(0+)+\int_{\R^+}(iy)^{k-1} e^{-iy\xi}\psi'(\xi)\,d\xi\\
&-&(iy)^{k-1}\psi(0-)+\int_{\R^-}(iy)^{k-1} e^{-iy\xi}\psi'(\xi)\,d\xi\\
&=&(iy)^{k-1}\langle \psi\rangle_0+\int_{\R^\times}(iy)^{k-1} e^{-iy\xi}\psi'(\xi)\,d\xi\,.
\end{eqnarray*}
Hence, by induction on $k$ and by our assumption
\begin{eqnarray*}
\int_{\R^\times}(iy)^k e^{-iy\xi}\psi(\xi)\,d\xi&=&(iy)^{k-1}\langle \psi\rangle_0+(iy)^{k-2}\langle \psi'\rangle_0+\dots+\langle \psi^{(k-1)}\rangle_0\\
&+&\int_{\R^\times} e^{-iy\xi}\psi^{(k)}(\xi)\,d\xi\\
&=&\int_{\R^\times}e^{-iy\xi}\psi^{(k)}(\xi)\,d\xi\,.
\end{eqnarray*}
Therefore our lemma follows from (\ref{D1.4lem2.1}).
\end{prf}
The following proposition is an immediate consequence of Lemmas \ref{D1.4lem1}, \ref{D1.4lem2}, and the formula (\ref{D1.3}).
\begin{pro}\label{D1not smooth}
Fix two integers $a,\ b\in\Bbb Z$ and a function $\psi\in \Ss(\R^\times)$. Let $c=-a-b$. If $c\geq 0$ assume that
\begin{equation}\label{D1.4lem1.1not smooth}
\langle \psi\rangle_0=\dots=\langle \psi^{(c)}\rangle_0=0\,.
\end{equation}
Then
\begin{eqnarray}\label{D1.4not smooth}
&&\int_\R\int_{\R^\times}(1+iy)^{-a}(1-iy)^{-b}e^{-iy\xi}\psi(\xi)\,d\xi\,dy\\
&=&\int_{\R^\times}P_{a,b}(\xi)e^{-|\xi|}\psi(\xi)\,d\xi+ Q_{a,b}(\partial_\xi)\psi(\xi)|_{\xi=0}\nn\\
%&=&\int_{\R}\left(P_{a,b}(\xi)e^{-|\xi|}\psi(\xi)+ \delta_0(\xi)Q_{a,b}(\partial_\xi) \psi(\xi)\right)\,d\xi\nn\\
&=&\int_{\R}\left(P_{a,b}(\xi)e^{-|\xi|}+ Q_{a,b}(-\partial_\xi)\delta_0(\xi)\right)\psi(\xi)\,d\xi\,,\nn
\end{eqnarray}
where $\delta_0$ denotes the Dirac delta at $0$.\\
(Recall that $Q_{a,b}=0$ if $c<0$ and $Q_{a,b}$ is a polynomial of degree if $c$, if $c\geq 0$.)
\end{pro}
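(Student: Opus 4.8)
The plan is to insert the identity \eqref{D1.3} for $(1+iy)^{-a}(1-iy)^{-b}$ into the left-hand side of \eqref{D1.4not smooth} and to treat the two resulting summands separately, using the decay estimate of Lemma \ref{D1.4lem1} for convergence and the evaluation formula of Lemma \ref{D1.4lem2} for the polynomial part. The only preliminary point is bookkeeping. Set $c=-a-b$ as in the statement. If $c\ge 0$, the hypothesis \eqref{D1.4lem1.1not smooth} is exactly the vanishing condition needed to apply Lemmas \ref{D1.4lem1} and \ref{D1.4lem2} with their parameter taken to be $c+1$; in particular Lemma \ref{D1.4lem1} (with parameter $c+1$) shows that $y\mapsto\int_{\R^\times}e^{-iy\xi}\psi(\xi)\,d\xi$ is $O(\min\{1,|y|^{-c-2}\})$, hence integrable against any polynomial of degree $\le c$. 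If $c<0$, then $Q_{a,b}=0$ and $(1+iy)^{-a}(1-iy)^{-b}$ itself decays like $|y|^{c}$, so the outer $y$-integral converges absolutely with no condition on $\psi$ beyond $\psi\in\Ss(\R^\times)$. This also yields the absolute convergence asserted in the proposition.

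For the summand carried by $\tfrac{1}{2\pi}\int_\R P_{a,b}(\eta)e^{-|\eta|}e^{iy\eta}\,d\eta$, I would interchange the order of integration, which is legitimate because $P_{a,b}(\cdot)e^{-|\cdot|}\in L^1(\R)$ while $\int_{\R^\times}e^{-iy\xi}\psi(\xi)\,d\xi$ decays rapidly when $c\ge0$, and at least like $|y|^{-1}$ in general, which together with the $|y|^{c}$ decay suffices when $c<0$. Performing the $y$-integration first and invoking Fourier inversion exactly as in the proof of Lemma \ref{D1.4lem2} (via \cite[(7.1.4)]{Hormander}; equivalently, this is formula \eqref{D1.2} of Proposition \ref{D1} paired with $\psi$), one collapses this summand to $\int_\R P_{a,b}(\xi)e^{-|\xi|}\psi(\xi)\,d\xi$; since $P_{a,b}(\cdot)e^{-|\cdot|}$ is locally integrable the point $\xi=0$ contributes nothing, so it equals $\int_{\R^\times}P_{a,b}(\xi)e^{-|\xi|}\psi(\xi)\,d\xi$.

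For the summand carried by $\tfrac{1}{2\pi}Q_{a,b}(iy)$, note that $Q_{a,b}(iy)$ is a polynomial in $iy$ whose degree is $c$ when $c\ge0$ and which vanishes identically when $c<0$ (this is the content of \eqref{D0''}). Writing $Q_{a,b}(iy)=\sum_{k=0}^{c}q_k\,(iy)^k$ with constants $q_k$ and applying Lemma \ref{D1.4lem2} term by term, again with parameter $c+1$ so that all orders $k\le c$ are covered, gives $\tfrac{1}{2\pi}\sum_k q_k\,\bigl(2\pi\,\psi^{(k)}(0)\bigr)=Q_{a,b}(\partial_\xi)\psi(\xi)|_{\xi=0}$. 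Adding the two summands yields the first equality in \eqref{D1.4not smooth}. The second equality is purely notational: by the definition of derivatives of the Dirac mass, $\int_\R Q_{a,b}(-\partial_\xi)\delta_0(\xi)\,\psi(\xi)\,d\xi=Q_{a,b}(\partial_\xi)\psi(\xi)|_{\xi=0}$.

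The main, and essentially the only, obstacle is the Fubini and Fourier-inversion justification in the second paragraph together with the degree count that matches $\deg Q_{a,b}=c$ with the number of vanishing conditions $\langle\psi\rangle_0=\dots=\langle\psi^{(c)}\rangle_0=0$ supplied by \eqref{D1.4lem1.1not smooth}; once these are in place, the proposition is indeed an immediate consequence of \eqref{D1.3} and Lemmas \ref{D1.4lem1}--\ref{D1.4lem2}, as claimed.
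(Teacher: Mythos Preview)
Your proposal is correct and follows exactly the approach the paper indicates: insert \eqref{D1.3}, use Lemma~\ref{D1.4lem1} (with its parameter equal to $c+1$) for the absolute convergence, and Lemma~\ref{D1.4lem2} (likewise with parameter $c+1$) to evaluate the polynomial $Q_{a,b}$ term by term. The bookkeeping matching $\deg Q_{a,b}=c$ to the $c+1$ jump conditions in \eqref{D1.4lem1.1not smooth} is handled correctly.
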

Let $\Ss(\R^+)$ be the space of the smooth complex valued functions whose all derivatives are rapidly decreasing at infinity and have limits at zero. 
Then $\Ss(\R^+)$ may be viewed as the subspace of the functions in $\Ss(\R^\times)$ which are zero on $\R^-$. Similarly we define $\Ss(\R^-)$.
The following propositions are direct consequences of Proposition \ref{D1not smooth}. We sketch  independent proofs below.
\begin{pro}\label{propD2}
There is a seminorm $p$ on the space $\Ss(\R^+)$ such that 
\begin{equation}\label{propD2.0}
\left| \int_{\R^+} e^{-z\xi}\psi(\xi)\,d\xi\right| \leq \min\{1,|z|^{-1}\} p(\psi) \qquad (\psi\in\Ss(\R^+),\ Re\,z\geq 0)\,,
\end{equation}
and similarly for $\Ss(\R^-)$.

Fix integers $a,b\in\Zb$ with $a+b\geq 1$. Then for any function $\psi\in\Ss(\R^+)$,
\begin{equation}\label{propD2.1}
\int_\R(1+iy)^{-a}(1-iy)^{-b}\int_{\R^+} e^{-iy\xi}\psi(\xi)\,d\xi\,dy
={2\pi} \int_{\R^+}P_{a,b,2}(\xi)e^{-\xi}\psi(\xi)\,d\xi\,,
\end{equation}
and any function $\psi\in \Ss(\R^-)$,
\begin{equation}\label{propD2.2}
\int_\R(1+iy)^{-a}(1-iy)^{-b}\int_{\R^-} e^{-iy\xi}\psi(\xi)\,d\xi\,dy
={2\pi} \int_{\R^-}P_{a,b,-2}(\xi)e^{\xi}\psi(\xi)\,d\xi\,,
\end{equation}
where each consecutive integral is absolutely convergent.
\end{pro}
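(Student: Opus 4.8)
The plan is to establish the two assertions separately: the estimate \eqref{propD2.0} follows from one integration by parts, while the evaluation \eqref{propD2.1}--\eqref{propD2.2} will be obtained by contour integration. For \eqref{propD2.0}, write $G_z(\psi)=\int_{\R^+}e^{-z\xi}\psi(\xi)\,d\xi$. Since $|e^{-z\xi}|\le 1$ for $\xi\ge 0$ whenever $\mathrm{Re}\,z\ge 0$, one has $|G_z(\psi)|\le\|\psi\|_1$; and for $z\ne 0$, integrating by parts gives $G_z(\psi)=z^{-1}\psi(0+)+z^{-1}\int_{\R^+}e^{-z\xi}\psi'(\xi)\,d\xi$, the term at $+\infty$ vanishing because $\psi$ is rapidly decreasing, whence $|G_z(\psi)|\le|z|^{-1}(|\psi(0+)|+\|\psi'\|_1)$. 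Taking $p(\psi)=\|\psi\|_1+|\psi(0+)|+\|\psi'\|_1$, a continuous seminorm on $\Ss(\R^+)$, yields the inequality, and the argument for $\Ss(\R^-)$ is the same with $\psi(0-)$ in place of $\psi(0+)$.

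For the evaluation, fix $\psi\in\Ss(\R^+)$ and set $\widetilde G(w)=\int_{\R^+}e^{-iw\xi}\psi(\xi)\,d\xi$. Because $|e^{-iw\xi}|=e^{\xi\,\mathrm{Im}\,w}\le 1$ for $\xi\ge 0$ when $\mathrm{Im}\,w\le 0$, the function $\widetilde G$ is holomorphic on the open lower half-plane, continuous up to $\R$, where it restricts to $G(y)$, and, repeating the integration by parts in the variable $w$, satisfies $|\widetilde G(w)|\le|w|^{-1}(|\psi(0+)|+\|\psi'\|_1)$ for $\mathrm{Im}\,w\le 0$, $w\ne 0$. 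In the closed lower half-plane the function $(1+iw)^{-a}(1-iw)^{-b}$ has at most one pole, at $w=-i$, coming from the factor $(1-iw)^{-b}$ when $b\ge 1$ (and it is holomorphic there when $b\le 0$, in which case $a\ge 1$ since $a+b\ge 1$). First I would check that $\int_\R(1+iy)^{-a}(1-iy)^{-b}G(y)\,dy$ is absolutely convergent: its integrand is bounded by a multiple of $(1+y^2)^{-(a+b)/2}\min\{1,|y|^{-1}\}$, which is $O(|y|^{-2})$ since $a+b\ge 1$. Next I would close the contour in the lower half-plane; on a semicircle of radius $R$ the integrand is $O(R^{-(a+b)-1})$, so the arc contributes $O(R^{-(a+b)})\to 0$, and the residue theorem gives
\[
\int_\R(1+iy)^{-a}(1-iy)^{-b}G(y)\,dy=-2\pi i\,\Res_{w=-i}\Big[(1+iw)^{-a}(1-iw)^{-b}\widetilde G(w)\Big],
\]
which vanishes when $b\le 0$, matching $P_{a,b,2}\equiv 0$ in that range.

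It remains to identify the residue for $b\ge 1$. Substituting $w=-i+\zeta$ gives $1-iw=-i\zeta$, $1+iw=2+i\zeta$, and $\widetilde G(-i+\zeta)=h(\zeta):=\int_{\R^+}e^{-\xi}e^{-i\zeta\xi}\psi(\xi)\,d\xi$, which is holomorphic near $\zeta=0$ with Taylor coefficients expressed through the moments $m_j=\int_{\R^+}\xi^j e^{-\xi}\psi(\xi)\,d\xi$; hence the residue equals $(-i)^{-b}$ times the coefficient of $\zeta^{b-1}$ in $(2+i\zeta)^{-a}h(\zeta)$. Expanding $(2+i\zeta)^{-a}=2^{-a}\sum_{j\ge 0}\frac{a(a+1)\cdots(a+j-1)}{j!}(-i\zeta/2)^j$, multiplying by the Taylor series of $h$, and keeping track of the powers of $-i$ and $2$, one checks that $-2\pi i$ times this residue is exactly $2\pi\int_{\R^+}P_{a,b,2}(\xi)e^{-\xi}\psi(\xi)\,d\xi$, with $P_{a,b,2}$ as in \eqref{eq:Pab2}; this is the step I expect to be the main obstacle, purely a matter of careful bookkeeping. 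The case $\psi\in\Ss(\R^-)$ is symmetric: one closes the contour in the upper half-plane, where the only pole is $w=i$ coming from $(1+iw)^{-a}$, and the same computation produces $P_{a,b,-2}$. A minor technical point is to make sure the decay $|\widetilde G(w)|=O(|w|^{-1})$ holds uniformly over the whole closed lower half-plane, so that the arc contribution already vanishes in the borderline case $a+b=1$.
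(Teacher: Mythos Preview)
Your proposal is correct and follows essentially the same approach as the paper: integration by parts for the seminorm estimate, then contour integration picking up the single relevant pole, with the residue computation left as bookkeeping. The only difference is cosmetic---the paper substitutes $z=iy$ and works in the right half-plane $\Re z\ge 0$ with the pole at $z=1$, whereas you stay with $w=y$ and close in the lower half-plane with the pole at $w=-i$; these are the same contour under a rotation.
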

\begin{prf}
Clearly
\[
\left| \int_{\R^+} e^{-z\xi}\psi(\xi)\,d\xi\right| \leq \int_{\R^+} e^{-\Re\,z\xi}|\psi(\xi)|\,d\xi\leq \parallel \psi\parallel_1\,.
\]
Integration by parts shows that for $z\ne 0$,
\[
 \int_{\R^+} e^{-z\xi}\psi(\xi)\,d\xi=z^{-1}\psi(0)+z^{-1} \int_{\R^+} e^{-z\xi}\psi'(\xi)\,d\xi\,.
\]
Hence (\ref{propD2.0}) follows with $p(\psi)=|\psi(0)|+\parallel\psi\parallel_1+\parallel\psi'\parallel_1$.

Let $a,b\in\Zb$ be such that $a+b\geq 1$. Then the function
\[
(1+z)^{-a}(1-z)^{-b}\int_{\R^+} e^{-z\xi}\psi(\xi)\,d\xi
\]
is continuous on $\Re z\geq 0$ and meromorphic on $\Re z>0$ and (\ref{propD2.0}) shows that it is dominated by
$|z|^{-2}$.
Therefore Cauchy's Theorem implies that the left-hand side of (\ref{propD2.1}) is equal to
\begin{equation*}\label{propD2.h}
-2\pi\, {\rm res}_{z=1}\left((1+z)^{-a}(1-z)^{-b}\int_{\R^+} e^{-z\xi}\psi(\xi)\,d\xi\right).
\end{equation*}
The computation of this residue is straightforward.  This verifies (\ref{propD2.1}).
The proof of (\ref{propD2.2}) is entirely analogous.
\end{prf}
\section{\bf The covering $\wt \G \to \G$}
\label{appenD}
\setcounter{equation}{0}
\setcounter{thh}{0}
In this appendix we recall some results about the splitting of the restrictions $\wt \L\to \L$ of the metaplectic covering
\begin{equation}\label{E1}
1 \to \{\pm 1\} \to \wt\Sp(\Wv)\to \Sp(\Wv) \to 1\,
\end{equation}
to a subgroup $\L$ of the compact member $\G$ of a dual pair $(\G, \G')$ as in \eqref{classificationGG'}. This is well known, but we could not find a reference sketching the proofs of the results we are using in this paper. We are therefore providing a short and complete argument.

If $\K$ is a maximal compact subgroup of $\Sp(\Wv)$, then $\wt \K$ is a maximal compact subgroup of $\wt \Sp(\Wv)$. The group $\wt \Sp(\Wv)$ is connected, noncompact, semisimple and with finite center $\wt \Zg$. (Since $\wt \Sp(\Wv)$ is a double cover of 
$\Sp(\Wv)$, only the connectedness needs to be commented. It follows from the fact that the covering \eqref{E1} does not split; see e.g. \cite[Proposition 4.20]{AubertPrzebinda_omega}
or the original proof \cite[p. 199]{WeilWeil}).
The maximal compact subgroup $\wt \K$ is therefore connected; see e.g. \cite[Chapter VI, Theorem 1.1]{HelgasonDifferential}. Hence the covering 
\begin{equation}\label{E2}
\wt\K\to\K
\end{equation}
does not split.

As is well known, $\K$ is isomorphic to a compact unitary group. In fact, if $\Wv=\R^{2n}$ and 
\begin{equation}
J_{2n}=\begin{pmatrix}
0 & I_n \\ -I_n & 0
\end{pmatrix}\,,
\end{equation}
then 
\begin{equation}\label{E3}
\Sp_{2n}(\R)^{J_{2n}}=\Big\{ \begin{pmatrix} a & -b\\ b & a\end{pmatrix}; \; a,b\in \GL_n(\R),\; ab^t=
ba^t
,\; aa^t+bb^t=I_n\Big\}
\end{equation}
is a maximal compact subgroup of $\Sp_{2n}(\R)$ and 
\begin{equation}\label{E3bis}
\Sp_{2n}(\R)^{J_{2n}}\ni \begin{pmatrix} a & -b\\ b & a\end{pmatrix} \to a+ib\in \Ug_n
\end{equation}
is a Lie group isomorphism. Any two maximal compact subgroups of $\Sp(\Wv)$ are conjugate by an inner automorphism. Let $\K \to \Sp_{2n}(\R)^{J_{2n}}$ be the corresponding isomorphism.  Composition with \eqref{E3bis} fixes then an isomorphism $\phi:\K \to \Ug_n$.
Set
\begin{equation}\label{E4}
{\wt\K}^\phi=\{(u,\zeta)\in\K\times\C^\times;\ \det(\phi(u))=\zeta^2\}
\end{equation}
Recall the bijection between equivalence classes of $n$-fold path-connected coverings and
the conjugacy classes of index-$n$ subgroups of the fundamental group  (see e.g. \cite[Theorem 1.38]{Hatcher}). Then, up to an isomorphism of coverings, $\Ug_n$ has only one connected double cover. 
Hence \eqref{E2} is isomorphic to
\begin{equation}\label{E5}
{\wt\K}^\phi\ni (u,\zeta)\to u\in \K\,.
\end{equation}
Let $\L\subseteq\K$ be any subgroup and 
\begin{equation}\label{E7}
\wt\L\to\L
\end{equation}
the restriction of the covering \eqref{E2} to $\L$. Let ${\wt \L}^\phi$ be the preimage of $\L$ in ${\wt \K}^\phi$. Then \eqref{E7} splits if and only if 
\begin{equation}\label{E7bis}
{\wt\L}^\phi\to\L
\end{equation}
splits, i.e. 
there is a group homomorphism $\L\ni g \to \zeta(g)\in \Ug_1\subset \C^\times$ such that 
$\zeta(g)^2=\det(\phi(g))$ for all $g \in \L$. For instance, if $\L$ is a connected subgroup of 
$\K$ such that
\begin{equation}\label{E6}
\L \subseteq \{u\in\K;\ \det(\phi(u))=1\}\,,
\end{equation}
then \eqref{E7} splits. 

To fix $\phi$,
let $(\Vv,(\cdot,\cdot))$ and $(\Vv',(\cdot,\cdot)')$ be the defining spaces of $\G$ and $\G'$, respectively, with $\dim_\Dc \Vv=d$ and $\dim_\Dc \Vv'=d'$.
Realize $\Wv$ as $\Vv\otimes_\Dc \Vv'$, considered as a real symplectic space, with 
symplectic form 
$\langle \cdot,\cdot\rangle=\tr_{\Dc/\R} \big((\cdot,\cdot)\otimes(\cdot,\cdot)'\big)$, 
where $\tr_{\Dc/\R}$ denotes the reduced trace; see \cite[\S 5]{howetheta} and \cite[p. 169]{Weil_Basic}. 
Then the group $\G$ is viewed as a subgroup of $\Sp(\Wv)$ via the identification $\G\ni g\to g\otimes 1 \in \Sp(\Wv)$. \footnote{Following the notation at the beginning of Section \ref{section:dualpairs-supergroups}, one should identify $g$ and $(g^{-1})^t\otimes 1$.}
Similarly, $\G'$ is viewed as a subgroup of $\Sp(\Wv)$ via the identification $\G'\ni g' \to 
1\otimes g'\in \Sp(\Wv)$.
Recall that $n$-by-$n$-matrices over $\C$ can be identified with $2n$-by-$2n$ matrices over $\R$ under the isomorphism 
$$
\alpha: M \to \begin{pmatrix}
\Re M & -\Im M\\ \Im M & \Re M
\end{pmatrix}\,.
$$ 
Moreover, $n$-by-$n$-matrices over $\Ha$ can be identified with $2n$-by-$2n$ matrices over $\C$ under the isomorphism 
$$
\beta: M \to \begin{pmatrix}
z_1(M) & -\overline{z_2(M)}\\ z_2(M)& \overline{z_1(M)}
\end{pmatrix}\,.
$$ 
Here, for $v\in \Ha$, we write $v=z_1(v)+jz_2(v)$ with $z_1(v), z_2(v)\in \C$, and we similarly define $z_1(M)$ and $z_2(M)$ if $M$ is a matrix over $\Ha$.

Since $\G$ is compact, there is a compatible positive complex structure $J$ on $\Wv$ such that the maximal compact subgroup $\K=\Sp(\Wv)^J$ of $\Sp(\Wv)$ contains $\G$. Moreover, since $\G$ commutes with $J$, there is $J'\in \G'$ such that $J=1\otimes J'$. 
Set $I_{p,q}=\begin{pmatrix} I_p &0\\0 & 
-I_q
\end{pmatrix}$. Then, the explicit expressions of $J'$ with respect to the standard basis of $\Vv\simeq \Dc^d$ and of $J$ with respect to the standard basis of $\Wv\simeq \R^{2n}$ are given as follows:
\medskip

%%%
\begin{center}
\extrarowheight=.2em
\begin{tabular}{|c|c|c|c|}
\hline
$(\G,\G')$ & $J'$ & $n$ & $J$\\[.2em]
\hline
\hline
$(\Og_d,\Sp_{2m}(\R))$ & $J_{2m}$ & $md$ & $J_{2md}$ \\[.2em]
\hline
$(\Ug_d,\Ug_{p,q})$ & $-iI_{p,q}$ & $d(p+q)$ & $\begin{pmatrix} 0 & I_{dp,dq} \\ -I_{dp,dq} & 0 \end{pmatrix}$\\[.2em]
\hline
$(\Sp_d,\Og^*_{2m})$ & $-jI_{m}$ & $2md$ & $\begin{pmatrix} J_{2pm} & 0 \\ 0 & J_{2pm} \end{pmatrix}$\\
\hline
\end{tabular}
\end{center}
\medskip

\noindent
Notice that in the $(\Ug_d,\Ug_{p,q})$-case we have $SJS^{-1}=J_{2d(p+q)}$ for 
$S=\begin{pmatrix}
I_{d(p+q)} & 0 \\
0 &  I_{dp,dq}
\end{pmatrix}$; in the $(\Sp_d,\Og^*_{2m})$-case, $TJT^{-1}=J_{4pm}$ for 
$T=\begin{pmatrix} I & 0 & 0 & 0\\ 0 & 0 & I & 0 \\ 0 & I & 0 & 0 \\ 0 & 0 & 0 & I
\end{pmatrix}$.
Hence, in all cases we can embed $\G$ in \eqref{E3} from the identification $g \to g\otimes 1 \in \Sp(\Wv)^J$ followed by the isomorphism of $\Sp(\Wv)^J$ and $\Sp_{2n}(\R)^{J_{2n}}$ corresponding to the conjugations by $S$ or $T$, and then apply \eqref{E3bis}.
We obtain:
%%up to conjugation
\begin{equation}
\label{E7bis}
\det(\phi(g))=\begin{cases}
\det(g)_\Vv^m &\text{if $(\G,\G')=(\Og_d,\Sp_{2m}(\R))$}\\
\det(g)_\Vv^{p-q} &\text{if $(\G,\G')=(\Ug_d,\Ug_{p,q})$}\\
1 & \text{if $(\G,\G')=(\Sp_d,\Og^*_{2m})$} 
\end{cases}\,,
\end{equation}
where $\det(g)_\Vv $ denotes the determinant of $g$ as an element of 
$\G\subseteq\GL_\Dc(\Vv)$. (The determinant of an $n$-by-$n$ matrix over $\Ha$ can be reduced to a determinant of a $2n$-by-$2n$ matrix over $\C$ via the isomorphism $\beta$. For elements of $\Sp(d)$, this notion of determinant coincides with other possible notions of quaternionic determinants; see \cite{Aslaksen96} for additional information.)

%It follows from \eqref{E6} and \eqref{E7bis} that the covering $\wt \Sp_d \to \Sp_d$ always splits and $\wt \Og_d \to \Og_d$ splits if $m$ is even. More generally, we have the following proposition. 

\begin{pro}\label{pro:det-covering}
The covering $\wt \G \to \G$ splits if and only if $\det(\phi(g))$ is a square. This happens for all pairs $(\G,\G')$ different from $(\Og_d,\Sp_{2m}(\R))$ with $m$ odd and $(\Ug_d,\Ug_{p,q})$ with $p+q$ odd. 
In these two non-splitting cases, the covering 
$\wt \G \to \G$ is isomorphic to the $\det^{1/2}$-covering
\begin{equation}\label{E8}
\sqrt{\G} \ni (g,\zeta) \to g\in \G
\end{equation}
where 
\begin{equation}
\label{E9}
\sqrt{\G}=\{(g,\zeta)\in \G\times \C^\times; \zeta^2=\det(g)_\Vv\}\,.
\end{equation}
\end{pro}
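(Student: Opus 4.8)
The plan is to establish the two directions of the equivalence separately, and then to read off the list of non-splitting cases from the explicit formulas for $\det(\phi(g))_\V$ collected in \eqref{E7bis}. First I would prove that the covering $\wt\G\to\G$ splits if and only if the covering $\wt\G^\phi\to\G$ splits, where $\wt\G^\phi$ is the preimage of $\G$ in $\wt\K^\phi$ under the identification \eqref{E5}; this is exactly the general principle formulated around \eqref{E7} for an arbitrary subgroup $\L\subseteq\K$, applied to $\L=\G$. Since $\G$ is connected (being one of $\SO_d$, $\Og_d$ with the component structure understood, $\Ug_d$, $\Sp_d$ — more precisely, for $\Og_d$ one argues on $\SO_d$ and then uses that the extension question is detected on the identity component, but in the statement $\G$ is already viewed via $g\mapsto g\otimes 1$ and the relevant obstruction lives on $\wt\K^\phi$), a splitting of $\wt\G^\phi\to\G$ is precisely a continuous group homomorphism $\zeta:\G\to\Ug_1$ with $\zeta(g)^2=\det(\phi(g))$ for all $g\in\G$.

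Next I would argue the two implications. If $\det(\phi(\cdot))$ is a square of a character, i.e. there is a group homomorphism $\zeta:\G\to\C^\times$ with $\zeta^2=\det(\phi(\cdot))$, then $g\mapsto(g,\zeta(g))$ is the desired section of $\wt\G^\phi\to\G$, hence of $\wt\G\to\G$. Conversely, a splitting produces such a $\zeta$ by the displayed equivalence. Then I would identify, case by case, when $\det(\phi(g))_\V$ (given by \eqref{E7bis}: the $m$-th power of $\det(g)_\Vv$ for $(\Og_d,\Sp_{2m}(\R))$, the $(p-q)$-th power for $(\Ug_d,\Ug_{p,q})$, and $1$ for $(\Sp_d,\Og^*_{2m})$) admits a square root as a character of $\G$. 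For $\G=\Sp_d$ there is nothing to prove since $\det(\phi(g))=1$. For $\G=\Og_d$, $\det(g)_\Vv=\pm1$, so $\det(\phi(g))=(\det g)^m$ is a square exactly when $m$ is even; when $m$ is odd the character $\det g\mapsto(\det g)^m=\det g$ has no square root among characters of $\Og_d$ (the group of characters of $\Og_d$ is $\{1,\det\}$, in which $\det$ is not a square). For $\G=\Ug_d$, the characters are the powers $\det{}^k$ of the determinant, and $\det{}^{p-q}$ is a square among these iff $p-q$ is even, i.e. iff $p+q$ is even. This yields precisely the asserted list: the covering is non-split only for $(\Og_d,\Sp_{2m}(\R))$ with $m$ odd and $(\Ug_d,\Ug_{p,q})$ with $p+q$ odd.

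Finally, in these two non-splitting cases I would identify $\wt\G\to\G$ with the $\det^{1/2}$-covering $\sqrt{\G}\to\G$ of \eqref{E8}--\eqref{E9}. Here $\wt\G^\phi=\{(g,\zeta):\zeta^2=\det(\phi(g))\}$, and since $\det(\phi(g))=\det(g)_\V^m$ (resp. $\det(g)_\V^{p-q}$) with $m$ (resp. $p-q$) odd, the map $(g,\zeta)\mapsto(g,\zeta\cdot\det(g)_\V^{-(m-1)/2})$ (resp. with exponent $-(p-q-1)/2$, an integer) is a well-defined isomorphism of coverings $\wt\G^\phi\to\sqrt{\G}$ over $\G$: it is bijective, continuous with continuous inverse, and a group homomorphism because $\det(\cdot)_\V$ is a character of $\G$. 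Composing with the isomorphism $\wt\G\to\wt\G^\phi$ from \eqref{E5} gives $\wt\G\cong\sqrt\G$. (One must check that this candidate map is compatible with the group laws, i.e. that the twisting factor $\det(g)_\V^{(m-1)/2}$ is multiplicative — immediate since $\det(\cdot)_\V$ is a homomorphism and $(m-1)/2$ is a fixed integer.)

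The main obstacle, such as it is, is bookkeeping rather than depth: one must be careful about the reduction to connected groups and about the precise identification of the character group of $\G$ in the orthogonal case (so that "$\det g$ is not a square character of $\Og_d$" is correctly justified rather than asserted), and one must verify that the several ad hoc conjugations by $S$, $T$ used to bring $J$ into the standard form $J_{2n}$ do not alter $\det(\phi(g))$ — they do not, since conjugation by a fixed element of $\Sp_{2n}(\R)$ induces an inner automorphism of $\Ug_n$ after \eqref{E3bis}, and inner automorphisms preserve determinants. Once these points are pinned down, the proposition follows directly from \eqref{E7bis}.
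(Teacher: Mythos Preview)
Your proposal is correct and follows essentially the same strategy as the paper: reduce to the covering $\wt\G^\phi\to\G$ via \eqref{E5}--\eqref{E7}, observe that a splitting is exactly a character $\zeta:\G\to\Ug_1$ with $\zeta^2=\det(\phi(\cdot))$, and then check case by case using \eqref{E7bis} when such a square-root character exists. Your explicit covering isomorphism $(g,\zeta)\mapsto(g,\zeta\cdot\det(g)_\V^{-(m-1)/2})$ is in fact cleaner than the paper's formula $(g,\zeta)\mapsto(g,\zeta^{1/(2k+1)})$, which is less transparent as a well-defined map on complex numbers.

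One remark: the digression about connectedness of $\G$ is unnecessary and somewhat muddled (indeed $\Og_d$ is not connected). The equivalence ``a splitting of $\wt\L^\phi\to\L$ is a character $\zeta:\L\to\Ug_1$ with $\zeta^2=\det\phi$'' holds for \emph{any} subgroup $\L\subseteq\K$, connected or not, simply by the definition \eqref{E4} of $\wt\K^\phi$; no reduction to the identity component is needed. You can delete that parenthetical entirely. Also, for the $\Og_d$ case the paper argues slightly differently (by noting that the image of any continuous $\zeta:\Og_d\to\Ug_1$ lies in $\{\pm1,\pm i\}$ and hence in $\{\pm1\}$), whereas you use that the character group of $\Og_d$ is $\{1,\det\}$; both are fine, but your version implicitly requires knowing that the abelianization of $\Og_d$ is $\Zb/2\Zb$ for all $d\geq 1$, which you should state.
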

\begin{proof}
By \eqref{E7bis} there is a group homomorphism $\G\ni g \to \zeta(g)\in \Ug_1 \subseteq \C^\times$ so that $\zeta(g)^2=\det(\phi(g))$ for all pairs $(\G,\G')$ except at most the two cases listed in the statement of the Proposition.  

Suppose that $\G'=\Sp_{2m}(\R)$, and let $\zeta:\Og_d\to \Ug_1$ be a continuous group homomorphism so that $\zeta(g)^2=\det(g)_\Vv^m=(\pm 1)^m$. Then  $\zeta(\Og_d)\subseteq \{\pm 1,\pm i\}$ and it is a subgroup with at most two elements. So $\zeta(\Og_d)\subseteq \{\pm 1\}$. On the other hand, if $g \in \Og_d \setminus \SO_d$, 
then $\det(g)_\G=-1$. Thus $\zeta(g)^2\neq \det(g)_\Vv^m$ if $m$ is odd.

Suppose now that $\G'=\Ug_{p,q}$, and let $\zeta:\Ug_d\to \Ug_1$ be a continuous group homomorphism so that $\zeta(g)^2=\det(g)_\Vv^{p-q}$. Restriction to $\Ug_1 \equiv \{\diag(h, 1\dots, 1); h \in \Ug_1\} \subseteq \Ug_d$ yields a continuous group homomorphism $h \in \Ug_1 \to \zeta(h) \in  \Ug_1$. Thus, there is  $k\in \Zb$ so that $\zeta(h)=h^k$ for all $h \in \Ug_1$. 
So $h^{2k}=\zeta(h)^2=\det(\diag(h, 1,\dots, 1))^{p-q}$ implies that $p+q$ must be even. 

For the last statement, consider for $k \in\Zb$ the covering $\Mg_k=\{(g,\zeta)\in \G\times \C^\times;\zeta^2=\det(g)_\Vv^{2k+1}\}$ of $\G$. 
Then $(g,\zeta)\to (g,\zeta^{\frac{1}{2k+1}})$ is a covering isomorphism between $\Mg_k$ and $\Mg_0$.   
\end{proof}

\begin{rem}
\label{rem:Fock}
Keep the notation of \eqref{E4} and 
let $\alpha:\widetilde{\K}^\phi\to \widetilde{\K}$ be the isomorphism lifting 
$\phi^{-1}:\Ug_n \to \K$. Then, by \cite[Proposition 4.39]{FollandPhase} or \cite[(1.4.17)]{PrzebindaExample}, the map 
$$(u,\zeta)\to \zeta^{-1} \omega(\alpha(u,\zeta))$$ 
is independent of $\zeta$.
\end{rem}

%%%%%%%%%%%%%%%%
\section{\bf On the nonoccurrence of the determinant character of $\Og_d$ in Howe's correspondence}
\label{det and omega}
\setcounter{thh}{0}
\setcounter{equation}{0}

Consider the reductive dual pair $(\Og_d,\Sp_{2n}(\R))$ where $d>n$.
Let $\M_{d,n}(\R)$ denote the space of $d\times n$ matrices with real coefficients and
consider the Schr\"odinger model for the Weil representation $\omega$, with space of 
smooth vectors $\mathcal{S}=\mathcal{S}(\M_{d,n}(\R))$. Moreover, let $\chi_+$ be the character of 
$\wt{\Og}_d$ defined in \eqref{chi+-on-O}. 
As recalled on page \pageref{notation-Od-irreps}, the 
representation $\omega\otimes\chi_+^{-1}$ descends to a representation $\omega_0$ of $\Og_d$
given by 
\begin{equation}
\label{omega0-appen}
\omega_0(g)f(x)=f(g^{-1}x) \qquad (g\in \Og_d, \, f\in \mathcal{S}, \, x\in \M_{d,n}(\R))\,.
\end{equation}
In this appendix, we prove that, under the assumption that $d>n$, the determinant character $\det$ does not occur in $\omega_0$. 
This property is a consequence of \cite[(C.43) Corollary]{PrzebindaExample} (which considers the more general case of the pseudo-orthogonal groups $\Og_{p,q}$, where $p+q=d>n$). However, the proof in \cite{PrzebindaExample} uses part of the classification of the $\K$-types of representations occurring in Howe's correspondence, determined by \cite{KashiwaraVergne}. The proof below, which follows  the $p$-adic case in \cite[p. 399]{Rallis-Howeduality}, is classification-free.

\begin{pro}
\label{non-occurence-det}
If $d>n$, then $\det$ does not occur in $\omega_0$. In other words: if $d>n$, then there is no character $\sigma$ of $\wt{\Og}_d$ occurring in Howe's correspondence such that $\sigma\otimes \chi_+^{-1}$ descends to the determinant character $\det$ of $\Og_d$. 
\end{pro}
\begin{proof}
We argue by contradiction. Suppose $f_0\in \mathcal{S}$ is a non-zero function satisfying 
$$
f_0(g^{-1}x)=\det(g) f_0(x) \qquad (g\in \Og_d, \, x\in \M_{d,n}(\R))\,.
$$
Define
$
\Zg=\{x\in \M_{d,n}(\R): \text{$x$ has maximal rank $n$}\}\,.
$ 
Then $\Zg$ is $\Og_d$-invariant and, by the density of $\Zg$ in $\M_{d,n}(\R)$, $f_0|_{\Zg}\neq 0$. 
Decompose $\Zg$  as a union of $\Og_d$-orbits $\mathcal{O}$. Then there is an $\Og_d$-orbit 
$\mathcal{O}$ such that $f_0|_{\mathcal{O}}\neq 0$. 
Set $\varphi=f_0|_{\mathcal{O}}$. Then 
\begin{equation}
\label{varphi-det}
\varphi(g^{-1}x)=\det(g) \varphi(x) \qquad (g\in \Og_d, \, x\in \mathcal{O})\,.
\end{equation}
Since $\mathcal{O}\subseteq \Zg$, the centralizer of any element in $\mathcal{O}$ is isomorphic to $\Og_{d-n}$. Hence 
$\mathcal{O}=\Og_d/\Og_{d-n}$ and $\varphi\in \Ind_{\Og_{d-n}}^{\Og_d}(1)$. 
By \eqref{varphi-det}, $\det$ occurs in $\Ind_{\Og_{d-n}}^{\Og_d}(1)$. Frobenius' reciprocity then
implies that the character $\det|_{\Og_{d-n}}$ contains $1$, i.e. $\det|_{\Og_{d-n}}=1$. 
This is clearly impossibile, and we have reached a contradiction. Thus
$\det$ cannot occur in $\omega_0$.
\end{proof}

%%%%%%%%%%%%%%%%%
\section{\bf Tensor product decomposition of the embedding $T$ over complementary invariant symplectic subspaces of $\Wv$}
\label{WandT}
\setcounter{thh}{0}
\setcounter{equation}{0}

We keep the notation introduced in section \ref{section:preliminaries}. Let
\begin{equation}
\label{chi+onSp}
\chi_+(\wt g)=\frac{\Theta(\wt g)}{|\Theta(\wt g)|} \qquad (g\in \Sp(\Wv))
\end{equation}
(Recall that $\chi_+$ is not a character on $\wt{\Sp}(\Wv)$, since $\wt{\Sp}(\Wv)$ does not have any nontrivial character. However, $\chi_+$ becomes a character when restricted to specific subgroups of 
$\wt{\Sp}(\Wv)$, such as $\wt{\Og}_d$; see \eqref{chi+-on-O}.)
By definition, see \eqref{Tt}, 
\begin{equation}
\label{chi+invT}
\chi_+^{-1}(\wt{g})T(\wt{g})=|\Theta(\wt{g})|\chi_{c(g)}\mu_{(g-1)\Wv}  \qquad (g\in \Sp(\Wv))
\end{equation}
descends to a distribution on $\Sp(\Wv)$. 

Let $\Wv=\Wv_1\oplus \Wv_2$ be an orthogonal decomposition of $\Wv$, and endow each subspace 
$\Wv_j$ (where $j=1,2$) of the symplectic form $\inner{\cdot}{\cdot}_j=\inner{\cdot}{\cdot}|_{\Wv_j\times \Wv_j}$. Suppose that $g\in \Sp(\Wv)$ preserves $\Wv_1$ and $\Wv_2$. Let $g_1$ and $g_2$ 
respectively denote the restrictions $g|_{\Wv_1}$ and $g|_{\Wv_2}$ of $g$ to 
these subspaces. 
Suppose we have chosen a complete polarization $\Wv=\X\oplus\Y$ of $\Wv$ such that 
$\X=\X_1\oplus \X_2$ and $\Y=\Y_1\oplus \Y_2$, where $\Wv_1=\X_1\oplus \Y_1$ and 
$\Wv_2=\X_2\oplus \Y_2$ are complete polarizations. Similarly, suppose that the compatible positive complex structures $J$, $J_1$, $J_2$ on $\Wv$, $\Wv_1$, $\Wv_2$, respectively, satisfy $J=J_1\times J_2$. Then $J(\X)=\Y$ if and only if $J(\X_1)=\Y_1$ and $J(\X_2)=\Y_2$, which we assume. 

Write $T_\Wv$, $T_{\Wv_1}$ and $T_{\Wv_2}$ for the distributions corresponding to $\wt{\Sp}(\Wv)$, $\wt{\Sp}(\Wv_1)$, $\wt{\Sp}(\Wv_2)$, respectively. Similar notation will apply to other symbols occurring in the computations below. For the tensor product of tempered distributions, we refer to 
\cite[Corollary of Theorem 51.6, especially (51.7)]{Treves}.

\begin{lem}
\label{lemma:WandT}
In the above notations, 
$$
|\Theta_\Wv(\wt{g})|\chi_{c(g)}\mu_{(g-1)\Wv}=|\Theta_{\Wv_1}(\wt{g_1})|\chi_{c(g_1)}\mu_{(g_1-1)\Wv_1}\otimes |\Theta_{\Wv_2}(\wt{g_2})|\chi_{c(g_2)}\mu_{(g_2-1)\Wv_2}\,.
$$
Consequently, independently of the choice of the preimages $\t g$, $\t g_1$ and $\t g_2$   of $g$, $g_1$ and $g_2$ in 
$\wt{\Sp}(\Wv)$, $\wt{\Sp}(\Wv_1)$, $\wt{\Sp}(\Wv_2)$, respectively,
$$
\chi_{\Wv,+}^{-1}(\wt{g})T_{\Wv}(\wt{g})=\chi_{\Wv_1,+}^{-1}(\wt{g_1})T_{\Wv_1}(\wt{g_1})\otimes
\chi_{\Wv_2,+}^{-1}(\wt{g_2})T_{\Wv_2}(\wt{g_2})
\,.
$$
Hence, if the elements $\t g$, $\t g_1$ and $\t g_2$ respectively are chosen so that
\[
\chi_{\Wv,+}^{-1}(\wt{g})=\chi_{\Wv_1,+}^{-1}(\wt{g_1})\chi_{\Wv_2,+}^{-1}(\wt{g_2})\,,
\]
then
$$
T_{\Wv}(\wt{g})=T_{\Wv_1}(\wt{g_1})\otimes T_{\Wv_2}(\wt{g_2})
\,.
$$
\end{lem}
\begin{proof}
Since $\Wv=\Wv_1\oplus \Wv_2$ and $g_1=g|_{\Wv_1}$, $g_2=g|_{\Wv_2}$, we have 
$(g-1)\Wv=(g_1-1)\Wv_1\oplus (g_2-1)\Wv_2$. Recall from \cite[Definitions 4.16, 4.18 and 4.23]{AubertPrzebinda_omega} that 
$$
\Theta(\wt{g})^2=\Theta^2(g) \qquad (g\in \Sp(\Wv))\,.
$$
Thus 
$|\Theta_\Vv(\wt{g})|^2=|\Theta^2_\Vv(g)|$ for $\Vv\in \{\Wv,\Wv_1,\Wv_2\}$. 
It follows that $|\Theta_\Wv(\wt{g})|=|\Theta_{\Wv_1}(\wt{g_1})||\Theta_{\Wv_2}(\wt{g_2})|$, and this independently of the choice of the preimages of $g$, $g_1$ and $g_2$ in 
$\wt{\Sp}(\Wv)$, $\wt{\Sp}(\Wv_1)$, $\wt{\Sp}(\Wv_2)$, respectively. Since the decomposition
$\Wv=\Wv_1\oplus \Wv_2$ is orthogonal, 
$$
\inner{c(g)w}{w}=\inner{c(g_1)w_1}{w_1}_1+\inner{c(g_2)w_2}{w_2}_2
\qquad (w_j\in (g_j-1)\Wv_j, \, j=1,2,\, w=w_1+w_2)\,,
$$
where $c$ denotes the Cayley transform. 
Therefore $\chi_{c(g)}=\chi_{c(g_1)}\otimes\chi_{c(g_2)}$ on $\Wv=\Wv_1\oplus \Wv_2$.
Finally, the normalization of measures on subspaces of $\Wv$ fixed at the beginning of section \ref{section:preliminaries} is such that $\mu_{(g-1)\Wv}=\mu_{(g_1-1)\Wv_1}\otimes \mu_{(g_2-1)\Wv_2}$.
\end{proof}

%%%%%%%%%%%%%%%%%
\section{\bf Highest weights of irreducible genuine representations of $\wt\G$}
\label{appenE}
\setcounter{thh}{0}
\setcounter{equation}{0}
\setlength{\abovedisplayskip}{0pt plus 3pt}
\setlength{\belowdisplayskip}{7pt plus 3pt minus 4pt}

In this appendix we collect the roots and weights for the irreducible genuine representations of $\wt\G$, where $\G$ is a compact member of a reductive dual pair $(\G,\G')$.
Let $\h$ be a fixed Cartan subalgebra of the Lie algebra $\g$ of $\G$.
We denote by $\Delta^+$ a choice of positive roots for $(\g_\C,\h_\C)$ and by $\rho$ the one-half of their sum. Each genuine irreducible representation of $\wt \G$  has highest weight $\lambda= \sum_{j=1}^l \lambda_j e_j$ listed below. 
\smallskip

\underline{$(\G,\G')=(\Ug_l,\Ug_{p,q}), \; l\geq 1, \, q\geq p\geq 0, \, p+q\geq 1$:}
\medskip

If $l=1$, then $\h_\C=\g_\C$.
If $l\geq 2$, then:
\begin{eqnarray*}
\addtolength{\jot}{0\jot}
&&\Delta^+=\{e_j-e_k; \; 1\leq j<k\leq l\} \; \text{(type $A_{l-1}$)}\,, \quad
\rho=\sum_{j=1}^l \Big(\frac{l+1}{2}-j\Big)e_j\,,\\[-1em]
\label{genuine-hw-Ul}
&&\lambda_j=\frac{p-q}{2}+\nu_j, \quad \nu_j\in \Zb, \quad \nu_1 \geq \nu_2 \geq \cdots \geq \nu_l\,.
\end{eqnarray*}

\underline{$(\G,\G')=(\Og_{2l+1},\Sp_{2l'}(\R)), \; l\geq 0, \, l'\geq 1$:}
\medskip

If $l=0$, then $\g=0$. If $l\geq 1$, then:
\begin{eqnarray*}
&&\Delta^+=\{e_j \pm e_k; \; 1\leq j<k\leq l\} \cup \{e_j; \; 1\leq j\leq l\}  \; \text{(type $B_{l}$)}\,, \quad \rho=\sum_{j=1}^l \Big(l+\frac{1}{2}-j\Big)e_j\,,\\[-1em]
\label{genuine-hw-O2l+1}
&&\lambda_j \in \Zb, \quad \lambda_1\geq \lambda_2 \geq \cdots \geq \lambda_l\geq 0\,.\\[.1em]
&&\text{
There are two irreducible genuine representations of highest weight $\lambda$.
}
\end{eqnarray*}

\underline{$(\G,\G')=(\Sp_{l},\Og^*_{2l'}), \; l\geq 1, \, l'\geq 1$ (for $l'=1$ this is a degenerate pair)}:
\begin{eqnarray*}
&&\Delta^+=\{e_j \pm e_k; \; 1\leq j<k\leq l\} \cup \{2e_j; \; 1\leq j\leq l\}  \; \text{(type $C_{l}$)}\,,\quad\rho=\sum_{j=1}^l (l+1-j) e_j\,,\\[-1em]
\label{genuine-hw-Spl}
&&\lambda_j \in \Zb, \quad \lambda_1\geq \lambda_2 \geq \cdots \geq \lambda_l\geq 0\,.
\end{eqnarray*}

\underline{$(\G,\G')=(\Og_{2l},\Sp_{2l'}(\R)), \; l\geq 1, \, l'\geq 1$:}
\medskip

If $l=1$, then $\h_\C=\g_\C$. If $l\geq 2$, then:
\begin{eqnarray*}
&&\Delta^+=\{e_j \pm e_k; \; 1\leq j<k\leq l\}  \; \text{(type $D_{l}$)}\,,\quad\rho=\sum_{j=1}^l (l-j)e_j\,,\\[-1em]
\label{genuine-hw-O2l}
&&\lambda_j \in \Zb, \quad \lambda_1\geq \lambda_2 \geq \cdots \geq |\lambda_l|\,.
\end{eqnarray*}
\quad\parbox{15.5truecm}{
The weights $(\lambda_1,\dots,\lambda_{l-1},\pm \lambda_l)$ yield the same representation of $\Og_{2l}$ if $\lambda_l\neq 0$. \\

If $\lambda_l=0$, there are two irreducible genuine representations of highest weight $\lambda$. }

%%

%%%%%%%%%%%%%%%%%%%%%%
%%%%%%%%%%%%%%%%%%%%%%
\section{\bf Integration on the quotient space $\Sg/\Sg^\hs1$}
\setcounter{thh}{0}
\setcounter{equation}{0}

We retain the notation of sections \ref{section:dualpairs-supergroups} and \ref{section:orbital integrals}. The purpose of this appendix is to prove the following lemma. 

\begin{lem}
\label{lem:orbitalintegrals-groupintegrals}
Suppose first that $\G\neq \Og_{2l+1}$ with $l<l'$. 
Then there are positive constants $C_1$ and $C_2$ such that for all $\phi\in C_c(\Wv)$ and $w\in {\hs1}^{\rm reg}$
\begin{align}
\label{integralonS/Sh1-1}
\int_{\Sg/\Sg^{\hs1}} \phi(s.w) \; d(s\Sg^{\hs1})&= 
C_1 \int_{\G}\int_{\G'/\Zg'} \phi((g,g').w) \; dg\,d(g'\Zg') \qquad \text{if $l\leq l'$}\\
\label{integralonS/Sh1-2}
\int_{\Sg/\Sg^{\hs1}} \phi(s.w) \; d(s\Sg^{\hs1})&= 
C_2 \int_{\G/\Zg}\int_{\G'} \phi((g,g').w) \; d(g\Zg)\,dg' \qquad \text{if $l>l'$}\,.
\end{align}
Now, let $\G=\Og_{2l+1}$ with $l<l'$ and let $w_0\in \mathfrak{s}_1(\V^0)$ be a nonzero element. 
Then there is a positive constant $C_3$ such that for all $\phi\in C_c(\Wv)$ and 
$w\in {\hs1}^{\rm reg}$
\begin{equation}
\label{integralonS/Sh1+w0}
\int_{\Sg/\Sg^{\hs1+w_0}} \phi(s.(w +w_0)) \; d(s\Sg^{\hs1+w_0})= 
C_3 \int_{\G}\int_{\G'/{\Zg'\,}^n} \phi((g,g'). (w +w_0)) \; dg\,d(g'{\Zg'\,}^n)\,,
\end{equation}
where ${\Zg'\,}^n$ is the centralizer in $\Zg'$ of $n=\tau'(w_0)$.
\end{lem}
%%%

Before proving Lemma \ref{lem:orbitalintegrals-groupintegrals}, let us consider the special case of the dual pair $(\G,\G')=(\Og_{1},\Sp_{2n}(\R))$, which is not included in this lemma but will be needed in its proof. In the notation of section \ref{section:dualpairs-supergroups}, $\V=\V_{\overline{0}}\oplus\V_{\overline{1}}$, where 
$\dim \V_{\overline{0}}=1$ and $\dim \V_{\overline{1}}=2n$. 
We have the identifications
$$
\Sg=\G\times \G'=\Og(\V_{\overline{0}}) \times \Sp(\V_{\overline{1}})\,, \qquad \Wv=\Hom(\V_{\overline{1}},\V_{\overline{0}})\,.
$$
Let $0\neq w_0\in \Wv$.
We shall describe ${\rm Stab}_{\G'}(w_0)$, the stabilizer of $w_0$ in $\G'=\Sp(\V_{\overline{1}})$, as well as $\big(\Og(\V_{\overline{0}}) \times \Sp(\V_{\overline{1}})\big)^{w_0^2}$ and $\big(\Og(\V_{\overline{0}}) \times \Sp(\V_{\overline{1}})\big)^{w_0}$.

Since $\dim \Ker w_0=\dim \Wv-1$, we see that $\dim (\Ker w_0)^\perp=1$. Let $\Xv=(\Ker w_0)^\perp$. Since $\dim \Xv=1$, this is an isotropic subspace of $\Wv$. Furthermore $\Ker w_0=\Xv^\perp$.
Let $\Yv\subseteq \Wv$ be a subspace  of dimension 1 such that $\Wv=\Ker w_0 \oplus \Yv$. Set $\Uv=(\Xv+ \Yv)^\perp$. Then the restriction of the symplectic form of $\Wv$ to $\Uv$ is non-degenerate and 
\begin{equation}
\label{decompositionV1}
\V_{\overline{1}}=\Xv\oplus \Uv\oplus \Yv\,.
\end{equation}
Let $P_\Yv\subseteq \G'$ be the parabolic subgroup preserving $\Yv$. Then we have an isomorphism 
$$
P_\Yv=\GL_1(\Yv)\times \Sp(\Uv)\times \N\,,
$$
where $\N$ is the uniponent radical, isomorphic to a Heisenberg group. We see from \eqref{decompositionV1} that 
\begin{equation}
\label{StabG'w}
{\rm Stab}_{\G'}(w_0)=\{1\} \times \Sp(\Uv)\times \N\,.
\end{equation}

If $w_1,w_2\in \mathfrak{s}_{\overline{1}}(\V)$ are non-zero and such that $w_1^2=w_2^2$, 
then $w_2=\pm w_1$. Equivalently, let $\tau':\Wv\to \g'=\sp(\Wv)$ denote the unnormalized 
moment map. Then $\tau'(w_1)=\tau'(w_2)$ implies $w_2=\pm w_1$, because $\Og_1$ acts transitively on the fibers of $\tau'$. Equivalently, if one thinks of $\Wv$ as $\M_{1,2n}(\R)$ and 
setting $w^*=Jw^t$ for $J=\begin{pmatrix} 0 & 1_{n}\\ -1_{n} & 0 \end{pmatrix}$, one has that
$w_1^*w_1=w_2^*w_2$. This is equivalent to  $w_1^tw_1=w_2^t w_2$, which implies $w_2=\pm w_1$.

Now, one readily checks that $g'\in \Sp(\V_{\overline{1}})^{w_0^2}$ if and only if $g'\tau'(w_0){g'}^{-1}=\tau'(w_0)$. Since, 
for $g'\in \Sp(\V_{\overline{1}})$,
$$
g'\tau'(w_0){g'}^{-1}=g'w_0^*w_0{g'}^{-1}=(w_0{g'}^{-1})^*(w_0{g'}^{-1})=\tau'(w_0{g'}^{-1})\,,
$$
this is equivalent to $\tau'(w_0{g'}^{-1})=\tau'(w_0)$, i.e. $w_0{g'}^{-1}=\pm w_0$. In turn, this means that 
$\pm g'\in {\rm Stab}_{\G'}(w_0)$. Thus
\begin{equation}
\label{G'w2}
{\Sp(\V_{\overline{1}})\,}^{w_0^2}=\{\pm 1\} \times \Sp(\Uv)\times \N\,.
\end{equation}
It follows that 
\begin{align}
\label{Sw2-O1}
\big(\Og(\V_{\overline{0}})\times \Sp(\V_{\overline{1}})\big)^{w_0^2}&=\{\pm 1\} \times \big(\{\pm 1\} \times \Sp(\Uv)\times \N\big)\\
\noalign{\noindent and}
\label{Sw-O1}
\big(\Og(\V_{\overline{0}}) \times \Sp(\V_{\overline{1}})\big)^{w_0}&=\{(\varepsilon;\varepsilon,m,n);\varepsilon=\pm 1, \, m\in \Sp(\Uv), \, n\in \N\}\,.
\end{align}
Notice that they do not depend on the choice of $0\neq w_0\in \Wv$. Moreover, 
$$
\big(\Og(\V_{\overline{0}})\times \Sp(\V_{\overline{1}})\big)^{w_0^2}/\big(\Og(\V_{\overline{0}}) \times \Sp(\V_{\overline{1}})\big)^{w_0}=(\{\pm 1\} \times \{\pm 1\})/\{\pm (1,1)\}
$$
is a group isomorphic to $\Og_1$.

\medskip

\noindent \textit{Proof of Lemma \ref{lem:orbitalintegrals-groupintegrals}.\;}
We now prove \eqref{integralonS/Sh1-1}, excluding for the moment the pair 
$(\G,\G')=(\Og_{2l+1},\Sp_{2l}(\R))$.

If $l\leq l'$, then $\h'=\h\oplus \h''$. Write $\z'=\h\oplus \z''$ and, for the corresponding groups, $\Zg'=\H\times \Zg''$.  Then $\Sg^{\hs1^2}=\H\times \Zg'$.
 
Let $\Delta:\H \to \G\times \G'$ be defined by $\Delta(h)=(h,(h,1_{l'-l}))$, where 
$1_r$ denotes the identity matrix of size $r$.
Then $\Sg^{\hs1}=\Delta(\H)(\{1_{l}\}\times (\{1_{l}\}\times \Zg''))$. Set
$$
\Lg=\Sg^{\hs1^2}/\Sg^{\hs1}=(\H\times \H\times \Zg'')/\Sg^{\hs1}=(\H\times \H \times \{1_{l'-l}\}) /\Delta(\H)\,.
$$
Then $\Lg$ is a compact abelian group because so is $\H$. It acts on $\Sg/\Sg^\hs1$ by 
$$
(g,g')\Sg^{\hs1}\cdot (h_1,h_2,1_{l'-l})\Delta(\H)=(gh_1,g'(h_2,1_{l'-l}))\Sg^{\hs1}\,.
$$
The action is proper and free. Hence the quotient space 
$(\Sg/\Sg^\hs1)/\Lg$, i.e. the space of orbits for this action, has a unique structure of smooth manifold such that the canonical projection $\Sg/\Sg^{\hs1} \to (\Sg/\Sg^{\hs1})/\Lg$ is a principal fiber bundle with structure group $\Lg$. 
Since we have fixed a Haar measure on $\H$, we also have Haar measures on 
$\H\times \H \times \{1_{l'-l}\}$ and $\Delta(\H)$. This fixes a quotient measure on
$\Lg=(\H\times \H \times \{1_{l'-l}\}) /\Delta(\H)$. Recall the notation $d(sS^{\hs1})$ for the quotient measure of $\Sg/\Sg^{\hs1}$. 
Then there is a unique 
measure $ds^\bullet$ on $(\Sg/\Sg^{\hs1})/\Lg$ such that for all $\Phi\in C_c(\Sg/\Sg^{\hs1})$
\begin{align*}
&\int_{\Sg/\Sg^{\hs1}} \Phi(s\Sg^{\hs1}) \; d(s\Sg^{\hs1})\\
&=
\int_{(\Sg/\Sg^{\hs1})/\Lg} \left( \int_{(\H\times \H \times \{1_{l'-l}\}) /\Delta(\H)} 
\Phi\big((g,g')(h_1,h_2,1_{l'-l})\Sg^{\hs1}\big)d((h_1,h_2,1_{l'-l})\Delta(\H)) \right) d(g,g')^\bullet\\
&=\frac{1}{\vol(\Delta(\H))} \int_{(\Sg/\Sg^{\hs1})/\Lg} \left( \int_{\H\times \H} 
\Phi\big((g,g')(h_1,h_2,1_{l'-l})\Sg^{\hs1}\big)d(h_1,h_2) \right) d(g,g')^\bullet\,;
\end{align*}
see e.g. \cite[\S 3.13, p. 183]{Duistermaat-Kolk}. 
As a set, 
\begin{align}
\label{SmodSh1modL}
(\Sg/\Sg^{\hs1})/\Lg&=\Big((\G\times \G')/\Sg^{\hs1}\Big)/\Big((\H\times \H \times \Zg'')/\Sg^{\hs1}\Big) \nn\\
&=(\G\times \G')/(\H\times \H \times \Zg'') \nn\\
&=(\G\times \G')/(\H\times\Zg')=\G/\H\times \G'/\Zg'\,,
\end{align}
where the second equality holds under the identification $(g,g')\Sg^{\hs1} L=(g,g')(\H\times\H\times\Zg'')$.
Since the measure $d(sS^{\hs1})$ on $\Sg/\Sg^{\hs1}$ is invariant with respect to the action of $\Sg$ by left-translation and this action commutes with the right-action of $\Lg$ on $\Sg/\Sg^{\hs1}$, the measure $ds^\bullet$ is left $\Sg$-invariant. By the above identification, $(\G\times \G')/(\H\times\Zg')$ is endowed with an $\Sg$-invariant measure, which must be a positive multiple of the quotient measure of those of $\G\times \G'$ and $\H\times\Zg'$. Thus 
$ds^\bullet$ is a positive multiple of the product measure of the quotient measures of 
$\G/\H$ and $\G'/\Zg'$. 
In conclusion, there is a positive constant $C$ such that for every 
$\Phi\in C_c(\Sg/\Sg^{\hs1})$
\begin{align}
\label{integral on S/Shh1 and on G/HxG'/Z'}
&\int_{\Sg/\Sg^{\hs1}} \Phi(s\Sg^{\hs1}) \; d(s\Sg^{\hs1}) \nn\\
&=C \int_{\G/\H \times \G'/\Zg'} \left( \int_{\H\times \H} 
\Phi\big((g,g')(h_1,h_2,1_{l'-l})\Sg^{\hs1}\big)d(h_1,h_2) \right) d(g\H)\, d(g'\Zg')\nn\,.
\end{align}
Suppose that  $\Phi(s)=\phi(s.w)$, where $\phi\in C_c(\Wv)$ and $w\in {\hs1}^{\rm reg}$.
Hence $\phi(s\Sg^{\hs1}.w)=\phi(s.w)$.
Observe that 
$$
(g,g')(h_1,h_2,1_{l'-l}).w=gh_1w(h_2^{-1},1_{l'-l})g'^{-1}=gh_1h_2^{-1}wg'^{-1}
=(gh_1h_2^{-1},g').w\,.
$$
Hence
\begin{align*}
 \int_{\H\times \H} \phi\big((g,g')(h_1,h_2,1_{l'-l}).w\big) \, d(h_1,h_2)&=
 \int_{\H}\int_{\H} \phi((gh_1,g').w)\; dh_1\, dh_2\\
 &=\vol(\H) \int_{\H} \phi((gh_1,g').w)\; dh_1
\end{align*}
and  
\begin{align*}
\int_{\G/\H \times \G'/\Zg'}&\int_{\H\times \H} \phi\big((g,g')(h_1,h_2,1_{l'-l}).w\big)\, d(h_1,h_2)\, d(g\H)\,d(g'\Zg')\\
&=\vol(\H) \int_{\G/\H}\int_{\G'/\Zg'} \Big(\int_{\H} \phi((gh_1,g').w) dh_1\Big) d(g\H)\,d(g'\Zg')\\
&=\vol(\H) \int_{\G}\int_{\G'/\Zg'} \phi((g,g').w) \; dg\,d(g'\Zg')\,.
\end{align*}
In conclusion, there is a positive constant $C$ such that for all $\phi\in C_c(\Wv)$ and $w\in {\hs1}^{\rm reg}$
\begin{equation}
\int_{\Sg/\Sg^{\hs1}} \phi(s.w) \; d(s\Sg^{\hs1}) = C \int_{\G}\int_{\G'/\Zg'} \phi((g,g').w) \; dg\, d(g'\Zg')\,.
\end{equation}

Let us now consider the dual pair $(\G,\G')=(\Og_{2l+1},\Sp_{2l'}(\R))$ with $1\leq l\leq l'$. 
We keep the notation introduced on page \pageref{2l+1 smaller than 2l' special-the page}. 
In particular, $\V^0=\V^0_{\overline 0}\oplus\V^0_{\overline 1}$ where $\dim \V_{\overline 0}^{0}=1$
and $\dim \V_{\overline 1}^{0}=2(l'-l)$. Each $h\in \H^0$ fixes $\V^0_{\overline 0}$ and hence every 
$h\in \H$ is of the form $h=(h_\bullet,\varepsilon)$ where $h_\bullet \in \Og(\V^1_{\overline 0}\oplus \cdots \V^l_{\overline 0})\simeq \Og_{2l}$ and $\varepsilon \in \Og(\V^0_{\overline 0})$. 
The elements $h_\bullet$ form a Cartan subgroup $\H_\bullet$ of $\Og(\V^1_{\overline 0}\oplus \cdots \V^l_{\overline 0})$. At the group level, the decomposition $\h'=\h\oplus\h''$ arising from the identification \eqref{the identification} corresponds to a decomposition $\H'=\H_\bullet\times \H''$
of the Cartan subgroup $\H'$ of $\G'$. 

If $l=l'$, then $\h''=0$ and the equality $\z'=\h'=\h$ corresponds, at the group level, to 
$\Zg'=\H'=\H_\bullet$. Hence $\Sg^{\hs1^2}=\H\times \Zg'=\H\times \H_\bullet\cong \H_\bullet\times \H_\bullet \times \Og(\V^0_{\overline 0})$ and $\Sg^{\hs1}=\{(h_\bullet,\varepsilon,h_\bullet); h_\bullet \in \H_\bullet\} \cong \Delta(\H_\bullet)\times \Og(\V^0_{\overline 0})$, where 
$\Delta(\H_\bullet)=\{(h,h); h\in \H_\bullet\}$. Thus $\Lg=\Sg^{\hs1^2}/\Sg^{\hs1}\cong (\H_\bullet   \times \H_\bullet)/\Delta(\H_\bullet)$ is a compact abelian group and, as a set, 
$$
(\Sg/\Sg^{\hs1})/\Lg=\big((\G\times\G')/\Sg^{\hs1}\big)/ \big((\H\times \Zg')/\Sg^{\hs1}\big)=
\G/\H\times \G'/\Zg'\,,
$$
as in \eqref{SmodSh1modL}. Hence \eqref{integralonS/Sh1-1} follows as in the general case $l\leq l'$.

Let us now consider the dual pair $(\G,\G')=(\Og_{2l+1},\Sp_{2l'})$ with $1\leq l<l'$. 
Let $0\neq w_0\in \mathfrak{s}_1(\V^0)=\Hom(\V_{\overline 1}^0,
\V_{\overline 0}^0)$. We shall describe $\Sg^{(\hs1+w_0)^2}$ and its subgroup $\Sg^{\hs1+w_0}$.

Since $\hs1$ preserves the decomposition \eqref{decomposition of space for a cartan subspace}, we see that $(\hs1+w_0)^2=\hs1^2+w_0^2$ and hence
\begin{align}
\nn
\Sg^{(\hs1+w_0)^2}=\Sg^{\hs1^2+w_0^2}=\big(\Sg^{\hs1^2}\big)^{w_0^2}&=\H_\bullet \times \Og(\V_{\overline 0}^{0}) \times\H_\bullet \times \Sp(\V_{\overline 1}^{0})^n,\\
\label{S(h1+w)2-O2l+1}
&\simeq\H_\bullet \times \H_\bullet \times \big(\Og(\V_{\overline 0}^{0}) \times \Sp(\V_{\overline 1}^{0})\big)^{w_0^2},
\end{align}
where $\Og(\V_{\overline 0}^{0})=\{\pm 1\}$ and $\Sp(\V_{\overline 1}^{0})^n$ is the centralizer of $n=\tau'(w_0)$ in the symplectic group $\Sp(\V_{\overline 1}^{0})$.
Notice that we can also write 
\begin{equation}
\label{S(h1+w)2-O2l+1-bis}
\Sg^{(\hs1+w_0)^2}=\H \times {\Zg'\,}^n\,,
\end{equation}
where ${\Zg'\,}^n$ is the centralizer of $n$ in $\Zg'$.
In the identification \eqref{S(h1+w)2-O2l+1}, 
\begin{align}
\nn
\Sg^{\hs1+w_0}&=\big\{(h,h,s); h\in \H_\bullet, s\in \big(\Og(\V_{\overline 0}^{0}) \times \Sp(\V_{\overline 1}^{0})\big)^{w_0}\big\}\\
\label{S(h1+w)-O2l+1}
&=\Delta(\H_\bullet)\times \big(\Og(\V_{\overline 0}^{0}) \times \Sp(\V_{\overline 1}^{0})\big)^{w_0}\,.
\end{align}
The groups $\big(\Og(\V_{\overline 0}^{0}) \times \Sp(\V_{\overline 1}^{0})\big)^{w_0^2}$ and 
$\big(\Og(\V_{\overline 0}^{0}) \times \Sp(\V_{\overline 1}^{0})\big)^{w_0}$ are computed as in 
\eqref{Sw2-O1} and \eqref{Sw-O1}, respectively, with $\V$ replaced by $\V^0$.
Then
\begin{align*}
\Lg=\Sg^{(\hs1+w_0)^2}/\Sg^{\hs1+w_0}&\simeq (\H_\bullet \times \H_\bullet)/\Delta(\H_\bullet) \times \big(\Og(\V_{\overline 0}^{0}) \times \Sp(\V_{\overline 1}^{0})\big)^{w_0^2}/
\big(\Og(\V_{\overline 0}^{0}) \times \Sp(\V_{\overline 1}^{0})\big)^{w_0}\\
&\cong (\H_\bullet \times \H_\bullet)/\Delta(\H_\bullet) \times \{\pm 1\}\,,
\end{align*}
which is a compact abelian group. 
By \eqref{S(h1+w)2-O2l+1-bis}, we therefore obtain that, as a set,
\begin{align*}
(\Sg/\Sg^{\hs1+w_0})/\Lg=(\G\times\G')/(\H \times {\Zg'\,}^n)=
\G/\H\times \G'/{\Zg'\,}^n\,,
\end{align*}
and \eqref{integralonS/Sh1+w0} follows as in the general case $l\leq l'$.

The proof of \eqref{integralonS/Sh1-2} is similar to that of \eqref{integralonS/Sh1-1} and left to reader. 
\null\hfill\qed

%%%%%%%%%%%%%%%%%%%%%%
\biblio
\end{document}